\tikzset{labl/.style={anchor=south, rotate=90, inner sep=.5mm}} \usetikzlibrary{babel} \tikzset{labr/.style={anchor=south, rotate=270, inner sep=.5mm}} \usetikzlibrary{babel}
\titleformat*{\section}{\LARGE\bfseries}
\numberwithin{equation}{section}
\newtheorem{lemma}[equation]{Lemma}
\newtheorem{satz}[equation]{Satz}
\newtheorem{korollar}[equation]{Korollar}
\theoremstyle{definition}\newtheorem{definition}[equation]{Definition}
\theoremstyle{definition}
\theoremstyle{definition}\newtheorem{konstruktion}[equation]{Konstruktion}
\theoremstyle{definition}\newtheorem{bemerkung}[equation]{Bemerkung}
\newtheorem*{notation}{Notationen und Terminologie}
\DeclareMathOperator{\Ima}{Im}
\DeclareMathOperator{\colim}{colim}
\DeclareMathOperator{\Spec}{Spec}
\DeclareMathOperator{\Spa}{Spa}
\DeclareMathOperator{\Ind}{Ind}
\DeclareMathOperator{\cofib}{cofib}
\author{Grigory Andreychev}
\title{$K$-Theorie adischer Räume}
\date{\vspace{-5ex}} 
\newenvironment{titlefront}{
	\clearpage
	\newgeometry{hmargin={3.25cm,3.25cm},vmargin={4cm,4.5cm}}
	\thispagestyle{empty}
	\parindent=0pt
	\parskip=0pt
	\centering
}{\clearpage\aftergroup\restoregeometry}
\newenvironment{titleback}{
	\clearpage
	\newgeometry{hmargin={4.5cm,4.5cm},vmargin={4cm,4.5cm}} 
	\parindent=0pt
	\parskip=0pt
}{\cleardoublepage\aftergroup\restoregeometry}
\begin{document}
\begin{titlefront}
 \vspace{2cm}
{\LARGE\textbf{$K$-Theorie adischer Räume}}

\vspace{5cm}

Dissertation
  		
\vspace{3mm}
zur
\vspace{3mm}
  		
Erlangung des Doktorgrades (Dr. rer. nat)
\vspace{3mm}
  		
der
\vspace{3mm}
  		
Mathematisch-Naturwissenschaftlichen Fakultät
\vspace{3mm}
  		
der
\vspace{3mm}
  		
Rheinischen Friedrich-Wilhelms-Universität Bonn

\vspace{4cm}
  		
vorgelegt von
\vspace{3mm}

{\textbf{Grigory Andreychev}}
\vspace{3mm}

aus

\vspace{3mm}

Norilsk, Russland

\vfill

Bonn, 2023
\thispagestyle{empty}
\end{titlefront}

\begin{titleback}
\thispagestyle{empty}
 	
\begin{center}
\noindent  Angefertigt mit Genehmigung der Mathematisch-Naturwissenschaftlichen Fakult\"  at  der  Rheinischen Friedrich-Wilhelms-Universit\" at Bonn
\end{center}
\vfill
 	
\noindent 1. Gutachter/Betreuer: Prof. Dr. Peter Scholze
 	
\noindent 2. Gutachter: Prof. Dr. Georg Tamme
 	
\vspace{1cm}
 		
\noindent Tag der Promotion: 30. August 2023 
 	
\noindent Erscheinungsjahr: 2023
 	
\end{titleback}

\begin{center}
\thispagestyle{empty}
\vspace*{7cm}
\Large\rightline{Моим родителям}
\vspace*{\fill}
\end{center}
\clearpage
\vfill
\newpage
\newpage\null\thispagestyle{empty}\newpage
\newpage
\tableofcontents
\newpage
\section{Einleitung}

Seit ihrer Erfindung nimmt die $K$-Theorie einen besonderen Platz in der Mathematik ein. Informell gesprochen handelt es sich um eine Sammlung von topologischen und algebraischen Methoden zur Analyse einer geeignet definierten „Kategorie der Vektorbündel“ auf einem Raum $X$. Diese Analyse sollte eine Folge von Invarianten liefern, die sich wie eine verallgemeinerte Kohomologietheorie verhält und in einer fundamentalen Beziehung zu anderen algebraischen Invarianten steht. Beispielsweise kann die $K$-Theorie in der algebraischen Topologie über die Atiyah-Hirzebruch-Spektralsequenz mit singulärer Kohomologie in Beziehung gesetzt werden. In der algebraischen Geometrie verbindet die berühmte Quillen-Lichtenbaum-Vermutung, welche von Voevodsky und Rost bewiesen wurde, die $K$-Theorie mit der étalen Kohomologie über eine ähnliche Spektralsequenz. Das Ziel dieser Arbeit ist es, die Grundlagen der $K$-Theorie im Kontext der adischen Geometrie zu legen und insbesondere ein Analogon des Satzes von Grothendieck-Riemann-Roch zu beweisen. Die wichtigste konzeptionelle Neuerung dieser Arbeit ist zweifellos die Definition der $K$-Theorie im adischen Kontext. Sobald der richtige Rahmen gefunden ist, sind die anderen Teile der Arbeit rein formal. Bevor wir den Plan dieser Dissertation vorstellen, möchten wir daher hier kurz auf die Entwicklung der Definition der $K$-Theorie eingehen und die Motivation für unseren Ansatz erläutern. 

Der erste vollständige kohomologische Formalismus dieser Art wurde in der algebraischen Topologie entwickelt und wird heute als \textit{topologische $K$-Theorie} bezeichnet. Für einen parakompakten topologischen Hausdorff-Raum $X$ betrachten wir das Gruppoid\footnote{d.h. eine Kategorie, deren Morphismen alle invertierbar sind} $\mathrm{Vect}_X^{\mathbb{R}/\mathbb{C}}$ der endlichdimensionalen lokal freien reellen oder komplexen Vektorbündel auf $X$, welche wir im Folgenden einfach \textit{Vektorbündel} nennen. Der Ausgangspunkt ist die folgende Definition von Grothendieck:

\begin{definition}
Die Gruppe $K_0(X)$ ist definiert als die Gruppenvervollständigung des Monoids der Isomorphieklassen der Vektorbündel auf $X$, dessen Monoidstruktur durch direkte Summe gegeben ist. 
\end{definition}

Aus dieser Definition wird dann mittels des \textit{Einhängungsisomorphismus} und der \textit{Bott-Periodizität} eine ganzzahlig indizierte Folge von algebraischen Invarianten von $X$ konstruiert, welche \textit{die höheren $K$-Gruppen} genannt werden. In der algebraischen oder analytischen Geometrie stehen solche Methoden jedoch nicht zur Verfügung. Wenn man also die $K$-Theorie in diesen Kontexten definieren will, muss man eine andere, abstraktere Definition finden. Die übliche moderne Herangehensweise an die algebraische $K$-Theorie macht sich die Theorie der $\infty$-Kategorien in wesentlicher Weise zunutze: Statt der Gruppenvervollständigung der Menge der Isomorphieklassen der Vektorbündel betrachtet man die \textit{$\infty$-Gruppenvervollständigung} des Gruppoids der Vektorbündel, welches als über $\mathrm{Top}$ angereicherte Kategorie angesehen wird. Dies liefert ein Spektrum im Sinne der algebraischen Topologie und ermöglicht die folgende Definition der topologischen $K$-Theorie, die für endlichdimensionale Zellkomplexe mit der klassischen übereinstimmt:

\begin{definition}
Die Gruppe $K_{i}(X)$ ist definiert als die $i$-te Homotopiegruppe $\pi_{i}((\mathrm{Vect}_X^{\mathbb{R}/\mathbb{C}})^{\mathrm{grp}})$ der $\infty$-Gruppenvervollständigung des Gruppoids der Vektorbündel auf $X$, dessen Monoidstruktur durch direkte Summe gegeben ist. 
\end{definition}

Einer der Hauptvorteile dieser Definition besteht darin, dass sie direkt in den Kontext der kommutativen Algebra übertragen werden kann. Ersetzt man nämlich die Kategorie der Vektorbündel durch die Kategorie der endlichen projektiven Moduln über einem kommutativen Ring $R$, so erhält man die folgende Definition:

\begin{definition}
Sei $R$ ein kommutativer Ring. Die Gruppe $K_{i}(R)$ ist definiert als die $i$-te Homotopiegruppe $\pi_{i}((\mathrm{Proj}_R)^{\mathrm{grp}})$ der $\infty$-Gruppenvervollständigung des Gruppoids der endlichen projektiven Moduln über $R$, dessen Monoidstruktur durch direkte Summe gegeben ist. 
\end{definition}

Will man jedoch noch weiter gehen und die $K$-Theorie für alle Schemata definieren, so muss man aufgrund einiger technischer Probleme etwas anders vorgehen. Die erste allgemeine Konstruktion geht auf Quillen zurück, der zu diesem Zweck den Begriff der \textit{exakten Kategorie} einführte. Ohne ins Detail zu gehen, sei hier nur erwähnt, dass sein Ansatz dem obigen sehr ähnlich ist: Die $K$-Gruppen sind dabei als die Homotopiegruppen des Spektrums gegeben, das sich durch Anwenden der sogenannten Quillen'schen $Q$-Konstruktion auf die exakte Kategorie der Vektorbündel ergibt. Es stellt sich jedoch heraus, dass diese Definition „falsch“ ist. Das Problem besteht darin, dass die so definierte $K$-Theorie im Allgemeinen schlechte Lokal-Global-Eigenschaften hat, mit anderen Worten erfüllt sie nicht Abstieg bezüglich der Zariski-Topologie. In seiner berühmten Arbeit \cite{TT90} hat Robert Thomason eine andere, heute allgemein akzeptierte Definition vorgeschlagen, die mit Quillens Version in guten Situationen übereinstimmt. Insbesondere stimmt die „korrekte“ $K$-Theorie von Thomason mit der von Quillen im affinen Fall überein. In moderner Sprache lautet die Definition wie folgt:

\begin{definition}
Sei $X$ ein Schema und $\mathrm{Cat}_\infty^\mathrm{perf}$ die $\infty$-Kategorie der kleinen idempotent vollständigen stabilen $\infty$-Kategorien, deren Morphismen exakte Funktoren sind. Mit $\mathrm{Perf}(X)$ bezeichnen wir die derivierte $\infty$-Kategorie der perfekten Komplexe auf $X$ und mit $K:\mathrm{Cat}_{\infty}^{\mathrm{perf}}\xrightarrow{} \mathrm{Sp}$ den $K$-Theorie-Funktor, welcher eine kleine stabile $\infty$-Kategorie auf ein Spektrum abbildet. Dann ist das $K$-Theorie-Spektrum $K(X)$ definiert als $K(\mathrm{Perf}(X))$.
\end{definition}

Wir möchten einige Anmerkungen zu dieser Definition machen. Es bezeichne $\mathcal{D}_{\mathrm{qc}}(X)$ die derivierte $\infty$-Kategorie der quasi-kohärenten Garben auf $X$. Man kann sich die derivierte $\infty$-Kategorie der perfekten Komplexe auf $X$ als die volle $\infty$-Unterkategorie von $\mathcal{D}_{\mathrm{qc}}(X)$ vorstellen, die „lokal“ durch Vektorbündel unter Verschiebungen und endlichen Kolimites erzeugt wird. Diese Beschreibung hat den Vorteil, dass der Übergang von Quillens Definition zu Thomasons, zumindest aus heutiger Sicht, nicht so überraschend erscheint: Man folgt dabei Grothendieck und geht zum „derivierten Niveau“ über. Es gibt jedoch eine andere Beschreibung, die aus vielen konzeptionellen Gründen viel wichtiger ist. Nämlich ist die derivierte $\infty$-Kategorie $\mathcal{D}_{\mathrm{qc}}(X)$ der quasi-kohärenten Garben auf einem quasi-kompakten quasi-separierten Schema $X$ \textit{kompakt erzeugt} und die $\infty$-Unterkategorie $\mathrm{Perf}(X)$ ist genau die \textit{$\infty$-Unterkategorie der kompakten Objekte} von $\mathcal{D}_{\mathrm{qc}}(X)$. Mithilfe dieser Charakterisierung gelang es Thomason, viele geometrische Fragen über die $K$-Theorie der Schemata auf formale Eigenschaften kompakt erzeugter Kategorien und des $K$-Theorie-Funktors zu reduzieren. Da letzterer eine universelle „algebraische“ Invariante kleiner stabiler $\infty$-Kategorien ist, schafft diese Umformulierung einen formalen Rahmen, in dem viele sonst sehr komplizierte Argumente klar werden.

Wie bereits erwähnt, besteht der Hauptvorteil von Thomasons Definition darin, dass seine Version der $K$-Theorie Abstieg bezüglich der Zariski-Topologie erfüllt. Tatsächlich erfüllt sie zudem Abstieg bezüglich der sogenannten \textit{Nisnevich-Topologie}, welche feiner als die Zariski-Topologie, aber gröber als die étale Topologie ist. Genauer hat Thomason für eine bestimmte Variante von $K:\mathrm{Cat}_{\infty}^{\mathrm{perf}}\xrightarrow{} \mathrm{Sp}$ den folgenden Satz bewiesen:

\begin{satz}[{\cite[Theorem 8.4]{TT90}}]\label{thomason}
Sei $X$ ein quasi-kompaktes quasi-separiertes Schema und $U$ ein beliebiges quasi-kompaktes étales Schema über $X$. Es bezeichne $\mathbb{K}:\mathrm{Cat}_{\infty}^{\mathrm{perf}}\xrightarrow{} \mathrm{Sp}$ den nicht konnektiven $K$-Theorie-Funktor. Dann definiert die Zuordnung $U \mapsto \mathbb{K}(\mathrm{Perf}(U))$ eine Garbe von Spektren bezüglich der Nisnevich-Topologie.
\end{satz}

Vor der Veröffentlichung von \cite{TT90} gab es mehrere Ad-hoc-Versuche, den Zariski-Abstieg für die $K$-Theorie unter bestimmten Annahmen an $X$ zu beweisen. Der wichtigste Punkt in Thomasons Arbeit ist, dass er den optimalen Abstiegssatz beweisen konnte. In der vorliegenden Arbeit untersuchen wir seine Ideen im Kontext der adischen Geometrie. Dort gibt es unter verschiedenen Annahmen an den betrachteten adischen Raum ebenfalls nicht triviale Definitionen der $K$-Theorie und zugehörige Abstiegsresultate. Unser abstrakter Ansatz erlaubt es jedoch, wie im Falle der Schemata, den optimalen Abstiegssatz in dieser Situation zu beweisen. Offensichtlich sind wir aber nicht die ersten, die über die Anwendung von Thomasons Methoden in der adischen Geometrie nachgedacht haben. Wir beschreiben nun kurz zwei Probleme im adischen Fall, die den Fortschritt seit Jahren behinderten und eine naive Übertragung der oben skizzierten Theorie auf den adischen Fall unmöglich machten. Deren Lösung basiert auf einigen theoretischen Grundlagen, die erst seit kurzer Zeit zur Verfügung stehen.

Das erste Problem lässt sich darauf zurückführen, dass es vor der Erfindung der verdichteten Mathematik durch Clausen und Scholze keinen ausreichend flexiblen algebraischen Formalismus der Garben von Moduln auf analytischen Räumen (sowohl nicht-archimedischen als auch komplexen) ohne Endlichkeitsbedingungen gab, d. h., es gab keine Theorie der quasi-kohärenten Garben in diesen Kontexten. Man könnte hoffen, dass dieser Grad an Allgemeinheit für die Zwecke der $K$-Theorie tatsächlich überflüssig ist, denn auf den ersten Blick verwenden alle Definitionen im algebraischen Falle nur kohärente Garben. Außerdem gibt es unter bestimmten noetherschen Annahmen, die von rigid-analytischen Räumen erfüllt sind, eine gute Theorie der kohärenten Garben. Leider ist diese aus mehreren Gründen für unsere Zwecke nicht ausreichend. Der wichtigste davon ist, dass die $K$-Theorie, die die derivierte Kategorie der perfekten Komplexe als formale Eingabe verwenden würde, Abstieg nicht erfüllen würde: In der rigiden Geometrie gibt es kein Analogon von Thomasons Offen-Abgeschlossen-Sequenz für perfekte Komplexe, welche die Schlüsselrolle in seinem Beweis spielt. Ein weiterer Grund liegt darin, dass wir nach einem solchen Formalismus suchen, der auch für eine bestimmte Klasse von nicht noetherschen adischen Räumen, nämlich den sogenannten \textit{perfektoiden} Räumen, funktioniert. Bevor wir uns der $K$-Theorie zuwenden können, müssen wir also diese Grundsatzfrage klären. In \cite{firstpaper} haben wir eine Definition der derivierten $\infty$-Kategorie der quasi-kohärenten Garben für allgemeine analytische adische Räume eingeführt. Wie bereits erwähnt, basiert unser Ansatz auf den Ideen von Clausen-Scholze und verwendet verdichtete Mathematik in wesentlicher Weise. Ohne zu sehr ins Detail zu gehen, möchten wir dennoch einige der wichtigsten Ergebnisse jener Arbeit präzise darlegen.

\begin{satz}[\cite{firstpaper}, Theorem 3.28, Proposition 3.34, Theorem 5.9, Lemma 5.10]\label{analytische Ringe}
Es bezeichne $\mathrm{CAff}$ die Kategorie der vollständigen Huber-Paare und $\mathrm{AnRing}$ die Kategorie der vollständigen kommutativen analytischen animierten verdichteten Ringe im Sinne von Clausen-Scholze. Dann gibt es einen volltreuen Funktor 
\[\mathrm{CAff}\xrightarrow{}\mathrm{AnRing},\ (A,A^+)\mapsto (A,A^+)_{\blacksquare}.\] Insbesondere erhält man für jeden affinoiden adischen Raum $X=\Spa (A,A^+)$ eine stabile $\infty$-Kategorie $\mathcal{D}_\blacksquare(X)$, die als die derivierte $\infty$-Kategorie $\mathcal{D}((A,A^+)_{\blacksquare})$ der festen Moduln über $(A,A^+)_{\blacksquare}$ definiert ist. Außerdem existiert eine volltreue Einbettung der üblichen algebraischen derivierten $\infty$-Kategorie $\mathcal{D}(A)$ in $\mathcal{D}_\blacksquare(X)$, die mit dem Basiswechsel kompatibel ist.
\end{satz}

\begin{satz}[\cite{firstpaper}, Theorem 4.1]\label{A}
Sei $X$ ein analytischer adischer Raum und $U=\Spa (A,A^+)$ eine beliebige affinoide offene Teilmenge von $X$. Dann definiert der Funktor, der $U$ auf die derivierte $\infty$-Kategorie $\mathcal{D}((A,A^+)_{\blacksquare})$ abbildet, eine Garbe von $\infty$-Kategorien auf $X$ bezüglich der analytischen Topologie. Insbesondere erhält man durch Verkleben eine stabile $\infty$-Kategorie $\mathcal{D}_\blacksquare(X)$, die wir die \textit{derivierte $\infty$-Kategorie der festen Garben auf $X$} nennen.
\end{satz}

Es sei erwähnt, dass das Analogon dieser Aussage, bei dem $\mathcal{D}((A,A^+)_{\blacksquare})$ durch $\mathcal{D}(A)$ ersetzt wird, falsch ist. Daher ist es wichtig anstatt mit den üblichen „diskreten“ Moduln mit verdichteten Moduln zu arbeiten. Eine Schlüsselrolle in der vorliegenden Arbeit spielt jedoch nicht die $\infty$-Kategorie $\mathcal{D}((A,A^+)_{\blacksquare})$, sondern deren volle $\infty$-Unterkategorie der sogenannten \textit{nuklearen Moduln}, welche ebenfalls Abstieg erfüllt und als Analogon der Kategorie der quasi-kohärenten Garben für unsere Zwecke angesehen werden kann. Der Grund dafür ist, dass $\mathcal{D}((A,A^+)_{\blacksquare})$ zwar kompakt erzeugt ist, seine volle Unterkategorie der kompakten Objekte allerdings zu groß ist, was dazu führt, dass die mit ihrer Hilfe definierte $K$-Theorie von $X$ trivial wäre. Die Kategorie der nuklearen Garben hat dagegen „die richtige Größe“, was sich insbesondere im diskreten Fall zeigt: Für ein diskretes Huber-Paar $(A,A^+)$ ist sie äquivalent zur derivierten Kategorie $\mathcal{D}(A)$. Darüber hinaus kann für nukleare Garben eine „Offen-Abgeschlossen-Sequenz“ konstruiert werden, was die Anwendung der Methoden von Thomason ermöglicht. Der Übergang zu dieser Kategorie führt jedoch zu dem zweiten Problem: Im Allgemeinen ist die $\infty$-Kategorie der nuklearen Garben nicht kompakt erzeugt, sondern nur dualisierbar. Für dieses Problem gibt es jedoch eine rein kategorielle Lösung nach Efimov, die das letzte Hindernis für die Übertragung des Ansatzes von Thomason auf den adischen Fall beseitigt:

\begin{satz}[\cite{Efimov}, Theorem 10]\label{efimov}
Es bezeichne $\mathrm{Cat}_{\infty}^{\mathrm{st,cg}}$ (bzw. $\mathrm{Cat}_{\infty}^{\mathrm{st,dual}}$) die $\infty$-Kategorie der stabilen kompakt erzeugten $\infty$-Kategorien (bzw. die $\infty$-Kategorie der stabilen dualisierbaren $\infty$-Kategorien), deren Morphismen kompakte (bzw. dualisierbare) Funktoren sind. Man betrachte den Funktor 
\[\mathrm{Cat}_{\infty}^{\mathrm{st,cg}}\xrightarrow{\makebox[1.5cm]{$\mathcal{C}\mapsto \mathcal{C}^\omega$}}\mathrm{Cat}_{\infty}^{\mathrm{perf}}\xrightarrow{\makebox[1.5cm]{$\mathbb{K}$}} \mathrm{Sp},\] 
wobei $\mathcal{C}^\omega$ die volle $\infty$-Unterkategorie der kompakten Objekte in $\mathcal{C}$ bezeichnet. Dann besitzt er eine eindeutige Fortsetzung zu einem Funktor $\mathbb{K}_\mathrm{stet}: \mathrm{Cat}_{\infty}^{\mathrm{st,dual}}\rightarrow \mathrm{Sp}$, der Verdier-Sequenzen auf Fasersequenzen abbildet.
\end{satz}

Die vorliegende Dissertation ist wie folgt gegliedert. In Abschnitt 2 legen wir den notwendigen formalen kategoriellen Rahmen für unsere Arbeit. Wir erinnern an die Begriffe der kompakt erzeugten und der dualisierbaren Kategorie und untersuchen ihre Beziehung. Unser wichtigstes Resultat ist Satz~\ref{dualisierbarer_abstieg}, der sich mit den Abstiegseigenschaften dualisierbarer $\infty$-Kategorien beschäftigt. In Abschnitt 3 erinnern wir an die Definition eines nuklearen Moduls über einem analytischen verdichteten Ring nach Clausen und Scholze. Dann untersuchen wir für ein Huber-Paar $(A,A^+)$ die $\infty$-Kategorie der nuklearen Moduln über $(A,A^+)_{\blacksquare}$, wobei $(A,A^+)_{\blacksquare}$ den über den Funktor $\mathrm{CAff}\xrightarrow{}\mathrm{AnRing},\ (A,A^+)\mapsto (A,A^+)_{\blacksquare}$ konstruierten analytischen Ring bezeichnet. Unser Hauptergebnis ist die Tatsache, dass diese $\infty$-Kategorie für eine große Klasse von Huber-Paaren dualisierbar ist. Insbesondere ist sie für alle garbigen tateschen Huber-Paare dualisierbar. In Abschnitt 4 verallgemeinern wir diese Aussage mithilfe der Resultate aus Abschnitt 2 auf den Fall allgemeiner analytischer adischer Räume. Konkret führen wir die $\infty$-Kategorie der nuklearen Garben auf einem analytischen adischen Raum ein und beweisen, dass sie für quasi-kompakte quasi-separierte Räume ebenfalls dualisierbar ist. Der nächste logische Schritt findet sich im Anhang. In dessen ersten Teil definieren wir die Nisnevich-Topologie für analytische adische Räume und zeigen dann, dass sie die zu erwartenden Eigenschaften besitzt. Insbesondere beweisen wir, dass sie durch eine $cd$-Struktur im Sinne von Voevodsky gegeben werden kann. Dann untersuchen wir im nächsten Abschnitt des Anhangs die Beziehung zwischen Garben und Hypergarben bezüglich der analytischen, Nisnevich- und étalen Topologien eines analytischen adischen Raums. Dabei folgen wir dem Ansatz von \cite{CM21} im schematischen Falle und erhalten ähnliche Resultate. Mit anderen Worten ist das Ziel des Anhangs zu zeigen, dass die Eigenschaften der Nisnevich- bzw. der étalen Topologie für Schemata und analytische adische Räume analog sind. In Abschnitt 5 verwendet wir unsere bisherigen Ergebnisse zur Analyse der Abstiegseigenschaften von lokalisierenden Invarianten auf analytischen adischen Räumen. Dabei folgen wir den Ideen von Thomason in ihrerer modernen Form wie in \cite{CMNN20}, \cite{K1} und \cite{CM21}. Insbesondere beweisen wir das Analogon des Satzes~\ref{thomason} und untersuchen die étalen Abstiegseigenschaften lokalisierender Invarianten nach chromatischer Lokalisierung. Im letzten Abschnitt dieser Arbeit formulieren und beweisen wir ein Analogon des Satzes von Grothendieck-Riemann-Roch, dessen Bedeutung keiner weiteren Erläuterung bedarf.

\bigskip\large{\textbf{Vorschläge zum Lesen:}}\normalsize

\begin{adjustwidth}{-0.17cm}{}
\begin{tabular}{p{0.48\linewidth}p{0.48\linewidth}}
\multicolumn{1}{l}{Bist du ein*e:} & \multicolumn{1}{l}{so lese die Arbeit in dieser Folge:} \\
Ehrenfrau*mann & §2, §3, §4, Anhang A und B, §5 und §6; \\
draufgängerische*r Schummler*in & §2, §3, §4, §5 und §6 mit der Analogie zwischen Schemata und adischen Räumen im Hinterkopf;\\
Adrenalinsüchtige*r  & §4, §5 und §6.
  \end{tabular}
\end{adjustwidth}

\bigskip\large{\textbf{Danksagung}}\normalsize

Mein herzlicher Dank gilt meinem Doktorvater Peter Scholze für seine umfassende und äußerst engagierte Betreuung. Trotz der Seltenheit unserer Treffen habe ich in der Zeit als Doktorand so viel gelernt, dass es wohl noch einige Jahre dauern wird, bis ich alles verstanden habe, was er mir beigebracht hat. Dank ihm bin ich hoffentlich endlich von einem Sammler zufälligen mathematischen Wissens zu einem Mathematiker geworden. Ich möchte auch Dustin Clausen meinen aufrichtigen Dank aussprechen. Er hat mir und Peter das Thema dieser Dissertation vorgeschlagen und hat mich während meiner Promotion mehrfach unterstützt. Darüber hinaus möchte ich mich bei Christian Dahlhausen und Georg Tamme für den Workshop „Condensed Mathematics and K-Theory“ in Mainz bedanken, wo ich die Chance hatte, über meine Doktorarbeit zu berichten. Natürlich haben Diskussionen mit vielen Menschen diese Arbeit beeinflusst, aber ich möchte besonders jene mit Alexander Efimov, Jacob Lurie, Lucas Mann und Ko Aoki erwähnen. Wenn ihr diese Zeilen lest, wisst ihr, dass irgendwo hier in meinen unbeholfenen Worten euer mathematisches Wissen begraben liegt. Ein herzliches Dankeschön geht auch an Ferdinand Wagner, der mir erlaubt hat, seine wunderschöne \TeX-Version von Grothendiecks berühmter Zeichnung in meiner Arbeit zu verwenden. Abschließend möchte ich mich bei Céline Fietz, Youshua Kesting und Maximilian von Consbruch bedanken, ohne die diese Worte nicht nur unbeholfenen, sondern auch grammatikalisch falsch wären: Mit ihrer Hilfe war es mir möglich, meinen kleinen Traum zu verwirklichen und diese Arbeit – zur Verwirrung vieler – in deutscher Sprache zu verfassen.
\newpage
\section{Dualisierbare Kategorien und deren Verklebung}\label{abstrakter_unsinn}

Eine der wichtigsten mathematischen Leistungen von Robert Thomason besteht zweifellos darin, dass er den „richtigen“ Blickwinkel auf die $K$-Theorie der Schemata gefunden hat. Vor der Veröffentlichung seiner bahnbrechenden Arbeit \cite{TT90} wurde die $K$-Theorie eines Schemas $X$ durch Anwenden der sogenannten Quillen'schen $Q$-Konstruktion auf die Kategorie der Vektorbündel auf $X$ definiert. Ein wesentlicher Grund, weshalb dieser Ansatz konzeptionell unbefriedigend blieb, war die Tatsache, dass die so definierte $K$-Theorie im Allgemeinen Zariski-Abstieg nicht erfüllte. Die Haupterkenntnis von Thomason, die er der Schule von Grothendieck zuschrieb, war die Idee, statt Vektorbündeln sogenannte \textit{perfekte Komplexe} zu betrachten. Per Definition sind dies jene Objekte der derivierten Kategorie der quasi-kohärenten Garben, die „lokal“ als endliche Kolimites von Verschiebungen von Vektorbündeln geschrieben werden können. Man könnte sagen, dies ist eine weitere Manifestation der heute in der algebraischen Geometrie allgegenwärtigen Idee, dass man bei der Untersuchung vieler Fragen konsequent auf dem „derivierten Niveau“ arbeiten sollte. Tatsächlich sind die perfekten Komplexe auf einem Schema $X$ genau durch die kompakten Objekte in der derivierten Kategorie der quasi-kohärenten Garben auf $X$ gegeben, was eine abstrakte Charakterisierung derselben bietet. Dementsprechend spielt der Begriff der \textit{kompakt erzeugten Kategorie} eine besondere Rolle in Thomasons Ansatz zur $K$-Theorie, und der Hauptteil der Arbeit \cite{TT90} besteht in der Untersuchung der Abstiegseigenschaften solcher Kategorien. Wir erinnern an die Definition:

\begin{definition}[{\cite[Definition 5.5.7.1]{HTT}}]
Sei $\mathcal{C}$ eine zugängliche $\infty$-Kategorie\footnote{D. h., $\mathcal{C}\cong \Ind_\kappa (\mathcal{C}^0)$ für eine kleine $\infty$-Kategorie $\mathcal{C}^0$ und eine reguläre Kardinalzahl $\kappa$, siehe \cite[Definition 5.4.2.1]{HTT}.}, abgeschlossen unter kleinen filtrierten Kolimites.
\begin{enumerate}[label=(\roman*)]
\item Ein Objekt $c\in\mathcal{C}$ heißt \textit{kompakt}, wenn der Funktor $\mathrm{hom}_\mathcal{C}(c,-)$ mit kleinen filtrierten Kolimites vertauscht. Wir schreiben $\mathcal{C}^\omega$ für die volle $\infty$-Unterkategorie der kompakten Objekte in $\mathcal{C}$.
\item Die $\infty$-Kategorie $\mathcal{C}$ heißt \textit{kompakt erzeugt}, wenn $\Ind (\mathcal{C}^\omega)\xrightarrow[]{\sim} \mathcal{C}$ gilt. Das ist dazu äquivalent, dass es eine kleine $\infty$-Kategorie $\mathcal{D}_0$ gibt, sodass $\mathcal{C}\cong \Ind \mathcal{D}_0$.
\end{enumerate}
\end{definition}

Sei $X$ ein quasi-kompaktes quasi-separiertes Schema. Es bezeichne $\mathcal{D}_{qc}(X)$ die derivierte $\infty$-Kategorie der quasi-kohärenten Garben auf $X$. Ihre volle $\infty$-Unterkategorie der kompakten Objekte heißt die \textit{derivierte $\infty$-Kategorie der perfekten Komplexe auf $X$} und wird mit $\mathrm{Perf}(X)$ bezeichnet. Nach Thomason ist die $K$-Theorie von $X$ durch Anwenden des $K$-Theorie-Funktors $K:\mathrm{Cat}_\infty^{\mathrm{st}}\rightarrow \mathrm{Sp}$ auf $\mathrm{Perf}(X)$ definiert. Einer der Hauptvorteile dieser Definition besteht darin, dass der Zariski- und der Nisnevich-Abstieg der $K$-Theorie von $X$ nun eine formale Konsequenz der Tatsache sind, dass die $\infty$-Kategorie $\mathcal{D}_{qc}(X)$ kompakt erzeugt ist und die kompakten Objekte den jeweiligen Abstieg erfüllen. Um einzusehen, dass $\mathcal{D}_{qc}(X)$ tatsächlich kompakt erzeugt ist, zitieren wir folgenden Satz:

\begin{satz}[{\cite[Beweis des Satzes 3.1.1]{BB02}}]\label{kompakte_verklebung}
Sei $X$ ein quasi-kompakter quasi-separierter topologischer Raum mit einer Basis $\mathcal{B}$, welche aus quasi-kompakten Teilmengen besteht. Sei $U\mapsto \mathcal{C}_U$ eine Garbe von stabilen $\infty$-Kategorien auf $X$, deren Wert auf jeder Teilmenge $B\in\mathcal{B}$ kompakt erzeugt ist. Angenommen, für alle $V\in \mathcal{B}$ und alle quasi-kompakten offenen Teilmengen $V'\subset V$ ist der Einschränkungsfunktor $\mathcal{C}_{V}\rightarrow \mathcal{C}_{V'}$ eine linke Bousfield-Lokalisierung, die kompakte Objekte erhält und deren Faser kompakt erzeugt ist. Dann ist die Kategorie $\mathcal{C}_X$ kompakt erzeugt.
\end{satz}

Aus diesem Satz lässt sich die gewünschte Eigenschaft von $\mathcal{D}_{qc}(X)$ leicht ableiten, denn für jedes affine Schema $X=\Spec R$ ist $\mathcal{D}_{qc}(X)$ äquivalent zur derivierten $\infty$-Kategorie $\mathcal{D}(R)$ und somit kompakt erzeugt.\footnote{Für den Beweis, dass die Einschränkungsfunktoren für quasi-kohärente Garben die technische Bedingung des Satzes erfüllen, verweisen wir auf \cite[Proposition 6.1]{BN93}.} Wenn wir Thomasons Ideen zur Definition der $K$-Theorie in der rigiden Geometrie anwenden wollen, brauchen wir also eine „Kategorie der quasi-kohärenten Garben“ für adische Räume. In Abschnitt 4 führen wir eine solche Kategorie ein, die wir die Kategorie der \textit{nuklearen Garben} nennen, und untersuchen ihre Eigenschaften. Das Hauptproblem, mit dem wir uns im vorliegenden Abschnitt befassen, ist die Tatsache, dass die Kategorie der nuklearen Garben selbst im affinoiden Fall nicht kompakt erzeugt, sondern nur \textit{dualisierbar} ist. Daher müssen wir zunächst die Abstiegseigenschaften dualisierbarer Kategorien untersuchen. Da diese eine sehr natürliche Verallgemeinerung der kompakt erzeugten Kategorien sind, sind die Situationen sehr ähnlich. Am Ende dieses Abschnitts beweisen wir ein Analogon von Satz~\ref{kompakte_verklebung} für dualisierbare Kategorien, welches wir dann in Abschnitt 4 anwenden, um die Dualisierbarkeit der Kategorie der nuklearen Garben auf einem analytischen adischen Raum nachzuweisen. Wir beginnen mit einer Erinnerung an alle notwendigen kategoriellen Begriffe, insbesondere an die Definition und die grundlegenden Eigenschaften dualisierbarer Kategorien.

\begin{notation} 
\begin{enumerate}[label=(\roman*)]
\item[]
\item Mit $\widehat{\mathrm{Cat}}_{\infty}^\omega$ bezeichnen wir die $\infty$-Kategorie der zugänglichen unter kleinen filtrierten Kolimites abgeschlossenen $\infty$-Kategorien, deren Morphismen kleine filtrierte Kolimites erhaltende Funktoren sind. 
\item Mit $\mathcal{P}r^{\mathrm{L}}$ bezeichnen wir die $\infty$-Kategorie der präsentierbaren $\infty$-Kategorien, deren Morphismen (kleine) Kolimites erhaltende Funktoren sind.
\item Mit $\mathcal{P}r^{\mathrm{St}}$ bezeichnen wir die volle $\infty$-Unterkategorie von $\mathcal{P}r^{\mathrm{L}}$ der stabilen präsentierbaren $\infty$-Kategorien.
\item Mit $\mathrm{Cat}_{\infty}^{\mathrm{perf}}$ bezeichnen wir die $\infty$-Kategorie der kleinen idempotent vollständigen stabilen $\infty$-Kategorien, deren Morphismen exakte Funktoren sind.
\end{enumerate}
\end{notation}

Wir erinnern zunächst an die Definition einer \textit{Verdier-Sequenz} von stabilen $\infty$-Kategorien.

\begin{definition}
Sei $\mathcal{C}\xrightarrow[]{F} \mathcal{D}\xrightarrow[]{G} \mathcal{E}$ eine Sequenz in $\mathrm{Cat}_{\infty}^{\mathrm{perf}}$ (bzw. $\mathcal{P}r^{\mathrm{St}}$) mit $G\circ F\cong 0$. Sie heißt eine \textit{Verdier-Sequenz}, falls sie eine Kofasersequenz und der Funktor $F$ volltreu ist.
\end{definition}

Im Folgenden sind die Verdier-Sequenzen, bei denen der Funktor $G$ eine linke Bousfield-Lokalisierung ist, von besonderer Bedeutung. In diesem Falle besitzt der Funktor $F$ ebenso einen rechtsadjungierten Funktor, wie die beiden folgenden Lemmata zeigen.

\begin{lemma}[{\cite[Lemma A.2.5]{HKT2}}]\label{verdier-sequenzen}
Für eine Sequenz $\mathcal{C}\xrightarrow[]{F} \mathcal{D}\xrightarrow[]{G} \mathcal{E}$ in $\mathrm{Cat}_{\infty}^{\mathrm{perf}}$ mit $G\circ F\cong 0$ sind äquivalent:
\begin{enumerate}[label=(\roman*)]
\item Die Sequenz $\mathcal{C}\xrightarrow[]{F} \mathcal{D}\xrightarrow[]{G} \mathcal{E}$ ist eine Kofasersequenz und der Funktor $G$ besitzt einen volltreuen rechts- bzw. linksadjungierten Funktor. 
\item Die Sequenz $\mathcal{C}\xrightarrow[]{F} \mathcal{D}\xrightarrow[]{G} \mathcal{E}$ ist eine Fasersequenz und der Funktor $F$ ist volltreu und besitzt einen rechts- bzw. linksajungierten Funktor.
\end{enumerate}
\end{lemma}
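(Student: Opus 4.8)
Der Plan ist, die Aussage zunächst auf die Variante mit den \emph{rechts}adjungierten Funktoren zu reduzieren. Die Variante mit den linksadjungierten Funktoren würde ich dann erhalten, indem ich sie auf die opponierte Sequenz $\mathcal{C}^{\mathrm{op}}\to\mathcal{D}^{\mathrm{op}}\to\mathcal{E}^{\mathrm{op}}$ anwende: Der Funktor $(-)^{\mathrm{op}}$ ist eine Selbstäquivalenz von $\mathrm{Cat}_{\infty}^{\mathrm{perf}}$, die als Äquivalenz Faser- und Kofasersequenzen sowie die Volltreue von Funktoren erhält, aber links- und rechtsadjungierte Funktoren vertauscht. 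Als Motor würde ich anschließend zwei Standardtatsachen über stabile $\infty$-Kategorien bereitstellen. \emph{(A)} Ist $F$ volltreu mit rechtsadjungiertem Funktor $F^{R}$, so liefert die Kofaser der Koeinheit $FF^{R}\to\mathrm{id}$ einen zur Inklusion $\mathcal{C}^{\perp}\hookrightarrow\mathcal{D}$ linksadjungierten Funktor; die Verkettung $\mathcal{C}^{\perp}\hookrightarrow\mathcal{D}\to\mathcal{D}/\mathcal{C}$ ist eine Äquivalenz, und folglich ist $\mathcal{C}\xrightarrow{F}\mathcal{D}\xrightarrow{\pi}\mathcal{D}/\mathcal{C}$ eine Verdier-Sequenz, deren Quotient $\pi$ einen \emph{volltreuen} rechtsadjungierten Funktor besitzt. \emph{(B)} Besitzt $G$ einen volltreuen rechtsadjungierten Funktor $G^{R}$, so ist $G$ eine Bousfield-Lokalisierung, die Inklusion $\ker G\hookrightarrow\mathcal{D}$ besitzt den rechtsadjungierten Funktor $d\mapsto\fib(d\to G^{R}Gd)$, und $\ker G\hookrightarrow\mathcal{D}\xrightarrow{G}\mathcal{E}$ ist eine Verdier-Sequenz; insbesondere identifiziert $G$ die Kategorie $\mathcal{E}$ mit dem Verdier-Quotienten $\mathcal{D}/\ker G$.

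Für die Richtung (i)$\Rightarrow$(ii) würde ich aus dem volltreuen rechtsadjungierten Funktor von $G$ mithilfe von (B) die Verdier-Sequenz $\ker G\hookrightarrow\mathcal{D}\xrightarrow{G}\mathcal{E}$ gewinnen, in der die Kern-Inklusion einen rechtsadjungierten Funktor besitzt. Die vorausgesetzte Kofasersequenz liefert zugleich $\mathcal{E}\simeq\mathcal{D}/\mathcal{C}$ über $G$, sodass $\ker G$ gerade die von dem Bild von $F$ erzeugte dicke Unterkategorie ist. Da $F$ volltreu ist (die Sequenz wird im Rahmen der Verdier-Sequenzen betrachtet), ist sein wesentliches Bild bereits dick und idempotent vollständig, stimmt also mit $\ker G$ überein; damit faktorisiert $F$ über eine Äquivalenz $\mathcal{C}\xrightarrow{\sim}\ker G$, und die Sequenz ist eine \emph{Faser}sequenz. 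Den rechtsadjungierten Funktor zu $F$ erhielte ich schließlich durch Transport des rechtsadjungierten Funktors der Kern-Inklusion aus (B) entlang dieser Äquivalenz.

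Für die Richtung (ii)$\Rightarrow$(i) würde ich dual vorgehen: Aus der Volltreue und dem rechtsadjungierten Funktor von $F$ liefert (A) die Verdier-Sequenz $\mathcal{C}\xrightarrow{F}\mathcal{D}\xrightarrow{\pi}\mathcal{D}/\mathcal{C}$ mit volltreuem rechtsadjungiertem $\pi$. Wegen $G\circ F\simeq 0$ faktorisiert $G$ als $\bar{G}\circ\pi$, und die Fasersequenz liefert $\ker G=\mathcal{C}=\ker\pi$. Da ein Quotientenfunktor einer Fasersequenz wesentlich surjektiv ist und der Verdier-Quotient zu einer festen dicken Unterkategorie eindeutig bestimmt ist, würde ich folgern, dass $\bar{G}$ eine Äquivalenz ist. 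Damit gilt $G\simeq\pi$, die Sequenz ist eine \emph{Kofaser}sequenz, und $G$ erbt den volltreuen rechtsadjungierten Funktor von $\pi$.

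Die Hauptschwierigkeit wird in beiden Richtungen genau die Identifikation des „fehlenden“ dritten Terms sein, also der Nachweis, dass der kanonische Vergleichsfunktor $\mathcal{C}\to\ker G$ beziehungsweise $\mathcal{D}/\mathcal{C}\to\mathcal{E}$ eine Äquivalenz ist. Hierfür benötige ich das Zusammenspiel von Faser- und Kofasersequenzen für exakte Sequenzen idempotent vollständiger stabiler $\infty$-Kategorien: dass eine Kofasersequenz mit volltreuem $F$ bereits eine Fasersequenz ist (da das wesentliche Bild von $F$ dick und idempotent vollständig ist und somit mit $\ker G$ zusammenfällt) und umgekehrt eine Fasersequenz mit wesentlich surjektivem Quotienten eine Kofasersequenz ist. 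Die eigentliche technische Arbeit besteht in der Kontrolle der Idempotenten-Vervollständigung — der wesentlichen Surjektivität von $G$ beziehungsweise der Dickheit des Bildes von $F$ —, und die Adjungierten aus (A) und (B) stellen genau diejenigen Daten bereit, die diese Kontrolle ermöglichen.
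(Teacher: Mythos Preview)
The paper does not prove this lemma at all; it is stated with a bare citation to \cite[Lemma A.2.5]{HKT2}. So there is no in-paper argument to compare against, and your plan has to stand on its own.

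Your treatment of (i)$\Rightarrow$(ii) is sound once you add the hypothesis that $F$ is fully faithful, and you correctly flag this as an implicit assumption (``die Sequenz wird im Rahmen der Verdier-Sequenzen betrachtet''). Without it the implication already fails: take $\mathcal{C}=\mathrm{Perf}(k)\times\mathrm{Perf}(k)$, $\mathcal{D}=\mathrm{Perf}(k)$, $\mathcal{E}=0$ with $F$ the first projection.

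The direction (ii)$\Rightarrow$(i), however, has a genuine gap. Your assertion ``ein Quotientenfunktor einer Fasersequenz ist wesentlich surjektiv'' is false in $\mathrm{Cat}_\infty^{\mathrm{perf}}$: a fiber sequence only records $\mathcal{C}\simeq\ker G$ and says nothing about the essential image of $G$. Concretely, the null sequence
\[
0\;\longrightarrow\;\mathrm{Perf}(k)\;\xrightarrow{\,M\mapsto(M,0)\,}\;\mathrm{Perf}(k)\times\mathrm{Perf}(k)
\]
is a fiber sequence, $F=0$ is fully faithful with a right adjoint, but $G$ is not essentially surjective and the sequence is \emph{not} a cofiber sequence. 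So your step fails, and in fact the literal implication (ii)$\Rightarrow$(i) is wrong. In the cited source the result is stated for sequences that are already Verdier (i.e.\ simultaneously fiber and cofiber); the equivalence then concerns only the existence of the adjoints, and under that reading your semi-orthogonal-decomposition facts (A) and (B) do give a correct proof. What you should do is either record the extra hypothesis explicitly, or replace the essential-surjectivity claim by the observation that, once the sequence is also assumed to be a cofiber sequence, the induced functor $\bar G\colon\mathcal{D}/\mathcal{C}\to\mathcal{E}$ is an equivalence by the universal property of the Verdier quotient.
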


Man beweist in ähnlicher Weise das folgende Analogon des obigen Lemmas für Verdier-Sequenzen in $\mathcal{P}r^{\mathrm{St}}$.

\begin{lemma}
Für eine Sequenz $\mathcal{C}\xrightarrow[]{F} \mathcal{D}\xrightarrow[]{G} \mathcal{E}$ in $\mathcal{P}r^{\mathrm{St}}$ mit $G\circ F\cong 0$ sind äquivalent:
\begin{enumerate}[label=(\roman*)]
\item Die Sequenz $\mathcal{C}\xrightarrow[]{F} \mathcal{D}\xrightarrow[]{G} \mathcal{E}$ ist eine Kofasersequenz und der Funktor $G$ besitzt einen volltreuen rechtsadjungierten Funktor, der mit Kolimites vertauscht.
\item Die Sequenz $\mathcal{C}\xrightarrow[]{F} \mathcal{D}\xrightarrow[]{G} \mathcal{E}$ ist eine Fasersequenz und der Funktor $F$ ist volltreu und besitzt einen rechtsajungierten Funktor, der mit Kolimites vertauscht.
\end{enumerate}
\end{lemma}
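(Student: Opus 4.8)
The statement is the counterpart in $\mathcal{P}r^{\mathrm{St}}$ of Lemma~\ref{verdier-sequenzen}, and my plan is to carry over the proof of that lemma, adding only the verification that the right adjoints produced stay inside $\mathcal{P}r^{\mathrm{St}}$. Three inputs from presentability enter. First, limits in $\mathcal{P}r^{\mathrm{St}}$ (as in $\mathcal{P}r^{\mathrm{L}}$) are created on underlying $\infty$-categories, so the fibre of a morphism $G\colon\mathcal{D}\to\mathcal{E}$ is the kernel $\mathcal{C}_0:=\{X\in\mathcal{D}: GX\simeq 0\}$. Second, colimits in $\mathcal{P}r^{\mathrm{L}}$ are computed by passing to right adjoints and forming the limit of the resulting diagram in $\widehat{\mathrm{Cat}}_\infty$; consequently a functor admitting a fully faithful right adjoint is a Bousfield localization, and for such a localization of stable presentable $\infty$-categories the inclusion $i\colon\mathcal{C}_0\hookrightarrow\mathcal{D}$ of its kernel is a colimit-preserving, fully faithful functor which itself admits a right adjoint $i^{\mathrm R}$ and sits in a functorial fibre sequence $i\,i^{\mathrm R}X\to X\to G^{\mathrm R}GX$ in the stable category $\mathcal{D}$. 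Third, by the adjoint functor theorem a colimit-preserving functor between presentable $\infty$-categories has a right adjoint, so the force of the clause ``has a right adjoint which commutes with colimits'' is precisely that this right adjoint is again a morphism of $\mathcal{P}r^{\mathrm{St}}$.

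For $(\mathrm i)\Rightarrow(\mathrm{ii})$ I would reason exactly as in Lemma~\ref{verdier-sequenzen}: the fully faithful right adjoint of $G$ produces the recollement above; the vanishing $GF\simeq 0$ yields a factorisation $F\simeq i\circ\bar F$ with $\bar F\colon\mathcal{C}\to\mathcal{C}_0$ in $\mathcal{P}r^{\mathrm{St}}$; and comparing the given cofibre sequence with the cofibre sequence $\mathcal{C}_0\xrightarrow{i}\mathcal{D}\xrightarrow{G}\mathcal{E}$ shows $\bar F$ is an equivalence, so $F$ is fully faithful, the sequence is a fibre sequence, and $F$ inherits the right adjoint $\bar F^{-1}\circ i^{\mathrm R}$. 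The one point not already in Lemma~\ref{verdier-sequenzen} is that this right adjoint commutes with colimits, and for this I would invoke stability of $\mathcal{D}$: in the fibre sequence $i\,i^{\mathrm R}\to\mathrm{id}_\mathcal{D}\to G^{\mathrm R}G$ of endofunctors, $G^{\mathrm R}G$ preserves colimits ($G$ does, and $G^{\mathrm R}$ does by hypothesis), hence so does its fibre $i\,i^{\mathrm R}$, and applying $i$ and using that $i$ is fully faithful and colimit-preserving shows $i^{\mathrm R}$, hence $\bar F^{-1}\circ i^{\mathrm R}$, preserves colimits. The implication $(\mathrm{ii})\Rightarrow(\mathrm i)$ is the same argument in reverse: a fully faithful $F$ with a right adjoint identifies $\mathcal{C}$ with a coreflective subcategory $\mathcal{C}_0\subseteq\mathcal{D}$, the cofibre $X\mapsto X/FF^{\mathrm R}X$ of the counit defines a Bousfield localization with kernel $\mathcal{C}_0$ which one identifies with $G$ (using $\mathcal{C}_0=\ker G$ and that $G$ inverts every $\mathcal{C}_0$-equivalence), so the sequence is a cofibre sequence and $G$ has the fully faithful right adjoint $G^{\mathrm R}$; colimit-preservation of $G^{\mathrm R}$ again drops out of the fibre sequence $FF^{\mathrm R}\to\mathrm{id}_\mathcal{D}\to G^{\mathrm R}G$, reading off colimit-preservation of $G^{\mathrm R}G$ and hence of $G^{\mathrm R}$.

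I expect the main obstacle to be the recollement bookkeeping in the middle of each implication --- checking that a functor with a fully faithful right adjoint really does give a kernel $\mathcal{C}_0$ inside a fibre-and-cofibre sequence $\mathcal{C}_0\to\mathcal{D}\to\mathcal{E}$ with $\mathcal{E}\simeq\mathcal{D}/\mathcal{C}_0$, and that the comparison $\bar F$ (respectively the identification of the constructed localization with $G$) is an equivalence. This is standard recollement theory for stable presentable $\infty$-categories but must be assembled with care, and it is where the proof of Lemma~\ref{verdier-sequenzen} does its real work. By contrast, the ingredient genuinely new here --- that the adjoints remain in $\mathcal{P}r^{\mathrm{St}}$ --- is cheap: it is only the observation that in a stable $\infty$-category the fibre of a colimit-preserving natural transformation between colimit-preserving functors is again colimit-preserving, applied to the recollement fibre sequence relating $F^{\mathrm R}=i^{\mathrm R}$ and $G^{\mathrm R}$.
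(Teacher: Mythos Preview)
Your proposal is correct and follows exactly the approach the paper indicates: the paper gives no proof beyond the sentence ``Man beweist in ähnlicher Weise das folgende Analogon des obigen Lemmas für Verdier-Sequenzen in $\mathcal{P}r^{\mathrm{St}}$'', i.e.\ it simply asserts that the argument of Lemma~\ref{verdier-sequenzen} carries over. Your write-up is in fact more detailed than the paper's, and your observation that colimit-preservation of the new right adjoint is read off from the recollement fibre sequence $FF^{\mathrm R}\to\mathrm{id}\to G^{\mathrm R}G$ (using that fibres of colimit-preserving transformations between colimit-preserving endofunctors of a stable category are colimit-preserving) is the right way to handle the one genuinely new ingredient.
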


\begin{definition}
Sei $\mathcal{C}\xrightarrow[]{F} \mathcal{D}\xrightarrow[]{G} \mathcal{E}$ eine Verdier-Sequenz in $\mathrm{Cat}_{\infty}^{\mathrm{perf}}$ (bzw. $\mathcal{P}r^{\mathrm{St}}$). Sie heißt \textit{rechts spaltend}, falls der Funktor $G$ einen volltreuen rechtsadjungierten Funktor (bzw. einen volltreuen Kolimites erhaltenden rechtsadjungierten Funktor) besitzt.
\end{definition}

Wir führen nun den Begriff der Dualisierbarkeit für stabile $\infty$-Kategorien ein und erinnern an ihre Grundeigenschaften.

\begin{lemma}\label{dualisierbar}
Für eine $\infty$-Kategorie $\mathcal{C}$ sind äquivalent:
\begin{enumerate}[label=(\roman*)]
\item Die Kategorie $\mathcal{C}$ ist ein Retrakt einer kompakt erzeugten Kategorie in der Kategorie $\widehat{\mathrm{Cat}}^{\omega}$.
\item Die Kategorie $\mathcal{C}$ ist ein Retrakt einer kompakt erzeugten Kategorie in der Kategorie $\mathcal{P}r^{\mathrm{L}}$.
\item Die Kategorie $\mathcal{C}$ ist präsentierbar und der Funktor $\colim: \Ind \mathcal{C}\rightarrow \mathcal{C}$ besitzt einen linksadjungierten Funktor.
\end{enumerate}
Ist $\mathcal{C}$ zudem stabil, so sind die obigen Bedingungen äquivalent zu den folgenden Bedingungen:
\begin{enumerate}[label=(\roman*)]
 \setcounter{enumi}{3}
\item Die Kategorie $\mathcal{C}$ ist ein Retrakt einer kompakt erzeugten Kategorie in der Kategorie $\mathcal{P}r^{\mathrm{St}}$.
\item Die Kategorie $\mathcal{C}$ ist ein dualisierbares Objekt in $\mathcal{P}r^{\mathrm{St}}$ bezüglich des Lurie-Tensorprodukt.
\end{enumerate}
\end{lemma}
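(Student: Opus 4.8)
Der Plan ist, alle fünf Bedingungen über den Begriff der Dualisierbarkeit in $\mathcal{P}r^{\mathrm{L}}$ zu verbinden. Der einzige nicht rein formale Baustein ist die Theorie der kompakt assemblierten präsentierbaren $\infty$-Kategorien (nach Lurie, vgl.\ \cite{Efimov}), der zufolge für präsentierbares $\mathcal{C}$ die Bedingung~$(ii)$ -- ein Retrakt einer kompakt erzeugten Kategorie in $\mathcal{P}r^{\mathrm{L}}$ zu sein -- dazu äquivalent ist, dass $\mathcal{C}$ ein dualisierbares Objekt von $(\mathcal{P}r^{\mathrm{L}},\otimes)$ ist, und dies wiederum zu~$(iii)$. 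Die Implikation $(iii)\Rightarrow(ii)$ lässt sich dabei auch direkt einsehen: Der Funktor $\colim\colon\Ind\mathcal{C}\to\mathcal{C}$ besitzt nach Yoneda einen volltreuen rechtsadjungierten Funktor $j$; besitzt er zudem einen linksadjungierten Funktor $\mu$, so folgt $\colim\circ\mu\simeq\mathrm{id}$ aus der Volltreue von $j$, und da $\mu$ (als Linksadjungierter) ebenso wie $\colim$ Kolimites erhält, ist $\mathcal{C}$ vermöge $(\mu,\colim)$ ein Retrakt von $\Ind\mathcal{C}$ in $\mathcal{P}r^{\mathrm{L}}$.

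Für $(i)\Leftrightarrow(ii)$ beachtet man, dass ein Kolimites erhaltender Funktor zwischen präsentierbaren Kategorien zugänglich ist und filtrierte Kolimites erhält; der Vergissfunktor $\mathcal{P}r^{\mathrm{L}}\to\widehat{\mathrm{Cat}}_{\infty}^\omega$ ist somit wohldefiniert und erhält Retraktdiagramme, woraus $(ii)\Rightarrow(i)$ unmittelbar folgt. Die Umkehrung $(i)\Rightarrow(ii)$ verlangt insbesondere, dass ein Retrakt in $\widehat{\mathrm{Cat}}_{\infty}^\omega$ einer kompakt erzeugten Kategorie bereits präsentierbar ist und dass sich die Retraktionsdaten kolimeserhaltend wählen lassen; dies ist der substanziellere Teil und ebenfalls in der oben zitierten Äquivalenz enthalten (er läuft wiederum über die Existenz des Linksadjungierten von $\colim$, also Bedingung~$(iii)$).

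Im stabilen Fall schließt man $(iv)$ und $(v)$ formal an. $(iv)\Rightarrow(ii)$ ist trivial, da $\mathcal{P}r^{\mathrm{St}}\hookrightarrow\mathcal{P}r^{\mathrm{L}}$ volltreu ist. Für $(ii)\Rightarrow(iv)$ tensoriert man das Retraktdiagramm $\mathcal{C}\to\mathcal{D}\to\mathcal{C}$ aus $\mathcal{P}r^{\mathrm{L}}$ mit $\mathrm{Sp}$: Man erhält ein Retraktdiagramm in $\mathcal{P}r^{\mathrm{St}}$, wobei $\mathcal{D}\otimes\mathrm{Sp}$ kompakt erzeugt und stabil ist und $\mathcal{C}\otimes\mathrm{Sp}\simeq\mathcal{C}$ gilt (da $\mathrm{Sp}$ in $\mathcal{P}r^{\mathrm{L}}$ idempotent und $\mathcal{C}$ bereits stabil ist). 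Für $(iv)\Rightarrow(v)$ benutzt man, dass eine kompakt erzeugte stabile Kategorie $\Ind(\mathcal{A})$ -- mit $\mathcal{A}$ klein, idempotent vollständig und stabil -- dualisierbar in $\mathcal{P}r^{\mathrm{St}}$ ist, mit Dual $\Ind(\mathcal{A}^{\mathrm{op}})$ (Evaluation aus der Paarung $\mathcal{A}^{\mathrm{op}}\otimes\mathcal{A}\to\mathrm{Sp}$, $(a,b)\mapsto\operatorname{map}_{\mathcal{A}}(a,b)$, Koevaluation aus der Diagonalen), und dass Retrakte dualisierbarer Objekte in jeder symmetrisch monoidalen Kategorie dualisierbar sind. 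Für $(v)\Rightarrow(iv)$ schließlich benötigt man die stabile Fassung des Lurie'schen Resultats, der zufolge ein in $\mathcal{P}r^{\mathrm{St}}$ dualisierbares Objekt ein Retrakt einer kompakt erzeugten stabilen Kategorie ist -- genau diese Charakterisierung liegt auch \cite{Efimov} zugrunde.

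Die Hauptschwierigkeit steckt ausschließlich in dem zitierten Resultat über kompakt assemblierte Kategorien, genauer in der Richtung, die aus der Dualisierbarkeit in $\mathcal{P}r^{\mathrm{L}}$ (bzw.\ aus der Retrakt-Eigenschaft in $\widehat{\mathrm{Cat}}_{\infty}^\omega$) die Existenz des Linksadjungierten von $\colim$ -- und damit die Retrakt-Eigenschaft in $\mathcal{P}r^{\mathrm{L}}$ -- gewinnt. Alles Übrige -- Vergissfunktoren, $-\otimes\mathrm{Sp}$, Abgeschlossenheit der Dualisierbarkeit unter Retrakten -- ist rein formal.
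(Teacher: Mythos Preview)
Dein Vorschlag ist korrekt und folgt im Wesentlichen derselben Linie wie die Arbeit: Auch dort wird die Aussage nicht eigenständig bewiesen, sondern auf Luries Theorie der kompakt assemblierten Kategorien in \cite{SAG} (Theorem~21.1.2.10, Korollar~21.1.2.18, Proposition~D.7.3.1) zurückgeführt. Dein Text entfaltet lediglich die formalen Reduktionsschritte zwischen den Bedingungen expliziter, verweist aber für den substantiellen Kern -- Dualisierbarkeit in $\mathcal{P}r^{\mathrm{L}}$ $\Leftrightarrow$ Retrakt kompakt erzeugter Kategorie $\Leftrightarrow$ Existenz des Linksadjungierten von $\colim$ -- auf dasselbe Resultat (du zitierst es über \cite{Efimov} statt direkt aus \cite{SAG}, was aber inhaltlich äquivalent ist).
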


\begin{proof}
Kombiniere \cite[Theorem 21.1.2.10]{SAG}, \cite[Korollar 21.1.2.18]{SAG} und \cite[Proposition D.7.3.1]{SAG}.
\end{proof}

\begin{definition}
Eine $\infty$-Kategorie (bzw. stabile $\infty$-Kategorie) $\mathcal{C}$ heißt \textit{kompakt gefertigt} (bzw. \textit{dualisierbar}), wenn sie eine der äquivalenten Bedingungen $(i)$ -- $(iii)$ (bzw. $(i)$ -- $(v)$) von Lemma~\ref{dualisierbar} erfüllt.
\end{definition}

\begin{bemerkung}
Aus Lemma~\ref{dualisierbar} folgt, dass insbesondere jede stabile kompakt erzeugte $\infty$-Kategorie $\mathcal{C}$ ein dualisierbares Objekt in $\mathcal{P}r^{\mathrm{St}}$ ist. Man kann zeigen, dass das Dual von $\mathcal{C}$ durch $\Ind ((\mathcal{C}^\omega)^\mathrm{op})$ gegeben ist.
\end{bemerkung}

Oft ist es notwendig, nicht allgemeine Funktoren in $\mathcal{P}r^{\mathrm{St}}$ zu betrachten, sondern nur solche, die kompakte Objekte erhalten. Ein Funktor zwischen kompakt erzeugten $\infty$-Kategorien, der diese Eigenschaft besitzt, heißt \textit{kompakt}. Ein Grund für die Beschränkung auf diese Klasse von Funktoren ist die Tatsache, dass die $K$-Theorie kompakt erzeugter $\infty$-Kategorien nur bezüglich solcher Morphismen $\mathcal{P}r^{\mathrm{St}}$ funktoriell ist. Ein weiterer Grund, der besonders wichtig für unser aktuelles Ziel ist, besteht darin, dass diese Klasse von Funktoren in der Formulierung des Satzes~\ref{kompakte_verklebung} auftritt. Die gleiche Definition ist jedoch für dualisierbare $\infty$-Kategorien nicht fein genug, da sie nicht genügend kompakte Objekte besitzen. Um eine passende Klasse von Funktoren im Falle von dualisierbaren $\infty$-Kategorien zu finden, beachten wir die folgende äquivalente Beschreibung der kompakten Funktoren, die für unsere Zwecke besser geeignet ist: 

\begin{lemma}\label{kompakte_funktoren}
Es seien $\mathcal{C}$ und $\mathcal{D}$ präsentierbare $\infty$-Kategorien und $F:\mathcal{C}\rightarrow \mathcal{D}$ ein linksadjungierter Funktor. Es bezeichne $G$ den rechtsadjungierten Funktor von $F$. Vertauscht $G$ mit filtrierten Kolimites, so erhält $F$ kompakte Objekte. Ist die Kategorie $\mathcal{C}$ kompakt erzeugt, so vertauscht $G$ mit filtrierten Kolimites genau dann, wenn $F$ kompakte Objekte erhält.
\end{lemma}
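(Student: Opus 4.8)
Der Beweis ist rein formal und beruht auf der üblichen Adjunktionsrechnung zusammen mit der Tatsache, dass in einer kompakt erzeugten $\infty$-Kategorie die kompakten Objekte $\mathcal{C}$ erzeugen. Für die erste Aussage würde ich wie folgt vorgehen: Ist $c\in\mathcal{C}$ kompakt, so ist zu zeigen, dass der Funktor $\mathrm{hom}_\mathcal{D}(F(c),-)$ mit kleinen filtrierten Kolimites vertauscht. Für ein filtriertes Diagramm $(d_i)_{i\in I}$ in $\mathcal{D}$ identifiziert man $\mathrm{hom}_\mathcal{D}(F(c),\colim_i d_i)$ mittels der Adjunktion $F\dashv G$ mit $\mathrm{hom}_\mathcal{C}(c,G(\colim_i d_i))$, dann -- unter Ausnutzung der Voraussetzung, dass $G$ mit filtrierten Kolimites vertauscht -- mit $\mathrm{hom}_\mathcal{C}(c,\colim_i G(d_i))$, und schließlich mittels der Kompaktheit von $c$ sowie erneut der Adjunktion mit $\colim_i \mathrm{hom}_\mathcal{D}(F(c),d_i)$. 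Da die auftretenden Äquivalenzen natürlich in den $d_i$ sind und sich mit dem kanonischen Vergleichsmorphismus verträglich machen lassen, ist $F(c)$ kompakt; bemerkenswerterweise wird hierfür keine Voraussetzung an $\mathcal{C}$ benötigt.

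Für die zweite Aussage genügt es nach der ersten, die Rückrichtung zu zeigen. Sei also $\mathcal{C}$ kompakt erzeugt und $F$ erhalte kompakte Objekte. Zu einem filtrierten Diagramm $(d_i)_{i\in I}$ in $\mathcal{D}$ betrachtet man den kanonischen Vergleichsmorphismus $\colim_i G(d_i)\to G(\colim_i d_i)$ in $\mathcal{C}$. Da $\mathcal{C}$ kompakt erzeugt ist, ist dieser genau dann eine Äquivalenz, wenn $\mathrm{hom}_\mathcal{C}(c,-)$ ihn für jedes kompakte $c\in\mathcal{C}^\omega$ in eine Äquivalenz überführt. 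Wegen der Kompaktheit von $c$ wird die linke Seite zu $\colim_i \mathrm{hom}_\mathcal{C}(c,G(d_i))\simeq \colim_i \mathrm{hom}_\mathcal{D}(F(c),d_i)$; wegen der Adjunktion $F\dashv G$ und der nach Voraussetzung geltenden Kompaktheit von $F(c)$ wird auch die rechte Seite zu $\colim_i \mathrm{hom}_\mathcal{D}(F(c),d_i)$. Da beide Identifikationen mit dem Vergleichsmorphismus verträglich sind, ist dieser eine Äquivalenz, d.\,h.\ $G$ vertauscht mit filtrierten Kolimites.

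Der eigentlich heikle Punkt, auf den ich beim Ausarbeiten am meisten achten würde, ist nicht die Kette der Adjunktionsäquivalenzen selbst, sondern deren Kompatibilität mit den kanonischen Vergleichsmorphismen (bzw.\ mit der natürlichen Transformation $\colim\circ\, G\Rightarrow G\circ\colim$), damit man tatsächlich von der objektweisen Aussage auf eine Äquivalenz von Morphismen bzw.\ von Funktoren schließen darf. In der $\infty$-kategoriellen Sprache erledigt man dies am saubersten, indem man konsequent mit den Funktoren $\mathrm{hom}_\mathcal{C}(c,-)$ auf dem Niveau der entsprechenden Funktorkategorien arbeitet und ausnutzt, dass die kompakten Objekte eine erzeugende Familie bilden; alternativ genügt ein Verweis auf die einschlägigen Resultate in \cite{HTT}, insbesondere \cite[Proposition 5.5.7.2]{HTT}.
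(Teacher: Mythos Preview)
Dein Vorschlag ist korrekt und folgt genau demselben Weg wie der Beweis im Papier: beide Richtungen werden durch die elementare Hom-Rechnung mittels der Adjunktion $F\dashv G$ und der Kompaktheit erledigt, und in der Rückrichtung wird ausgenutzt, dass die kompakten Objekte $\mathcal{C}$ erzeugen. Das Papier ist dabei lediglich etwas knapper und geht nicht explizit auf die von dir betonte Kompatibilität mit den Vergleichsmorphismen ein.
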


\begin{proof}
Sei $X$ ein kompaktes Objekt in $\mathcal{C}$. Dann gilt
\[\mathrm{Hom}_{\mathcal{D}}(F(X),\colim Y_i)\cong\mathrm{Hom}_{\mathcal{C}}(X,\colim G(Y_i))\cong \colim \mathrm{Hom}_{\mathcal{D}}(F(X),Y_i).\]

Angenommen, $\mathcal{C}$ ist kompakt erzeugt und $F$ erhält kompakte Objekte. Sei $X$ ein kompaktes Objekt in $\mathcal{C}$. Wir führen folgende elementare Rechnung durch:
\[\mathrm{Hom}_{\mathcal{C}}(X,G(\colim Y_i))\cong\mathrm{Hom}_{\mathcal{D}}(F(X),\colim Y_i)\cong \colim \mathrm{Hom}_{\mathcal{D}}(F(X),Y_i)\]
\[\cong \colim \mathrm{Hom}_{\mathcal{C}}(X,G(Y_i))\cong \mathrm{Hom}_{\mathcal{C}}(X,\colim G(Y_i)).\qedhere\]\end{proof}

\begin{definition}
Es seien $\mathcal{C}$ und $ \mathcal{D}$ dualisierbare $\infty$-Kategorien und $F:\mathcal{C}\rightarrow \mathcal{D}$ ein linksadjungierter Funktor. Es bezeichne $G$ den rechtsadjungierten Funktor von $F$. Der Funktor $F$ heißt \textit{dualisierbar}, wenn $G$ mit filtrierten Kolimites vertauscht.
\end{definition}

Sei $\mathcal{F}$ eine Garbe von stabilen $\infty$-Kategorien auf einem topologischen Raum $X$. Wenn wir mithilfe des Satzes~\ref{kompakte_verklebung} zeigen wollen, dass der Wert von $\mathcal{F}$ auf $X$ kompakt erzeugt ist, genügt es nicht zu wissen, dass $\mathcal{F}$ „lokal“ eine Garbe kompakt erzeugter $\infty$-Kategorien ist. Die Garbe $\mathcal{F}$ muss noch eine weitere wichtige Bedingung erfüllen: Die Fasern der Lokalisierungsfunktoren müssen ebenso kompakt erzeugt sein. Die analoge Bedingung für dualisierbare Kategorien ist jedoch automatisch erfüllt, wie das folgende Lemma zeigt. 

\begin{lemma}\label{dualisierbare_faser}
Es seien $\mathcal{C}$ und $\mathcal{D}$ stabile präsentierbare $\infty$-Kategorien und $F:\mathcal{C}\rightarrow \mathcal{D}$ eine linke Bousfield-Lokalisierung, deren rechtsadjungierter Funktor $G$ mit Kolimites vertauscht. Ist $\mathcal{C}$ dualisierbar, so ist die Faser von $F$ ebenso dualisierbar.
\end{lemma}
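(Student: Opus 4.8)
The plan is to exhibit the fiber $\mathcal{E}:=\fib(F)$ of $F$, formed in $\mathcal{P}r^{\mathrm{St}}$, as a retract of $\mathcal{C}$ itself inside $\mathcal{P}r^{\mathrm{St}}$, and then to conclude with Lemma~\ref{dualisierbar}(iv): since $\mathcal{C}$ is dualizable it is a retract of a compactly generated category in $\mathcal{P}r^{\mathrm{St}}$, retracts compose, so $\mathcal{E}$ is again such a retract and hence dualizable.

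First I would pin down the fiber concretely. The forgetful functor from $\mathcal{P}r^{\mathrm{L}}$ to (large) $\infty$-categories preserves limits and $\mathcal{P}r^{\mathrm{St}}$ is closed under limits in $\mathcal{P}r^{\mathrm{L}}$; therefore the fiber of $F$ over the zero category is, as an underlying $\infty$-category, the full subcategory $\mathcal{E}=\ker F=\{c\in\mathcal{C}\mid F(c)\simeq 0\}$. Since $F$ preserves colimits, $\mathcal{E}$ is stable under colimits in $\mathcal{C}$, hence presentable, and the inclusion $i\colon\mathcal{E}\hookrightarrow\mathcal{C}$ is a morphism in $\mathcal{P}r^{\mathrm{St}}$ which is fully faithful by construction.

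Next I would construct a colimit-preserving right adjoint to $i$. Write $u\colon\mathrm{id}_{\mathcal{C}}\Rightarrow GF$ for the unit of $F\dashv G$. As $F$ is a left Bousfield localization, $G$ is fully faithful, so the counit $FG\Rightarrow\mathrm{id}_{\mathcal{D}}$ is an equivalence; the triangle identities then force $F(u_c)\colon F(c)\to FGF(c)$ to be an equivalence for every $c\in\mathcal{C}$, whence $F(\fib(u_c))\simeq 0$. Thus the endofunctor $i^{R}:=\fib\bigl(\mathrm{id}_{\mathcal{C}}\to GF\bigr)$ factors through $\mathcal{E}$. A short Yoneda computation identifies $i^{R}$ as a right adjoint of $i$: for $e\in\mathcal{E}$ one has $\mathrm{Hom}_{\mathcal{C}}(i(e),GF(c))\simeq\mathrm{Hom}_{\mathcal{D}}(Fi(e),F(c))\simeq\mathrm{Hom}_{\mathcal{D}}(0,F(c))\simeq 0$, so applying $\mathrm{Hom}_{\mathcal{C}}(i(e),-)$ to the fiber sequence $i^{R}(c)\to c\to GF(c)$ gives $\mathrm{Hom}_{\mathcal{E}}(e,i^{R}(c))\simeq\mathrm{Hom}_{\mathcal{C}}(i(e),i^{R}(c))\simeq\mathrm{Hom}_{\mathcal{C}}(i(e),c)$, naturally in $e$ and $c$. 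This is where the hypothesis on $G$ enters: both $\mathrm{id}_{\mathcal{C}}$ and $GF$ preserve colimits, and in a stable presentable category $\fib(\alpha)$ agrees up to shift with $\cofib(\alpha)$ and is therefore built from a colimit, so $i^{R}$ preserves colimits and hence is a morphism in $\mathcal{P}r^{\mathrm{St}}$.

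Finally, since $i$ is fully faithful its unit $\mathrm{id}_{\mathcal{E}}\Rightarrow i^{R}i$ is an equivalence, so $\mathcal{E}\xrightarrow{i}\mathcal{C}\xrightarrow{i^{R}}\mathcal{E}$ exhibits $\mathcal{E}$ as a retract of $\mathcal{C}$ in $\mathcal{P}r^{\mathrm{St}}$, and the conclusion follows as indicated (alternatively one may invoke Lemma~\ref{dualisierbar}(v) together with the closure of dualizable objects in a symmetric monoidal category under retracts). I do not expect a serious obstacle; the two points that deserve care are the identification of the fiber in $\mathcal{P}r^{\mathrm{St}}$ with $\ker F$ together with its inclusion, and the verification that $i^{R}$ preserves colimits — and it is exactly the latter that uses the assumption that $G$ commutes with colimits, without which $i^{R}$ need not preserve colimits and the statement can fail.
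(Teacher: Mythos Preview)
Your proof is correct and follows essentially the same route as the paper: exhibit the fiber as a retract of $\mathcal{C}$ in $\mathcal{P}r^{\mathrm{St}}$ and invoke Lemma~\ref{dualisierbar}. The only difference is packaging — the paper appeals to the $\mathcal{P}r^{\mathrm{St}}$-version of Lemma~\ref{verdier-sequenzen} to obtain the right-splitting Verdier sequence (and hence the colimit-preserving right adjoint $G'$ to the inclusion) in one stroke, whereas you construct $i^{R}=\fib(\mathrm{id}\to GF)$ by hand and verify the adjunction and colimit-preservation directly.
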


\begin{proof}
Es bezeichne $\mathcal{F}$ die Faser von $F$. Da der Funktor $G$ mit Kolimites vertauscht, liefert Lemma~\ref{verdier-sequenzen} folgende rechts spaltende Verdier-Sequenz:
\begin{center}
\begin{tikzcd}
\mathcal{F} \arrow[r,"F'"] & \mathcal{C} \arrow[r, "F"] \arrow[l, "G'", shift left=2] & \mathcal{D} \arrow[l, "G", shift left=2] 
\end{tikzcd}
\end{center}
Die $\infty$-Kategorie $\mathcal{F}$ ist also ein Retrakt von $\mathcal{C}$ in $\mathcal{P}r^L$ und somit dualisierbar.
\end{proof}

Es seien $\mathcal{C}$ und $\mathcal{D}$ stabile $\infty$-Kategorien und $F:\mathcal{C}\rightarrow \mathcal{D}$ eine linke Bousfield-Lokalisierung. Die einzige wesentliche Schwierigkeit des Beweises von Satz~\ref{kompakte_verklebung} liegt in der Hochhebung von kompakten Objekten in $\mathcal{D}$ nach $\mathcal{C}$. Im Falle von Schemata ist dies zu der Frage äquivalent, wann ein perfekter Komplex auf einer offenen Teilmenge $U$ von einem Schema $X$ zu einem perfekten Komplex auf ganz $X$ fortgesetzt werden kann. Als letzten Schritt, bevor wir uns dem Analogon von Satz~\ref{kompakte_verklebung} für dualisierbare Kategorien widmen, geben wir einen anderen (und etwas kompakteren) Beweis des folgenden klassischen Satzes, dessen Herzstück im sogenannten \textit{Thomason-Trick} liegt.

\begin{satz}[{\cite[Theorem 2.1]{Ne92}}]\label{allgemeiner_thomason-trick}
Es seien $\mathcal{C}$ und $\mathcal{D}$ stabile $\infty$-Kategorien und $F:\mathcal{C}\rightarrow \mathcal{D}$ eine linke Bousfield-Lokalisierung, deren rechtsadjungierter Funktor $G$ mit Kolimites vertauscht. Dann gilt:

\begin{enumerate}[label=(\roman*)]
\item Ist die $\infty$-Kategorie $\mathcal{C}$ dualisierbar, so ist auch die $\infty$-Kategorie $\mathcal{D}$ dualisierbar.

\item Angenommen, die $\infty$-Kategorie $\mathcal{C}$ ist kompakt erzeugt. Dann ist die Kategorie $\mathcal{D}$ ebenso kompakt erzeugt. Es seien $X$ ein kompaktes Objekt in $\mathcal{D}$, $Y$ ein beliebiges Objekt in $\mathcal{C}$ und $f$ ein Morphismus $f:X\rightarrow F(Y)$. Dann gibt es ein kompaktes Objekt $\Tilde{X}$ in $\mathcal{C}$ und einen Morphismus $\tilde{f}:\Tilde{X}\rightarrow Y$, sodass $F(\tilde{f})$ wie folgt faktorisiert:
\[F(\tilde{X})\xrightarrow{\sim} X\oplus X'\xrightarrow{\pi_{X}}X\xrightarrow{f}F(Y),\]
wobei $X'$ ein kompaktes Objekt in $\mathcal{D}$ ist. Insbesondere ist jedes kompakte Objekt in $\mathcal{D}$ ein Retrakt eines kompakten Objektes im wesentlichen Bild von $F$.

\item Thomasons Trick: Angenommen, die $\infty$-Kategorie $\mathcal{C}$ und die Faser von $F$ sind beide kompakt erzeugt. Es seien $X$ ein kompaktes Objekt in $\mathcal{C}$, $Y$ eine beliebiges Objekt in $\mathcal{C}$ und $f$ ein Morphismus $f:F(X)\rightarrow F(Y)$. Dann gibt es ein kompaktes Objekt $X'$ in $\mathcal{C}$ und Morphismen $\phi:X'\rightarrow X$ und $\psi:X'\rightarrow Y$, sodass $F(\phi)$ ein Isomorphismus ist und $f$ wie folgt faktorisiert:
\[f:F(X)\xrightarrow{F(\phi)^{-1}} F(X')\xrightarrow{F(\psi)}F(Y).\]
Mit anderen Worten besitzt jeder Morphismus in $\mathcal{D}$ der obigen Form eine Hochhebung nach $\mathcal{C}$. Insbesondere liegt ein kompaktes Objekt in $\mathcal{D}$ im wesentlichen Bild von $F$ genau dann, wenn seine Klasse in $K_0(\mathcal{D})$ in $\Ima K_0(\mathcal{C})$ liegt. 
\end{enumerate}
\end{satz}

\begin{proof}

\begin{enumerate}[label=(\roman*)]
\item Aus der Annahme folgt direkt, dass $\mathcal{D}$ ein Retrakt von $\mathcal{C}$ und somit dualisierbar ist.

\item Wie man leicht einsehen kann, folgt die erste Aussage aus der zweiten. Man schreibe das Objekt $G(X)\in \mathcal{C}$ als filtrierten Kolimes $\colim X_i$ von kompakten Objekten in $\mathcal{C}$. Da $G$ volltreu ist und $F$ mit Kolimites vertauscht, gilt $X\cong \colim F(X_i)$. Der Isomorphismus $X\cong \colim F(X_i)$ faktorisiert über ein $F(X_i)$, denn $X$ ist kompakt. Das Objekt $X$ ist also ein direkter Summand von $F(X_i)$. Es bezeichne $p_i$ den kanonischen Morphismus $F(X_i)\rightarrow X$ und $f'$ den Morphismus $X_i\rightarrow GF(Y)$, welcher der adjungierte Morphismus zu $f\circ p_i: F(X_i)\rightarrow F(Y)$ ist. Wir betrachten nun folgendes kommutative Diagramm:

\begin{center}
\begin{tikzcd}[]

 & Y \arrow[d] \\

X_i \arrow[rd, dashrightarrow] \arrow[r,"f'"] & GF(Y) \arrow[d] \\
 & \mathrm{cofib} (Y \rightarrow GF(Y))
\end{tikzcd}
\end{center}

Man schreibe $\mathrm{cofib} (Y \rightarrow GF(Y))$ als filtrierten Kolimes $\colim Z_j'$ von kompakten Objekten in $\mathcal{C}$. Da $X_i$ kompakt ist, faktorisiert die Abbildung $X_i\rightarrow \mathrm{cofib} (Y \rightarrow GF(Y))$ über einen Morphismus $g:X_i\rightarrow Z_j$. Außerdem dürfen wir annehmen, dass der Morphismus $F(g)$ trivial ist. Wir erhalten also das folgende kommutative Diagramm:

\begin{center}
\begin{tikzcd}[]

\mathrm{fib} (X_i \rightarrow Z_j)\arrow[r]\arrow[d] & Y \arrow[d] \\

X_i \arrow[d, "g"] \arrow[r,"f'"] & GF(Y) \arrow[d] \\
Z_j \arrow[r] & \mathrm{cofib} (Y \rightarrow GF(Y))
\end{tikzcd}
\end{center}

Man sieht nun ohne Mühe, dass das Paar $(\mathrm{fib} (X_i \rightarrow Z_j),\mathrm{fib} (X_i \rightarrow Z_j)\rightarrow Y)$ ein gewünschtes Paar $(\tilde{X},\tilde{f})$ bildet, denn $\mathrm{fib} (X_i \rightarrow Z_j)$ ist kompakt und es gilt $ F(\mathrm{fib} (X_i \rightarrow Z_j))\cong F(X_i)\oplus F (Z_j[-1])$.

\item Man betrachte das Diagramm oben und ersetze $X_i$ durch $X$. Man sieht dann leicht, dass wir annehmen dürfen, dass $F(Z_j)=0$ für jedes $Z_j$ gilt, denn die Faser von $F$ ist kompakt erzeugt. Daraus folgt unmittelbar, dass man Morphismen hochheben kann. Sei nun $K$ ein kompaktes Objekt in $\mathcal{D}$, dessen Klasse in $\Ima K_0(\mathcal{C})$ liegt, und $\Tilde{K}$ ein kompaktes Objekt in $\mathcal{C}$ mit $[F(\tilde{K})]=[K]\in K_0(\mathcal{D})$. Man prüft leicht nach, dass es ein kompaktes Objekt $K'\in\mathcal{D}$ gibt, sodass $K\oplus K'\cong F(\Tilde{K})\oplus K'$ gilt. Nach dem Teil $(ii)$ des Satzes dürfen wir annehmen, dass $K'$ im wesentlichen Bild von $F$ liegt, denn jedes kompakte Objekt in $\mathcal{D}$ ist ein Retrakt eines kompakten Objektes im wesentlichen Bild von $F$. Es genügt also zu zeigen, dass für jede Fasersequenz $K_1\rightarrow K_2 \rightarrow K_3$ von kompakten Objekten in $\mathcal{D}$, deren zwei von drei Termen im wesentlichen Bild von $F$ liegen, auch ihr dritter Term im wesentlichen Bild von $F$ liegt. Dies folgt aber unmittelbar aus der ersten Hälfte von $(iii)$.
\end{enumerate} 
\end{proof}

Wir haben nun alle Vorbereitungen für die Formulierung und den Beweis des Analogons von Satz~\ref{kompakte_verklebung} für dualisierbare $\infty$-Kategorien getroffen. Wir werden es als formale Folgerung des kompakt erzeugten Falles erhalten. Wir behandeln also die beiden Fälle gleichzeitig und beweisen unten auch Satz~\ref{kompakte_verklebung}.

\begin{satz}\label{dualisierbarer_abstieg}
Sei $X$ ein quasi-kompakter quasi-separierter topologischer Raum mit einer Basis $\mathcal{B}$, die aus quasi-kompakten Teilmengen besteht. Sei $U\mapsto \mathcal{C}_U$ eine Garbe von stabilen $\infty$-Kategorien auf $X$, deren Wert auf jeder Teilmenge $B\in\mathcal{B}$ kompakt erzeugt bzw. dualisierbar ist. Angenommen, für alle $V\in \mathcal{B}$  und alle quasi-kompakten offenen Teilmengen $V'\subset V$ ist der Einschränkungsfunktor $C_{V}\rightarrow C_{V'}$ eine kompakte bzw. dualisierbare linke Bousfield-Lokalisierung mit kompakt erzeugter bzw. dualisierbarer Faser (im dualisierbaren Fall ist die letzte Bedingung nach Lemma~\ref{dualisierbare_faser} und Satz~\ref{allgemeiner_thomason-trick} automatisch erfüllt). Dann ist die Kategorie $\mathcal{C}_X$ kompakt erzeugt bzw. dualisierbar.
\end{satz}

\begin{proof}
Wir betrachten zunächst den kompakt erzeugten Fall. Wir argumentieren über eine Induktion nach der minimalen Anzahl von offenen Teilmengen in einer Überdeckung $\mathcal{U}\subset \mathcal{B}$ von $X$. Seien $U_1,\dots, U_n$ die Elemente von $\mathcal{U}$. Es bezeichne $V$ (bzw. $U$) die Teilmenge $\underset{i=1}{\overset{n-1}{\cup}}U_i$ (bzw. $U_n$). Dann ist folgendes Diagramm kartesisch:
\begin{center}
\begin{tikzcd}
\mathcal{C}_X \arrow[r] \arrow[d] & \mathcal{C}_U \arrow[d] \\
\mathcal{C}_V \arrow[r] & \mathcal{C}_{U\cap V}
\end{tikzcd}\
\end{center}

Sei $M$ ein Objekt in $\mathcal{C}_X$. Es bezeichne $M|_{U}$ (bzw. $M|_{V}$ bzw. $M|_{V\cap U}$) das Bild von $M$ in $\mathcal{C}_U$ (bzw. $\mathcal{C}_V$ bzw. $\mathcal{C}_{V\cap U}$). Um zu beweisen, dass $\mathcal{C}_X$ kompakt erzeugt ist, genügt es zu zeigen, dass es für jedes $M\in \mathcal{C}_X$ und jedes kompakte $N\in \mathcal{C}_U$ (bzw. $N\in \mathcal{C}_V$) zusammen mit einem Morphismus $f:N\rightarrow M|_{U}$ (bzw. $f:N\rightarrow M|_{V}$) ein kompaktes Objekt $\Tilde{N}\in\mathcal{C}_X$ und einen Morphismus $\tilde{f}:\Tilde{N}\rightarrow M$ gibt, sodass $\tilde{f}|_U$ (bzw. $\tilde{f}|_V$) wie folgt faktorisiert:
\[\tilde{N}|_U\xrightarrow{\sim} N\oplus L\xrightarrow{\pi_{N}}N\xrightarrow{f}M|_U\ (\text{bzw}.\ \tilde{N}|_V\xrightarrow{\sim} N\oplus L\xrightarrow{\pi_{N}}N\xrightarrow{f}M|_V),\]
wobei $L$ ein kompaktes Objekt in $\mathcal{C}_U$ (bzw. $\mathcal{C}_V$) ist. Wir behandeln nur den Fall von $U$, denn der Fall von $V$ ist völlig analog. Man betrachte den Morphismus $f:N|_{V\cap U}\rightarrow M|_{V\cap U}$. Wie man leicht nachprüft, ist der Funktor $\mathcal{C}_V\rightarrow \mathcal{C}_{U\cap V}$ eine kompakte linke Bousfield-Lokalisierung. Deswegen gibt es – nach Satz~\ref{allgemeiner_thomason-trick}(ii) – ein kompaktes Objekt $\hat{N}\in\mathcal{C}_{V}$ und einen Morphismus $\hat{f}:\hat{N}\rightarrow M|_{V}$, sodass $\hat{f}|_{U\cap V}$ wie folgt faktorisiert:
\[\hat{N}|_{U\cap V}\xrightarrow{\sim} N|_{U\cap V}\oplus N'\xrightarrow{\pi_{N}|_{U\cap V}}N\xrightarrow{f|_{U\cap V}}M|_{U\cap V},\]
wobei $N'$ ein kompaktes Objekt in $\mathcal{C}_{U\cap V}$ ist. Sei jetzt $N''$ eine Hochhebung von $N'\oplus N'[1]$ nach $U$, die nach Satz~\ref{allgemeiner_thomason-trick}(iii) existiert. Man betrachte folgende zwei Morphismen: \[\hat{N}\oplus  \hat{N}[1]\xrightarrow{\pi_{\hat{N}}}\hat{N}\xrightarrow{\hat{f}} M|_{V}\] in $C_{V}$ und \[N \oplus  N[1]\oplus N''\xrightarrow{\pi_{N}}N\xrightarrow{f} M|_{U}\] in $C_{U}$. Man überzeugt sich leicht davon, dass sie auf $U\cap V$ kompatibel sind, daher verkleben sie sich zu einem Paar ($\tilde{N}$, $\tilde{f}$) mit den gewünschten Eigenschaften.

Wir behandeln nun den dualisierbaren Fall. Wir erinnern zunächst an den folgenden Satz; für den Beweis verweisen wir auf \cite[Proposition I.3.5]{TCH}.

\begin{satz}[Thomason-Neeman]
Sei $\mathcal{C}'\xrightarrow{f} \mathcal{C} \xrightarrow{g} \mathcal{C}''$ eine Sequenz in $\mathrm{Cat}_{\infty}^{\mathrm{perf}}$ mit $g\circ f=0$. Dann ist sie eine Verdier-Sequenz genau dann, wenn $\Ind \mathcal{C}'\xrightarrow{\Ind f} \Ind\mathcal{C} \xrightarrow{\Ind g} \Ind\mathcal{C}''$ eine Verdier-Sequenz in $\mathcal{P}r^{\mathrm{St}}$ ist.
\end{satz}

Sei $\mathcal{C}$ jetzt eine dualisierbare $\infty$-Kategorie. Es bezeichne $\hat{y}:\mathcal{C}\rightarrow \Ind \mathcal{C}$ den linksadjungierten Funktor zu $\colim: \Ind \mathcal{C}\rightarrow \mathcal{C}$. Man prüft leicht nach, dass es eine hinreichend große reguläre Kardinalzahl $\kappa$ gibt, sodass der volltreue Funktor $\hat{y}:\mathcal{C}\rightarrow \Ind \mathcal{C}$ über $\Ind (\mathcal{C}^\kappa) \rightarrow \Ind \mathcal{C}$ faktorisiert, wobei $\mathcal{C}^\kappa$ die volle $\infty$-Unterkategorie der $\kappa$-kompakten Objekte\footnote{siehe \cite[Definition 5.3.4.5]{HTT}} in $\mathcal{C}$ bezeichnet. Die volle $\infty$-Unterkategorie $\mathcal{C}^\kappa$ ist nach \cite[Bemerkung 5.4.2.13]{HTT} klein, daher ist die $\infty$-Kategorie $\Ind (\mathcal{C}^\kappa)$ kompakt erzeugt. Für den Rest des Beweises wählen wir implizit eine hinrechend große Kardinalzahl $\kappa$, sodass alle betrachteten dualisierbaren $\infty$-Kategorien eine solche Faktorisierung besitzen, und schreiben für $\Ind (\mathcal{C}^\kappa)$ einfach $\Ind \mathcal{C}$. 

Wir betrachten folgende Diagramme: 

\begin{center}
\begin{tikzcd}
\mathcal{C}_X \arrow[r,"F_U'"] \arrow[d,"F_V'"] & \mathcal{C}_U \arrow[d,"F_U"] \\
\mathcal{C}_V \arrow[r,"F_V"] & \mathcal{C}_{U\cap V}
\end{tikzcd}\
\begin{tikzcd}
\Ind \mathcal{C}_V \underset{\Ind \mathcal{C}_{U\cap V}}{\times} \Ind \mathcal{C}_U \arrow[r,"\pi_U"] \arrow[d,"\pi_V"] & \Ind \mathcal{C}_U \arrow[d,"\Ind F_U"] \\
\Ind \mathcal{C}_V \arrow[r,"\Ind F_V"] & \Ind \mathcal{C}_{U\cap V},
\end{tikzcd}\
\end{center}
wobei $F_U,F_V,F_U',F_V'$ die Lokalisierungsfunktoren bezeichnen und $G_U,G_V,G_U',G_V'$ deren rechtsadjungierten Funktoren. Nach dem Satz von Thomason-Neeman ist die Faser von $\Ind F_U$ (bzw. $\Ind F_V$) äquivalent zu $\Ind (\mathrm{fib}F_U)$ (bzw. $\Ind (\mathrm{fib}F_V)$). Das Faserprodukt $\Ind \mathcal{C}_V \underset{\Ind \mathcal{C}_{U\cap V}}{\times} \Ind \mathcal{C}_U$ ist also nach der ersten Hälfte des Beweises kompakt erzeugt. Deshalb genügt es zu zeigen, dass $C_X$ ein Retrakt von $\Ind \mathcal{C}_V \underset{\Ind \mathcal{C}_{U\cap V}}{\times} \Ind \mathcal{C}_U$ ist.

Man betrachte folgende Paare von adjungierten Funktoren:
\[
\begin{tikzcd}
           \mathcal{C}_U \arrow[r, shift left=1ex, "L_U"{name=G}] & \Ind \mathcal{C}_U \arrow[l, shift left=.5ex, "\colim_U"{name=F}]
            \arrow[phantom, from=F, to=G, , "\scriptscriptstyle\boldsymbol{\bot}"],
        \end{tikzcd}\begin{tikzcd}
             \mathcal{C}_V \arrow[r, shift left=1ex, "L_V"{name=G}] & \Ind \mathcal{C}_V \arrow[l, shift left=.5ex, "\colim_V"{name=F}]
            \arrow[phantom, from=F, to=G, , "\scriptscriptstyle\boldsymbol{\bot}"],
        \end{tikzcd}\begin{tikzcd}
            \mathcal{C}_{U\cap V} \arrow[r, shift left=1ex, "L_{U\cap V}"{name=G}] & \Ind \mathcal{C}_{U\cap V} \arrow[l, shift left=.5ex, "\colim_{U\cap V}"{name=F}]
            \arrow[phantom, from=F, to=G, , "\scriptscriptstyle\boldsymbol{\bot}"],
        \end{tikzcd} \]
Wir behaupten, dass folgende Diagramme kommutativ sind:
\begin{center}
\begin{tikzcd}[row sep=large, column sep=large]
\Ind \mathcal{C}_V \underset{\Ind \mathcal{C}_{U\cap V}}{\times} \Ind \mathcal{C}_U \arrow[r,"\pi_U"] \arrow[d,"\pi_V"] & \Ind \mathcal{C}_U \arrow[d,"\Ind F_U"] \arrow[dr, "\colim_U"] \\

\Ind \mathcal{C}_V \arrow[r,"\Ind F_V"] \arrow[dr,"\colim_{V}"] & \Ind \mathcal{C}_{U\cap V} \arrow[dr,"\colim_{U\cap V}"] & \mathcal{C}_{U} \arrow[d,"F_{U}"] \\

& \mathcal{C}_{V}\arrow[r,"F_{V}"] & \mathcal{C}_{U\cap V}
\end{tikzcd}\
\begin{tikzcd}[row sep=large, column sep=large]
\mathcal{C}_X \arrow[r,"F_U'"] \arrow[d,"F_V'"] & \mathcal{C}_U \arrow[d,"F_U"]\arrow[dr, "L_U"] \\
\mathcal{C}_V \arrow[r,"F_V"] \arrow[dr,"L_V"]& \mathcal{C}_{U\cap V} \arrow[dr,"L_{U\cap V}"] & \Ind \mathcal{C}_{U}\arrow[d, "\Ind F_U"]\\
& \Ind \mathcal{C}_{V}\arrow[r,"\Ind F_{V}"] & \Ind \mathcal{C}_{U\cap V}
\end{tikzcd}\
\end{center}
Wir betrachten zunächst die Diagramme
\begin{center}
\begin{tikzcd}
\mathcal{C}_V \arrow[r,"F_V"] \arrow[d,"L_V"]& \mathcal{C}_{U\cap V} \arrow[d,"L_{U\cap V}"]\\
\Ind \mathcal{C}_{V}\arrow[r,"\Ind F_{V}", swap] & \Ind \mathcal{C}_{U\cap V}
\end{tikzcd}
\begin{tikzcd}
\mathcal{C}_U \arrow[d,"F_U"] \arrow[r,"L_U"] & \Ind \mathcal{C}_{U} \arrow[d,"\Ind F_U"] \\
\mathcal{C}_{U\cap V} \arrow[r,"L_{U\cap V}", swap] & \Ind \mathcal{C}_{U\cap V}
\end{tikzcd}
\end{center} 
Da alle Funktoren in den beiden Diagrammen linksadjungiert sind, genügt es nach Lemma~\ref{ind_adj} zu zeigen, dass die Diagramme
\begin{center}
\begin{tikzcd}
\mathcal{C}_V & \mathcal{C}_{U\cap V} \arrow[l,"G_V", swap] \\
\Ind \mathcal{C}_{V}  \arrow[u,"\colim_V", swap] &  \Ind \mathcal{C}_{U\cap V} \arrow[l,"\Ind G_{V}"] \arrow[u,"\colim_{U\cap V}", swap]
\end{tikzcd}
\begin{tikzcd}
\mathcal{C}_U & \Ind \mathcal{C}_{U} \arrow[l,"\colim_U", swap] \\
\mathcal{C}_{U\cap V}  \arrow[u,"G_U", swap] &  \Ind \mathcal{C}_{U\cap V} \arrow[l,"\colim_{U\cap V}"] \arrow[u,"\Ind G_{U}", swap]
\end{tikzcd}
\end{center}
kommutativ sind. Es bezeichne $y_U$ (bzw. $y_V$ bzw. $y_{U\cap V}$) die Yoneda-Einbettung $\mathcal{C}_U\rightarrow \Ind \mathcal{C}_U$ (bzw. $\mathcal{C}_V\rightarrow \Ind \mathcal{C}_V$ bzw. $\mathcal{C}_{U\cap V}\rightarrow \Ind \mathcal{C}_{U\cap V}$). Man prüft direkt nach, dass 
\[\colim_V \circ \Ind G_V \circ y_{U\cap V}\cong G_V \circ \colim_{U\cap V} \circ y_{U\cap V}\ \mathrm{und} \  \colim_U \circ \Ind G_U \circ y_{U\cap V}\cong G_U \circ \colim_{U\cap V} \circ y_{U\cap V} \]
gilt. Die gewünschte Kommutativität ergibt sich nun aus der universellen Eigenschaft von $\Ind(-)$. Die Kommutativität der Diagramme

\begin{center}
\begin{tikzcd}
\Ind \mathcal{C}_V \arrow[r,"\Ind F_V"] \arrow[d,"\colim_V"]& \Ind \mathcal{C}_{U\cap V} \arrow[d,"\colim_{U\cap V}"]\\
\mathcal{C}_{V}\arrow[r,"F_{V}"] & \mathcal{C}_{U\cap V}
\end{tikzcd}
\begin{tikzcd}
\Ind \mathcal{C}_U \arrow[d,"\Ind F_U"] \arrow[r,"\colim_U"] & \mathcal{C}_{U} \arrow[d,"F_U"] \\
\Ind \mathcal{C}_{U\cap V} \arrow[r,"\colim_{U\cap V}"] & \Ind \mathcal{C}_{U\cap V}
\end{tikzcd}
\end{center}
wird völlig analog bewiesen.

Die universelle Eigenschaft des Faserprodukts liefert also Funktoren \[\Phi: \mathcal{C}_X \rightarrow \Ind \mathcal{C}_V \underset{\Ind \mathcal{C}_{U\cap V}}{\times} \Ind \mathcal{C}_U\ \mathrm{und} \ \Psi: \Ind \mathcal{C}_V \underset{\Ind \mathcal{C}_{U\cap V}}{\times} \Ind \mathcal{C}_U \rightarrow \mathcal{C}_X.\] Es gilt außerdem $\Psi\circ \Phi \cong \mathrm{Id}_{\mathcal{C}_X}$, was man wiederum unter Benutzung der universellen Eigenschaft des Faserprodukts nachweist. Man überzeugt sich leicht davon, dass die Funktoren $\Psi$ und $\Phi$ filtrierte Kolimites erhalten, weswegen die $\infty$-Kategorie $\mathcal{C}_X$ ein Retrakt des Faserprodukts $\Ind \mathcal{C}_V \underset{\Ind \mathcal{C}_{U\cap V}}{\times} \Ind \mathcal{C}_U$ in der $\infty$-Kategorie $\mathcal{P}r^{\mathrm{St}}$ ist.
\end{proof}

\begin{lemma}\label{ind_adj}
Seien   \[
        \begin{tikzcd}
            \mathcal{C} \arrow[r, shift left=1ex, "F"{name=G}] & \mathcal{D}\arrow[l, shift left=.5ex, "G"{name=F}]
            \arrow[phantom, from=F, to=G, , "\scriptscriptstyle\boldsymbol{\bot}"]
        \end{tikzcd}
    \]
    
adjungierte Funktoren zwischen kleinen $\infty$-Kategorien. Dann sind die Funktoren
 \[
        \begin{tikzcd}
            \Ind \mathcal{C} \arrow[r, shift left=1ex, "\Ind F"{name=G}] & \Ind \mathcal{D}\arrow[l, shift left=.5ex, "\Ind G"{name=F}]
            \arrow[phantom, from=F, to=G, , "\scriptscriptstyle\boldsymbol{\bot}"]
        \end{tikzcd}
    \]
adjungiert. Ist $G$ hierbei volltreu, so ist $\Ind G$ ebenso volltreu.
\end{lemma}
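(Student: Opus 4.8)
Der Plan ist, die gesuchte Adjunktion als Einschränkung der Standardadjunktion zwischen Prägarbenkategorien zu gewinnen. Dazu realisiere ich $\Ind\mathcal{C}$ (bzw.\ $\Ind\mathcal{D}$) wie üblich als die von den darstellbaren Objekten unter filtrierten Kolimites erzeugte volle Unterkategorie von $\mathcal{P}(\mathcal{C}):=\mathrm{Fun}(\mathcal{C}^{\mathrm{op}},\mathcal{S})$ (bzw.\ $\mathcal{P}(\mathcal{D})$); hierin sind die darstellbaren Objekte kompakt, und $\Ind(-)$ hat die universelle Eigenschaft der freien Vervollständigung unter filtrierten Kolimites (vgl.\ \cite[\S 5.3.5]{HTT}). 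Der Funktor $F$ induziert das adjungierte Paar $\mathcal{P}(F)\colon\mathcal{P}(\mathcal{C})\rightleftarrows\mathcal{P}(\mathcal{D})\colon(-)\circ F^{\mathrm{op}}$, wobei $\mathcal{P}(F)$ die Linkskanerweiterung von $y_\mathcal{D}\circ F$ entlang der Yoneda-Einbettung $y_\mathcal{C}$ ist (insbesondere $\mathcal{P}(F)\circ y_\mathcal{C}\simeq y_\mathcal{D}\circ F$) und der Rechtsadjungierte die Restriktion entlang $F^{\mathrm{op}}$ ist.

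Als Erstes würde ich zeigen, dass $\mathcal{P}(F)$ zu $\Ind(F)$ einschränkt: Als Linksadjungierter erhält $\mathcal{P}(F)$ filtrierte Kolimites und bildet Darstellbare auf Darstellbare ab, also bildet er $\Ind\mathcal{C}$ nach $\Ind\mathcal{D}$ ab, und wegen der Eindeutigkeit der filtrierte Kolimites erhaltenden Fortsetzung gilt $\mathcal{P}(F)|_{\Ind\mathcal{C}}\simeq\Ind(F)$. Der eigentliche Punkt ist der nächste Schritt: Auch $(-)\circ F^{\mathrm{op}}$ schränkt zu einem Funktor $\Ind\mathcal{D}\to\Ind\mathcal{C}$ ein, und zwar zu $\Ind(G)$. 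Hier geht die Existenz des Rechtsadjungierten $G$ wesentlich ein, denn für $d\in\mathcal{D}$ ist $y_\mathcal{D}(d)\circ F^{\mathrm{op}}\simeq\mathrm{Map}_\mathcal{D}(F(-),d)\simeq\mathrm{Map}_\mathcal{C}(-,G(d))\simeq y_\mathcal{C}(G(d))$, also wieder darstellbar; da Restriktionsfunktoren alle Kolimites erhalten, bildet $(-)\circ F^{\mathrm{op}}$ somit $\Ind\mathcal{D}$ nach $\Ind\mathcal{C}$ ab und stimmt dort — erneut nach der universellen Eigenschaft von $\Ind(-)$ — mit $\Ind(G)$ überein.

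Als Nächstes verwende ich den formalen Umstand, dass sich eine Adjunktion $L\dashv R$ auf volle Unterkategorien einschränkt, sobald beide Funktoren diese erhalten (die Abbildungsraumformel $\mathrm{Map}(Lx,y)\simeq\mathrm{Map}(x,Ry)$ sowie Ein- und Koeinheit bleiben dabei erhalten); zusammen mit den beiden vorigen Schritten ergibt das $\Ind(F)\dashv\Ind(G)$. Für die Volltreue von $\Ind(G)$ im Fall, dass $G$ volltreu ist, bietet sich an: $\Ind(G)$ erhält filtrierte Kolimites und bildet darstellbare (also kompakte) Objekte auf darstellbare ab, und auf den Darstellbaren stimmt $\Ind(G)$ mit dem volltreuen Funktor $y_\mathcal{C}\circ G$ überein; schreibt man beliebige Objekte von $\Ind\mathcal{D}$ als filtrierte Kolimites von Darstellbaren und berechnet Abbildungsräume aus kompakten Objekten, so überträgt sich die Volltreue auf ganz $\Ind\mathcal{D}$. (Alternativ: $G$ volltreu $\Leftrightarrow$ Koeinheit $FG\Rightarrow\mathrm{id}_\mathcal{D}$ ist eine Äquivalenz; unter $\Ind(F)\circ\Ind(G)\simeq\Ind(FG)$ ist dann auch die Koeinheit von $\Ind(F)\dashv\Ind(G)$ eine Äquivalenz, woraus die Volltreue von $\Ind(G)$ folgt.)

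Die einzige nicht rein formale Stelle ist der zweite Schritt: Ohne einen Rechtsadjungierten von $F$ bildet $(-)\circ F^{\mathrm{op}}$ die Unterkategorie $\Ind\mathcal{D}$ im Allgemeinen nicht nach $\Ind\mathcal{C}$ ab, und der nach dem adjungierten Funktorsatz existierende Rechtsadjungierte von $\Ind(F)$ lässt sich dann nicht als Restriktion beschreiben. Alles Übrige ist Buchführung mit den universellen Eigenschaften von $\Ind(-)$ und $\mathcal{P}(-)$.
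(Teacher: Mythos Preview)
Dein Beweis ist korrekt, geht aber einen deutlich konkreteren Weg als der des Papiers. Das Papier argumentiert in einem Satz: $\Ind\colon \mathrm{Cat}_\infty \to \widehat{\mathrm{Cat}}_\infty$ ist ein Funktor von $(\infty,2)$-Kategorien, und ein solcher erhält Adjunktionen (diese sind über Einheit, Koeinheit und Dreiecksidentitäten rein $2$-kategoriell kodiert) sowie die Volltreue eines Rechtsadjungierten (kodiert als Invertierbarkeit der Koeinheit). Dein Zugang realisiert die gesuchte Adjunktion stattdessen explizit als Einschränkung der Prägarbenadjunktion $\mathcal{P}(F)\dashv(-)\circ F^{\mathrm{op}}$; der entscheidende Schritt ist dabei, dass der Restriktionsfunktor Darstellbare auf Darstellbare schickt, gerade weil $F$ den Rechtsadjungierten $G$ besitzt. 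Dein Argument hat den Vorteil, vollständig in $(\infty,1)$-kategorieller Sprache zu bleiben und nur die Standardeigenschaften von $\Ind$ und $\mathcal{P}$ aus \cite[\S 5.3]{HTT} zu verwenden, ohne auf die im Papier ohne Referenz angeführte $(\infty,2)$-Funktorialität von $\Ind$ zurückzugreifen. Der Ansatz des Papiers ist dafür konzeptionell knapper und überträgt sich unmittelbar auf jede $(\infty,2)$-funktorielle Konstruktion.
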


\begin{proof}
Es bezeichne $\mathrm{Cat}_{\infty}$ (bzw. $\widehat{\mathrm{Cat}}_{\infty}$) die $(\infty,2)$-Kategorie der kleinen $\infty$-Kategorien (bzw. die $(\infty,2)$-Kategorie aller $\infty$-Kategorien). Dann folgt die gewünschte Aussage ganz formal aus der Tatsache, dass der Funktor $\Ind: \mathrm{Cat}_{\infty}\rightarrow \widehat{\mathrm{Cat}}_{\infty}$ ein Funktor von $(\infty,2)$-Kategorien ist.
\end{proof}

\newpage
\section{Nukleare Moduln}

Im modernen Ansatz zur $K$-Theorie der Schemata definiert man diese durch Anwenden einer rein kategoriellen Konstruktion auf die derivierte Kategorie der quasi-kohärenten Garben. Um in unserer Situation analog vorgehen zu können, benötigen wir dementsprechend eine solche Kategorie für analytische adische Räume. Im nächsten Abschnitt führen wir die $\infty$-Kategorie der \textit{nuklearen Garben} ein und untersuchen ihre Eigenschaften. Das Ziel des vorliegenden Abschnitts ist es, diese Diskussion vorzubereiten. Konkret behandeln wir hier den affinoiden Fall, indem wir kurz an die Definition der $\infty$-Kategorie der \textit{nuklearen Moduln} über einem Huber-Ring erinnern und ihre Eigenschaften analysieren. Unser Hauptresultat ist der Beweis der Tatsache, dass diese $\infty$-Kategorie dualisierbar ist, siehe Satz~\ref{dualisierbarkeit_moduln}. Im Folgenden setzen wir Vertrautheit mit den Grundlagen der verdichteten Mathematik voraus. Für eine kurze Zusammenfassung der benötigten Definitionen und Sätze verweisen wir auf \cite[Abschnitt 2]{firstpaper}, mehr dazu findet man in \cite{Condensed} und \cite{Analytic}.

Der Hauptbestandteil unserer Argumentation ist der technische Begriff der \textit{schwachen Proregularität} für Ideale eines Ringes, den man sich als das Analogon der noetherschen Bedingung für nicht noethersche Ringe vorstellen kann. Wir erinnern kurz an die Definition und ihre wichtigsten Eigenschaften, aber wir wollen hier auf weitere Details verzichten und verweisen stattdessen auf \cite{WPR}.

\begin{definition}[{\cite[Abschnitt 2 und Definitionen 3.1 und 3.2]{WPR}}]
Sei $A$ ein (kommutativer) Ring und $\bm{a}=(a_1,\cdots,a_n)$ eine endliche Folge von Elementen von $A$. Für $i\in \mathbb{N}$ bezeichne $\bm{a}^i$ die Folge $(a_1^i,\dots,a_n^i)$.
\begin{enumerate}[label=(\roman*)]
\item Sei $a$ ein Element von $A$. Unter dem zu $a$ assoziierten \textit{Koszul-Komplex} verstehen wir den Komplex von $A$-Moduln
\[K(A;a)=(\dots\rightarrow 0 \rightarrow A\xrightarrow[]{\cdot a} A\rightarrow 0 \rightarrow \dots),\]
welcher in den homologischen Graden $0$ und $1$ konzentriert ist. Für jedes Paar von natürlichen Zahlen $i,j\in\mathbb{N}$ mit $j\geq i$ bezeichnen wir mit $\mu_{ji}$ die Abbildung $K(A;a^j)\rightarrow K(A;a^i)$ von Komplexen über $A$, die durch den Identitätsmorphismus im Grad $0$ und die Multiplikation mit $a^{j-i}$ im Grad $1$ gegeben ist.
\item Unter dem zu $\bm{a}$ assoziierten \textit{Koszul-Komplex} von $A$-Moduln verstehen wir das Tensor-Produkt $K(A;\bm{a})=K(A;a_1)\underset{A}{\otimes}\cdots \underset{A}{\otimes} K(A;a_n)$. Die oben definierten Morphismen $\mu_{ji}$ induzieren in offensichtlicher Weise Morphismen $K(A;\bm{a}^j)\rightarrow K(A;\bm{a}^i)$, die wir ebenfalls mit $\mu_{ji}$ bezeichnen.
\item Die Folge $\bm{a}$ heißt \textit{schwach proregulär}, falls für jedes $q>0$ das inverse System $\{H_q(K(A;\bm{a}^i)) \}_{i\in \mathbb{N}}$ von $A$-Moduln pro-trivial ist, d. h., falls für jedes $i\geq 0$ es ein $j\geq i$ gibt, sodass die Abbildung $H_q(\mu_{ji}):H_q(K(A;\bm{a}^j))\rightarrow H_q(K(A;\bm{a}^i))$ trivial ist.
\item Ein Ideal $I\subset A$ heißt \textit{schwach proregulär}, falls es eine endliche schwach proreguläre Folge gibt, dessen Elemente $I$ erzeugen.
\end{enumerate}
\end{definition}

\begin{satz}[{\cite[Theorem 3.3]{WPR}}]
Sei $A$ ein noetherscher Ring. Dann ist jedes Ideal von $A$ schwach proregulär.
\end{satz}

Die schwache Proregularität eines Ideals ist tatsächlich unabhängig von der Wahl der erzeugenden Folge, wie das folgende Lemma zeigt.

\begin{lemma}[{\cite[Korollar 3.5]{WPR}}]
Sei $A$ ein Ring und $I\subset A$ ein Ideal. Ist $I$ schwach proregulär, so ist jede endliche Folge, die das Ideal $I$ erzeugt, schwach proregulär.
\end{lemma}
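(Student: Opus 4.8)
The plan is to reduce the statement to the case in which one generating sequence of $I$ is obtained from the other by inserting or deleting a single redundant element, and to analyse that case via the long exact Koszul homology sequence produced by appending one element. First, since $K(A;\bm{b})$ and all transition maps $\mu_{ji}$ are tensor products of the one--element complexes $K(A;b_k)$ and their transition maps, weak proregularity is unchanged under permuting the entries of a sequence. Second, any two finite sequences $\bm{a}$ and $\bm{b}$ with $(\bm{a})=(\bm{b})=I$ are connected by finitely many moves of the two types \textbf{(I)} ``pass from $\bm{a}$ to $(\bm{a},c)$ with $c\in(\bm{a})$'' and \textbf{(II)} ``pass from $(\bm{a},c)$ to $\bm{a}$ with $c\in(\bm{a})$'': one first adjoins $b_1,\dots,b_m$ one at a time (each new $b_\ell\in I$ lies in the ideal generated by the current sequence), then deletes $a_1,\dots,a_n$ one at a time, reordering so that the entry to be removed is last (each $a_k\in(\bm{b})$ lies in the ideal generated by the surviving entries). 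Hence it suffices to prove that for $c\in(\bm{a})$ the sequence $\bm{a}$ is weakly proregular if and only if $(\bm{a},c)$ is.

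Write $C_i:=K(A;\bm{a}^i)$. From the identification $K(A;(\bm{a},c)^i)\cong C_i\otimes_A K(A;c^i)$ and the triangle $A\xrightarrow{c^i}A\to K(A;c^i)\to A[1]$ in $\mathcal D(A)$, tensoring with the complex of free modules $C_i$ and taking homology yields, for every $q$ and every $i$, a short exact sequence
\[0\longrightarrow H_q(C_i)/c^iH_q(C_i)\longrightarrow H_q\bigl(K(A;(\bm{a},c)^i)\bigr)\longrightarrow H_{q-1}(C_i)[c^i]\longrightarrow 0,\]
where $M[c^i]:=\ker(c^i\colon M\to M)$, and this sequence is compatible with the transition maps: on the left subobject the transition induced by $\mu_{ji}$ is that of $H_q(\mu_{ji})$, while on the right quotient it is $c^{\,j-i}H_{q-1}(\mu_{ji})$ (the extra factor $c^{\,j-i}$ coming from the ``source'' copy of $A$ in the triangle). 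I shall use the standard facts that a sub- or quotient-system of a pro-trivial inverse system is pro-trivial and that an extension of pro-trivial systems is pro-trivial, as well as the elementary observation that $H_\ast(C_i)$ is annihilated by $(a_1^i,\dots,a_n^i)$, which contains $I^{\,n(i-1)+1}$; in particular $c^{\,n(i-1)+1}H_\ast(C_i)=0$ whenever $c\in I$. Move \textbf{(I)} follows at once: if $\{H_q(C_i)\}_i$ is pro-trivial for all $q>0$, then the left term of the sequence is pro-trivial as a quotient system, and the right term $\{H_{q-1}(C_i)[c^i]\}_i$ is pro-trivial too---for $q\ge2$ its transitions factor through the $H_{q-1}(\mu_{ji})$, while for $q=1$ its entries are $(A/(\bm{a}^i))[c^i]$ with transition $c^{\,j-i}\cdot(\text{projection})$, which vanishes as soon as $c^{\,j-i}\in(\bm{a}^i)$, e.g.\ for $j-i\ge n(i-1)+1$---so the middle term is pro-trivial for $q>0$ and $(\bm{a},c)$ is weakly proregular.

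For move \textbf{(II)}, assume $(\bm{a},c)$ is weakly proregular. Then $\{H_q(C_i)/c^iH_q(C_i)\}_i$, a subsystem of the pro-trivial $\{H_q(K(A;(\bm{a},c)^i))\}_i$, is pro-trivial for all $q>0$; equivalently, for each $i$ there is $j\ge i$ with $H_q(\mu_{ji})\bigl(H_q(C_j)\bigr)\subseteq c^iH_q(C_i)$. Fix $q>0$ and an index $i_0$, and set $N:=n(i_0-1)+1$, so $c^NH_q(C_{i_0})=0$. Choose $i_0<i_1<i_2<\cdots$ recursively with $H_q(\mu_{i_{k+1}i_k})\bigl(H_q(C_{i_{k+1}})\bigr)\subseteq c^{\,i_k}H_q(C_{i_k})$; since the $\mu_{ji}$ are $A$-linear, composing and shrinking step by step gives
\[H_q(\mu_{i_r i_0})\bigl(H_q(C_{i_r})\bigr)\ \subseteq\ c^{\,i_0+i_1+\dots+i_{r-1}}\,H_q(C_{i_0}),\]
and the right-hand side is zero once $i_0+\dots+i_{r-1}\ge N$, hence for $r\ge N$. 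Thus $H_q(\mu_{i_r i_0})=0$, so $\{H_q(C_i)\}_i$ is pro-trivial for all $q>0$ and $\bm{a}$ is weakly proregular.

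The one point requiring care, and the only real subtlety, is the bookkeeping of the transition maps in the displayed short exact sequence---in particular the twist by $c^{\,j-i}$ on the torsion quotient---together with the fact that the annihilating exponent $N$ grows with $i_0$; the latter is exactly why move \textbf{(II)} cannot be settled by iterating ``an extension of pro-trivial systems is pro-trivial'' and genuinely needs the telescoping argument. (Alternatively, one may invoke from \cite{WPR} the characterization of weak proregularity by injective modules, to the effect that $\bm{a}$ is weakly proregular if and only if the \v{C}ech complex $\check C^\bullet(\bm{a})$ tensored with any injective $A$-module is exact in positive degrees; as $\check C^\bullet(\bm{a})$ is a complex of flat $A$-modules whose class in $\mathcal D(A)$ equals $\fib\bigl(A\to\mathrm{R}\Gamma(\Spec A\smallsetminus V(I),\mathcal O)\bigr)$ and hence depends only on $I$, the lemma is then immediate.)
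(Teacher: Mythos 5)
Your proof is correct, and there is nothing in the paper to measure it against: the lemma is only quoted from \cite[Korollar 3.5]{WPR} and never proved internally. Your reduction to the two elementary moves (adjoining, respectively deleting, a single redundant generator $c\in(\bm{a})$, together with permutation invariance of Koszul complexes and of the transition maps $\mu_{ji}$) is sound, and both directions are handled correctly: the short exact sequence $0\to H_q(C_i)/c^iH_q(C_i)\to H_q(K(A;(\bm{a},c)^i))\to H_{q-1}(C_i)[c^i]\to 0$ is the standard one coming from the triangle for $K(A;c^i)$, the twist $c^{\,j-i}H_{q-1}(\mu_{ji})$ on the torsion quotient is exactly what the definition of $\mu_{ji}$ in degree $1$ produces, the case $q=1$ is correctly killed using $c\in I$ and $I^{\,n(i-1)+1}\subseteq(\bm{a}^i)$, and in move (II) you rightly observe that one cannot just quote closure of pro-trivial systems under extensions and instead run the telescoping estimate $H_q(\mu_{i_ri_0})(H_q(C_{i_r}))\subseteq c^{\,i_0+\cdots+i_{r-1}}H_q(C_{i_0})$, which vanishes once the exponent reaches $N=n(i_0-1)+1$; the only degenerate index $i_0=0$ is vacuous since $K(A;\bm{a}^0)$ is contractible. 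Compared with the route usually taken in the literature the paper cites — deducing the statement from the fact that weak proregularity can be tested on the telescope/\v{C}ech complex against injective modules, which is essentially your parenthetical alternative — your argument is more elementary: it stays entirely at the level of Koszul homology and pro-systems and needs neither local cohomology nor the K-flatness of the \v{C}ech complex, at the price of the explicit bookkeeping of the twisted transition maps, which you carry out correctly.
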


Für unsere Analyse der $\infty$-Kategorie der nuklearen Moduln benötigen wir die folgende Version des Begriffes der schwachen Proregularität für Huber-Ringe.

\begin{definition}
Ein vollständiger Huber-Ring $A$ heißt \textit{schwach proregulär}, wenn er ein Definitionspaar $(A_0,I)$ mit $I$ schwach proregulär in $A_0$ besitzt.
\end{definition}

Genauso wie für Ideale ist die Proregularität eines Huber-Ringes von der Wahl des Definitionspaars unabhängig, wie das folgende Lemma zeigt.

\begin{lemma}\label{nukleare_Huber-Ringe}
Sei $A$ ein vollständiger schwach pro-regulärer Huber-Ring und $(A_0,I)$ ein beliebiges Definitionspaar von $A$. Dann ist das Ideal $I$ schwach proregulär in $A_0$.
\end{lemma}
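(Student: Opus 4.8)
The plan is to reduce the statement to a comparison between two rings of definition, one contained in the other. I will use two elementary structural facts about Huber rings. \emph{First}, the rings of definition of $A$ are directed upward under inclusion: given rings of definition $B_0,B_1$, the subring $B_0[B_1]\subseteq A$ generated by them is again one. It is open since it contains $B_0$, and it is bounded since, fixing an ideal of definition $\mathfrak b_0$ of $B_0$, every element of $B_0[B_1]$ is a finite sum $\sum_\nu x_\nu y_\nu$ with $x_\nu\in B_0$ and $y_\nu\in B_1$ (using commutativity), and $\mathfrak b_0^m\cdot\sum_\nu x_\nu y_\nu\subseteq\sum_\nu\mathfrak b_0^m y_\nu\subseteq\mathfrak b_0^k$ once $\mathfrak b_0^m B_1\subseteq\mathfrak b_0^k$, which holds for $m\gg k$ because $B_1$ is bounded and $\mathfrak b_0^k$ is an open subgroup. \emph{Second}, if $B\subseteq B'$ are rings of definition and $\mathfrak b\subseteq B$ is an ideal of definition, then $\mathfrak bB'$ is an ideal of definition of $B'$: since $B$ is an open ring of definition, $\{\mathfrak b^n\}_n$ is a neighbourhood basis of $0$ in $A$; one has $\mathfrak b^n\subseteq\mathfrak b^nB'$ for all $n$, while $\mathfrak b^nB'\subseteq\mathfrak b^k$ for $n\gg k$ by boundedness of $B'$; hence $\{\mathfrak b^nB'\}_n=\{(\mathfrak bB')^n\}_n$ is again a neighbourhood basis of $0$ in $B'$, and in particular the towers $\{\mathfrak b^n\}_n$ and $\{\mathfrak b^nB'\}_n$ are mutually cofinal.

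The third ingredient is that weak proregularity of a finitely generated ideal depends only on its radical --- if $\sqrt{I'}=\sqrt{I''}$ with $I',I''$ finitely generated, then $I'$ is weakly proregular iff $I''$ is --- which follows from the telescope-complex description of weak proregularity in \cite{WPR}, since two finitely generated ideals with equal radical are mutually cofinal and hence share the same derived torsion functor. Granting the key claim stated below, the lemma now follows. Given an arbitrary pair of definition $(A_0,I)$ and a pair $(A_1,J)$ with $J$ weakly proregular in $A_1$, choose by the first fact a ring of definition $A_2$ of $A$ containing both $A_0$ and $A_1$. By the key claim applied to $A_1\subseteq A_2$, the ideal $JA_2$ is weakly proregular in $A_2$. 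By the second fact, $IA_2$ and $JA_2$ are both ideals of definition of $A_2$; they therefore define the same topology on $A_2$ and so have the same radical, whence by the third ingredient $IA_2$ is weakly proregular in $A_2$. Applying the key claim in the opposite direction to $A_0\subseteq A_2$ then gives that $I$ is weakly proregular in $A_0$.

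It remains to establish the \textbf{key claim}: if $B\subseteq B'$ are rings of definition of $A$ and $\bm{a}$ is a finite sequence generating an ideal of definition of $B$, then $\bm{a}$ is weakly proregular in $B$ iff it is weakly proregular in $B'$. This is the one substantial point. From $K(B';\bm{a}^i)\cong K(B;\bm{a}^i)\otimes_BB'$ one gets $H_q(K(B';\bm{a}^i))\cong H_q\bigl(K(B;\bm{a}^i)\otimes_BB'\bigr)$, so the claim would be immediate if $B\to B'$ were faithfully flat: applying $-\otimes_BB'$ to the transition maps $\mu_{ji}$ preserves pro-triviality of Koszul homology, and reflects it in the faithfully flat case. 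However, an inclusion of rings of definition need not be flat. To bypass this I would pass to the characterisation of weak proregularity through the extended alternating Koszul complex $\check{C}^\bullet(R;\bm{a})=[\,R\to\bigoplus_i R_{a_i}\to\bigoplus_{i<j}R_{a_ia_j}\to\cdots\,]$ (a bounded complex of flat $R$-modules): $\bm{a}$ is weakly proregular in $R$ precisely when $\check{C}^\bullet(R;\bm{a})$ represents the derived $(\bm{a})$-torsion functor on $\mathcal{D}(R)$. This complex base-changes along any ring map, $\check{C}^\bullet(B';\bm{a})\cong\check{C}^\bullet(B;\bm{a})\otimes_BB'$; combined with completeness of $B$ and $B'$ and the mutual cofinality of $\{\mathfrak b^n\}$ and $\{\mathfrak b^nB'\}$ from the second fact, this should let one match the relevant derived torsion computations on $B$ and on $B'$ and transfer the defining vanishing condition across $B\hookrightarrow B'$. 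Making this transfer precise in the absence of flatness is where I expect the real work to lie; the rest of the argument is bookkeeping with neighbourhood bases and radicals.
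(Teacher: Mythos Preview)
Your reduction strategy --- passing to a common larger ring of definition and invoking that weak proregularity depends only on the radical --- is exactly the paper's. The point where you diverge, and where you correctly locate the difficulty, is your \emph{key claim}: that for rings of definition $B\subseteq B'$ and a finite sequence $\bm a$ generating an ideal of definition of $B$, weak proregularity of $\bm a$ in $B$ and in $B'$ are equivalent. You reach for the derived torsion characterisation and then concede you do not see how to finish without flatness. That is the gap.

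The paper's argument for this step is direct and avoids any flatness or derived-functor considerations. The observation you are missing is that $K(B;\bm a^i)$ is literally a \emph{subcomplex} of $K(B';\bm a^i)$ (termwise $B^{\binom{n}{q}}\subseteq (B')^{\binom{n}{q}}$, same differentials), and that there exists $k\ge 0$ with $I^kB'\subseteq B$, where $I=(\bm a)B$. This cofinality lets you shuttle pro-triviality across the inclusion by composing transition maps. Concretely: for $q\ge 1$ the map $\mu_{j,j-k}$ multiplies every coordinate by an element of $I^k$, so it sends $K(B';\bm a^{\,j})_q$ into $K(B;\bm a^{\,j-k})_q$. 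Hence, given $i$ and choosing $m\ge i$ with $H_q(\mu_{mi})=0$ on the $B$-side, the composite
\[
H_q\bigl(K(B';\bm a^{m+k})\bigr)\xrightarrow{\ \mu_{m+k,m}\ }H_q\bigl(K(B;\bm a^{m})\bigr)\xrightarrow{\ \mu_{m,i}\ }H_q\bigl(K(B;\bm a^{i})\bigr)\hookrightarrow H_q\bigl(K(B';\bm a^{i})\bigr)
\]
vanishes, giving pro-triviality on the $B'$-side. The converse direction is symmetric: a boundary witness $y'\in K(B';\bm a^{i+k})_{q+1}$ is pushed by $\mu_{i+k,i}$ into $K(B;\bm a^{i})_{q+1}$, so it becomes a boundary witness over $B$. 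No Čech complexes, no derived torsion, no flatness --- just the inclusion $I^kB'\subseteq B$ and bookkeeping with indices.
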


\begin{proof}
Da das Produkt zweier Definitionsringe ebenfalls ein Definitionsring ist, genügt es folgende zwei Aussagen zu beweisen.
\begin{enumerate}[label=(\roman*)]
\item Angenommen, das Ideal $I\subset A_0$ ist schwach proregulär. Dann ist jedes Definitionsideal $J\subset A_0$ ebenso schwach proregulär.
\item Sei $A_0'$ ein Definitionsring von $A$ mit $A_0\subset A_0'$. Dann ist das Ideal $I$ genau dann schwach proregulär, wenn das Ideal $I\cdot A_0'$ schwach proregulär ist.
\end{enumerate}

Die erste Aussage ergibt sich als direkte Folgerung aus \cite[Theorem 3.4]{WPR}, da das Radikal jedes Definitionsideals von $A_0$ gleich dem Durchschnitt $A_0\cap A^{\circ \circ}$ ist. Wir beweisen nun die zweite Aussage. Es bezeichne $\bm{a}$ eine endliche Folge von Elementen von $A_0$, die das Ideal $I$ erzeugt. Die Koszul-Komplexe $K(A_0;\bm{a}^i)$ sind in offensichtlicher Weise Unterkomplexe der Koszul-Komplexe $K(A_0';\bm{a}^i)$. Man prüft nun direkt nach, dass für $q>0$ das inverse System $\{H_q(K(A_0;\bm{a}^i)) \}_{i\in \mathbb{N}}$ genau dann pro-trivial ist, wenn das System $\{H_q(K(A_0';\bm{a}^i)) \}_{i\in \mathbb{N}}$ pro-trivial ist, denn es gibt ein $k\geq 0$, sodass $I^k\cdot A_0'\subset A_0$ gilt.
\end{proof}

Die Hauptanwendung des Begriffes der schwachen Proregularität besteht darin, dass man mit seiner Hilfe die Beziehung zwischen verschiedenen Vollständigkeitsbegriffen für Komplexe von $A$-Moduln bestimmen kann. Obwohl wir hier auf eine vollständige Diskussion der schwachen Proregularität verzichten, kann man diesen Abschnitt trotzdem ohne Beeinträchtigung des Verständnisses lesen: Im Weiteren brauchen wir nur eine ganz konkrete Aussage, für welche wir an der entsprechenden Stelle eine Referenz geben. 

Im Folgenden sei $(A,A^+)$ ein festes vollständiges schwach proreguläres Huber-Paar und $(A_0,I)$ dessen festes Definitionspaar. Sei $(f_1,\dots,f_n)$ ein endliches System von Elementen von $A_0$, welches das Ideal $I$ erzeugt. Es bezeichne $(R,\mathfrak{m})$ den Ring $\mathbb{Z}[[x_1,\dots,x_n]]$ zusammen mit dem Ideal $(x_1,\dots,x_n)\subset R$. In diesem Abschnitt versehen wir $R$ mit der $\mathfrak{m}$-adischen Topologie und bezeichnen den zugehörigen analytischen verdichteten Ring mit $R_\blacksquare$, siehe \cite[Theorem 3.28]{firstpaper} und die dem Theorem vorangehende Diskussion. Wir versehen $A_0$, und damit auch $A$, mit der Struktur eines Moduls über $R$, welche durch die Abbildung $R\rightarrow A_0,\ x_i\mapsto f_i$ für $i=1,\dots,n$ gegeben ist. Die Notation $(\mathcal{A},\mathcal{M})$, wenn sie ohne anderslautenden Kommentar auftritt, bezeichnet stets den analytischen verdichteten Ring $(A,A^+)_{\blacksquare}$, siehe wieder \cite[Theorem 3.28]{firstpaper}. Wir verwenden den Unterstrich, um die verdichteten Moduln zu bezeichnen, welche zu üblichen topologischen Moduln assoziiert sind; wenn der betrachtete Modul $M$ jedoch diskret ist, verzichten wir auf den Unterstrich und bezeichnen den zugehörigen verdichteten Modul ebenfalls mit $M$.  

Wir erinnern kurz an die allgemeine Definition der Nuklearität für analytische verdichtete Ringe und ihre äquivalente Beschreibung im Kontext der adischen Geometrie; bezüglich Details konsultiere man \cite[Vorlesung 13]{Analytic} und \cite[Abschnitt 5.3]{firstpaper}.

\begin{definition}[{\cite[Definition 13.10]{Analytic}}]
Sei $(\mathcal{A},\mathcal{M})$ ein beliebiger analytischer animierter verdichteter Ring. Ein Objekt $C\in\mathcal{D}(\mathcal{A},\mathcal{M})$ heißt \textit{nuklear}, wenn für jeden extremal unzusammenhängenden Raum $S$ die natürliche Abbildung $(\mathcal{M}[S]^\vee\underset{(\mathcal{A},\mathcal{M})}{\otimes}C)(\ast)\rightarrow C(S)$ in $\mathcal{D}(\mathrm{Ab})$ ein Isomorphismus ist, wobei $\mathcal{M}[S]^\vee$ das derivierte interne Dual $\mathrm{R}\underline{\mathrm{Hom}}_{(\mathcal{A},\mathcal{M})}(\mathcal{M}[S],\mathcal{A})$ bezeichnet.
\end{definition}

\begin{lemma}[{\cite[Proposition 5.35]{firstpaper}}]
Sei $(A,A^+)$ ein beliebiges vollständiges Huber-Paar. Es bezeichne $(\mathcal{A},\mathcal{M})$ den analytischen verdichteten Ring $(A,A^+)_\blacksquare$. Ein Objekt $C\in \mathcal{D}(\mathcal{A},\mathcal{M})$ ist nuklear genau dann, wenn für jede proendliche Menge $S$ die natürliche Abbildung

\[\underline{C(S,A)}\underset{(\mathcal{A},\mathcal{M})}{\overset{L}{\otimes}} C \xrightarrow[]{} \mathrm{R}\underline{\mathrm{Hom}}_{(\mathcal{A},\mathcal{M})}(\mathcal{M}[S],C) \]
ein Isomorphismus ist, wobei $C(S,A)$ den Modul der stetigen Funktionen auf $S$ mit Werten in $A$ bezeichnet.
\end{lemma}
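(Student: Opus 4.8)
The strategy is to identify the two sides of the comparison map with explicit condensed modules and then to reduce the statement to the structure theory of nuclear modules recalled from \cite{Analytic} and \cite{firstpaper}. First I would pin down the dual $\mathcal{M}[S]^\vee := \mathrm{R}\underline{\mathrm{Hom}}_{(\mathcal{A},\mathcal{M})}(\mathcal{M}[S],\mathcal{A})$ for a profinite set $S$. Since $\mathcal{M}[S]=\mathcal{A}\otimes_{\mathbb{Z}_\blacksquare}\mathbb{Z}_\blacksquare[S]$, for every profinite $T$ one has
\[
\mathcal{M}[S]^\vee(T)\cong\mathrm{RHom}_{(\mathcal{A},\mathcal{M})}(\mathcal{M}[S\times T],\mathcal{A})\cong\mathrm{RHom}_{\mathrm{Cond}(\mathrm{Ab})}(\mathbb{Z}[S\times T],\underline{A})\cong\mathrm{R}\Gamma(S\times T,\underline{A}),
\]
and because $A$ is a complete Huber ring this last complex is $C(S\times T,A)$ concentrated in degree $0$; hence $\mathcal{M}[S]^\vee\cong\underline{C(S,A)}$ as a condensed $A$-module. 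Combining this with the defining adjunction $\mathrm{R}\underline{\mathrm{Hom}}_{(\mathcal{A},\mathcal{M})}(\mathcal{M}[S],C)(\ast)\cong C(S)$, the natural map occurring in the definition of nuclearity is exactly the evaluation at $\ast$ of the canonical comparison
\[
\Phi_S\colon\ \underline{C(S,A)}\underset{(\mathcal{A},\mathcal{M})}{\overset{L}{\otimes}}C\ \longrightarrow\ \mathrm{R}\underline{\mathrm{Hom}}_{(\mathcal{A},\mathcal{M})}(\mathcal{M}[S],C).
\]
Thus $C$ is nuclear if and only if $\Phi_S(\ast)$ is an equivalence for all extremally unzusammenhängenden $S$, and the lemma amounts to saying that this holds if and only if $\Phi_S$ is an equivalence in $\mathcal{D}(\mathcal{A},\mathcal{M})$ for all profinite $S$.

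The implication "$\Leftarrow$" is immediate, since extremally disconnected sets are profinite and an equivalence in $\mathcal{D}(\mathcal{A},\mathcal{M})$ remains one after passing to $\ast$-sections. For "$\Rightarrow$" I would argue by dévissage in the variable $C$. The source of $\Phi_S$ preserves all colimits in $C$, and the target preserves filtered colimits in $C$ because $\mathcal{M}[S]$ is a compact object of the compactly generated $\infty$-category $\mathcal{D}((A,A^+)_\blacksquare)$ (see the introduction and \cite{firstpaper}); hence the class of $C$ for which $\Phi_S$ is an equivalence is closed under filtered colimits. Now I invoke the description of nuclear objects recalled in \cite[Vorlesung 13]{Analytic} and \cite[Abschnitt 5.3]{firstpaper}: every nuclear module is a filtered colimit, along trace-class transition maps, of objects of the form $\mathcal{M}[S']^\vee\cong\underline{C(S',A)}$ with $S'$ profinite. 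So it suffices to treat the case $C=\mathcal{M}[S']^\vee$.

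For $C=\mathcal{M}[S']^\vee$ the target of $\Phi_S$ is computed by adjunction and the symmetric monoidality of the free-module functor $T\mapsto\mathcal{M}[T]$:
\[
\mathrm{R}\underline{\mathrm{Hom}}_{(\mathcal{A},\mathcal{M})}(\mathcal{M}[S],\mathcal{M}[S']^\vee)\cong\mathrm{R}\underline{\mathrm{Hom}}_{(\mathcal{A},\mathcal{M})}(\mathcal{M}[S]\otimes\mathcal{M}[S'],\mathcal{A})\cong\mathcal{M}[S\times S']^\vee\cong\underline{C(S\times S',A)},
\]
while the source is $\underline{C(S,A)}\otimes_{(\mathcal{A},\mathcal{M})}\underline{C(S',A)}$; here one uses the computation — valid for profinite $S,S'$ and a complete Huber ring $A$, recorded in \cite{Analytic} and \cite{firstpaper} — that the natural map $\underline{C(S,A)}\otimes_{(\mathcal{A},\mathcal{M})}\underline{C(S',A)}\to\underline{C(S\times S',A)}$ is an equivalence (equivalently, that the basic nuclear objects $\mathcal{M}[S]^\vee$ are stable under tensor product with the expected value). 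A short diagram chase then identifies $\Phi_S$, through these two descriptions, with the identity of $\underline{C(S\times S',A)}$, which completes the dévissage and the proof.

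I expect the conceptual heart of the argument to be precisely the passage in the previous paragraph, together with the observation that forces it upon us: the naive presentation $\underline{C(S,A)}\cong\colim_i\mathcal{A}^{\oplus S_i}$ (for $S=\lim_i S_i$ with the $S_i$ finite) is correct only when $A$ is discrete, so one cannot reduce the tensor product and the internal Hom in $\Phi_S$ to finite-rank data and pass to a filtered colimit — tensoring with $C$ does not commute with the cofiltered limit $\mathcal{M}[S]\cong\lim_i\mathcal{M}[S_i]$. This is exactly where the structure theory of nuclear modules (closure under colimits, generation along trace-class maps by the $\mathcal{M}[S']^\vee$, and the tensor computation above) must be brought to bear; once that input is in place, the rest is formal manipulation with adjunctions and the compact generation of $\mathcal{D}((A,A^+)_\blacksquare)$.
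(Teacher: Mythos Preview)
The paper does not prove this lemma itself; it is quoted from \cite[Proposition 5.35]{firstpaper} without argument, so there is no proof here to compare against directly.

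As for your attempt: the identification $\mathcal{M}[S]^\vee\cong\underline{C(S,A)}$ and the trivial direction $\Leftarrow$ are fine, but the d\'evissage for $\Rightarrow$ is circular as written. The crux of your argument is the base case $C=\mathcal{M}[S']^\vee$, where you invoke
\[
\underline{C(S,A)}\underset{(\mathcal{A},\mathcal{M})}{\overset{L}{\otimes}}\underline{C(S',A)}\ \xrightarrow{\ \sim\ }\ \underline{C(S\times S',A)}
\]
as a known input. But this isomorphism \emph{is} the assertion that $\Phi_S$ is an equivalence for $C=\mathcal{M}[S']^\vee$: the target side is already $\mathcal{M}[S\times S']^\vee\cong\underline{C(S\times S',A)}$ by your own adjunction computation, so the tensor formula on the source side is precisely what remains to be shown. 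Citing it therefore assumes the special case you were supposed to verify. In the present paper this tensor formula appears only \emph{after} the lemma, as a corollary of it (and only under the weak proregularity hypothesis); if you want to import it from \cite{Analytic} or \cite{firstpaper} in the generality of an arbitrary complete Huber pair, you must check that the proof there does not itself rest on Proposition~5.35. As it stands, your d\'evissage does not reduce the difficulty --- it relocates it to the base case, which you then assume rather than prove.

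A secondary imprecision: the structure theorem you invoke presents a nuclear $C$ as a colimit of $P^\vee$ for \emph{compact} $P$ (this is what the trace functor description gives), not literally of objects $\mathcal{M}[S']^\vee$. One passes to the latter by writing compacts as retracts of finite complexes built from the $\mathcal{M}[S']$, which is harmless for the d\'evissage but should be said explicitly.
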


Wir beginnen nun mit der Analyse der Nuklearitätsbedingung für Moduln über dem analytischen verdichteten Ring $(\mathcal{A},\mathcal{M})$.  

\begin{lemma}
\label{grundnuklearität}
Ist $S$ eine proendliche Menge, so ist der Modul $\underline{C(S,R)}$ der stetigen Funktionen auf $S$ mit Werten in $R$ nuklear über $R_{\blacksquare}$.
\end{lemma}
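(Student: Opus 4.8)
The plan is to verify the nuclearity criterion of the preceding lemma (\cite[Proposition 5.35]{firstpaper}): one must show that for every profinite set $T$ the natural map
\[
\underline{C(T,R)}\otimes^{L}_{R_\blacksquare}\underline{C(S,R)}\longrightarrow \mathrm{R}\underline{\mathrm{Hom}}_{R_\blacksquare}\big(R_\blacksquare[T],\,\underline{C(S,R)}\big)
\]
is an equivalence. First I would record the identification $\underline{C(S,R)}\simeq \mathrm{R}\underline{\mathrm{Hom}}_{R_\blacksquare}(R_\blacksquare[S],R)$ (the internal dual of the free solid module on $S$, concentrated in degree $0$), and likewise for $T$. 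Combining this with the tensor--Hom adjunction and the symmetric monoidality $R_\blacksquare[T]\otimes^{L}_{R_\blacksquare}R_\blacksquare[S]\simeq R_\blacksquare[T\times S]$, the right-hand side becomes $\mathrm{R}\underline{\mathrm{Hom}}_{R_\blacksquare}(R_\blacksquare[T\times S],R)\simeq \underline{C(T\times S,R)}$, and the displayed map turns into the canonical comparison map
\[
\Phi_T\colon \underline{C(T,R)}\otimes^{L}_{R_\blacksquare}\underline{C(S,R)}\longrightarrow \underline{C(T\times S,R)}.
\]
So it suffices to show that $\Phi_T$ is an equivalence for every profinite $T$.

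This is where the hypotheses on $R$ are used. Since $R=\mathbb{Z}[[x_1,\dots,x_n]]$ is regular and noetherian, each $R/\mathfrak{m}^{k}$ --- equivalently each $R/(x_1^{k},\dots,x_n^{k})$, which has the Koszul complex $K(R;x_1^{k},\dots,x_n^{k})$ as a finite free resolution --- is a perfect, hence dualizable, object of $\mathcal{D}(R_\blacksquare)$. Therefore $-\otimes^{L}_{R}R/\mathfrak{m}^{k}$ is exact, commutes with all limits and colimits, and carries $\underline{C(S,R)}=\mathrm{R}\underline{\mathrm{Hom}}_{R_\blacksquare}(R_\blacksquare[S],R)$ to $\mathrm{R}\underline{\mathrm{Hom}}_{R_\blacksquare}(R_\blacksquare[S],R/\mathfrak{m}^{k})\simeq\underline{C(S,R/\mathfrak{m}^{k})}$. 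Moving the factor $R/\mathfrak{m}^{k}$ onto one of the tensor factors and base changing along $R_\blacksquare\to(R/\mathfrak{m}^{k})_\blacksquare$, the map $\Phi_T\otimes^{L}_{R}R/\mathfrak{m}^{k}$ gets identified with the corresponding comparison map over the discrete ring $B:=R/\mathfrak{m}^{k}$, which is finite free as a $\mathbb{Z}$-module. Over such a $B$ the statement is elementary: $\underline{C(S,B)}\simeq\colim_i B^{S_i}$ is a filtered colimit of the finite free --- hence dualizable --- $B_\blacksquare$-modules $B^{S_i}$, the solid tensor product commutes with these colimits, one has $B^{T_i}\otimes^{L}_{B_\blacksquare}B^{S_j}\simeq B^{T_i\times S_j}$, and the finite products $\{T_i\times S_j\}$ are cofinal among the finite quotients of $T\times S$; hence the comparison map over $B$ is an equivalence. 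So $\cofib(\Phi_T)\otimes^{L}_{R}R/\mathfrak{m}^{k}\simeq 0$ for every $k$.

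It remains to conclude that $\cofib(\Phi_T)$ itself vanishes. The target $\underline{C(T\times S,R)}=\mathrm{R}\underline{\mathrm{Hom}}_{R_\blacksquare}(R_\blacksquare[T\times S],R)$ is derived $\mathfrak{m}$-complete, since $R$ is $\mathfrak{m}$-adically complete and $\mathrm{R}\underline{\mathrm{Hom}}(-,R)$ preserves this property; and since $R$ is noetherian, $\mathfrak{m}$ is weakly proregular, so derived $\mathfrak{m}$-completion is computed by the tower $\{-\otimes^{L}_{R}K(R;x_1^{k},\dots,x_n^{k})\}$ of perfect complexes and the derived Nakayama lemma applies: a derived $\mathfrak{m}$-complete object all of whose reductions $-\otimes^{L}_{R}R/\mathfrak{m}^{k}$ vanish is itself zero. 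Hence $\Phi_T$ is an equivalence as soon as its source $\underline{C(T,R)}\otimes^{L}_{R_\blacksquare}\underline{C(S,R)}$ is derived $\mathfrak{m}$-complete, and I expect establishing this to be the main obstacle. Each factor is individually derived $\mathfrak{m}$-complete, being the surjective limit $\varprojlim_k\underline{C(-,R/\mathfrak{m}^{k})}$; but a solid tensor product of derived $\mathfrak{m}$-complete modules need not be derived $\mathfrak{m}$-complete in general, so one has to exploit the ``pro-ind-finite'' shape of the modules $\underline{C(S,R)}$ together with the noetherian hypothesis on $R$ to see that $-\otimes^{L}_{R_\blacksquare}\underline{C(S,R)}$ does commute with the countable limit $\varprojlim_k$ computing the completion. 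Granting this, $\cofib(\Phi_T)$ is derived $\mathfrak{m}$-complete with vanishing mod-$\mathfrak{m}^{k}$ reductions, hence zero, and $\underline{C(S,R)}$ is nuclear over $R_\blacksquare$.
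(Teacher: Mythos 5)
Your overall strategy (reduce modulo $\mathfrak{m}^k$, check the comparison map over the discrete quotients, then conclude by derived Nakayama) is coherent, and the formal steps you do carry out are fine: the identification of the nuclearity criterion with the map $\Phi_T\colon \underline{C(T,R)}\otimes^L_{R_\blacksquare}\underline{C(S,R)}\to \underline{C(T\times S,R)}$, the perfectness of $R/(x_1^k,\dots,x_n^k)$ via the Koszul complex, the elementary verification over the discrete quotients, and the derived completeness of the target. But the step you yourself flag — the derived $\mathfrak{m}$-completeness of the \emph{source} $\underline{C(T,R)}\otimes^L_{R_\blacksquare}\underline{C(S,R)}$, equivalently the commutation of $-\otimes^L_{R_\blacksquare}\underline{C(S,R)}$ with the limit computing the completion — is not a technical loose end but the entire substance of the lemma, and "exploit the pro-ind-finite shape together with the noetherian hypothesis" is not an argument. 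Note that both factors are filtered colimits of Tate algebras, and filtered colimits destroy completeness in general, so there is no soft reason for the source to be complete; if one knew completeness of $\underline{R\langle T\rangle}\otimes^L_{R_\blacksquare}\underline{R\langle T'\rangle}$ one would already essentially have the assertion $\underline{R\langle T\rangle}\otimes^L_{R_\blacksquare}\underline{R\langle T'\rangle}\cong\underline{R\langle T,T'\rangle}$ that the lemma rests on. So as written the proposal has a genuine gap.

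The paper closes exactly this gap by a direct computation that never invokes Nakayama (completeness of the tensor product comes out as a consequence rather than going in as a hypothesis). It first reduces to the single Tate algebra $\underline{R\langle T\rangle}$, using that $C(S,M)$ is free for discrete $M$ and that nuclear modules are closed under colimits; then it writes $\underline{R\langle T\rangle}\cong \colim_{f}\prod_{i\in\mathbb{N}}\underline{\mathfrak{m}}_{f(i)}T^i$ with $\mathfrak{m}_k=(x_1^k,\dots,x_n^k)$, observes via the Koszul complex that each $\prod_i\underline{\mathfrak{m}}_{f(i)}$ is a finite complex of modules of the form $\prod\underline{R}$, and then uses the defining property of the solid tensor product, $\prod_I\underline{R}\otimes^L_{R_\blacksquare}\prod_J\underline{R}\cong\prod_{I\times J}\underline{R}$, together with the regularity of the sequence $(x_1^{f(i)},\dots,x_n^{f(i)})$ on $\mathfrak{m}_{g(j)}$ to get $\underline{\mathfrak{m}}_{f(i)}\otimes^L\underline{\mathfrak{m}}_{g(j)}\cong\underline{\mathfrak{m}}_{f(i),g(j)}$, and finally identifies the resulting double colimit with $\underline{R\langle T,T'\rangle}$. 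If you want to salvage your Nakayama route, you would in any case need this product formula (or an equivalent input) to control the solid tensor product of the countable products appearing in $\underline{C(S,R)}$; with it in hand, the direct computation is already complete and the Nakayama detour becomes unnecessary.
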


\begin{proof}
Wir führen zunächst folgende elementare Rechnung durch:
\[\underline{C(S,R)}\cong \varprojlim C(S,R/\mathfrak{m}^n)\cong \varprojlim \underset{J}{\bigoplus} R/\mathfrak{m}^n\cong \underset{\substack{\tilde{J}\subset J \\ \textnormal{abz\"ahlbar}}}{\colim} \underline{R\langle T \rangle}.\]
Hierbei ist $T$ eine formale Variable und $R\langle T\rangle$ die Tate-Algebra. Bei dieser Rechnung haben wir die Tatsache benutzt, dass der Modul der stetigen Funktionen auf $S$ mit Werten in einem diskreten Modul frei ist, siehe \cite[Theorem 5.4]{Condensed}. Da die $\infty$-Kategorie der nuklearen Moduln unter Kolimites abgeschlossen ist, genügt es zu zeigen, dass der Modul $\underline{R\langle T \rangle}$ nuklear ist. Sei $S'$ eine proendliche Menge. Dann gilt
\[R_\blacksquare[S']^\vee \underset{R_{\blacksquare}}{\overset{L}{\otimes}} \underline{R\langle T \rangle} \cong \underline{C(S',R)} \underset{R_{\blacksquare}}{\overset{L}{\otimes}} \underline{R\langle T \rangle} \cong\underset{\substack{\tilde{J}\subset J \\ \textnormal{abz\"ahlbar}}}{\colim} ( \underline{R\langle T' \rangle} \underset{R_{\blacksquare}}{\overset{L}{\otimes}} \underline{R\langle T \rangle}),\]
\[\underline{C(S',R\langle T \rangle )}\cong \varprojlim C(S,R/\mathfrak{m}^n[T])\cong \varprojlim \underset{J}{\bigoplus} R/\mathfrak{m}^n[T]\cong \underset{\substack{\tilde{J}\subset J \\ \textnormal{abz\"ahlbar}}}{\colim} \underline{R\langle T, T' \rangle},\]
wobei $T'$ ebenfalls eine formale Variable ist. Daher genügt es zu zeigen, dass das feste Tensorprodukt $\underline{R\langle T' \rangle} \underset{R_{\blacksquare}}{\overset{L}{\otimes}} \underline{R\langle T \rangle}$ zur Tate-Algebra $\underline{R\langle T, T' \rangle}$ isomorph ist. 

Für eine natürliche Zahl $k\geq 0$ bezeichne $\mathfrak{m}_{k}$ das von $x_1^{k},\dots, x_n^{k}$ erzeugte Ideal von $R$. Wie man leicht nachprüft, lässt sich die Tate-Algebra $\underline{R\langle T \rangle}$ als Kolimes \[\underset{\substack{f:\mathbb{N}\rightarrow \mathbb{N} \\ f(n)\rightarrow +\infty}}{\colim} \underset{i\in\mathbb{N}}{\prod}\underline{\mathfrak{m}}_{f(i)} T^i\] in der Kategorie der verdichteten Moduln schreiben, wobei $f:\mathbb{N}\rightarrow \mathbb{N}$ die Menge der nicht negativen gegen $+\infty$ konvergenten Folgen durchläuft. Wir behaupten nun, dass jedes $\underset{i\in\mathbb{N}}{\prod}\underline{\mathfrak{m}}_{f(i)}$ kompakt in $\mathcal{D}(R_\blacksquare)$ ist. In der Tat ist $\mathfrak{m}_{f(i)}$ isomorph zum Komplex $\mathrm{fib}\, (R\rightarrow K(R;x_1^{f(i)},\dots, x_n^{f(i)}))$, wobei $K(R;x_1^{f(i)},\dots, x_n^{f(i)})$ den zur Folge $(x_1^{f(i)},\dots, x_n^{f(i)})$ assoziierten Koszul-Komplex bezeichnet. Daraus folgt sofort, dass der verdichtete Modul $\underset{i\in\mathbb{N}}{\prod}\underline{\mathfrak{m}}_{f(i)}$ quasi-isomorph zu einem endlichen Komplex ist, dessen Terme alle der Form ${\prod} \underline{R}$ sind. Da für jedes Paar von Mengen $I,I'$ das Tensorprodukt $\underset{I}{\prod}\underline{R}  \underset{R_{\blacksquare}}{\overset{L}{\otimes}} \underset{J}{\prod}\underline{R}$ isomorph zum Produkt $\underset{I\times J}{\prod}\underline{R}$ ist, gilt
\[\underset{i\in\mathbb{N}}{\prod}\underline{\mathfrak{m}}_{f(i)}  \underset{R_{\blacksquare}}{\overset{L}{\otimes}} \underset{j\in\mathbb{N}}{\prod}\underline{\mathfrak{m}}_{g(j)}\cong \underset{i,j\in\mathbb{N}}{\prod}(\underline{\mathfrak{m}}_{f(i)}  \underset{R_{\blacksquare}}{\overset{L}{\otimes}} \underline{\mathfrak{m}}_{g(j)}),\] 
wobei $g:\mathbb{N}\rightarrow \mathbb{N}$ eine nicht negative gegen $+\infty$ konvergente Folge ist. Es bezeichne $\mathfrak{m}_{f(i),g(j)}$ das von $\{x_k^{f(i)}x_l^{g(j)}\}_{k,l=1}^n$ erzeugte Ideal von $R$. Man prüft direkt nach, etwa unter Benutzung der Koszul-Auflösung, dass das Tensorprodukt $\underline{\mathfrak{m}}_{f(i)}  \underset{R_{\blacksquare}}{\overset{L}{\otimes}} \underline{\mathfrak{m}}_{g(j)}$ isomorph zu $\underline{\mathfrak{m}}_{f(i),g(j)}$ ist, denn die Folge $(x_1^{f(i)},\dots, x_n^{f(i)})$ ist $\mathfrak{m}_{g(j)}$-regulär. Man verifiziert dann ohne Schwierigkeiten, dass die Tate-Algebra $\underline{R\langle T,T'\rangle}$ sich als Kolimes 
\[\underset{\substack{f,g:\mathbb{N}\rightarrow \mathbb{N} \\ f(n),g(n)\rightarrow +\infty}}{\colim} \underset{i,j\in\mathbb{N}}{\prod}\underline{\mathfrak{m}}_{f(i),g(j)} T^i\cdot (T')^j\] 
schreiben lässt.
\end{proof}

Wir führen nun eine Version des \textit{(idealistischen) derivierten Vervollständigungsfunktors} ein. Für eine Diskussion des klassischen, nicht verdichteten Falls verweisen wir auf \cite[Abschnitt 1]{WPR}.

\begin{definition}
Sei $\mathcal{D}(R)$ die algebraische derivierte $\infty$-Kategorie des Ringes $R$. Der \textit{derivierte Vervollständigungsfunktor} $\mathbb{L}\Lambda_\mathfrak{m}: \mathcal{D}_{\geq 0}(R)\rightarrow \mathcal{D}_{\geq 0}(R_\blacksquare)$ ist gegeben durch 
\[(\cdots \rightarrow\underset{J}{\bigoplus}R\rightarrow \underset{J'}{\bigoplus}R\rightarrow 0 \rightarrow \cdots)\mapsto (\cdots \rightarrow\varprojlim\underset{J}{\bigoplus}R/\mathfrak{m}^n\rightarrow \varprojlim\underset{J'}{\bigoplus}R/\mathfrak{m}^n\rightarrow 0 \rightarrow \cdots).\]
\end{definition}
Man bemerke dabei, dass für jede Indexmenge $J$ der verdichtete Modul $\varprojlim\underset{J}{\bigoplus}R/\mathfrak{m}^n$ isomorph zum Modul $\underset{\substack{\tilde{J}\subset J \\ \textnormal{abz\"ahlbar}}}{\colim} \underline{R\langle T \rangle}$ ist. Daher ergibt sich als Konsequenz des obigen Lemmas der folgende Satz:

\begin{satz}\label{monoidal}
Der Funktor $\mathbb{L}\Lambda_\mathfrak{m}:\mathcal{D}_{\geq 0}(R)\rightarrow \mathcal{D}_{\geq 0}(R_\blacksquare)$ ist monoidal.
\end{satz}

Es bezeichne $R_{\mathrm{disk}}$ (bzw. $(A_0)_{\mathrm{disk}}$) den Ring $R$ (bzw. $A_0$) versehen mit der diskreten Topologie. Wir beweisen zunächst das folgende Lemma. 

\begin{lemma}
\label{vervollständigung}
Ist $S$ eine proendliche Menge, so gilt $\mathbb{L}\Lambda_\mathfrak{m}(C(S,(A_0)_{\mathrm{disk}}))\cong \underline{C(S,A_0)}$. Insbesondere ist der verdichtete Modul $\mathbb{L}\Lambda_\mathfrak{m}((A_0)_{\mathrm{disk}})$ isomorph zum Modul $\underline{A_0}$. 
\end{lemma}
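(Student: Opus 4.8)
The idea is to unwind $\mathbb{L}\Lambda_\mathfrak{m}$ into a derived limit over the $\mathfrak{m}$-adic (equivalently, Koszul) filtration and then feed in weak proregularity. First I would record the reduction: for $M\in\mathcal{D}_{\geq 0}(R)$ represented by a bounded-below complex of free $R$-modules $P_\bullet$, the complex $\mathbb{L}\Lambda_\mathfrak{m}(M)$ of the definition is $\varprojlim_n(P_\bullet\otimes_R R/\mathfrak{m}^n)$ formed termwise; since in each homological degree the transition maps of the system $\{\bigoplus R/\mathfrak{m}^n\}_n$ are surjective, it is Mittag--Leffler, so this underived limit already computes $\mathrm{R}\varprojlim_n(P_\bullet\otimes_R R/\mathfrak{m}^n)=\mathrm{R}\varprojlim_n(M\otimes_R^{L}R/\mathfrak{m}^n)$ in $\mathcal{D}(R_\blacksquare)$. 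By \cite[Theorem 5.4]{Condensed} the module $C(S,R_{\mathrm{disk}})$ is free over $R$, say on an index set $J$; since $C(S,N)\cong C(S,R_{\mathrm{disk}})\otimes_R N\cong\bigoplus_J N$ for any complex $N$ of discrete $R$-modules, $C(S,R_{\mathrm{disk}})$ is $R$-flat, and derived tensor products commute with direct sums, one gets
\[\mathbb{L}\Lambda_\mathfrak{m}\bigl(C(S,(A_0)_{\mathrm{disk}})\bigr)\;\cong\;\mathrm{R}\varprojlim_n\, C\!\left(S,\;A_0\otimes_R^{L}R/\mathfrak{m}^n\right),\]
with $C(S,-)$ on the right the exact, levelwise functor on complexes of discrete modules. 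Finally I would pass to the cofinal filtration $\{\mathfrak{m}_n\}_n$, $\mathfrak{m}_n=(x_1^n,\dots,x_n^n)$: cofinality forces the pro-systems $\{A_0\otimes_R^L R/\mathfrak{m}^n\}_n$ and $\{A_0\otimes_R^L R/\mathfrak{m}_n\}_n$ to agree, and since $(x_1^n,\dots,x_n^n)$ is a regular sequence in $R=\mathbb{Z}[[x_1,\dots,x_n]]$ the Koszul complex $K(R;x_1^n,\dots,x_n^n)$ resolves $R/\mathfrak{m}_n$ and $A_0\otimes_R^L K(R;x_1^n,\dots,x_n^n)\cong K(A_0;f_1^n,\dots,f_n^n)$. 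Hence $\mathbb{L}\Lambda_\mathfrak{m}(C(S,(A_0)_{\mathrm{disk}}))\cong \mathrm{R}\varprojlim_n C(S,K(A_0;f_1^n,\dots,f_n^n))$.

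Now the hypothesis enters. As $(A,A^+)$ is weakly proregular, Lemma~\ref{nukleare_Huber-Ringe} gives that $I\subset A_0$ is weakly proregular, and since $(f_1,\dots,f_n)$ generates $I$ it is a weakly proregular sequence by \cite[Korollar 3.5]{WPR}; thus for each $q>0$ the inverse system $\{H_q(K(A_0;f_1^n,\dots,f_n^n))\}_n$ is pro-trivial. Since $C(S,-)$ is exact and functorial, $H_q(C(S,K(A_0;f_1^n,\dots,f_n^n)))\cong C(S,H_q(K(A_0;f_1^n,\dots,f_n^n)))$ is again a pro-trivial system for $q>0$, while in degree $0$ the system $H_0\cong C(S,A_0/(f_1^n,\dots,f_n^n))$ has surjective transition maps. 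I would then run the two-column spectral sequence $E_2^{p,q}=\mathrm{R}^p\varprojlim_n H_q\Rightarrow H_{q-p}(\mathrm{R}\varprojlim_n C(S,K(A_0;f_1^n,\dots,f_n^n)))$, which has only the columns $p=0,1$ because $\mathrm{R}^p\varprojlim$ over $\mathbb{N}$ vanishes for $p\geq 2$: the rows $q>0$ contribute nothing (a pro-trivial $\mathbb{N}$-system has vanishing $\mathrm{R}^p\varprojlim$ for all $p$), and in the row $q=0$ the Mittag--Leffler condition kills $\mathrm{R}^1\varprojlim$. So the derived limit is concentrated in homological degree $0$ with value $\varprojlim_n C(S,A_0/(f_1^n,\dots,f_n^n))$. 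Because $\{(f_1^n,\dots,f_n^n)\}_n$ is cofinal with the $I$-adic filtration $\{I^n\}_n$ (one has $(f_1^n,\dots,f_n^n)\subset I^n$ and $I^{\,n(n-1)+1}\subset(f_1^n,\dots,f_n^n)$ by a pigeonhole on monomials) and $A_0$ is $I$-adically complete (being a ring of definition of the complete Huber ring $A$), this limit equals $\varprojlim_n C(S,A_0/I^n)=\underline{C(S,A_0)}$, proving the first claim. Taking $S=\ast$ gives the second one; alternatively, the general case follows from this special case together with the monoidality of $\mathbb{L}\Lambda_\mathfrak{m}$ (Satz~\ref{monoidal}), but the computation above is more direct.

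The formal skeleton---cofinality of the two filtrations, exactness of $C(S,-)$, the two-column $\mathrm{R}\varprojlim$ spectral sequence---is routine, and the only genuinely non-formal input is the pro-triviality of positive Koszul homology, which is exactly weak proregularity, so no further work is needed there. I expect the point requiring the most care to be the bookkeeping with derived limits in $\mathcal{D}(R_\blacksquare)$: one must justify that the termwise limit in the definition of $\mathbb{L}\Lambda_\mathfrak{m}$ really computes $\mathrm{R}\varprojlim$ (the Mittag--Leffler remark, valid in condensed abelian groups since countable products are exact there), and that replacing $R/\mathfrak{m}^n$ by the Koszul system is legitimate already at the level of pro-objects before applying $\mathrm{R}\varprojlim$. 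The interplay of cofinality with $\mathrm{R}\varprojlim$ is thus the main place to be attentive; everything else is a direct computation.
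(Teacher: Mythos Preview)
Your argument is correct and takes a genuinely different route from the paper's. The paper chooses an arbitrary free resolution $P_\bullet$ of $(A_0)_{\mathrm{disk}}$ over $R$, rewrites the termwise completion $\varprojlim_n\bigoplus_J P_\bullet/\mathfrak{m}^n P_\bullet$ as a colimit over countable subsets, and then invokes \cite[Theorem 3.11]{WPR} as a black box to obtain exactness of
\[\cdots\to\varprojlim P_1/\mathfrak{m}^nP_1\to\varprojlim P_0/\mathfrak{m}^nP_0\to A_0\langle T\rangle\to 0\]
in ordinary $R$-modules; it then needs a \emph{second} step, an openness argument together with \cite[Lemma 3.1]{firstpaper}, to upgrade this to exactness of condensed modules. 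Your approach sidesteps both ingredients: by identifying $\mathbb{L}\Lambda_\mathfrak{m}$ with $\mathrm{R}\varprojlim(-\otimes_R^L R/\mathfrak{m}^n)$ inside $\mathcal{D}(R_\blacksquare)$ from the outset and then passing to the Koszul tower, you can read off concentration in degree $0$ directly from the $\mathrm{R}\varprojlim$ spectral sequence, feeding in nothing more than the \emph{definition} of weak proregularity. In effect you are reproving the relevant special case of \cite[Theorem 3.11]{WPR} on the fly, and because you stay in the condensed derived category throughout, no separate topological exactness check is required. The paper's version is more modular (it isolates the algebraic input as a citation), while yours is more self-contained and makes the role of weak proregularity more transparent.

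Two minor remarks. First, you use the symbol $n$ both for the number of generators of $\mathfrak{m}$ and for the filtration index (e.g.\ in $\mathfrak{m}_n=(x_1^n,\dots,x_n^n)$ and in $I^{n(n-1)+1}$); this is harmless but should be disambiguated in a final write-up. Second, the place you flag as delicate---that the termwise limit in the definition of $\mathbb{L}\Lambda_\mathfrak{m}$ really computes $\mathrm{R}\varprojlim$ in $\mathcal{D}(R_\blacksquare)$---is indeed the point to justify carefully; your Mittag--Leffler remark (surjective transition maps in each degree, countable products exact in condensed abelian groups) is exactly the right justification.
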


\begin{proof}
Der Modul $C(S,(A_0)_{\mathrm{disk}})$ ist nach \cite[Theorem 5.4]{Condensed} isomorph zu einer direkten Summe $\underset{J}{\bigoplus} (A_0)_{\mathrm{disk}}$.
Sei $P_{\bullet}$ eine freie Auflösung von $(A_0)_{\mathrm{disk}}$ (in der Kategorie $\mathrm{Mod}_{R}$). Wir führen folgende Rechnung durch:
\[\mathbb{L}\Lambda_\mathfrak{m}(C(S,(A_0)_{\mathrm{disk}}))\cong \varprojlim\underset{J}{\bigoplus}P_{\bullet}/\mathfrak{m}^nP_{\bullet}\cong  \underset{\substack{\tilde{J}\subset J \\ \textnormal{abz\"ahlbar}}}{\colim} \varprojlim(P_{\bullet}/\mathfrak{m}^nP_{\bullet} [T]) \overset{\dagger}{\cong} \underset{\substack{\tilde{J}\subset J \\ \textnormal{abz\"ahlbar}}}{\colim} \underline{A_0\langle T \rangle}  \cong \underline{C(S,A_0)}.\]
Wir müssen die Isomorphie $(\dagger)$ nachweisen. Da der Ring $A_0$ klassisch $I$-vollständig ist, ist er auch deriviert $I$-vollständig. Die Sequenz
\[ \cdots \xrightarrow{\phi_2} \varprojlim P_1/\mathfrak{m}^nP_1 \xrightarrow{\phi_1} \varprojlim P_0/\mathfrak{m}^nP_0\xrightarrow{\phi_0} A_0\langle T \rangle\rightarrow 0\]
ist nach \cite[Theorem 3.11]{WPR} exakt in der Kategorie $\mathrm{Mod}_{R}$, denn die Ideale $\mathfrak{m}\subset R$ und $I\subset A_0$ sind schwach proregulär. Es bleibt zu zeigen, dass sie auch in der Kategorie der verdichteten Moduln über $\underline{R}$ exakt ist. Aus der Definition und der Exaktheit der oberen Sequenz folgt unmittelbar, dass der Kern von $\phi_i$ zur $I$-adischen Vervollständigung des Kerns der Abbildung $P_i\rightarrow P_{i-1}$ isomorph ist, weshalb die Abbildungen $\phi_i: \varprojlim P_i/\mathfrak{m}^nP_i \rightarrow \Ima \phi_i$ alle offen sind. Die gewünschte Exaktheit folgt nun aus \cite[Lemma 3.1]{firstpaper}.
\end{proof}

Wir wollen nun zeigen, dass der Modul $\underline{C(S,A)}$ für jede proendliche Menge $S$ nuklear über dem analytischen verdichteten Ring $(\mathcal{A},\mathcal{M})$ ist. Wir beweisen zunächst das folgende Lemma. 

\begin{lemma}
\label{torsion}
Für jeden $\mathfrak{m}^{\infty}$-Torsionsmodul $M$ über $R$ gilt $M \underset{R_{\mathrm{disk}}}{\overset{L}{\otimes}}R_{\blacksquare}\cong M$.
\end{lemma}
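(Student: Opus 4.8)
The plan is to reduce the statement, by a short sequence of formal dévissages, to the classical fact that the derived $\mathfrak m$-adic completion of a finitely generated, $\mathfrak m$-power-torsion module is the module itself, and then to promote that classical statement to the setting of condensed $R_\blacksquare$-modules exactly as in the proof of Lemma~\ref{vervollständigung}.

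First I would record the formal properties of the base change $-\underset{R_{\mathrm{disk}}}{\overset{L}{\otimes}}R_\blacksquare\colon\mathcal D_{\geq 0}(R)\to\mathcal D_{\geq 0}(R_\blacksquare)$: it is a symmetric monoidal left adjoint, hence preserves small colimits and cofibre sequences; it carries $R$ to $R_\blacksquare=\underline R=\varprojlim_n R/\mathfrak m^n$; and on a discrete connective module it agrees with $\mathbb L\Lambda_{\mathfrak m}$ (on a free module $\bigoplus_J R$ both send it to $\varprojlim_n\bigoplus_J R/\mathfrak m^n$, and $\mathbb L\Lambda_{\mathfrak m}$ was defined via free resolutions). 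There is a unit transformation $\eta_N\colon N\to N\underset{R_{\mathrm{disk}}}{\overset{L}{\otimes}}R_\blacksquare$ induced by $R_{\mathrm{disk}}\to R_\blacksquare$, and the claim is that $\eta_M$ is an isomorphism. Now $\eta$ is a natural transformation between two functors that commute with filtered colimits, a filtered colimit of discrete condensed modules is again the discrete module on the colimit, and every $\mathfrak m^\infty$-torsion module is the filtered colimit of its finitely generated submodules, each of which is annihilated by some power of $\mathfrak m$. Hence it suffices to prove that $\eta_M$ is an isomorphism for $M$ a finitely generated $R$-module with $\mathfrak m^kM=0$.

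For such an $M$ I would choose a resolution $P_\bullet\xrightarrow{\ \sim\ }M$ by finitely generated free $R$-modules (of finite length, since $R=\mathbb Z[[x_1,\dots,x_n]]$ is regular, though finiteness of the length is inessential). Because finite coproducts commute with $\varprojlim_n$, one has $P_i\underset{R_{\mathrm{disk}}}{\overset{L}{\otimes}}R_\blacksquare\cong\varprojlim_nP_i/\mathfrak m^nP_i$, and since the base change preserves cofibre sequences, $M\underset{R_{\mathrm{disk}}}{\overset{L}{\otimes}}R_\blacksquare$ is computed by the complex $\varprojlim_n(P_\bullet/\mathfrak m^nP_\bullet)=\mathbb L\Lambda_{\mathfrak m}(M)$. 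It therefore remains to show $\mathbb L\Lambda_{\mathfrak m}(M)\cong M$ in $\mathcal D(R_\blacksquare)$, where on the right $M$ carries the discrete topology (its $\mathfrak m$-adic topology is discrete since $\mathfrak m^kM=0$). Classically this is standard: $R$ is noetherian, so $\mathfrak m$ is weakly proregular, and $M$ is finitely generated and classically, hence derived, $\mathfrak m$-complete, so by \cite[Theorem 3.11]{WPR} the complex $\varprojlim_n(P_\bullet/\mathfrak m^nP_\bullet)$ has homology concentrated in degree $0$, equal to $\widehat M=M$. To upgrade the exactness of $\cdots\to\varprojlim_nP_1/\mathfrak m^nP_1\to\varprojlim_nP_0/\mathfrak m^nP_0\to M\to 0$ from abstract $R$-modules to condensed $\underline R$-modules, I would argue as in the proof of Lemma~\ref{vervollständigung}: by the Artin–Rees lemma (available since $R$ is noetherian) the subspace topology on each cycle and boundary submodule of $P_i=R^{r_i}$ coincides with its $\mathfrak m$-adic topology, so the transition maps onto their images are open, and then \cite[Lemma 3.1]{firstpaper} yields the desired exactness in the category of condensed $\underline R$-modules.

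The main obstacle is this last, condensed, step. It is not a formality: the statement is genuinely false classically for general torsion modules, the classical derived $\mathfrak m$-completion of, say, $\bigoplus_{\mathbb N}R/\mathfrak m^i$ being strictly larger than $\bigoplus_{\mathbb N}R/\mathfrak m^i$. What rescues the condensed assertion is precisely that $-\underset{R_{\mathrm{disk}}}{\overset{L}{\otimes}}R_\blacksquare$ commutes with filtered colimits, together with the fact, supplied by Artin–Rees and \cite[Lemma 3.1]{firstpaper}, that the term-by-term completed resolution $\varprojlim_n(P_\bullet/\mathfrak m^nP_\bullet)$ computes the correct condensed object and not merely the correct abstract one.
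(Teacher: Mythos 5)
Your argument is essentially sound, but it takes a genuinely different route from the paper. The paper's proof of Lemma~\ref{torsion} is a two-line dévissage: since $-\underset{R_{\mathrm{disk}}}{\overset{L}{\otimes}}R_\blacksquare$ commutes with colimits and every $\mathfrak m^\infty$-Torsionsmodul is a colimit of copies of $R/(x_1^i,\dots,x_n^i)$, it suffices to treat these cyclic modules, and for them the explicit Koszul resolution $K(R;x_1^i,\dots,x_n^i)$ does the job at once. You instead reduce (by filtered colimits) to an arbitrary finitely generated $\mathfrak m^k$-torsion module, resolve it by finite free $R$-modules, and then transport the classical exactness to the condensed level via Artin--Rees strictness and \cite[Lemma 3.1]{firstpaper}, exactly as in the proof of Lemma~\ref{vervollständigung}. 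The paper's choice of generators makes the last step nearly invisible (the Koszul complex is an explicit strict resolution), while your version is more uniform and makes explicit the topological-exactness input that the paper's ``folgt direkt aus der Koszul-Auflösung'' leaves implicit; the appeal to \cite[Theorem 3.11]{WPR} is superfluous in your setting, since the terms $\varprojlim_n P_i/\mathfrak m^nP_i$ of a \emph{finite} free resolution over the complete noetherian ring $R$ are just $P_i$ again, so the classical exactness is that of the chosen resolution.

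One side claim in your preamble is false, though fortunately not load-bearing: the base change does \emph{not} agree with $\mathbb L\Lambda_{\mathfrak m}$ on arbitrary discrete connective modules, and in particular $\bigl(\bigoplus_J R\bigr)\underset{R_{\mathrm{disk}}}{\overset{L}{\otimes}}R_\blacksquare$ is \emph{not} $\varprojlim_n\bigoplus_J R/\mathfrak m^n$ for infinite $J$. Coproducts in $\mathcal D(R_\blacksquare)$ are computed in condensed $\underline R$-Moduln and are not completed, so the base change of $\bigoplus_{\mathbb N}R\cong R[T]$ is $\bigoplus_{\mathbb N}\underline R\cong \underline R\otimes_{R}R[T]$ and not the Tate algebra $\underline{R\langle T\rangle}$, whereas $\mathbb L\Lambda_{\mathfrak m}$ by definition produces the completed direct sum. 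This is in fact forced by your own concluding remark: both functors preserve filtered colimits only on one side, and the Lemma itself shows they must disagree on, say, $\bigoplus_{\mathbb N}R/\mathfrak m^i$. Your actual dévissage only uses the identification for finitely generated free modules, where finite sums commute with $\varprojlim_n$ and the claim is correct, so the proof stands once that parenthetical is restricted accordingly.
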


\begin{proof}
Da der Funktor $-\underset{R_{\mathrm{disk}}}{\overset{L}{\otimes}}R_{\blacksquare}$ mit Kolimites vertauscht, genügt es zu zeigen, dass für jedes $i\geq 0$ die kanonische Abbildung
\[R/(x_1^i,\cdots,x_n^i) \underset{R_{\mathrm{disk}}}{\overset{L}{\otimes}}R_{\blacksquare}\rightarrow R/(x_1^i,\cdots,x_n^i)\]
ein Isomorphismus ist. Dies folgt direkt aus der Koszul-Auflüsung.
\end{proof}

\begin{satz}\label{A_ist_nuklear}
Der Ring $\underline{A}$ ist nuklear als verdichteter Modul über $R_{\blacksquare}$.
\end{satz}

\begin{proof}
Man betrachte folgende exakte Sequenz von verdichteten Moduln:
\[0\rightarrow \underline{A_0}\rightarrow \underline{A}\rightarrow A/A_0\rightarrow 0.\]
Wie man leicht nachweist, ist der Modul $A/A_0$ wegen der Stetigkeit der Multiplikation ein diskreter Torsionsmodul über $R$. Hieraus folgt nun unter Benutzung von Lemma~\ref{torsion}, dass der Modul $A/A_0$ nuklear über $R_\blacksquare$ ist. Deshalb dürfen wir annehmen, dass $A=A_0$ gilt, denn die volle $\infty$-Unterkategorie der nuklearen Moduln ist abgeschlossen unter Kolimites. Wir führen nun folgende leichte Rechnung durch:
\[A_{\mathrm{disk}}\underset{R_{\mathrm{disk}}}{\overset{L}{\otimes}} C(S, R_{\mathrm{disk}})\cong A_{\mathrm{disk}}\underset{R_{\mathrm{disk}}}{\overset{L}{\otimes}}(\underset{J}{\bigoplus} R_{\mathrm{disk}})\cong \underset{J}{\bigoplus} A_{\mathrm{disk}}\cong C(S, A_{\mathrm{disk}}).\]
Aus Satz~\ref{monoidal} und Lemma~\ref{vervollständigung} folgt daher
\[\underline{C(S,R)}\underset{R_{\blacksquare}}{\overset{L}{\otimes}}\underline{A}\cong \mathbb{L}\Lambda_\mathfrak{m}(C(S,R_{\mathrm{disk}}) \underset{R_{\mathrm{disk}}}{\overset{L}{\otimes}} A_{\mathrm{disk}})\cong \mathbb{L}\Lambda_\mathfrak{m}(C(S,A_{\mathrm{disk}}))\cong \underline{C(S,A)}.\qedhere\]
\end{proof}

\begin{korollar}
\label{nuklearität}
Für jede proendliche Menge $S$ ist der Modul $\underline{C(S,A)}$ nuklear über dem analytischen verdichteten Ring $(\mathcal{A},\mathcal{M})$.
\end{korollar}

\begin{proof}
Sei $S'$ eine proendliche Menge. Aus Lemma~\ref{grundnuklearität} und Satz~\ref{A_ist_nuklear} folgt
\[\underline{C(S,A)} \underset{(\mathcal{A},\mathcal{M})}{\overset{L}{\otimes}} \underline{C(S',A)} \cong \underline{A} \underset{R_\blacksquare}{\overset{L}{\otimes}} \underline{C(S,R)} \underset{R_\blacksquare}{\overset{L}{\otimes}} \underline{C(S',R)}\cong \underline{A} \underset{R_\blacksquare}{\overset{L}{\otimes}} \underline{\mathrm{Hom}}_{R_\blacksquare} (R_\blacksquare[S], \underline{C(S',R)})\cong \underline{C(S\times S',A)}.\]
\end{proof}

\begin{korollar}\label{tensor_produkt_von_nuklearen}
Seien $S$ und $S'$ proendliche Mengen. Dann gilt
\[\underline{C(S,A)} \underset{(\mathcal{A},\mathcal{M})}{\overset{L}{\otimes}} \underline{C(S',A)} \cong \underline{C(S\times S',A)}.\]
\end{korollar}

Wir kommen nun zum Beweis der Tatsache, dass die $\infty$-Kategorie der nuklearen Moduln über $(A,A^+)_\blacksquare$ dualisierbar ist. Dazu benötigen wir eine allgemein gültige Aussage für beliebige analytische animierte verdichtete Ringe. Sei $(\mathcal{A},\mathcal{M})$ ein solcher Ring. Es bezeichne $\mathcal{C}$ die derivierte $\infty$-Kategorie $\mathcal{D}(\mathcal{A},\mathcal{M})$ der Moduln über $(\mathcal{A},\mathcal{M})$. Man betrachte die Äquivalenz $\mathcal{C}\xrightarrow[]{\sim} \mathrm{Fun}^{\mathrm{lim}}(\mathcal{C}^\mathrm{op},\mathrm{Sp})$, welche von der Yoneda-Einbettung induziert wird. Hierbei bezeichnet $\mathrm{Fun}^{\mathrm{lim}}(\mathcal{C}^\mathrm{op},\mathrm{Sp})$ die volle $\infty$-Unterkategorie der Limites erhaltenden Funktoren. Da $\mathcal{C}$ kompakt erzeugt ist, induziert diese Äquivalenz durch Einschränken auf die volle $\infty$-Unterkategorie $\mathcal{C}^{\omega}$ der kompakten Objekte in $\mathcal{C}$ eine Äquivalenz $\mathcal{C} \xrightarrow{\sim} \mathrm{Fun}^{\mathrm{ex}}((\mathcal{C}^{\omega})^{\mathrm{op}}, \mathrm{Sp})$, wobei $\mathrm{Fun}^{\mathrm{ex}}((\mathcal{C}^{\omega})^{\mathrm{op}}, \mathrm{Sp})$ die volle $\infty$-Unterkategorie der exakten Funktoren ist. Es bezeichne $(-)^{\vee}:\mathcal{C}\rightarrow \mathcal{C}$ das interne Dual $\mathrm{R}\underline{\mathrm{Hom}}_{(\mathcal{A},\mathcal{M})}(-,\mathcal{A})$. Unter diesen Äquivalenzen definiert der Funktor 
\[\mathcal{C} \rightarrow \mathrm{Fun}^{\mathrm{ex}}((\mathcal{C}^{\omega})^{\mathrm{op}}, \mathrm{Sp}),\ M\mapsto M^{\mathrm{spur}}:=((-)^{\vee} \underset{(\mathcal{A},\mathcal{M})}{\overset{L}{\otimes}} M)(\ast)\]
einen Funktor $\mathcal{C}\rightarrow \mathcal{C}$, den wir den \textit{Spurfunktor} nennen. Sei $M$ nun ein Objekt von $\mathcal{C}$. Der Spurfunktor ausgestattet mit einer natürlichen Transformation $(-)^{\mathrm{spur}}\rightarrow \mathrm{id}$ von Endofunktoren von $\mathcal{C}$, welche von der natürlichen Transformation $((-)^{\vee} \underset{(\mathcal{A},\mathcal{M})}{\overset{L}{\otimes}} M)(\ast)\rightarrow \mathrm{Hom}_{\mathcal{C}}(-,M)$ induziert wird. Aus den Definitionen folgt direkt, dass der Morphismus $M^{\mathrm{spur}}\rightarrow M$ für $M \in \mathcal{C}$ genau dann ein Isomorphismus ist, wenn $M$ nuklear ist. Um die Dualisierbarkeit der $\infty$-Kategorie der nuklearen Moduln zu nachzuweisen, benötigen wir die folgende alternative Beschreibung des Spurfunktors.

\begin{satz}
\label{alternative_formel}
Sei $(\mathcal{A},\mathcal{M})$ ein beliebiger analytischer animierter verdichteter Ring. Dann ist der Spurfunktor durch
\[M\in \mathcal{C}\mapsto M^{\mathrm{spur}}=\underset{P,\ \mathcal{A}\rightarrow P\otimes M}{\colim}\, P^{\vee}\]
gegeben, wobei $P$ die kompakten Objekte von $\mathcal{C}$ zusammen mit einem Morphismus $\mathcal{A}\rightarrow P \otimes M$ durchläuft.
\end{satz}

\begin{proof}
Gegeben sei ein Objekt $M \in \mathcal{C}$. Da die $\infty$-Kategorie $\mathcal{C}$ kompakt erzeugt ist, gilt $M^\mathrm{spur}= \underset{Q\rightarrow M^\mathrm{spur}}{\colim}\, Q$, wobei $Q$ die kompakten Objekte von $\mathcal{C}$ zusammen mit einem Morphismus $Q\rightarrow M^\mathrm{spur}$ durchläuft. Wie wir in der obigen Diskussion gesehen haben, ist das Morphismenspektrum $\mathrm{Hom}_{\mathcal{C}}(N, M^{\mathrm{spur}})$ für jedes kompakte Objekt $N\in\mathcal{C}$ isomorph zum Spektrum $(N^{\vee} \underset{(\mathcal{A},\mathcal{M})}{\overset{L}{\otimes}} M)(\ast)$. Daraus folgt, dass $\mathrm{Hom}_\mathcal{C}(Q,M^\mathrm{spur})$ zu $\underset{P\rightarrow Q^\vee}{\colim}\,( P\underset{(\mathcal{A},\mathcal{M})}{\otimes} M)(\ast)$ isomorph ist, wobei $P$ die kompakten Objekte von $\mathcal{C}$ zusammen mit einem Morphismus $P\rightarrow Q^\vee$ durchläuft. Daher gilt
\[M^{\mathrm{spur}}\cong \underset{\substack{Q,\ P \\ 1 \rightarrow P \otimes M \\ P\rightarrow Q^\vee}}{\colim}\ Q\cong \underset{\substack{Q,\ P \\ 1\rightarrow P\otimes M \\ Q\rightarrow P^\vee}}{\colim}\ Q \cong \underset{P,\ 1\rightarrow P\otimes M}{\colim} P^\vee.\qedhere\]
\end{proof}

Wir können jetzt die Dualisierbarkeit der $\infty$-Kategorie der nuklearen Moduln über $(\mathcal{A},\mathcal{M})=(A,A^+)_\blacksquare$ nachweisen.

\begin{satz}\label{dualisierbarkeit_moduln}
Es bezeichne $\mathcal{D}^\mathrm{nuk}(\mathcal{A},\mathcal{M})$ die $\infty$-Kategorie der nuklearen Moduln über $(\mathcal{A},\mathcal{M})$. Der Spurfunktor $\mathcal{D}(\mathcal{A},\mathcal{M})\rightarrow \mathcal{D}(\mathcal{A},\mathcal{M})$ vertauscht mit Kolimites und faktorisiert über die natürliche Einbettung $\mathcal{D}^\mathrm{nuk}(\mathcal{A},\mathcal{M})\hookrightarrow \mathcal{D}(\mathcal{A},\mathcal{M})$. Insbesondere ist $\mathcal{D}^{\mathrm{nuk}}(\mathcal{A},\mathcal{M})$ ein Retrakt einer kompakt erzeugten $\infty$-Kategorie und somit dualisierbar.  
\end{satz}

\begin{proof}
Die Tatsache, dass der Spurfunktor Kolimites erhält, folgt direkt aus der Definition. Gegeben sei ein Objekt $M\in \mathcal{D}(\mathcal{A},\mathcal{M})$. Nach Satz~\ref{alternative_formel} ist das Objekt $M^\mathrm{spur}$ isomorph zu einem Kolimes von Objekten der Form $P^\vee$ mit $P$ kompakt. Ein solches Objekt $P$ ist isomorph zu einem Retrakt eines endlichen Komplexes, dessen Terme alle der Form $\mathcal{M}[S]$ mit $S$ proendlich sind. Aus unseren Argumentationen oben folgt, dass jedes $\mathcal{M}[S]^\vee$ zu einem Kolimes von Tate-Algebren über $A$ isomorph ist. D. h., das Objekt $P^\vee$ liegt in der vollen stabilen $\infty$-Unterkategorie von $\mathcal{D}(\mathcal{A},\mathcal{M})$, welche von $A\langle T \rangle $ erzeugt wird und unter Kolimites abgeschlossenen ist. Daraus folgt außerdem, dass die $\infty$-Kategorie $\mathcal{D}^\mathrm{nuk}(\mathcal{A},\mathcal{M})$ innerhalb der kompakt erzeugten vollen $\infty$-Unterkategorie $\mathcal{D}(\mathcal{A},\mathcal{M})_{\kappa}$ für eine überabzählbare starke Limes-Kardinalzahl $\kappa$ liegt, siehe \cite[Anhang zu Vorlesung II]{Condensed}. Da für jedes $N\in \mathcal{D}^\mathrm{nuk}(\mathcal{A},\mathcal{M})$ die natürliche Abbildung $N^\mathrm{spur}\rightarrow N$ ein Isomorphismus ist, ist $\mathcal{D}^\mathrm{nuk}(\mathcal{A},\mathcal{M})$ also ein Retrakt von $\mathcal{D}(\mathcal{A},\mathcal{M})_{\kappa}$.
\end{proof}

Wir erhalten den folgenden Satz als direkte Konsequenz des Beweises und der Tatsache, dass für jedes Paar von proendlichen Mengen $S,S'$ das Tensorprodukt $\underline{C(S,A)}\underset{(\mathcal{A},\mathcal{M})}{\otimes}\underline{C(S',A)}$ isomorph zu $\underline{C(S\times S',A)}$ ist, und somit von der Wahl des Ringes $A^+$ unabhängig. 

\begin{korollar}\label{unabhängigkeit}
Das folgende Diagramm ist kommutativ:

\begin{center}
\begin{tikzcd}[column sep=-0.5em]
\mathcal{D}((A,A^{\circ})_\blacksquare) \arrow{d}{X\mapsto X^{\mathrm{tr}}} & \subseteq &\mathcal{D}((A,A^{+})_\blacksquare) \arrow{d}{X\mapsto X^{\mathrm{tr}}} & \subseteq & \mathcal{D}((A,\mathbb{Z})_\blacksquare) \arrow{d}{X\mapsto X^{\mathrm{tr}}}  \\
\mathcal{D}^{\mathrm{nuk}}((A,A^{\circ})_\blacksquare) & = & \mathcal{D}^{\mathrm{nuk}}((A,A^{+})_\blacksquare) & = & \mathcal{D}^{\mathrm{nuk}}((A,\mathbb{Z})_\blacksquare)
\end{tikzcd}
\end{center}
Mit anderen Worten sind der Spurfunktor und die volle $\infty$-Unterkategorie $\mathcal{D}^{\mathrm{nuk}}((A,A^{+})_\blacksquare)\subset \mathcal{D}(\underline{A})$ von $A^+$ unabhängig. Außerdem ist die monoidale Struktur auf $\mathcal{D}^{\mathrm{nuk}}((A,A^+)_\blacksquare)$ ebenfalls von $A^+$ unabhängig.
\end{korollar}

Aufgrund dieses Korollars schreiben wir in den folgenden Abschnitten für die $\infty$-Kategorie der nuklearen Moduln $\mathcal{D}^\mathrm{nuk}((A,A^{+})_\blacksquare)$ einfach $\mathrm{Nuk}(A)$. Außerdem ergibt sich aus dem Beweis des obigen Satzes das folgende Korollar.
\begin{korollar} 
Sei $(R,R^+)$ ein diskretes Huber-Paar. Dann ist die $\infty$-Kategorie der nuklearen Moduln über $(R,R^+)_{\blacksquare}$ äquivalent zur algebraischen derivierten $\infty$-Kategorie des Ringes $R$.
\end{korollar}
\newpage
\section{Nukleare Garben}

Im vorhergehenden Abschnitt führten wir die $\infty$-Kategorie der nuklearen Moduln ein und untersuchten ihre Eigenschaften. Unser Hauptresultat, welches im weiteren Verlauf dieser Arbeit eine Schlüsselrolle spielen wird, ist die Tatsache, dass diese Kategorie dualisierbar ist. In diesem Abschnitt erläutern wir die allgemeinere Definition der $\infty$-Kategorie der \textit{nuklearen Garben} auf einem analytischen adischen Raum und zeigen dann mithilfe unserer Ergebnisse über dualisierbare Kategorien, dass sie ebenfalls dualisierbar ist. Wie bereits mehrfach betont, ist diese Tatsache der wichtigste Bestandteil unserer Argumentation im folgenden Abschnitt, in dem wir die $K$-Theorie eines analytischen adischen Raumes definieren und ihre Abstiegseigenschaften analysieren. Wir beginnen mit dem Beweis des folgenden Lemmas, das uns erlaubt, die Ergebnisse des vorhergehenden Abschnitts auf analytische adische Räume anzuwenden.

\begin{lemma}
Jeder vollständige tatesche Huber-Ring $A$ ist schwach proregulär.
\end{lemma}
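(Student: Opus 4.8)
The plan is to reduce everything to the defining feature of a Tate ring — the existence of a topologically nilpotent unit — and to exploit the fact that such a unit provides a \emph{principal} ideal of definition whose generator is a non-zero-divisor. Weak proregularity for such an ideal is then essentially automatic, since the only Koszul homology that could obstruct it vanishes identically.

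First I would fix an arbitrary pair of definition $(A_0,J)$ of $A$ (which exists by completeness) together with a topologically nilpotent unit $\varpi_0\in A^{\times}$. Since $J\subseteq A_0$ is an open subgroup and $\varpi_0^n\to 0$, some power $\varpi:=\varpi_0^N$ lies in $J$; it is again a topologically nilpotent unit of $A$ and in particular lies in $A_0$. The key claim is that $\varpi A_0$ is itself an ideal of definition of $A_0$, i.e.\ that $\{\varpi^n A_0\}_{n\geq 0}$ is a fundamental system of neighbourhoods of $0$ in $A_0$. On the one hand, $\varpi\in J$ forces $\varpi^n A_0\subseteq J^n$, so these ideals become arbitrarily small. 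On the other hand, multiplication by the unit $\varpi^{-n}$ is a homeomorphism of $A$ carrying the neighbourhood $A_0$ of $0$ to the neighbourhood $\varpi^{-n}A_0$; since $J^m\to 0$ we get $J^m\subseteq\varpi^{-n}A_0$ for $m$ large, i.e.\ $\varpi^n A_0\supseteq J^m$, so each $\varpi^n A_0$ is open. Hence $(A_0,\varpi A_0)$ is a pair of definition of $A$.

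It then remains to verify that the length-one sequence $(\varpi)$, which generates $\varpi A_0$, is weakly proregular in $A_0$. Because $\varpi$ is a unit of $A$ and $A_0\subseteq A$, it is a non-zero-divisor in $A_0$: from $\varpi a=0$ with $a\in A_0$ one gets $a=\varpi^{-1}\cdot 0=0$ in $A$, hence in $A_0$. Consequently $H_1(K(A_0;\varpi^i))=\mathrm{Ann}_{A_0}(\varpi^i)=0$ for every $i$, while $H_q(K(A_0;\varpi^i))=0$ for $q\geq 2$ since this Koszul complex is concentrated in homological degrees $0$ and $1$. Thus for every $q>0$ the inverse system $\{H_q(K(A_0;\varpi^i))\}_{i\in\mathbb{N}}$ is the zero system and therefore pro-trivial. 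So $\varpi A_0$ is a weakly proregular ideal of $A_0$, and $(A_0,\varpi A_0)$ exhibits $A$ as a weakly proregular Huber ring, as desired.

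I do not expect a genuine obstacle here. The only mildly delicate point is the claim that $\varpi A_0$ is an ideal of definition (equivalently, that a complete Tate ring admits a principal ideal of definition generated by a pseudo-uniformizer), and even this follows immediately from continuity of multiplication by $\varpi^{-1}$ together with the fact that powers of an ideal of definition shrink to $0$. Conceptually the whole statement rests on the single observation that in a Tate ring the ideal of definition can be chosen principal with a non-zero-divisor generator, which trivialises the weak-proregularity condition; note also that combined with Lemma~\ref{nukleare_Huber-Ringe} this shows the property is then independent of all choices.
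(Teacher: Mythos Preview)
Your proof is correct and follows essentially the same idea as the paper: both rest on the observation that a pseudo-uniformizer $\varpi$ is invertible in $A$ and hence a non-zero-divisor in any ring of definition $A_0$, which makes the weak proregularity condition trivial. The only difference is presentational: the paper invokes the criterion \cite[Proposition~5.6]{WPR} (weak proregularity of a principal ideal $(\varpi)$ is equivalent to bounded $\varpi$-torsion in $A_0$, which is zero here), whereas you unpack this directly by computing $H_1(K(A_0;\varpi^i))=\mathrm{Ann}_{A_0}(\varpi^i)=0$; your version is more self-contained but otherwise identical in substance.
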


\begin{proof}
Sei $\varpi$ eine Uniformisierende von $A$. Nach \cite[Proposition 5.6]{WPR} ist die schwache Proregularität des Ringes $A$ äquivalent dazu, dass die $\varpi$-Torsion eines Definitionsringes von $A$ nach oben beschränkt ist, was in unserer Situation in offensichtlicher Weise gilt, denn $\varpi$ ist invertierbar.
\end{proof}

Insbesondere ist die $\infty$-Kategorie der nuklearen Moduln über einem vollständigen tateschen Huber-Ring dualisierbar. Wir wollen nun die $\infty$-Kategorie der nuklearen Garben definieren, indem wir die „lokalen“ Kategorien der nuklearen Moduln auf offenen affinoiden Teilmengen formal verkleben. Dazu benötigen wir den folgenden Abstiegssatz für nukleare Moduln.

\begin{satz}[{\cite[Theorem 5.42]{firstpaper}}]\label{lokaler_nuklearer_abstieg}
Sei $(A,A^+)$ ein vollständiges garbiges analytisches Huber-Paar und $U\subset\Spa (A,A^+)$ eine beliebige affinoide Teilmenge. Es bezeichne $A_U$ den Ring $\mathcal{O}_{\Spa (A,A^+)}(U)$. Dann definiert die Zuordnung $U\mapsto \mathrm{Nuk}(A_U)$ eine Garbe von $\infty$-Kategorien auf $\Spa (A,A^+)$ bezüglich der analytischen Topologie.
\end{satz}

Wir müssen die implizite Struktur der Garbe im obigen Satz erläutern, nämlich wie die entsprechenden Einschränkungsfunktoren aussehen. Seien $U,V$ offene affinoide Teilmengen von $\Spa (A,A^+)$ mit $U\subset V$. Es bezeichne $A_U$ und $A_U^+$ (bzw. $A_V$ und $A_V^+$) die Ringe $\mathcal{O}_{\Spa (A,A^+)}(U)$ und $\mathcal{O}^+_{\Spa (A,A^+)}(U)$ (bzw. $\mathcal{O}_{\Spa (A,A^+)}(V)$ und $\mathcal{O}^+_{\Spa (A,A^+)}(V)$). Der Einschränkungsfunktor $\mathrm{Nuk}(V)\rightarrow \mathrm{Nuk}(U)$ ist durch den Rückzug der nuklearen Moduln gegeben: Der Funktor \[-\underset{(A_V,A_V^+)_\blacksquare}{\otimes}(A_U,A_U^+)_\blacksquare: \mathcal{D}((A_V,A_V^+)_\blacksquare)\rightarrow \mathcal{D}((A_U,A_U^+)_\blacksquare)\] induziert einen Funktor $\mathrm{Nuk}(A_V)\rightarrow \mathrm{Nuk}(A_U)$, wie der Beweis von \cite[Theorem 5.42]{firstpaper} zeigt. Im Weiteren bezeichnen wir diesen Funktor mit $j^\ast$, wobei $j:U\rightarrow V$ die entsprechende offene Einbettung ist.

Anhand des Satzes~\ref{lokaler_nuklearer_abstieg} können wir nun die $\infty$-Kategorie der \textit{nuklearen Garben} auf einem analytischen adischen Raum durch formales Verkleben der entsprechenden Kategorien auf den affinoiden Teilmengen definieren.

\begin{definition}\label{nukleare_garben}
Sei $X$ ein analytischer adischer Raum. Die $\infty$-Kategorie ${\mathrm{Nuk}}(X)$ \textit{der nuklearen Garben auf $X$} ist definiert als der Limes $\varprojlim \mathrm{Nuk}(\mathcal{O}_X(U))$, wobei $U$ die affinoiden offenen Teilmengen von $X$ durchläuft.
\end{definition}

Da alle Einschränkungsfunktoren für affinoide offene Teilmengen mit Kolimites vertauschen, ist die so definierte $\infty$-Kategorie automatisch präsentierbar. Wie wir im Folgenden sehen werden, ist sie auch dualisierbar. Man prüft mithilfe des Satzes~\ref{lokaler_nuklearer_abstieg} leicht nach, dass die Zuordnung $U\mapsto \mathrm{Nuk}(U)$ für $U\subset X$ offen eine Garbe von $\infty$-Kategorien bezüglich der analytischen Topologie definiert. Die entsprechenden Einschränkungsfunktoren werden ebenfalls durch Verkleben der Einschränkungsfunktoren auf den affinoiden Teilmengen konstruiert, die wir im Folgenden auch \textit{Lokalisierungen} oder \textit{Rückzugsfunktoren} nennen. Der erste Name ist durch die Tatsache gerechtfertigt, dass diese Funktoren unter milderen Annahmen linke Bousfield-Lokalisierungen sind. In der Tat seien $U,V$ offene Teilmengen von $X$ mit $j:U\subset V$. Angenommen, die beiden Teilmengen sind quasi-kompakt und quasi-separiert. Falls sie außerdem affinoid sind, definiert der Vergissfunktor $\mathcal{D}((\mathcal{O}_X(U),\mathcal{O}_X^+(U))_\blacksquare)\rightarrow \mathcal{D}((\mathcal{O}_X(V),\mathcal{O}_X^+(V))_\blacksquare)$ einen Funktor $\mathrm{Nuk}(\mathcal{O}_X(U))\rightarrow \mathrm{Nuk}(\mathcal{O}_X(V))$, wie Lemmata~\ref{adische_morphismen} und \ref{rigidität_von_der_nuklerität} unten zeigen. Im Weiteren bezeichnen wir diesen Funktor mit $j_\ast$ und nennen ihn den \textit{direkten Bildfunktor}. Da der Vergissfunktor für alle festen Moduln nach \cite[Proposition 4.12]{firstpaper} volltreu und rechtsadjungiert zum Funktor \[-\underset{(\mathcal{O}_X(V),\mathcal{O}_X^+(V))_\blacksquare}{\otimes}(\mathcal{O}_X(U),\mathcal{O}_X^+(U))_\blacksquare: \mathcal{D}((\mathcal{O}_X(V),\mathcal{O}_X^+(V))_\blacksquare)\rightarrow \mathcal{D}((\mathcal{O}_X(U),\mathcal{O}_X^+(U))_\blacksquare)\] ist, definiert seine Einschränkung auf die $\infty$-Kategorie der nuklearen Moduln einen volltreuen Funktor, welcher zum Rückzugfunktor $j^\ast$ rechtsadjungiert ist. Wenn $U$ und $V$ nun beliebige offene quasi-kompakte quasi-separierte Teilmengen von $X$ sind, konstruiert man formal den direkten Bildfunktor \[j_\ast:\mathrm{Nuk}(U)\rightarrow\mathrm{Nuk}(V).\] Wie man leicht verifiziert, ist der so definierte Funktor volltreu und rechtsadjungiert zum Rückzugsfunktor $j^\ast$. Außerdem vertauscht er wegen der Annahme mit Kolimites, denn er ist automatisch exakt und die Kategorien $\mathrm{Nuk}(U)$ und $\mathrm{Nuk}(V)$ sind isomorph zu \textit{endlichen} Limites von $\infty$-Kategorien der nuklearen Moduln.

\begin{lemma}\label{adische_morphismen}
Sei $f:(A,A^+)\rightarrow (B,B^+)$ ein Morphismus von vollständigen tateschen Huber-Paaren. Dann ist $\underline{B}$ nuklear über $(A,A^+)_\blacksquare$.
\end{lemma}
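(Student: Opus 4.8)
The plan is to combine the concrete criterion for nuclearity over $(A,A^{+})_{\blacksquare}$ of \cite[Proposition 5.35]{firstpaper} (the Lemma recalled above) with Satz~\ref{A_ist_nuklear} applied to $B$, after arranging that $A$ and $B$ are treated by one common copy of $R_{\blacksquare}$. Since $f$ is continuous, I would first choose rings of definition $A_{0}\subseteq A$ and $B_{0}\subseteq B$ with $f(A_{0})\subseteq B_{0}$, and a pseudo-uniformizer $\varpi\in A$; as $A$ and $B$ are Tate, after enlarging $A_{0},B_{0}$ we may assume that $\varpi A_{0}$ and $f(\varpi)B_{0}$ are ideals of definition. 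Being complete and Tate, both $A$ and $B$ are weakly proregular, so the constructions of Section~3 apply to each of them; taking $R=\mathbb{Z}[[x]]$ with $\mathfrak{m}=(x)$ and the homomorphisms $R\to A_{0}\to B_{0}$, $x\mapsto\varpi\mapsto f(\varpi)$, one gets a commuting triangle of analytic rings $R_{\blacksquare}\xrightarrow{u}(A,A^{+})_{\blacksquare}\xrightarrow{v}(B,B^{+})_{\blacksquare}$ realising the Section~3 data simultaneously for $A$ and for $B$.

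Applying Satz~\ref{A_ist_nuklear} to $B$ shows that $\underline{B}$ is nuclear over $R_{\blacksquare}$, i.e., by \cite[Proposition 5.35]{firstpaper}, that for every proendliche set $S$ the canonical map $\underline{C(S,R)}\underset{R_{\blacksquare}}{\overset{L}{\otimes}}\underline{B}\to \mathrm{R}\underline{\mathrm{Hom}}_{R_{\blacksquare}}(R_{\blacksquare}[S],\underline{B})$ is an isomorphism. Applying the same theorem to $A$, its proof moreover identifies $\underline{C(S,A)}\cong\underline{C(S,R)}\underset{R_{\blacksquare}}{\overset{L}{\otimes}}\underline{A}\cong u^{*}\underline{C(S,R)}$, and $\mathcal{M}[S]=u^{*}R_{\blacksquare}[S]$ because base change preserves free modules. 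Thus both $\underline{C(S,A)}$ and $\mathcal{M}[S]$ are pulled back along $u$ from $R_{\blacksquare}$.

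By \cite[Proposition 5.35]{firstpaper} it then suffices to check that for every proendliche $S$ the natural map
\[
\underline{C(S,A)}\underset{(A,A^{+})_{\blacksquare}}{\overset{L}{\otimes}}\underline{B}\longrightarrow \mathrm{R}\underline{\mathrm{Hom}}_{(A,A^{+})_{\blacksquare}}(\mathcal{M}[S],\underline{B})
\]
is an isomorphism. Using the previous paragraph together with the compatibility of $\overset{L}{\otimes}$ and of internal $\mathrm{R}\underline{\mathrm{Hom}}$ with the adjunction $u^{*}\dashv u_{*}$, the left-hand side becomes $\underline{C(S,R)}\underset{R_{\blacksquare}}{\overset{L}{\otimes}}\underline{B}$ and the right-hand side becomes $\mathrm{R}\underline{\mathrm{Hom}}_{R_{\blacksquare}}(R_{\blacksquare}[S],\underline{B})$, the displayed map going over into the comparison map that we already know is an isomorphism; since restriction of scalars $\mathcal{D}((A,A^{+})_{\blacksquare})\to\mathcal{D}(R_{\blacksquare})$ is conservative, the displayed map is an isomorphism as well, and we are done.

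No step here is deep; the one point requiring care is that this reduction must be carried out at the level of \emph{maps}, not merely of isomorphism classes of objects. Concretely, one invokes the projection formula $u_{*}(u^{*}X\otimes N)\cong X\otimes u_{*}N$ and the identity $u_{*}\,\mathrm{R}\underline{\mathrm{Hom}}_{(A,A^{+})_{\blacksquare}}(u^{*}X,N)\cong \mathrm{R}\underline{\mathrm{Hom}}_{R_{\blacksquare}}(X,u_{*}N)$, both formal consequences of $u^{*}$ being symmetric monoidal and colimit-preserving, applied with $X=R_{\blacksquare}[S]$ and $N=\underline{B}$. Equivalently, one may argue directly along $v$, writing $\underline{B}=v_{*}\mathbf{1}_{(B,B^{+})_{\blacksquare}}$ and applying the projection formula and internal-$\mathrm{Hom}$ adjunction for $v$; in that formulation the only genuine input is the base-change identity $v^{*}\underline{C(S,A)}\cong\underline{C(S,B)}$, which again follows from Satz~\ref{A_ist_nuklear} applied to $A$ and to $B$.
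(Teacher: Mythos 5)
Your proposal is correct and follows essentially the same route as the paper: choose the uniformizer $\varpi$, map $\mathbb{Z}[[x]]\to A\to B$ via $x\mapsto\varpi\mapsto f(\varpi)$, apply Satz~\ref{A_ist_nuklear} to both $A$ and $B$, and transfer nuclearity along $\mathbb{Z}[[x]]_\blacksquare\to (A,A^+)_\blacksquare$ using the identification $\underline{C(S,A)}\cong \underline{C(S,\mathbb{Z}[[x]])}\otimes^L_{\mathbb{Z}[[x]]_\blacksquare}\underline{A}$. The paper compresses this into a three-step tensor computation; your explicit bookkeeping via the projection formula, the internal-Hom adjunction and conservativity of restriction of scalars just spells out what the paper leaves implicit.
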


\begin{proof}
Sei $\varpi$ eine Uniformisierende von $A$. Wir versehen $A$ mit der Struktur eines Moduls über $\mathbb{Z}[[x]]$, die durch $\mathbb{Z}[[x]]\rightarrow A$, $x\mapsto \varpi$ gegeben ist. Dann sind $\underline{A}$ und $\underline{B}$ nach Satz~\ref{A_ist_nuklear} nuklear über $\mathbb{Z}[[x]]_\blacksquare$. Sei $S$ eine proendliche Menge. Zum Nachweis der gewünschten Nuklearität führen wir folgende Rechnung durch:

\[\underline{C(S,A)}\underset{(A,A^+)_\blacksquare}{\overset{L}{\otimes}}\underline{B}\cong \underline{C(S,\mathbb{Z}[[x]])}\underset{\mathbb{Z}[[x]]_\blacksquare}{\overset{L}{\otimes}}\underline{A}\underset{(A,A^+)_\blacksquare}{\overset{L}{\otimes}}\underline{B}\cong \underline{C(S,\mathbb{Z}[[x]])}\underset{\mathbb{Z}[[x]]_\blacksquare}{\overset{L}{\otimes}}\underline{B}\cong \underline{C(S,B)}.\qedhere\]
\end{proof}

\begin{lemma}\label{rigidität_von_der_nuklerität}
Sei $(A,A^+)\rightarrow (B,B^+)$ ein Morphismus von schwach proregulären Huber-Paaren, sodass $\underline{B}$ nuklear über $(A,A^+)_\blacksquare$ ist. Dann ist ein Objekt $M\in \mathcal{D}((B,B^+)_{\blacksquare})$ nuklear über $(B,B^+)_\blacksquare$ genau dann, wenn es nuklear über $(A,A^+)_\blacksquare$ ist. Insbesondere erhält der Vergissfunktor $\mathcal{D}((B,B^+)_{\blacksquare})\rightarrow \mathcal{D}((A,A^+)_{\blacksquare})$ nukleare Objekte.
\end{lemma}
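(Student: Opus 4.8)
The plan is to reduce nuclearity over $(B,B^+)_\blacksquare$ and nuclearity over $(A,A^+)_\blacksquare$ to one and the same condition, by means of the characterization of nuclear objects via profinite sets (\cite[Proposition 5.35]{firstpaper}), using the hypothesis on $\underline B$ to transport the relevant data along the base change. Write $(\mathcal A,\mathcal M)=(A,A^+)_\blacksquare$ and $(\mathcal B,\mathcal N)=(B,B^+)_\blacksquare$; the forgetful functor in question is restriction of scalars along $(\mathcal A,\mathcal M)\to(\mathcal B,\mathcal N)$, right adjoint to the base change $-\underset{(\mathcal A,\mathcal M)}{\otimes}(\mathcal B,\mathcal N)$. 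For a profinite set $S$, let $\mathcal M[S]$ and $\mathcal N[S]$ denote the corresponding free solid modules; since base change takes free modules to free modules, $\mathcal M[S]\underset{(\mathcal A,\mathcal M)}{\otimes}(\mathcal B,\mathcal N)\cong\mathcal N[S]$, and both $\mathcal M[S]$ and $\mathcal N[S]$ are compact. I shall also use the identifications $\underline{C(S,A)}\cong\mathcal M[S]^\vee$ and $\underline{C(S,B)}\cong\mathcal N[S]^\vee$, which are established in \cite{firstpaper}.

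Now fix $M\in\mathcal D((\mathcal B,\mathcal N))$. First I would identify the target of the natural map occurring in \cite[Proposition 5.35]{firstpaper}: since $M$ is a $\mathcal B$-module and $\mathcal M[S]$ base-changes to $\mathcal N[S]$, the projection formula for the internal hom along restriction of scalars supplies a natural isomorphism
\[\mathrm{R}\underline{\mathrm{Hom}}_{(\mathcal A,\mathcal M)}(\mathcal M[S],M)\;\cong\;\mathrm{R}\underline{\mathrm{Hom}}_{(\mathcal B,\mathcal N)}(\mathcal N[S],M).\]
Next I would identify the source. By the projection formula for the tensor product, $\underline{C(S,A)}\underset{(\mathcal A,\mathcal M)}{\overset{L}{\otimes}}M\cong\bigl(\underline{C(S,A)}\underset{(\mathcal A,\mathcal M)}{\overset{L}{\otimes}}\underline B\bigr)\underset{(\mathcal B,\mathcal N)}{\overset{L}{\otimes}}M$, so it suffices to prove $\underline{C(S,A)}\underset{(\mathcal A,\mathcal M)}{\overset{L}{\otimes}}\underline B\cong\underline{C(S,B)}$. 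This is where the hypothesis on $\underline B$ enters: since $\underline B$ is nuclear over $(\mathcal A,\mathcal M)$ and $\mathcal M[S]$ is compact, the defining isomorphism of nuclearity (\cite[Definition 13.10]{Analytic}) gives $\mathcal M[S]^\vee\underset{(\mathcal A,\mathcal M)}{\overset{L}{\otimes}}\underline B\xrightarrow{\ \sim\ }\mathrm{R}\underline{\mathrm{Hom}}_{(\mathcal A,\mathcal M)}(\mathcal M[S],\underline B)$, whose right-hand side equals $\mathrm{R}\underline{\mathrm{Hom}}_{(\mathcal B,\mathcal N)}(\mathcal N[S],\underline B)=\mathcal N[S]^\vee\cong\underline{C(S,B)}$ by the isomorphism above applied to $M=\underline B$; together with $\underline{C(S,A)}\cong\mathcal M[S]^\vee$ this is the claim.

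Granting that all of these identifications are natural in $S$ and compatible with the canonical maps, the natural transformation $\underline{C(S,A)}\overset{L}{\otimes}M\to\mathrm{R}\underline{\mathrm{Hom}}_{(\mathcal A,\mathcal M)}(\mathcal M[S],M)$ is carried to $\underline{C(S,B)}\overset{L}{\otimes}M\to\mathrm{R}\underline{\mathrm{Hom}}_{(\mathcal B,\mathcal N)}(\mathcal N[S],M)$. Hence, applying \cite[Proposition 5.35]{firstpaper} over $(\mathcal A,\mathcal M)$ and over $(\mathcal B,\mathcal N)$, the first map is an isomorphism for every profinite $S$ precisely when $M$ is nuclear over $(\mathcal A,\mathcal M)$, and the second precisely when $M$ is nuclear over $(\mathcal B,\mathcal N)$; since the two maps agree, the two notions of nuclearity coincide for $M$. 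The \enquote{Insbesondere} then follows at once, the forgetful functor being exactly restriction of scalars. I expect the only step that is not pure bookkeeping to be the last one, namely the verification that the two natural transformations really do correspond under the chain of identifications --- i.e.\ tracing the naturality of the nuclearity isomorphism for $\underline B$ and of the two projection formulas --- rather than merely that their sources and targets match.
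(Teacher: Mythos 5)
Your proposal is correct and takes essentially the same route as the paper: the paper's proof is exactly the chain $\underline{C(S,B)}\underset{(B,B^+)_\blacksquare}{\overset{L}{\otimes}}M\cong \underline{C(S,A)}\underset{(A,A^+)_\blacksquare}{\overset{L}{\otimes}}\underline{B}\underset{(B,B^+)_\blacksquare}{\overset{L}{\otimes}}M\cong \mathrm{R}\underline{\mathrm{Hom}}_{(A,A^+)_\blacksquare}(\mathcal{M}[S],M)\cong \mathrm{R}\underline{\mathrm{Hom}}_{(B,B^+)_\blacksquare}(\mathcal{N}[S],M)$, i.e.\ your three ingredients (nuclearity of $\underline{B}$ giving $\underline{C(S,A)}\otimes\underline{B}\cong\underline{C(S,B)}$, the projection formula, and base change of free modules under the Hom-adjunction). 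You merely make explicit the naturality that the paper dispatches with \enquote{eine völlig analoge Rechnung} for the converse direction, which is a fine presentational refinement but not a different argument.
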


\begin{proof}
Angenommen, $M$ ist nuklear über $(A,A^+)_\blacksquare$. Sei $S$ eine proendliche Menge. Wir führen folgende Rechnung durch:

\[\underline{C(S,B)}\underset{(B,B^+)_\blacksquare}{\overset{L}{\otimes}} M\cong \underline{C(S,A)} \underset{(A,A^+)_\blacksquare}{\overset{L}{\otimes}} \underline{B}\underset{(B,B^+)_\blacksquare}{\overset{L}{\otimes}} M\cong \mathrm{R}\underline{\mathrm{Hom}}_{(A,A^+)_\blacksquare}((A,A^+)_\blacksquare[S],M)\]\[\cong \mathrm{R}\underline{\mathrm{Hom}}_{(B,B^+)_\blacksquare}((B,B^+)_\blacksquare[S],M).\]

Wenn nun $M$ nuklear über $(B,B^+)_\blacksquare$ ist, zeigt eine völlig analoge Rechnung, dass es auch nuklear über $(A,A^+)_\blacksquare$ ist.
\end{proof}

Wir haben nun alle benötigten formalen Eigenschaften der $\infty$-Kategorie der nuklearen Moduln bewiesen und können formal aus Satz~\ref{dualisierbarer_abstieg} den folgenden Satz ableiten.

\begin{satz}
Sei $X$ ein quasi-separierter quasi-kompakter analytischer adischer Raum. Dann ist die $\infty$-Kategorie $\mathrm{Nuk}(X)$ dualisierbar.
\end{satz}

In den folgenden Abschnitten werden wir die folgende Version der Kategorie der nuklearen Garben benötigen.

\begin{definition}
Sei $X$ ein analytischer adischer Raum und $Y$ eine abgeschlossene Teilmenge von $X$. Es bezeichne $U$ das Komplement von $Y$ in $X$. Die $\infty$-Kategorie $\mathrm{Nuk}(X\ \mathrm{auf} \ Y)$ der \textit{nuklearen Garben auf $X$ mit Träger in $Y$} ist definiert als die Faser des Einschränkungsfunktors $\mathrm{Nuk}(X)\rightarrow \mathrm{Nuk}(U)$.
\end{definition}

\begin{korollar}
Sei $X$ ein quasi-kompakter quasi-separierter analytischer adischer Raum und $Y$ eine abgeschlossene Teilmenge von $X$. Angenommen, das Komplement von $Y$ ist quasi-kompakt. Dann ist die Kategorie $\mathrm{Nuk}(X\ \mathrm{auf} \ Y)$ dualisierbar.
\end{korollar}

\begin{proof}
Dies folgt aus Lemma~\ref{dualisierbare_faser}.
\end{proof}

Wir beweisen nun die folgenden weiteren Eigenschaften der $\infty$-Kategorie der nuklearen Garben, welche wir zur Analyse der étalen Abstiegseigenschaften der $K$-Theorie verwenden werden. 

\begin{satz}\label{perfekt-nuklear}
Sei $A\rightarrow B$ ein endlicher étaler Morphismus von vollständigen tateschen Huber-Ringen. Dann gilt
\[\mathrm{Nuk}(B)\cong \mathrm{Nuk}(A)\underset{\mathrm{Perf}(A)}{\otimes}\mathrm{Perf}(B),\]
wobei $\mathrm{Perf}(A)$ (bzw. $\mathrm{Perf}(B)$) die $\infty$-Kategorie der perfekten Komplexe über $A$ (bzw. $B$) bezeichnet.
\end{satz}

\begin{proof}
Nach Annahme ist die $\infty$-Kategorie $\mathrm{Perf}(B)$ äquivalent zur $\infty$-Kategorie $\mathrm{Mod}_B(\mathrm{Perf}(A))$, und somit $\mathrm{Nuk}(A)\underset{\mathrm{Perf(A)}}{\otimes}\mathrm{Perf}(B)$ zu $\mathrm{Mod}_B(\mathrm{Nuk}(A))$. Die $\infty$-Kategorie $\mathcal{D}((A,\mathbb{Z})_\blacksquare)$ (bzw. $\mathcal{D}((B,\mathbb{Z})_\blacksquare)$) ist per Definition die $\infty$-Kategorie $\mathrm{Mod}_{\underline{A}}(\mathcal{D}(\mathbb{Z}_\blacksquare))$ (bzw. $\mathrm{Mod}_{\underline{B}}(\mathcal{D}(\mathbb{Z}_\blacksquare))$). Dementsprechend genügt es zu zeigen, dass ein Objekt $M\in \mathcal{D}((B,\mathbb{Z})_\blacksquare)$ genau dann nuklear über $(B,\mathbb{Z})_\blacksquare$ ist, wenn es nuklear über $(A,\mathbb{Z})_\blacksquare$ ist. Dies folgt aber sofort aus Lemma~\ref{rigidität_von_der_nuklerität}.
\end{proof}

\begin{korollar}[étaler Abstiegssatz für nukleare Garben]\label{étaler_nuklearer_abstieg}
Sei $X$ ein analytischer adischer Raum. Dann definiert die Zuordnung $U\mapsto \mathrm{Nuk}(U)$ für $U$ étale über $X$ eine Garbe von $\infty$-Kategorien bezüglich der étalen Topologie auf $X$.
\end{korollar}

\begin{proof}
Nach Satz~\ref{lokaler_nuklearer_abstieg} und dem Analogon von Satz~\ref{étale=Nisnevich+Galois} für Garben von $\infty$-Kategorien genügt es zu zeigen, dass für jede surjektive endliche étale Abbildung $f:\Spa (B,B^+)\rightarrow \Spa (A,A^+)$ zwischen tateschen affinoiden adischen Räumen die $\infty$-Kategorie $\mathrm{Nuk}(A)$ isomorph zum Limes $\varprojlim \mathrm{Nuk}(B\underset{A}{\otimes}\dots \underset{A}{\otimes} B)$ ist. Für einen Ring $R$ bezeichne $\mathcal{D}(R)$ die algebraische derivierte $\infty$-Kategorie von $R$. Aus Satz~\ref{perfekt-nuklear} lässt sich leicht ableiten \[\mathrm{Nuk}(B\underset{A}{\otimes}\dots \underset{A}{\otimes} B)\cong \mathrm{Nuk}(A)\underset{\mathcal{D}(A)}{\otimes}\mathcal{D}(B\underset{A}{\otimes}\dots \underset{A}{\otimes} B).\]
Wie man leicht nachprüft, ist der Morphismus $\Spec B\rightarrow \Spec A$ treuflach. Es gilt daher 
\[\mathcal{D}(A)\xrightarrow[]{\sim} \mathrm{Tot}\Bigl(\mathcal{D}(B)\rightrightarrows \mathcal{D}(B\underset{A}{\otimes}B) \mathrel{\substack{\textstyle\rightarrow\\[-0.6ex]
                      \textstyle\rightarrow \\[-0.6ex]
                      \textstyle\rightarrow}} \dots\Bigr).\]
Es genügt also zu zeigen, dass die obere Totalisierung mit dem Tensorprodukt $\mathrm{Nuk}(A)\underset{\mathcal{D}(A)}{\otimes}-$ vertauscht. Dies folgt aus \cite[Korollar 3.42]{Mathew}.
\end{proof}

Zum Schluss wollen wir noch den folgenden Hilfsatz beweisen, mit dessen Hilfe wir im folgenden Abschnitt zeigen werden, dass unsere Definition der $K$-Theorie mit der alten übereinstimmt.

\begin{satz}\label{torsion_ist_diskret}
Sei $A$ ein tatescher Huber-Ring, $A_0$ ein Definitionsring von $A$ und $\varpi\in A_0$ eine Uniformisierende. Es bezeichne $\mathrm{Tors}(\varpi^\infty)$ die $\infty$-Kategorie der $\varpi$-Torsionsmoduln in der algebraischen derivierten $\infty$-Kategorie $\mathcal{D}(A_0)$ des Ringes $A_0$. Dann ist die Faser der Lokalisierung $\mathrm{Nuk}(A_0)\rightarrow \mathrm{Nuk}(A)$ äquivalent zur $\infty$-Kategorie $\mathrm{Tors}(\varpi^\infty)$.
\end{satz}

\begin{proof}
Wir betrachten die algebraische derivierte $\infty$-Kategorie $\mathcal{D}(A_0)$ mittels des \textit{Kondensierungsfunktors} als eine volle $\infty$-Unterkategorie von $\mathcal{D}((A_0,\mathbb{Z})_\blacksquare)$, siehe \cite[Theorem 5.9]{firstpaper}. Wir müssen zeigen, dass jeder nukleare Modul $C$ über $A_0$, der nach dem Invertieren von $\varpi$ trivial wird, innerhalb von $\mathcal{D}(A_0)$ liegt. Man prüft direkt nach, dass für jede proendliche Menge $S$ gilt $\underline{C(S,A_0)}[1/\varpi]\cong\underline{C(S,A)}$. Es genügt also zu zeigen, dass der Kegel \[\mathrm{cone}\, (\underline{C(\mathbb{N}\cup \{\infty\},A_0)}\rightarrow \underline{C(\mathbb{N}\cup \{\infty\},A)})\] in $\mathcal{D}(A_0)$ liegt, denn die $\infty$-Kategorie $\mathrm{Nuk}(A_0)$ wird von $\underline{C(\mathbb{N}\cup  \{\infty\},A_0)}$ erzeugt und die volle $\infty$-Unterkategorie $\mathcal{D}(A_0)\subset \mathcal{D}((A_0,\mathbb{Z})_\blacksquare)$ ist abgeschlossen unter Kolimites. Dies folgt aber direkt aus der exakten Sequenz von verdichteten Moduln
\[0\rightarrow \underline{A_0\langle T\rangle }\rightarrow \underline{A\langle T\rangle }\rightarrow \underset{\mathbb{N}}{\bigoplus}A/A_0\rightarrow 0.\qedhere\]
\end{proof}

\newpage
\section{\texorpdfstring{$K$}{K}-Theorie, lokalisierende Invarianten und Abstieg}

Der Begriff der \textit{lokalisierenden Invariante} ist eine sehr präzise und leistungsfähige Interpretation der Idee der algebraischen Invarianten von Kategorien, der schon seit langer Zeit im Mittelpunkt der mathematischen Forschung steht. Wir erinnern an die Definition:

\begin{definition}
Sei $\mathcal{D}$ eine stabile $\infty$-Kategorie. Es bezeichne $\mathrm{Cat}_{\infty}^{\mathrm{perf}}$ die $\infty$-Kategorie der kleinen karoubischen\footnote{d. h. unter Retrakten abgeschlossenen} stabilen $\infty$-Kategorien. Unter einer \textit{lokalisierenden Invariante mit Werten in $\mathcal{D}$} verstehen wir einen Funktor $F:\mathrm{Cat}_{\infty}^{\mathrm{perf}}\rightarrow \mathcal{D}$, der finale Objekte auf finale Objekte und Verdier-Sequenzen auf Fasersequenzen abbildet.\footnote{In der Definition wird nicht gefordert, dass ein solcher Funktor mit Kolimites vertauscht.}
\end{definition}

Ein besonders wichtiges Beispiel für eine lokalisierende Invariante ist die \textit{nicht konnektive $K$-Theorie}. Diese ist ein Funktor $\mathbb{K}:\mathrm{Cat}_{\infty}^{\mathrm{perf}}\rightarrow \mathrm{Sp}$ mit der folgenden universellen Eigenschaft: Sie ist die initiale lokalisierende Invariante mit Werten in $\mathrm{Sp}$ mit einer Abbildung $\mathbb{S}[-]\circ \mathrm{core}\rightarrow \mathbb{K}$. Hierbei bezeichnet $\mathrm{core}$ den Funktor, der eine $\infty$-Kategorie $\mathcal{C}$ auf das maximale Unteranima von $\mathcal{C}$ abbildet. Die konnektive Version der $K$-Theorie ist ein Funktor $K:\mathrm{Cat}_{\infty}^{\mathrm{perf}}\rightarrow \mathrm{Sp}_{\geq 0}$. Sie kann durch eine analoge universelle Eigenschaft gegeben werden, aber wir setzen einfach $K\overset{\mathrm{def}}{=}\Omega^\infty \mathbb{K}$. Die konnektive Version bildet jedoch keine lokalisierende Invariante, wenn man $\mathrm{Sp}_{\geq 0}$ als $\infty$-Unterkategorie von $\mathrm{Sp}$ ansieht: Sie ist nur eine sogenannte \textit{additive Invariante}.

Mithilfe der lokalisierenden Invarianten wurden zahlreiche unterschiedliche Fragestellungen und Objekte aus vielen Bereichen der Mathematik erfolgreich und detailliert untersucht. Sie bereichern insbesondere die algebraische Geometrie, wo sie eng mit deren schwierigsten und tiefsten Problemen verbunden sind. Von besonderer Bedeutung für diese Arbeit ist die folgende Definition:

\begin{definition}\label{definition_lokalisierende_schemata}
Sei $X$ ein quasi-kompaktes quasi-separiertes Schema und $\mathcal{D}$ eine stabile $\infty$-Kategorie. Unter einer \textit{lokalisierenden Invariante auf $X$ mit Werten in $\mathcal{D}$} verstehen wir eine Prägarbe
\[\mathcal{F}:\{\textrm{quasi-kompakte offene Teilmengen von}\ X\}\rightarrow \mathcal{D},\ U\mapsto F(\mathrm{Perf}(U)),\]
wobei $F:\mathrm{Cat}_{\infty}^{\mathrm{perf}}\rightarrow \mathcal{D}$ eine lokalisierende Invariante mit Werten in $\mathcal{D}$ ist. Hierbei bezeichnet $\mathrm{Perf}(U)$ die $\infty$-Kategorie der perfekten Komplexen auf $U$.
\end{definition}

Eine der wichtigsten abstrakten Eigenschaften der schematischen lokalisierenden Invarianten ist der Nisnevich-Abstieg. Dieser ergibt sich nach Thomason-Trobaugh \cite{TT90} (siehe \cite[Appendix A]{CMNN20} für eine moderne Darlegung) automatisch aus der Definition:

\begin{satz}
Sei $X$ ein quasi-kompaktes quasi-separiertes Schema und $\mathcal{F}$ eine lokalisierende Invariante auf $X$ mit Werten in $\mathcal{D}$. Dann bildet $\mathcal{F}$ eine Garbe mit Werten in $\mathcal{D}$ bezüglich der Nisnevich-Topologie.
\end{satz}

Viel subtiler ist die Frage, ob eine gegebene lokalisierende Invariante auf $X$ den stärkeren \textit{étalen} Abstieg erfüllt: Wie das Beispiel von der $K$-Theorie zeigt, ist dies im Allgemeinen nicht immer der Fall.\footnote{Für ein konkretes Gegenbeispiel verweisen wir auf \url{https://mathoverflow.net/questions/239393/simplest-example-of-failure-of-finite-galois-descent-in-algebraic-k-theory}.} In derselben Arbeit \cite{TT90} hat Thomason jedoch – wenn auch unter einigen unnötigen Annahmen – bewiesen, dass nach \textit{chromatischer Lokalisierung} jede solche Invariante étalen Hyperabstieg erfüllt. Sein Theorem in moderner Form lautet:

\begin{satz}[{\cite[Theorem 7.14]{CM21}}]
Sei $X$ ein quasi-kompaktes quasi-separiertes Schema endlicher Krull-Dimension und $p$ eine feste Primzahl. Wir bezeichnen mit $L_n^{f}$ die linke Bousfield-Lokalisierung bezüglich des Spektrums $T(0)\oplus\cdots\oplus T(n)$, wobei $T(i)$ das Teleskop eines $p$-lokalen endlichen Spektrum vom Typ $i$ ist. Angenommen, die virtuellen $p$-lokalen galoisschen kohomologischen Dimensionen der Restklassenkörper von $X$ sind nach oben beschränkt. Dann bildet jede lokalisierende Invariante $\mathcal{F}$ mit Werten in $L_n^f$-lokalen Spektren eine étale Hypergarbe auf $X$.
\end{satz}

In diesem Abschnitt verwenden wir unsere bisherigen Ergebnisse, um die Methoden von \cite{TT90} (in ihrer modernen Form wie in \cite{CMNN20} und \cite{CM21}) an adische Räume anzupassen und um die Analoga der obigen Sätze zu formulieren und zu beweisen. Insbesondere definieren wir die \textit{stetige konnektive} bzw. \textit{nicht konnektive $K$-Theorie} für analytische adische Räume. Im Folgenden setzen wir Vertrautheit mit dem Material im Anhang voraus.

\begin{notation}
\begin{enumerate}[label=(\roman*)]
\item[] 
\item Mit $\mathrm{St}^{\mathrm{cg}}$ (bzw. $\mathrm{St}^{\mathrm{dual}}$) bezeichnen wir die $\infty$-Kategorie der stabilen kompakt erzeugten (bzw. dualisierbaren) Kategorien, deren Morphismen kompakte (bzw. dualisierbare) Funktoren sind.
\item Wir nehmen an, dass alle Huber-Ringe vollständig sind.
\end{enumerate}
\end{notation}

Die ursprünglichen Techniken von \cite{TT90} sind sehr mächtig: Das einzige Hindernis in unserer Situation ist im Wesentlichen die Definition selbst, die anderen Bestandteile des Beweises funktionieren auch im adischen Kontext. Im schematischen Falle benutzt man implizit die Äquivalenz
\[\mathrm{Cat}_{\infty}^{\mathrm{perf}}\xrightarrow{\sim} \mathrm{St}^{\mathrm{cg}},\ \mathcal{C}\mapsto \Ind (\mathcal{C}).\]
In der Tat ist die derivierte Kategorie der quasi-kohärenten Garben auf einem quasi-kompakten quasi-separierten Schema $X$ nach Satz~\ref{kompakte_verklebung} kompakt erzeugt und dabei gilt $\mathcal{D}_{qc}(X)\cong\Ind(\mathrm{Perf}(X))$. In unserer Situation ist die Kategorie der nuklearen Garben jedoch nur dualisierbar. Deswegen müssen wir erst lokalisierende Invarianten auf dualisierbare Kategorien fortsetzen. Dies ist möglich dank des folgenden Satzes von Efimov:

\begin{satz}[{\cite[Theorem 10]{Efimov}}]
Sei $\mathcal{D}$ eine stabile $\infty$-Kategorie. Unter einer \textit{lokalisierenden Invariante von dualisierbaren Kategorien mit Werten in $\mathcal{D}$} verstehen wir einen Funktor $F:\mathrm{St}^{\mathrm{dual}}\rightarrow \mathcal{D}$, der finale Objekte auf finale Objekte und Verdier-Sequenzen auf Fasersequenzen abbildet. Dann liefert der Funktor
\[\mathrm{Fun}(\mathrm{St}^{\mathrm{dual}},\mathcal{D})\rightarrow \mathrm{Fun}(\mathrm{Cat}_{\infty}^{\mathrm{perf}},\mathcal{D}),\ F\mapsto F\circ \Ind\]
eine Äquivalenz zwischen den vollen Unterkategorien von lokalisierenden Invarianten auf den beiden Seiten.
\end{satz}

Mit anderen Worten kann jede lokalisierende Invariante $F:\mathrm{Cat}_{\infty}^{\mathrm{perf}}\rightarrow \mathrm{D}$ eindeutig zu einer lokalisierenden Invariante auf $\mathrm{St}^{\mathrm{dual}}$ fortgesetzt werden. Diese Fortsetzung wird die \textit{stetige Fortsetzung von $F$} genannt und mit $F_{\mathrm{stet}}$ bezeichnet. Sie besitzt zudem die folgende Eigenschaft:

\begin{satz}[{\cite[Lemma 14]{Efimov}}]
Eine lokalisierende Invariante $F$ vertauscht mit (filtrierten) Kolimites genau dann, wenn deren stetige Fortsetzung $F_{\mathrm{stet}}$ mit (filtrierten) Kolimites vertauscht.
\end{satz}

Als Konsequenz erhalten wir einen Kolimites erhaltenden Funktor $\mathbb{K}_{\mathrm{stet}}:\mathrm{St}^{\mathrm{dual}}\rightarrow \mathrm{Sp}$, den wir die \textit{stetige nicht konnektive $K$-Theorie} nennen. Ihr konnektives Analogon ist definiert als $K_{\mathrm{stet}}\overset{\mathrm{def}}{=}\Omega^\infty \mathbb{K}_{\mathrm{stet}}$. Anhand dieser Funktoren können wir nun die stetige konnektive (bzw. nicht konnektive) $K$-Theorie eines schwach proregulären Huber-Ringes definieren.

\begin{definition}
Sei $A$ ein schwach proregulärer Huber-Ring. Dann ist die stetige konnektive (bzw. nicht konnektive) $K$-Theorie von $A$ definiert als $K_{\mathrm{stet}}(\mathrm{Nuk}(A))\ (\mathrm{bzw.}\ \mathbb{K}_{\mathrm{stet}}(\mathrm{Nuk}(A)))$.
\end{definition}

Sei jetzt $A$ ein $I$-adisch vollständiger Ring mit $I$ endlich erzeugt. In dieser Situation kann man eine „stetige $K$-Theorie von $A$“ ohne den Formalismus der verdichteten Mathematik definieren, und zwar als $\varprojlim K(A/I^n)$. Diese Definition spielt eine zentrale Rolle im klassischen Ansatz zur $K$-Theorie rigid-analytischer Räume (siehe z. B. \cite{Morrow}). Wenn $I$ außerdem schwach proregulär ist, was in der rigiden Geometrie immer der Fall ist, stimmen unsere und die klassische Definitionen nach einem Satz von Efimov überein (siehe \cite{Stetigkeitssatz}):

\begin{satz}[Efimov'scher Stetigkeitssatz]\label{Stetigkeitssatz}
Sei $A$ ein $I$-adisch vollständiger Ring, wobei $I$ ein endlich erzeugtes schwach proreguläres Ideal von $A$ ist. Dann liefern die natürlichen Abbildungen \[K_{\mathrm{stet}}(A)\rightarrow K_{\mathrm{stet}}(A/I^n)\cong K(A/I^n)\ (\text{bzw.}\ \mathbb{K}_{\mathrm{stet}}(A)\rightarrow \mathbb{K}_{\mathrm{stet}}(A/I^n)\cong \mathbb{K}(A/I^n))\] für $n\geq 0$ einen Isomorphismus
\[K_{\mathrm{stet}}(A)\xrightarrow{\sim} \varprojlim K(A/I^n)\ (\text{bzw.}\ \mathbb{K}_{\mathrm{stet}}(A)\xrightarrow{\sim} \varprojlim \mathbb{K}(A/I^n)).\]
\end{satz}

Sei nun $A$ ein tatescher Ring mit einem Definitionsring $A_0$ und einer Uniformisierenden $\varpi$. Im klassischen Ansatz definiert man die stetige $K$-Theorie von $A$ durch das kokartesische Produkt 
\begin{center}
\begin{tikzcd}
K(A_0) \arrow[d] \arrow[r] & K(A_0[1/\varpi])\arrow[d] \\
K_{\mathrm{stet}}(A_0) \arrow[r] & K'_{\mathrm{stet}}(A_0[1/\varpi]) \arrow[ul, phantom, "\scalebox{1.5}{$\ulcorner$}" , very near start, color=black]
\end{tikzcd}
\end{center}
Man globalisiert dann die so definierte $K$-Theorie tatescher Ringe auf rigid-analytische Räume durch den sogenannten \textit{pro-cdh-Abstieg}. Das offensichtliche konzeptionelle Problem des Ansatzes, und zwar die Tatsache, dass alle Konstruktionen etwas ad hoc sind, führt dazu, dass die Beweise im nicht noetherschen Falle schwer fassbar sind. Insbesondere ist es nicht klar, wie man den Abstieg für allgemeine analytische Räume nachweist. Mit unserer abstrakteren Definition können wir jedoch den optimalen Abstiegssatz beweisen. Zuerst zeigen wir, dass die Definitionen im affinoiden Fall übereinstimmen. Man betrachte folgendes Diagramm: 
\begin{center}
\begin{tikzcd}
\mathrm{Tor}(\varpi^\infty)\arrow[d]\arrow[r] & \mathcal{D}(A_0) \arrow[d] \arrow[r, "L"] & \mathcal{D}(A_0[1/\varpi]) \arrow[d] \\
\mathrm{Tor}^{\mathrm{nuk}}(\varpi^\infty)\arrow[r] & \mathrm{Nuk}(A_0) \arrow[r, "L'"] & \mathrm{Nuk}(A_0[1/\varpi])
\end{tikzcd}
\end{center}
Hierbei bezeichnen $L$ und $L'$ die kanonischen Lokalisierungsfunktoren und $\mathrm{Tor}(\varpi^\infty)$ und $\mathrm{Tor}^{\mathrm{nuk}}(\varpi^\infty)$ deren Kerne. Dann ist der Funktor $\mathrm{Tors}(\varpi^\infty)\rightarrow \mathrm{Tor}^{\mathrm{nuk}}(\varpi^\infty)$ nach Satz~\ref{torsion_ist_diskret} eine Äquivalenz, d. h., das Diagramm
\begin{equation}\label{kokartesisch}
\begin{tikzcd}
\mathbb{K}(A_0) \arrow[d] \arrow[r] & \mathbb{K}(A[1/\varpi]) \arrow[d] \\
\mathbb{K}_{\mathrm{stet}}(A_0) \arrow[r] & \mathbb{K}_{\mathrm{stet}} (A[1/\varpi])
\end{tikzcd}
\end{equation}
ist (ko-)kartesisch, weshalb $K_{\mathrm{stet}} (A[1/\varpi])\cong K'_{\mathrm{stet}} (A[1/\varpi])$ gilt. Übertragen wir nun Definition~\ref{definition_lokalisierende_schemata} mithilfe der Ergebnisse des vorhergehenden Abschnitts auf allgemeine analytische adische Räume:

\begin{definition}
Sei $X$ ein quasi-kompakter quasi-separierter analytischer adischer Raum und $\mathcal{D}$ eine stabile $\infty$-Kategorie. Unter einer \textit{lokalisierenden Invariante auf $X$ mit Werten in $\mathcal{D}$} verstehen wir eine Prägarbe 
\[\mathcal{F}_{\mathrm{stet}}:\{\textrm{quasi-kompakte offene Teilmengen von}\ X\}\rightarrow \mathcal{D},\ U\mapsto \mathcal{F}_{\mathrm{stet}}(\mathrm{Nuk}(U)),\]
wobei $\mathcal{F}$ eine lokalisierende Invariante mit Werten in $\mathcal{D}$ ist. Im Weiteren schreiben wir für $\mathcal{F}_{\mathrm{stet}}(\mathrm{Nuk}(U))$ einfach $\mathcal{F}_{\mathrm{stet}}(U)$.
\end{definition}

Für unsere Analyse der Abstiegseigenschaften der lokalisierenden Invarianten benötigen wir das folgende elementare Lemma, welches, genau wie im schematischen Falle, eine Schlüsselrolle in unseren weiteren Argumenten spielen wird.

\begin{lemma}\label{fasersequenz}
Sei $X$ ein quasi-kompakter quasi-separierter analytischer adischer Raum und $U$ eine quasi-kompakte offene Teilmenge von $X$. Es bezeichne $Y$ das Komplement von $U$ in $X$. Sei $Z$ eine abgeschlossene Teilmenge von $X$ mit quasi-kompaktem Komplement. Dann sind die Sequenzen \[\mathrm{Nuk}(X\ \mathrm{auf} \ Y)\rightarrow\mathrm{Nuk}(X)\rightarrow \mathrm{Nuk}(U),\]
\[\mathrm{Nuk}(X\ \mathrm{auf} \ Y\cap Z)\rightarrow\mathrm{Nuk}(X\ \mathrm{auf} \ Z)\rightarrow \mathrm{Nuk}(U\ \mathrm{auf} \ U\cap Z)\]
rechts spaltende Verdier-Sequenzen.
\end{lemma}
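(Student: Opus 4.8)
The plan is to handle the two sequences separately; the first is essentially the definition, and the second is its ``restriction to the closed set $Z$''.

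\textbf{First sequence.} By Definition~\ref{nukleare_garben} (and the definition of $\mathrm{Nuk}(X\ \mathrm{auf}\ Y)$ as the fibre of the restriction to the open complement $U$), this sequence is $\fib(j^\ast)\to\mathrm{Nuk}(X)\xrightarrow{j^\ast}\mathrm{Nuk}(U)$ for the open immersion $j\colon U\hookrightarrow X$. All three categories lie in $\mathcal{P}r^{\mathrm{St}}$: $\mathrm{Nuk}(X)$ and $\mathrm{Nuk}(U)$ are dualisable, since $X$ is qcqs and $U$ is a quasi-compact open of a qcqs space hence qcqs, and $\fib(j^\ast)=\mathrm{Nuk}(X\ \mathrm{auf}\ Y)$ is dualisable by Lemma~\ref{dualisierbare_faser}. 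Moreover, by the discussion preceding Lemma~\ref{adische_morphismen}, $j^\ast$ is a colimit-preserving functor whose right adjoint $j_\ast$ is fully faithful and colimit-preserving, i.e.\ $j^\ast$ is a colimit-preserving Bousfield localisation with fully faithful colimit-preserving right adjoint. The $\mathcal{P}r^{\mathrm{St}}$-analogue of Lemma~\ref{verdier-sequenzen} then identifies the sequence as a right spaltende Verdier-Sequenz.

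\textbf{Second sequence: setup and the fibre.} Put $W:=X\setminus Z$; it is quasi-compact by hypothesis, so $U\cap W$ is quasi-compact (as $X$ is quasi-separated), the complement of $U\cap W$ in $U$ (resp.\ in $W$) is $U\cap Z$ (resp.\ $W\cap Y$), and $X\setminus(U\cup W)=Y\cap Z$ with $U\cup W$ quasi-compact. Consider the commuting square of restriction functors with corners $\mathrm{Nuk}(X),\mathrm{Nuk}(U),\mathrm{Nuk}(W),\mathrm{Nuk}(U\cap W)$. Passing to vertical fibres and using $\mathrm{Nuk}(X\ \mathrm{auf}\ Z)=\fib(\mathrm{Nuk}(X)\to\mathrm{Nuk}(W))$ and $\mathrm{Nuk}(U\ \mathrm{auf}\ U\cap Z)=\fib(\mathrm{Nuk}(U)\to\mathrm{Nuk}(U\cap W))$ exhibits the functor $\mathrm{Nuk}(X\ \mathrm{auf}\ Z)\to\mathrm{Nuk}(U\ \mathrm{auf}\ U\cap Z)$ of the statement as the corestriction of $j_U^\ast$, whose fibre is the total fibre of the square; by interchanging limits this equals $\fib\!\big(\mathrm{Nuk}(X\ \mathrm{auf}\ Y)\xrightarrow{(-)|_W}\mathrm{Nuk}(W\ \mathrm{auf}\ W\cap Y)\big)$, i.e.\ the full subcategory of those $M\in\mathrm{Nuk}(X)$ with $M|_U\simeq 0$ and $M|_W\simeq 0$. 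By separatedness of the sheaf $\mathrm{Nuk}$ on the cover $\{U,W\}$ of $U\cup W$ (the analytic descent recorded after Definition~\ref{nukleare_garben}, a consequence of Satz~\ref{lokaler_nuklearer_abstieg}) this is precisely the condition $M|_{U\cup W}\simeq 0$, i.e.\ $M\in\mathrm{Nuk}(X\ \mathrm{auf}\ Y\cap Z)$, and the resulting inclusion matches the first functor of the statement.

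\textbf{Second sequence: localisation property and main obstacle.} By the $\mathcal{P}r^{\mathrm{St}}$-analogue of Lemma~\ref{verdier-sequenzen} it now suffices to show that $\mathrm{Nuk}(X\ \mathrm{auf}\ Z)\to\mathrm{Nuk}(U\ \mathrm{auf}\ U\cap Z)$ is a colimit-preserving Bousfield localisation with fully faithful colimit-preserving right adjoint. The candidate right adjoint is the corestriction of $j_{U\ast}\colon\mathrm{Nuk}(U)\to\mathrm{Nuk}(X)$; for this to make sense one must know that $j_{U\ast}$ carries $\mathrm{Nuk}(U\ \mathrm{auf}\ U\cap Z)$ into $\mathrm{Nuk}(X\ \mathrm{auf}\ Z)$, which follows from the base-change equivalence $j_W^\ast\circ j_{U\ast}\simeq \tilde j_{U\ast}\circ\tilde j_W^\ast$ along the Cartesian square of quasi-compact open immersions $\tilde j_W\colon U\cap W\hookrightarrow U$, $\tilde j_U\colon U\cap W\hookrightarrow W$: for $N\in\mathrm{Nuk}(U\ \mathrm{auf}\ U\cap Z)$ one gets $(j_{U\ast}N)|_W\simeq\tilde j_{U\ast}(N|_{U\cap W})\simeq 0$. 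Granting this, full faithfulness of the corestricted $j_{U\ast}$ (hence the Bousfield property), as well as colimit-preservation of both corestrictions, are formal, since the subcategories involved are full and closed under colimits (being fibres in $\mathcal{P}r^{\mathrm{St}}$) and the adjunction unit and counit of $(j_U^\ast,j_{U\ast})$ pass to them. I expect the base change to be the only genuinely non-formal input; the plan is to deduce it from the analytic-sheaf property of $\mathrm{Nuk}$, checking it after the fully faithful embeddings into the ambient categories $\mathcal{D}((-,-)_\blacksquare)$, where it is part of the descent formalism of \cite{firstpaper}, exactly as in the schematic case. (Alternatively one may avoid singling out base change by assembling the three instances of the first sequence --- for the pairs $U\subset X$, $U\cap W\subset W$, $U\cap Z\subset U$ --- into a $3\times 3$ diagram whose rows and columns are right spaltende Verdier-Sequenzen except for the top row and left column, and invoking the corresponding $3\times 3$-lemma for Verdier-Sequenzen.)
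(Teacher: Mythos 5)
Your proposal is correct, and it reaches the same structural conclusion as the paper, but it is organized at a different level. The paper's own proof is very short: it writes down the two right-split Verdier sequences for the ambient solid categories, $\mathcal{D}_\blacksquare(X\ \mathrm{auf}\ Y)\rightarrow\mathcal{D}_\blacksquare(X)\rightarrow\mathcal{D}_\blacksquare(U)$ together with the supported variant, where the adjunction $j^\ast\dashv j_\ast$ and the compatibility with supports are taken from the solid formalism, and then merely observes that $j^\ast$ and $j_\ast$ preserve nuclear objects (the discussion after Satz~\ref{lokaler_nuklearer_abstieg} and Definition~\ref{nukleare_garben}), so everything restricts to $\mathrm{Nuk}$. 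You instead build the sequences directly at the nuclear level: for the first sequence you use the fully faithful, colimit-preserving $j_\ast$ on $\mathrm{Nuk}$ (the same §4 discussion) and the $\mathcal{P}r^{\mathrm{St}}$-version of Lemma~\ref{verdier-sequenzen}, exactly as the paper does in Lemma~\ref{dualisierbare_faser}; for the second you identify the fibre with $\mathrm{Nuk}(X\ \mathrm{auf}\ Y\cap Z)$ via the sheaf property on the cover $\{U,W\}$ of $U\cup W$ and get stability of the support condition under $j_{U\ast}$ from base change along the open square. What your write-up makes explicit are precisely the two points the paper absorbs into its last sentence (identification of the kernel, and that $j_\ast$ respects supports). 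One small strengthening of your plan: the base change you single out as the only non-formal input is in fact formal given descent — since $\mathrm{Nuk}(U\cup W)\simeq\mathrm{Nuk}(U)\times_{\mathrm{Nuk}(U\cap W)}\mathrm{Nuk}(W)$, the right adjoint of the projection to $\mathrm{Nuk}(U)$ is computed componentwise as $N\mapsto(N,\tilde j_{U\ast}(N|_{U\cap W}))$, and restriction from $X$ to $U\cup W$ is compatible with $j_{U\ast}$ because $k_\ast$ is fully faithful for $k:U\cup W\hookrightarrow X$; so no detour through $\mathcal{D}((-,-)_\blacksquare)$ is needed, though that route (the paper's) works as well. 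Also note that the dualisability assertions in your first paragraph are not needed for the statement, only presentability and stability of the fibres.
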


\begin{proof}
Es bezeichne $j$ die offene Einbettung $U\hookrightarrow X$. Wir betrachten folgende rechts spaltende Verdier-Sequenzen:

\begin{center}
\begin{tikzcd}
\mathcal{D}(X\ \mathrm{auf} \ Y) \arrow[r, shift left=0.5ex] & \mathcal{D}(X) \arrow[r, shift left=0.5ex,"j^\ast"]\arrow[l, shift left=0.5ex] & \mathcal{D}(U) \arrow[l, shift left=0.5ex,"j_\ast"] \\
\mathcal{D}(X\ \mathrm{auf} \ Y\cap Z)\arrow[r, shift left=0.5ex]\arrow[u,hook] & \mathcal{D}(X\ \mathrm{auf} \ Z)\arrow[r, shift left=0.5ex,"j^\ast"]\arrow[u,hook]\arrow[l, shift left=0.5ex] & \mathcal{D}(U\ \mathrm{auf} \ U\cap Z)\arrow[u,hook]\arrow[l, shift left=0.5ex,"j_\ast"]
\end{tikzcd}
\end{center}
Hierbei bezeichnet $j^\ast$ (bzw. $j_\ast$) den Rückzugsfunktor (bzw. den direkten Bildfunktor) entlang von $j$. Die nach rechts weisenden Pfeile auf der linken Seite des Diagramms stellen die natürlichen Einbettungen der Kerne der Funktoren $j^\ast$ und $j^\ast|_{\mathcal{D}(X\ \mathrm{auf} \ Z)}$ dar. Die nach links weisenden Pfeile entsprechen den Funktoren, die der Funktor $j_\ast$ gemäß Lemma~\ref{verdier-sequenzen} liefert. Wir behaupten, dass man die gewünschten Sequenzen erhält, wenn man alle betrachteten Funktoren auf nukleare Garbe einschränkt. Mit anderen Worten behaupten wir, dass die Funktoren $j_\ast$ und $j^\ast$ nukleare Objekte erhalten. Dies folgt aber sofort aus den Diskussionen nach Satz~\ref{lokaler_nuklearer_abstieg} und Definition~\ref{nukleare_garben}.
\end{proof}

Mithilfe dieses Lemmas zeigen wir nun, dass lokalisierende Invarianten in unserer Situation ebenfalls automatisch Nisnevich-Abstieg erfüllen:

\begin{satz}\label{nisnevcih-abstieg}
Sei $X$ ein quasi-kompakter quasi-separierter adischer Raum. Dann erfüllt jede lokalisierende Invariante $\mathcal{F}_{\mathrm{stet}}$ auf $X$ Nisnevich-Abstieg. Insbesondere bildet $\mathbb{K}_{\mathrm{stet}}(-)$ eine Nisnevich-Garbe auf $X$.
\end{satz}

\begin{proof}
Nach Korollar~\ref{excisiv} genügt es zu zeigen, dass für jede elementare Nisnevich'sche Überdeckung $U\hookrightarrow X\xleftarrow{f} V$ das Diagramm
\begin{center}
\begin{tikzcd}
\mathcal{F}_{\mathrm{stet}}(X) \arrow[r] \arrow[d] & \mathcal{F}_{\mathrm{stet}}(U) \arrow[d] \\
\mathcal{F}_{\mathrm{stet}}(V) \arrow[r] & \mathcal{F}_{\mathrm{stet}}(U \underset{X}{\times} V)
\end{tikzcd}\
\end{center}
kartesisch ist. Wir bezeichnen das Komplement von $U$ in $X$ mit $Z$. Die Zeilen des Diagramms
\begin{center}
\begin{tikzcd}
\mathcal{F}_{\mathrm{stet}}(X\ \mathrm{auf}\ Z) \arrow[r] \arrow[d] & \mathcal{F}_{\mathrm{stet}}(X) \arrow[r] \arrow[d] & \mathcal{F}_{\mathrm{stet}}(U) \arrow[d] \\
\mathcal{F}_{\mathrm{stet}}(U \underset{X}{\times} V\ \mathrm{auf}\ f^{-1}(Z)) \arrow[r] & \mathcal{F}_{\mathrm{stet}}(V) \arrow[r] & \mathcal{F}_{\mathrm{stet}}(U \underset{X}{\times} V) 
\end{tikzcd}\
\end{center}
sind nach Lemma~\ref{fasersequenz} Fasersequenzen. Deswegen reicht es aus zu zeigen, dass der natürliche Funktor 
\[\mathrm{Nuk} (X\ \mathrm{auf}\ Z) \rightarrow \mathrm{Nuk} (U \underset{X}{\times} V\ \mathrm{auf}\ f^{-1}(Z))\]
eine Äquivalenz ist. Dies folgt aber sofort aus dem étalen Abstiegssatz für nukleare Garben (Korollar~\ref{étaler_nuklearer_abstieg}).
\end{proof}

\begin{bemerkung}
Man kann in der Situation von Satz~\ref{nisnevcih-abstieg} mithilfe des zweiten Teils des Lemmas~\ref{fasersequenz} nachweisen, dass die Prägarbe $U\mapsto F_\mathrm{stet}(\mathrm{Nuk}(U\ \mathrm{auf}\ Z'\cap U))$ ebenso eine Nisnevich-Garbe bildet, wobei $Z'$ eine abgeschlossene Teilmenge von $X$ mit quasi-kompaktem Komplement ist.  
\end{bemerkung}

Im Gegensatz dazu gilt der étale Abstieg lokalisierender Invarianten auf analytischen adischen Räumen im Allgemeinen nicht. Wie im Fall der Schemata wenden wir die chromatische Lokalisierung an, um dieses Problem zu beheben:

\begin{satz}\label{hauptabstiegssatz}
Sei $X$ ein quasi-kompakter quasi-separierter analytischer adischer Raum endlicher Krull-Dimension und $p$ eine feste Primzahl. Angenommen, die virtuellen $p$-lokalen kohomologischen Dimensionen der galoisschen Siten von $X$ sind nach oben beschränkt. Dann bildet jede lokalisierende Invariante $\mathcal{F}_{\mathrm{stet}}$ mit Werten in $L_n^f$-lokalen Spektren eine étale Hypergarbe auf $X$.
\end{satz}

\begin{proof}
Sei $x$ ein Punkt von $X$. Man betrachte die Prägarbe $x^\ast\mathcal{F}_{\mathrm{stet}}$ auf dem galoisschen Situs $\mathcal{T}_x$, die durch die Einschränkung auf $\mathcal{T}_x$ des Nisnevich-Rückzugs von $\mathcal{F}$ entlang der kanonischen Abbildung $\iota_x:\Spa (\kappa_h(x),\kappa_h^+(x))\rightarrow X$ gegeben ist (siehe die Diskussion unter Definition~\ref{stetige_g_mengen}). Ein Morphismus in $\mathcal{T}_x$ entspricht einer endlichen étalen Abbildung $f: A'\rightarrow A''$ von endlichen étalen Algebren über $A$, wobei $A\cong \underset{x\in U_i}{\colim}\, A_i$ der nicht vervollständigte Nisnevich-Halm der Strukturgarbe in $x$ ist. Hierbei durchläuft $U_i$ die offenen affinoiden Nisnevich-Umgebungen von $x$ und mit $A_i$ wird der Ring $\mathcal{O}_X(U_i)$ bezeichnet. Wir schreiben die Abbildung $f$ als Kolimes $\colim(f_i:A_i'\rightarrow A_i'')$ von endlichen étalen Abbildungen zwischen endlichen étalen Algebren über $A_i$. Mit der üblichen Argumention dürfen wir zudem annehmen, dass die Abbildung $f$ von $f_0$ induziert wird, d. h., $f_i=f_0\underset{A_0'}{\otimes} A_i'$ für jede Umgebung $U_i$.

Es bezeichne $F$ die lokalisierende Invariante auf $\mathrm{Cat}_{\infty}^{\mathrm{perf}}$ mit Werten in $\mathrm{Sp}$, die der lokalisierenden Invariante $\mathcal{F}_{\mathrm{stet}}$ auf $X$ entspricht, und $\mathrm{Cat}_{\infty,A_0'}^{\mathrm{perf}}$ die $\infty$-Kategorie $\mathrm{Mod}_{\mathrm{Perf}(A_0')}(\mathrm{Cat}_{\infty}^{\mathrm{perf}})$. Die Prägarbe $x^\ast\mathcal{F}_\mathrm{stet}$ hat auf $A'$ (bzw. $A''$) den Wert $\colim F_\mathrm{stet}(\mathrm{Nuk}(A_i'))$ (bzw. $\colim F_\mathrm{stet}(\mathrm{Nuk}(A_i''))$). Wie man leicht nachprüft, definiert der Funktor 
\[\mathrm{Cat}_{\infty,A_0'}^{\mathrm{perf}}\rightarrow \mathrm{Sp},\ \mathcal{C}\mapsto \underset{i,\ x\in U_i}{\colim}\, F_\mathrm{stet}(\mathcal{C}\underset{\mathrm{Perf}(A_0')}{\otimes}\mathrm{Nuk}(A_i'))\]
eine lokalisierende Invariante mit Werten in $L_n^f$-lokalen Spektren. Unter Anwendung von \cite[Theorem 5.1 und Proposition 5.4]{CMNN20} auf die endliche étale Abbildung $f_0:A_0'\rightarrow A_0''$ sieht man, dass das Spektrum $\colim F_\mathrm{stet}(\mathrm{Nuk}(A_i'))$ zur Totalisierung
\[\mathrm{Tot}\Bigl(\colim F_\mathrm{stet}(\mathrm{Perf}(A_0'')\underset{\mathrm{Perf}(A_0')}{\otimes}\mathrm{Nuk}(A_i')) \rightrightarrows \colim F_\mathrm{stet}(\mathrm{Perf}(A_0''\underset{A_0'}{\otimes}A_0'')\underset{\mathrm{Perf}(A_0')}{\otimes}\mathrm{Nuk}(A_i'))\mathrel{\substack{\textstyle\rightarrow\\[-0.6ex]
                      \textstyle\rightarrow \\[-0.6ex]
                      \textstyle\rightarrow}} \dots\Bigr)\]
isomorph ist. Für jede endliche étale Algebra $B$ über $A_0$ gilt $\mathrm{Nuk}(B\underset{A_0'}{\otimes} A_i'') \cong \mathrm{Perf}(B)\underset{\mathrm{Perf}(A_0')}{\otimes}\mathrm{Nuk}(A_i')$, was sich leicht mithilfe des Satzes~\ref{perfekt-nuklear} zeigen lässt. Die Prägarbe $x^\ast \mathcal{F}_\mathrm{stet}$ bildet also eine Garbe auf $\mathcal{T}_x$. Die gerade durchgeführte Argumentation zeigt außerdem, dass $\mathcal{F}_{\mathrm{stet}}$ galoisschen Abstieg auf $X$ erfüllt. D. h., die Nisnevich-Hypergarbe $\mathcal{F}_{\mathrm{stet}}$ bildet nach Satz~\ref{étale=Nisnevich+Galois} eine étale Garbe auf $X$. 

Sei $B$ eine endliche étale Algebra über $A$. Wir schreiben diese als Kolimes $\colim B_i$ von endlichen étalen Algebren über $A_i$. Nach \cite[Theorem 7.14]{CM21} ist die Prägarbe $G:B\mapsto \colim F(\mathrm{Perf}(B_i))$ auf $\mathcal{T}_x$ eine hypervollständige Garbe. Man sieht unmittelbar, dass die étale Garbe $x^\ast \mathcal{F}_\mathrm{stet}$ ein Modul über $G$ ist. Da die Hypervervollständigung auf $\mathcal{T}_x$ nach Satz~\ref{tensor-lokalisierung-proendliche-gruppen} eine Tensor-Lokalisierung ist, ist $x^\ast \mathcal{F}_\mathrm{stet}$ also hypervollständig (siehe Lemma~\ref{tensor-lokalisierung}). Die Hypervollständigkeit der étalen Garbe $\mathcal{F}_{\mathrm{stet}}$ ergibt sich deshalb aus dem Lokal-Global-Prinzip für Hypervollstädigkeit (Satz~\ref{lokal-global}) und Satz~\ref{kriterium}.
\end{proof}

Wir wenden uns jetzt einer spezielleren Situation zu. Für den Rest dieses Abschnittes sei $X$ ein quasi-kompakter quasi-separierter analytischer adischer Raum über $\Spa (\mathbb{Z}[1/p],\mathbb{Z}[1/p])$\footnote{Mit anderen Worten ist $p$ invertierbar in $\mathcal{O}^+_S$.} von endlicher Krull-Dimension. Außerdem nehmen wir an, dass die virtuellen $p$-lokalen kohomologischen Dimensionen der galoisschen Siten von $X$ nach oben beschränkt sind. Wir folgen dem Ansatz von Thomason-Trobaugh im schematischen Fall in seiner modernen Form (siehe \cite[Abschnitt 3]{K1}) und beschreiben explizit die étale Hypergarbe $L_{K(1)}K_{\mathrm{stet}}(-)$ auf $X$\footnote{Nach der Teleskopvermutung für die Höhe 1 ist sie isomorph zur étalen Hypergarbe $L_{T(1)}K_{\mathrm{stet}}(-)$.}, wobei $K(1)$ die erste $p$-lokale Morava'sche $K$-Theorie bezeichnet. Es bezeichne $B\mathbb{Z}_p^\times$ den $\infty$-Topos $\mathrm{Sh}(\mathcal{T}_{\mathbb{Z}_p^\times})$ (siehe Definition~\ref{stetige_g_mengen}), wobei $\mathbb{Z}_p^\times$ mit der $p$-adischen Topologie versehen wird. Wir betrachten die volle Unterkategorie $\mathrm{Zykl}_p$ des kleinen étalen Situs $\mathrm{\Acute{E}t}_X$ von $X$, die aus den Objekten der Form $\underset{i=1}{\overset{k}{\coprod}}\Spec \mathbb{Z}[1/p,\zeta_{p^{n_i}}]$ besteht. Die Einbettung $\mathrm{Zykl}_p\subset \mathrm{\Acute{E}t}_X$ und der Morphismus $X\rightarrow \Spec \mathbb{Z}[1/p]$ von lokal geringten Räumen liefern die Morphismen von $\infty$-Topoi
\begin{center}
\begin{tikzcd}
X_{\Acute{e}t} \arrow[rr, bend left, "\pi_X"] \arrow[r] &(\Spec \mathbb{Z}[1/p])_{\Acute{e}t}\arrow[r,"\pi"] & B\mathbb{Z}_p^\times.\\
\end{tikzcd}
\end{center}
Es bezeichne $KU^\wedge_{p}$ die in \cite[Lemma 3.8]{K1} konstruierte $p$-vollständige hypervollständige Garbe von Spektren auf $\mathcal{T}_{\mathbb{Z}_p^\times}$. Dann definieren wir die étale Hypergarbe $KU^\wedge_{p,X}$ auf $X$ als die $p$-Vervollständigung der Hypervervollständigung des Rückzugs $\pi_X^\ast(KU^\wedge_{p})$. Wie man leicht einsehen kann, folgt die Hypervollständigkeit von $KU^\wedge_{p,X}$ aus Lemma~\ref{grundeigenschaften_der_hypervollständigkeit}.

Wir erinnern kurz an die Konstruktion des Morphismus $\pi^\ast(KU^\wedge_{p})\rightarrow L_{K(1)}K(-)$ von étalen Garben auf $\Spec \mathbb{Z}[1/p]$; für weitere Details verweisen wir auf \cite[Konstruktion 3.7 und Theorem 3.9]{K1}. Aus der Abbildung $\mu_{p^\infty}\rightarrow K_1(\mathbb{Z}[\zeta_{p^\infty}])$ ergibt sich der Morphismus von Spektren $\mathbb{S}[B\mu_{p^\infty}]\rightarrow K(\mathbb{Z}[\zeta_{p^\infty}])$, welcher uns einen Morphismus $\mathbb{S}[B\mu_{p^\infty}]_p^\wedge\rightarrow L_{K(1)}K(\mathbb{Z}[\zeta_{p^\infty}])$ liefert. Es bezeichne $\beta\in \pi_2(\mathbb{S}[B\mu_{p^\infty}])$ das Bott'sche Element. Da  das Bild von $\beta$ in $\pi_2(L_{K(1)}K(\mathbb{Z}[\zeta_{p^\infty}]))$ invertierbar ist (siehe \cite[Proposition 3.5]{K1}), induziert der obige Morphismus einen Morphismus $(\mathbb{S}[ B\mu_{p^\infty}]_p^\wedge[\beta^{-1}])_p^\wedge\xrightarrow[]{} L_{K(1)}K(\mathbb{Z}[\zeta_{p^\infty}])$. Nach dem Satz von Snaith gibt es einen Isomorphismus von Spektren $(\mathbb{S}[B\mu_{p^\infty}]_p^\wedge[\beta^{-1}])_p^\wedge\xrightarrow[]{\sim} KU_p^\wedge$\footnote{Hierbei bezeichnet $KU_p^\wedge$ nicht die oben erwähnte Garbe auf $\mathcal{T}_{\mathbb{Z}_p^\times}$, sondern das $p$-vollständige komplexe $K$-Theorie-Spektrum.}, woraus sich der gewünschte Morphismus $\pi^\ast(KU^\wedge_{p})\rightarrow L_{K(1)}K(-)$ von étalen Garben auf $\mathbb{Z}[1/p]$ ergibt.

Sei nun $A$ ein tatescher Ring. Die natürliche Einbettung $\mathcal{D}(A)\hookrightarrow \mathrm{Nuk}(A)$ induziert einen Morphismus $K(A)\rightarrow K_{\mathrm{stet}}(A)$. Dieser liefert uns zusammen mit dem Morphismus $\pi^\ast(KU^\wedge_{p})\rightarrow L_{K(1)}K(-)$ einen Morphismus $\mathrm{rgl}^{-1}:KU^\wedge_{p,X}\rightarrow L_{K(1)}K_{\mathrm{stet}}(-)$ von étalen Hypergarben auf $X$. 

\begin{satz}
Der Morphismus $\mathrm{rgl}^{-1}:KU^\wedge_{p,X}\rightarrow L_{K(1)}K_{\mathrm{stet}}(-)$ ist ein Isomorphismus.
\end{satz}

\begin{proof}
Da die Garben $KU^\wedge_{p,X}$ und $L_{K(1)}K(-)$ hypervollständig sind, genügt es, die Aussage halmweise nachzuweisen. Außerdem reicht es wegen der $p$-Vollständigkeit aus zu zeigen, dass die Halme von $KU^\wedge_{p,X}$ und $L_{K(1)}K(-)$ nach der $p$-Vervollständigung übereinstimmen. Sei $x$ ein Punkt von $X$ und betrachte den geometrischen Punkt $\Bar{\iota}_x:\Spa (\Bar{\kappa}(x),\Bar{\kappa}^+(x))\rightarrow X$. Wie man leicht nachprüft, ist die $p$-Vervollständigung des étalen Halms $(\Bar{\iota}_x)_{\mathrm{\acute{e}t}}^*(KU^\wedge_{p,X})$ isomorph zum Spektrum $KU^\wedge_{p}$. Da der Funktor $L_{K(1)}$ äquivalent zur Verkettung $(-)^\wedge_p\circ L_{KU}$ ist und die Bousfield-Lokalisierung $L_{KU}:\mathrm{Sp}\rightarrow \mathrm{Sp}$ mit Kolimites vertauscht, gilt
\begin{equation}\tag{$\ast$}
\bigl(\underset{x\in U}{\colim}\, L_{K(1)}K_{}(\mathcal{O}_X(U))\bigr)^\wedge_p\cong L_{K(1)}\bigl(\underset{x\in U}{\colim}\, K_{}(\mathcal{O}_X(U))\bigr),
\end{equation}
wobei $U$ die affinoiden étalen Umgebungen von $x$ durchläuft. 

Sei nun $V$ eine tatesche affinoide Umgebung von $x$. Es bezeichne $A_V$ den tateschen Ring $\mathcal{O}_X(V)$ und sei $\varpi$ eine Uniformisierende von $A_V$. Nach dem Efimov'schen Stetigkeitssatz (Satz~\ref{Stetigkeitssatz}) ist die stetige $K$-Theorie $K_{\mathrm{stet}}(A_{V,0})$ isomorph zu $\varprojlim K(A_{V,0}/\varpi^n)$, wobei $A_{V,0}$ ein Definitionsring von $A_V$ mit $1/p,\varpi\in A_V$ ist. Es ergibt sich aus dem Gabber'schen Starrheitssatz \[L_{K(1)}K(A_{V,0})\xrightarrow{\sim} L_{K(1)}K(A_{V,0}/\varpi)\xleftarrow{\sim} L_{K(1)}K(A_{V,0}/\varpi^n)\xleftarrow{\sim} L_{K(1)}K_{\mathrm{stet}}(A_{V,0}).\]
Es gilt also $L_{K(1)}K(A_{V})\xrightarrow{\sim}L_{K(1)}K_{\mathrm{stet}}(A_{V})$, denn Diagramm~\ref{kokartesisch} ist kokartesich. Unter Verwendung des Isomorphismus~($\ast$) folgt daher
\[\bigl(\underset{x\in U}{\colim}\, L_{K(1)}K_{\mathrm{stet}}(U)\bigr)^\wedge_p\cong L_{K(1)}K(\underset{x\in U}{\colim}\,\mathcal{O}_X(U)),\]
wobei $U$ die tateschen affinoiden étalen Umgebungen von $x$ durchläuft. Unter erneuter Anwendung des Gabber'schen Starrheitssatzes sieht man, dass $L_{K(1)}K(\underset{x\in U}{\colim}\,\mathcal{O}_X(U))$ isomorph zu $L_{K(1)}K(k)$ ist, wobei $k=(\underset{x\in U}{\colim}\, \mathcal{O}_X(U))^\wedge$ die Vervollständigung des étalen Halms der Strukturgarbe in $x$ bezeichnet. Der Satz folgt nun aus der Berechnung der $p$-adischen $K$-Theorie separabel abgeschlossener Körper nach Suslin, siehe \cite[Theorem 3.9]{K1}.
\end{proof}
\newpage
\section{Grothendieck-Riemann-Roch}

Entgegen Grothendiecks Willen ändern wir unseren Kurs nicht, sondern lassen uns von unserem Wissens- und Entdeckungsdrang immer tiefer ins logische Delirium führen: Das Ziel dieses Abschnittes ist es, den Satz von Grothendieck-Riemann-Roch für analytische adische Räume zu formulieren und zu beweisen. Im gesamten Abschnitt nehmen wir an, dass alle betrachteten analytischen adischen Räume quasi-kompakt und quasi-separiert und somit alle Morphismen quasi-kompakt sind. Im Folgenden sei $S$ stets ein fester noetherscher analytischer adischer Raum über $\Spa (\mathbb{Z}[1/p],\mathbb{Z}[1/p])$ für eine Primzahl $p$. Außerdem nehmen wir an, dass er die Bedingungen des Satzes~\ref{hauptabstiegssatz} erfüllt. Wir betrachten eine Abbildung $f:X\xrightarrow{} Y$ zwischen noetherschen analytischen adischen Räumen, glatt und eigentlich über $S$.

\begin{satz}[Satz von Grothendieck-Riemann-Roch]
Das folgende Diagramm von étalen Garben von Spektren auf $S$ ist kommutativ: 
\begin{equation*}
\begin{tikzcd}
\mathbb{K}_{X,S}(-)\rar["f_!"]\dar["\operatorname{Td}(X)\operatorname{ch}(-)"'] & \mathbb{K}_{Y,S}(-)\dar["\operatorname{ch}(-)\operatorname{Td}(Y)"]\\
\mathcal{H}_{\mathrm{\Acute{e}t}}(X,\mathbb{Q}_p)\rar["f_*"] & \mathcal{H}_{\mathrm{\Acute{e}t}}(Y,\mathbb{Q}_p)
\end{tikzcd}
\end{equation*}

\begin{center}
\vspace{-1.73cm}
\hspace{-0.6cm}
\resizebox {9.25cm} {!} {\input{die_Teufel}}
\end{center}
\end{satz}

Bevor wir uns dem Beweis zuwenden können, müssen wir alle Notationen und relevanten Begriffe erklären. Wir erinnern zunächst, der Vollständigkeit halber, an die Definitionen der noetherschen Bedingung, der Eigentlichkeit und der Glattheit in der rigiden Geometrie.

\begin{definition}[{\cite[Abschnitt 1.1]{Huber}}]
\begin{enumerate}[label=(\roman*)]
\item[] 
\item Sei $A$ ein analytischer Huber-Ring. Es heißt $A$ \textit{strikt noethersch}, wenn für jedes $n\geq 0$ die Tate-Algebra $A\langle T_1,\dots,T_n\rangle$ noethersch ist.
\item Ein analytischer adischer Raum ${X}$ heißt \textit{noethersch}, wenn es eine affinoide Überdeckung ${X}=\underset{i\in I}{\bigcup} \Spa (A_i,A_i^+)$ mit $A_i$ strikt noethersch gibt.
\end{enumerate}
\end{definition}

\begin{definition}[{\cite[Definitionen 1.2.1, 1.3.1 und 1.3.2]{Huber}}]
Sei ${f}:{X}\xrightarrow{} {Y}$ eine Abbildung zwischen analytischen adischen Räumen.
\begin{enumerate}[label=(\roman*)]
\item Eine Abbildung $r:A\xrightarrow{} B$ zwischen vollständigen analytischen Huber-Ringen heißt \textit{topologisch von endlichem Typ}, wenn sie über eine stetige offene surjektive Abbildung $A\langle T_1,\dots,T_n\rangle \xrightarrow{} B$ für ein $n\geq 0$ faktorisiert.
\item Die Abbildung ${f}$ heißt \textit{schwach von endlichem Typ}, wenn es für jeden Punkt $x\in {X}$ eine affinoide Teilmenge $x\in U\subset {X}$ und eine affinoide Teilmenge $V\subset {Y}$ mit $f(U)\subset V$ gibt, sodass die Abbildung $\mathcal{O}_{{Y}}(V)\xrightarrow{} \mathcal{O}_{{X}}(U)$ topologisch von endlichem Typ ist.
\item Die Abbildung ${f}$ heißt \textit{${}^{+}$schwach von endlichem Typ}, wenn es für jeden Punkt $x\in {X}$ eine affinoide Teilmenge $x\in U\subset {X}$, eine affinoide Teilmenge $V\subset {Y}$ mit $f(U)\subset V$ und eine endliche Menge $E\subset \mathcal{O}_{{X}}^+(U)$ gibt, sodass die Abbildung $\mathcal{O}_{{Y}}(V)\xrightarrow{} \mathcal{O}_{{X}}(U)$ topologisch von endlichem Typ und der Ring $\mathcal{O}_{{X}}^+(U)$ mit dem ganzen Abschluss von $\mathcal{O}_{{Y}}^+[E\cup \mathcal{O}_{{X}}(U)^{\circ\circ}]$ übereinstimmt.
\item Die Abbildung ${f}$ heißt \textit{separiert}, wenn sie schwach von endlichem Typ ist und die Diagonale $\Delta({X})$ abgeschlossen in ${X} \times_Y {X}$ ist.
\item Die Abbildung ${f}$ heißt \textit{universell abgeschlossen}, wenn sie schwach von endlichem Typ ist und der Basiswechsel ${Y}'\times_Y {X}\xrightarrow{} {Y}'$ für jeden Morphismus ${Y}'\xrightarrow{} {Y}$ von adischen Räumen abgeschlossen ist.
\item Die Abbildung ${f}$ heißt \textit{eigentlich}, wenn sie ${}^+$schwach von endlichem Typ, separiert und universell abgeschlossen ist.
\end{enumerate}
\end{definition}

\begin{definition}[{\cite[Korollar 1.6.10]{Huber}}]\label{glattheit}
Sei ${f}:{X}\xrightarrow{} {Y}$ eine Abbildung zwischen noetherschen adischen Räumen. Es heißt ${f}$ \textit{glatt}, wenn es für jede affinoide offene Teilmenge $\Spa (A,A^+)\subset {Y}$ eine offene Teilmenge $U\subset {X}$ mit $f(U)\subset \Spa (A,A^+)$ gibt, sodass sich die Einschränkung $f|_U:U\xrightarrow{} \Spa (A,A^+)$ als Verkettung

\begin{center}
\begin{tikzcd}
U \arrow[dr,"{g}",end anchor={north west}]\arrow[dd,"f|_U"] & \\
& \Spa (A\langle T_1,\dots, T_n\rangle,A^+\langle T_1,\dots, T_n\rangle ) \arrow[dl,"{h}",start anchor={south west}] \\
\Spa (A,A^+) &
\end{tikzcd}
\end{center}
für ein $n\geq 0$ schreiben lässt, wobei ${g}$ étale und ${h}$ die natürliche Projektion ist. 
\end{definition}

Sei $\mathbb{Z}/p^{k}\mathbb{Z}(n)$ die diskrete abelsche Gruppe $\mathbb{Z}/p^{k}\mathbb{Z}$ zusammen mit der stetigen $\mathbb{Z}_p^\times$-Wirkung, die durch $(\alpha,x)\in (\mathbb{Z}_p^\times,\mathbb{Z}/p^{k}\mathbb{Z})\mapsto \alpha^n\cdot x$ gegeben ist. Für einen analytischen adischen Raum $X$ über $\Spec \mathbb{Z}[1/p]$ bezeichne $\underline{\mathbb{Z}/p^{k}\mathbb{Z}}$ den Rückzug von $\mathbb{Z}/p^{k}\mathbb{Z}(n)$ nach $X_{\Acute{e}t}$, wobei $\mathbb{Z}/p^{k}\mathbb{Z}(n)$ als Garbe auf $\mathcal{T}_{\mathbb{Z}_p^\times}$ angesehen wird (siehe die Diskussion nach Satz~\ref{hauptabstiegssatz}). Dann definieren wir die étale Hypergarbe $\underline{\mathbb{Z}_p}(n)$ (bzw. $\underline{\mathbb{Q}_p}(n)$) auf $X$ als die étale Hypergarbe $\underset{k\in \mathbb{N}}{\varprojlim}\, \underline{\mathbb{Z}/p^{k}\mathbb{Z}}(n)$ (bzw. $(\underset{k\in \mathbb{N}}{\varprojlim}\, \underline{\mathbb{Z}/p^{k}\mathbb{Z}}(n))[1/p]$), wobei der Limes in der Kategorie der étalen Garben von Spektren gebildet wird. 

Wir definieren jetzt den \textit{direkten Bildmorphismus} auf dem Niveau der $K$-Theorie für einen eigentlichen lokal vollständigen Durchschnitt ${f}:{X}\xrightarrow{} {Y}$ zwischen noetherschen analytischen adischen Räumen. Wir verwenden hierfür den in \cite[Vorlesung IX]{6-Funktor} entwickelten Sechs-Funktor-Formalismus für feste Garben auf analytischen adischen Räumen. Es bezeichne $\mathcal{D}_{\blacksquare}({X})$ (bzw. $\mathcal{D}_{\blacksquare}({Y})$) die $\infty$-Kategorie der festen Garben auf $X$ (bzw. $Y$). Als Erstes bemerken wir, dass der Funktor ${f}_!: \mathcal{D}_{\blacksquare}({X})\xrightarrow{} \mathcal{D}_{\blacksquare}({Y})$ Nuklearität erhält. In der Tat, sei $N\in \mathcal{D}_{\blacksquare}({X})$ eine nukleare Garbe auf ${X}$ und $M$ ein kompaktes Objekt in $\mathcal{D}_{\blacksquare}({Y})$. Für ein beliebiges Objekt $L\in \mathcal{D}_{\blacksquare}({Y})$ führen wir folgende Rechnung durch:
\[\mathrm{Hom}(L,{f}_!N\otimes M^\vee)\cong \mathrm{Hom}(L,{f}_!(N\otimes {f}^{\ast}(M^\vee)))\cong \mathrm{Hom}({f}^{\ast} L,N\otimes {f}^{\ast}(M^\vee)) \cong \mathrm{Hom}({f}^{\ast} L,\underline{\mathrm{Hom}}({f}^{\ast} M,N))\]\[\cong\mathrm{Hom}({f}^{\ast} (L\otimes M),N)\cong \mathrm{Hom}(L\otimes M,{f}_! N)\cong \mathrm{Hom}(L, \underline{\mathrm{Hom}}(M,{f}_! N)). \]
Hierbei bezeichnet $M^\vee$ das Dual $\underline{\mathrm{Hom}}(M,\mathcal{O}_{{Y}})$ und wir verwenden implizit die Äquivalenz ${f}_!\xrightarrow[]{\sim}{f}_\ast$ und den Isomorphismus ${f}^{\ast}(M^\vee)\cong ({f}^{\ast} M)^\vee$. Wir bemerken nun, dass der Funktor ${f}^!: \mathcal{D}_{\blacksquare}({X})\xrightarrow{} \mathcal{D}_{\blacksquare}({Y})$ ebenfalls Nuklearität erhält und zudem mit Kolimites vertauscht. In der Tat ist er bis auf Twist um eine invertierbare Garbe zum Funktor $f^\ast$ äquivalent. Mit anderen Worten liefert der Funktor $f_!$ durch Einschränken einen dualisierbaren Funktor $f_!: \mathrm{Nuk}({X})\xrightarrow{} \mathrm{Nuk}({Y})$ und induziert somit einen direkten Bildmorphismus $f_!: \mathbb{K}_{\mathrm{stet}}(X)\xrightarrow{} \mathbb{K}_{\mathrm{stet}}(Y)$. Es bezeichne $\mathbb{K}_X(-)$ (bzw. $\mathbb{K}_Y(-)$) die durch die stetige nicht konnektive $K$-theorie definierte étale Prägarbe auf $X$ (bzw. auf $Y$). Aus unseren Überlegungen ergibt sich also ein Morphismus $f_!: f_\ast \mathbb{K}_X(-)\xrightarrow{} \mathbb{K}_Y(-)$ von étalen Prägarben auf $Y$. Wir nehmen jetzt an, dass der Morphismus $f$ ein Morphismus von Räumen über $S$ ist, und bezeichnen die Strukturabbildungen $X\xrightarrow{} S$ und $Y\xrightarrow{} S$ mit ${p}_X$ und ${p}_Y$. In diesem Fall setzen wir $\mathbb{K}_{X,S}(-)={p}_{X\ast}\mathbb{K}_X(-)$ und $\mathbb{K}_{Y,S}(-)={p}_{Y\ast}\mathbb{K}_Y(-)$ und bemerken, dass der Morphismus $f_!: f_\ast \mathbb{K}_X(-)\xrightarrow{} \mathbb{K}_Y(-)$ in offensichtlicher Weise einen Morphismus $\mathbb{K}_{X,S}(-)\xrightarrow{} \mathbb{K}_{Y,S}(-)$ induziert, den wir ebenfalls mit $f_!$ bezeichnen.

Sei nun $f:X\xrightarrow{} Y$ eine eigentliche Abbildung zwischen noetherschen analytischen adischen Räumen, glatt über $S$ der relativen Dimensionen $d_X$ und $d_Y$. Wir bezeichnen wiederum die Strukturabbildung $X\xrightarrow{} S$ (bzw. $Y\xrightarrow{} S$) mit ${p}_X$ (bzw. ${p}_Y$). Wir definieren die relative étale Kohomologie $\mathcal{H}_{\mathrm{\Acute{e}t}}(X,\mathbb{Q}_p)$ (bzw. $\mathcal{H}_{\mathrm{\Acute{e}t}}(Y,\mathbb{Q}_p)$) als das direkte Bild ${p}_{X\ast} (\underset{n\in\mathbb{Z}}{\bigoplus}\underline{\mathbb{Q}_p}(n)[2n])$ (bzw. ${p}_{Y\ast} (\underset{n\in\mathbb{Z}}{\bigoplus}\underline{\mathbb{Q}_p}(n)[2n])$). Wie im klassischen Fall benutzen wir zur Definition des direkten Bildmorphismus $f_\ast$ auf der étalen Kohomologie $\mathcal{H}_{\mathrm{\Acute{e}t}}(-,\mathbb{Q}_p)$ die Poincaré-Dualität:

\begin{satz}[Huber]
Sei ${p}_{{X}}:{X}\xrightarrow{} S$ ein noetherscher analytischer adischer Raum, glatt über $S$ der relativen Dimension $d_X$. Dann gilt für jedes $k\geq 0$
\[{p}_{{X}}^!(\underline{\mathbb{Z}/p^k\mathbb{Z}})\cong \underline{\mathbb{Z}/p^k\mathbb{Z}}(d_X)[2d_X].\]
\end{satz}

\begin{proof}
Für den Beweis verweisen wir auf {\cite[Theorem 6.1.6]{Poincare}}.
\end{proof}

Sei $k>0$ eine natürliche Zahl. Für jedes $n\in\mathbb{Z}$ erhalten wir durch Anwenden der Koeinheit $f_!f^!\xrightarrow{}\mathrm{id}$ auf den Identitätsmorphismus ${p}_Y^!(\underline{\mathbb{Z}/p^k\mathbb{Z}}(n)[2n]) \xrightarrow[]{\sim} {p}_Y^!(\underline{\mathbb{Z}/p^k\mathbb{Z}}(n)[2n])$ von étalen Garben auf $Y$ einen Morphismus $f_\ast {p}_X^!(\underline{\mathbb{Z}/p^k\mathbb{Z}}(n)[2n])\xrightarrow[]{} {p}_Y^!(\underline{\mathbb{Z}/p^n\mathbb{Z}}(n)[2n])$. Dann definieren wir den direkten Bildmorphismus $f_\ast: \mathcal{H}_{\mathrm{\Acute{e}t}}(X,\mathbb{Q}_p)\xrightarrow{} \mathcal{H}_{\mathrm{\Acute{e}t}}(Y,\mathbb{Q}_p)$ über die Diagramme 

\begin{center}
\begin{tikzcd}
{p}_{X\ast} {p}_X^!(\underline{\mathbb{Z}/p^k\mathbb{Z}}(n)[2n]) \arrow[r,]\arrow[d,"\sim" labr] &{p}_{Y\ast}{p}_Y^!(\underline{\mathbb{Z}/p^k\mathbb{Z}}(n)[2n]) \arrow[d,"\sim" labr] \\
{p}_{X\ast} {p}_X^\ast(\underline{\mathbb{Z}/p^k\mathbb{Z}}(n+d_X)[2n+2d_X]) \arrow[r] & {p}_{Y\ast}{p}_Y^\ast(\underline{\mathbb{Z}/p^k\mathbb{Z}}(n+d_Y)[2n+2d_Y])
\end{tikzcd}
\end{center}
für alle $k>0$, $n\in\mathbb{Z}$.

Wir kommen nun zur Definition des \textit{Chern-Charakters} und der \textit{Todd-Klasse}.

\begin{definition}[{\cite[Sequenz 2.2.6]{Huber}}]\label{Kummer}
Sei ${X}$ ein analytischer adischer Raum über $\Spec \mathbb{Z}[1/p]$. Es bezeichne $\mathbb{G}_{m,{X}}$ die étale Garbe $U\mapsto \mathcal{O}^\times_{{X}}(U)$ abelscher Gruppen auf ${X}$. Unter der \textit{Kummer'schen Sequenz} verstehen wir die exakte Sequenz 
\[1\xrightarrow{}\mu_{p^n}\xrightarrow{}\mathbb{G}_{m,{X}}\xrightarrow{\cdot p^n}\mathbb{G}_{m,{X}}\xrightarrow{} 1\]
von étalen Garben abelscher Gruppen auf ${X}$.
\end{definition}

Unter der Identifizierung $\mu_{p^n}= \underline{\mathbb{Z}/p^n\mathbb{Z}}(1)$ für alle $n\geq 0$ erhalten wir aus dieser Sequenz einen Morphismus $\mathbb{G}_{m,{X}}[1]\xrightarrow{}\underline{\mathbb{Z}_p}(1)[2]$, welchen wir zur Definition der \textit{ersten Chern-Klasse} benutzen.

\begin{lemma}[{\cite[Satz 2.2.7]{Huber}}]
Sei ${X}$ ein noetherscher analytischer adischer Raum. Dann ist die étale Kohomologiegruppe $H_{\Acute{e}t}^1({X},\mathbb{G}_{m,{X}})$ isomorph zur Picard-Gruppe von ${X}$ (bezüglich der analytischen Topologie).
\end{lemma}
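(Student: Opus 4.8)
Der Plan ist, die Aussage aus dem Vergleich der analytischen mit der étalen Topologie von $X$ abzuleiten; der wesentliche Bestandteil ist die adische Version des Satzes~90 von Hilbert. Zunächst erinnere ich an die allgemeine Tatsache, dass für jeden beringten Situs $(\mathcal{X},\mathcal{O})$ ein kanonischer Isomorphismus $H^1(\mathcal{X},\mathcal{O}^\times)\cong\{\text{invertierbare }\mathcal{O}\text{-Moduln}\}/{\cong}$ besteht, welcher durch die Identifizierung der $\mathcal{O}^\times$-Torsoren mit invertierbaren Moduln mittels des Kozykels der lokalen Trivialisierungen gegeben ist. Angewandt auf den analytischen Situs liefert dies $H^1_{\mathrm{an}}(X,\mathbb{G}_{m,X})\cong\mathrm{Pic}(X)$, und angewandt auf den kleinen étalen Situs ergibt sich $H^1_{\mathrm{\acute{e}t}}(X,\mathbb{G}_{m,X})\cong\mathrm{Pic}(X_{\mathrm{\acute{e}t}})$. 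Es genügt also zu zeigen, dass der Rückzug entlang des Morphismus von Siten $\epsilon\colon X_{\mathrm{\acute{e}t}}\to X_{\mathrm{an}}$ einen Isomorphismus $\mathrm{Pic}(X_{\mathrm{an}})\xrightarrow{\sim}\mathrm{Pic}(X_{\mathrm{\acute{e}t}})$ induziert, d.\,h. dass invertierbare Garben étalen Abstieg erfüllen.

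Hierzu betrachte ich die Leray-Spektralsequenz $H^p_{\mathrm{an}}(X,R^q\epsilon_*\mathbb{G}_m)\Rightarrow H^{p+q}_{\mathrm{\acute{e}t}}(X,\mathbb{G}_m)$. Die Gleichheit $\epsilon_*\mathbb{G}_{m,\mathrm{\acute{e}t}}=\mathbb{G}_{m,\mathrm{an}}$ ist unmittelbar, da $\mathbb{G}_m(V)=\mathcal{O}_X^\times(V)$ für jede analytische offene Teilmenge $V$ gilt; die exakte Fünftermsequenz reduziert die Behauptung daher auf das Verschwinden $R^1\epsilon_*\mathbb{G}_m=0$. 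Nun ist $R^1\epsilon_*\mathbb{G}_m$ die analytische Garbifizierung von $V\mapsto H^1_{\mathrm{\acute{e}t}}(V,\mathbb{G}_m)=\mathrm{Pic}(V_{\mathrm{\acute{e}t}})$; die Aussage ist also gerade, dass jedes étale Geradenbündel auf einem affinoiden Raum analytisch lokal trivial ist. Hier verwende ich die lokale Struktur der étalen Morphismen adischer Räume (Huber): eine beliebige étale Überdeckung lässt sich analytisch lokal zu einer solchen verfeinern, die aus rationalen Lokalisierungen und endlichen étalen Abbildungen aufgebaut ist; die Frage reduziert sich damit darauf zu zeigen, dass ein Geradenbündel $L$ auf $\Spa(A,A^+)$, welches auf einer endlichen étalen Überdeckung $\Spa(B,B^+)\to\Spa(A,A^+)$ trivial wird, auf einer rationalen Umgebung jedes vorgegebenen Punktes $x$ trivial ist. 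Dies gilt, weil der Halm $\mathcal{O}_{X,x}$ ein lokaler Ring ist — nach Übergang zu einem geometrischen Punkt ein streng henselscher lokaler Ring —, über dem endliche étale Algebren zerfallen und Geradenbündel frei sind; ein trivialisierender Schnitt von $L_x$ lässt sich dann wegen endlicher Präsentierbarkeit zu einer Trivialisierung über einer rationalen Teilmenge $V\ni x$ fortsetzen. Gleichwertig prüft man, dass die Halme von $R^1\epsilon_*\mathbb{G}_m$ in allen geometrischen Punkten verschwinden, da sie $\mathrm{Pic}$ der streng henselschen lokalen Ringe $\mathcal{O}_{X,\bar x}$ berechnen.

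Die Hauptschwierigkeit besteht darin, diesen letzten Schritt im adischen Kontext rigoros zu machen: man muss wissen, dass auf einem noetherschen analytischen adischen Raum die Strukturprägarbe eine Garbe auf rationalen Teilmengen und auf dem kleinen étalen Situs ist, dass die étalen Halme von $\mathcal{O}_X$ streng henselsche lokale Ringe mit separabel abgeschlossenem Restklassenkörper sind, und dass $R^1\epsilon_*$ mit den filtrierten Kolimites vertauscht, welche diese Halme definieren — all dies steht für noethersche $X$ zur Verfügung (hier geht die noethersche Voraussetzung ein). Ein alternativer Weg, der die Halm-Rechnung umgeht, besteht darin, direkt zu zeigen, dass Vektorbündel étalen Abstieg auf $X$ erfüllen, indem man auf treuflachen Abstieg endlich präsentierter Moduln entlang der endlich lokal freien Abbildung $A\to B$ einer endlichen étalen Überdeckung zurückführt; schränkt man die so erhaltene Äquivalenz $\mathrm{Vect}(X_{\mathrm{an}})\simeq\mathrm{Vect}(X_{\mathrm{\acute{e}t}})$ auf Rang eins ein und kombiniert dies mit $\mathrm{Pic}(X_{\mathrm{\acute{e}t}})=H^1_{\mathrm{\acute{e}t}}(X,\mathbb{G}_m)$, so ergibt sich das Lemma ebenfalls.
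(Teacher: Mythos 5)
Vorweg: die Arbeit beweist dieses Lemma gar nicht selbst, sondern zitiert es als [Huber, Satz 2.2.7]; dein Vorschlag ist also an Hubers Argument zu messen. Deine Gesamtstrategie (Torsor-Interpretation von $H^1$ auf beringten Siten, $\epsilon_\ast\mathbb{G}_m=\mathbb{G}_m$, Reduktion auf $R^1\epsilon_\ast\mathbb{G}_m=0$, d.\,h. étaler Abstieg für Geradenbündel) ist die übliche und im Kern richtig. Dein zentrales Detailargument hat aber eine echte Lücke: Der Halm von $R^1\epsilon_\ast\mathbb{G}_m$ wird an Punkten $x\in X$ des analytischen Situs gebildet und ist $\colim_{V\ni x}H_{\Acute{e}t}^1(V,\mathbb{G}_m)$ über analytische Umgebungen $V$; das ist \emph{nicht} $\mathrm{Pic}$ des streng henselschen étalen Halms $\mathcal{O}_{X,\bar x}$ (dessen Verschwinden ist der triviale Halm der étalen Garbifizierung von $\mathrm{Pic}$ und sagt über $R^1\epsilon_\ast$ nichts aus). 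Ebenso greift dein Ausbreitungsargument zu kurz: eine Trivialisierung über dem streng henselschen Halm breitet sich per endlicher Präsentierbarkeit nur auf eine \emph{étale} Umgebung aus, nicht auf eine rationale Teilmenge $V\ni x$ — genau dieser Übergang von der étalen zur analytischen Topologie ist der Inhalt der Aussage, das Argument ist so zirkulär. Um den Halm wirklich zu berechnen, müsste man entweder $\colim_V H_{\Acute{e}t}^1(V,\mathbb{G}_m)$ via eines Stetigkeitsarguments mit $H_{\Acute{e}t}^1(\Spa(\kappa(x),\kappa^+(x)),\mathbb{G}_m)$ identifizieren und dann klassisches Hilbert 90 anwenden (wobei die gängigen Stetigkeitssätze für Torsionskoeffizienten formuliert sind, $\mathbb{G}_m$ also Zusatzarbeit verlangt), oder direkt über Abstieg argumentieren.

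Der am Ende skizzierte Alternativweg ist dagegen tragfähig und entspricht im Wesentlichen dem Standardbeweis: analytisch lokal darf man (lokaler Struktursatz für étale Morphismen, Quasikompaktheit) annehmen, dass der étale $\mathbb{G}_m$-Torsor durch eine endliche étale Überdeckung $\Spa(B,B^+)\to\Spa(A,A^+)$ trivialisiert wird; treuflacher Abstieg endlich präsentierter Moduln entlang $A\to B$ liefert einen invertierbaren $A$-Modul $M$, der über einer rationalen Umgebung jedes Punktes frei ist, da die analytischen Halme $\mathcal{O}_{X,x}$ lokale Ringe sind und die Strukturgarbe für noethersches $X$ garbig ist; zusammen mit der Injektivität aus der Fünftermsequenz (mittels $\epsilon_\ast\mathbb{G}_m=\mathbb{G}_m$) ergibt das die Behauptung. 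Beachte aber, dass auch hier der Schritt „beliebige étale Trivialisierung $\Rightarrow$ Trivialisierung auf einer \emph{endlichen} étalen Überdeckung einer affinoiden Umgebung“ ein eigenes kleines Argument braucht (Henselianität der analytischen Halme bzw. Limesargumente à la Gabber–Ramero, wie sie im Anhang der Arbeit zitiert werden): der Struktursatz allein liefert nur, dass ein Stück der Überdeckung offen in einem endlichen étalen Raum liegt, nicht, dass der Torsor auf dem ganzen endlichen étalen Raum trivial wird.
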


\begin{definition}\label{erste chernklasse}
Sei ${X}$ ein noetherscher analytischer adischer Raum über $\Spec \mathbb{Z}[1/p]$ und $\mathcal{L}$ ein Geradenbündel auf ${X}$. Dann ist die \textit{erste Chern-Klasse} $c_1(\mathcal{L})$ von $\mathcal{L}$ definiert als das Bild der Klasse von $\mathcal{L}$ in $\mathrm{Pic}({X})$ unter der Abbildung
\[\mathrm{Pic}({X})\xrightarrow{\sim} H_{\Acute{e}t}^1({X},\mathbb{G}_{m,{X}})\xrightarrow{} H_{\Acute{e}t}^2({X},\underline{\mathbb{Z}_{p}}(1)).\]
\end{definition}

Genau wie im Falle von Schemata und komplexen Mannigfaltigkeiten definieren wir nun dadurch den Chern-Charakter und die Todd-Klasse. Sei $X$ ein analytischer adischer Raum $X$ über $\Spec \mathbb{Z}[1/p]$. Im Folgenden bezeichnen wir mit $H^\ast_{\Acute{e}t}(X,\mathbb{Z}_p)$ (bzw. $H^\ast_{\Acute{e}t}(X,\mathbb{Q}_p)$) die direkte Summe von Kohomologiegruppen $\underset{n\in\mathbb{Z}}{\bigoplus}H^{2n} ({X},\underline{\mathbb{Z}_p}(n))$ (bzw. $\underset{n\in\mathbb{Z}}{\bigoplus}H^{2n} ({X},\underline{\mathbb{Q}_p}(n))$).

\begin{satz}
Es gibt eine eindeutig bestimmte funktorielle Zuordnung $\mathcal{V}\mapsto c(\mathcal{V})\in H^\ast_{\Acute{e}t}(X,\mathbb{Z}_p)[[t]]$, wobei $t$ eine formale Variable und $\mathcal{V}$ ein Vektorbündel auf einem noetherschen analytischen adischen Raum ${X}$ über $\Spec \mathbb{Z}[1/p]$ ist, die folgende Eigenschaften besitzt:
\begin{enumerate}[label=(\roman*)]
\item Die Zuordnung $\mathcal{V}\mapsto c(\mathcal{V})$ vertauscht mit dem Rückzug.
\item Für jede exakte Sequenz von Vektorbündeln $0\xrightarrow{} \mathcal{V}' \xrightarrow{} \mathcal{V} \xrightarrow{} \mathcal{V}''\xrightarrow{} 0$ gilt $c(\mathcal{V})=c(\mathcal{V}')\cdot c(\mathcal{V}'')$.
\item Für ein Geradenbündel $\mathcal{L}$ gilt $c(\mathcal{L})=1+c_1(\mathcal{L})t$.
\end{enumerate}
\end{satz}

\begin{proof}
Wie bei Schemata und komplexen Mannigfaltigkeiten folgt der Satz aus dem \textit{Spaltungsprinzip}, welches auch in unserer Situation gilt, denn man kann den Fahnenraum eines Vektorbündels $\mathcal{V}$ auch im adischen Kontext konstruieren.\footnote{Für eine Diskussion der Konsktruktion von $\mathbb{P}(\mathcal{V})$ für ein Vektorbündel $\mathcal{V}$ in der adischen Geometrie verweisen wir auf \cite[Abschnitt 6]{Bogdascha}.} Bezüglich Details konsultiere man etwa \cite{Chernklassen}.
\end{proof}

\begin{satz}
Es gibt eine eindeutig bestimmte funktorielle Zuordnung $\mathcal{V}\mapsto \mathrm{ch}(\mathcal{V})\in H^{\ast} ({X},\mathbb{Q}_p)$, wobei $\mathcal{V}$ ein Vektorbündel auf einem noetherschen analytischen adischen Raum ${X}$ über $\Spec \mathbb{Z}[1/p]$ ist, die folgende Eigenschaften besitzt:
\begin{enumerate}[label=(\roman*)]
\item Die Zuordnung $\mathcal{V}\mapsto \mathrm{ch}(\mathcal{V})$ vertauscht mit dem Rückzug.
\item Für jede exakte Sequenz von Vektorbündeln $0\xrightarrow{} \mathcal{V}' \xrightarrow{} \mathcal{V} \xrightarrow{} \mathcal{V}''\xrightarrow{} 0$ gilt $\mathrm{ch}(\mathcal{V})=\mathrm{ch}(\mathcal{V}') + \mathrm{ch}(\mathcal{V}'')$.
\item Für ein Geradenbündel $\mathcal{L}$ gilt $\mathrm{ch}(\mathcal{L})=e^{c_1(\mathcal{L})}$.
\end{enumerate}
Außerdem gilt die Künneth-Formel $\mathrm{ch}(\mathcal{V}\otimes \mathcal{W}) =\mathrm{ch}(\mathcal{V})\cdot  \mathrm{ch}(\mathcal{W})$ für Vektorbündel $\mathcal{V}$ und $\mathcal{W}$ auf einem noetherschen analytischen adischen Raum ${X}$. 
\end{satz}

\begin{proof}
Die Existenz, die Eindeutigkeit und die Formel $\mathrm{ch}(\mathcal{V}\otimes \mathcal{W}) =\mathrm{ch}(\mathcal{V})\cdot  \mathrm{ch}(\mathcal{W})$ werden genau wie in klassischen Fällen bewiesen, siehe etwa \cite{Cherncharakter}.
\end{proof}

In ähnlicher Weise beweist man schließlich den folgenden Satz:

\begin{satz}\label{todklasse}
Es gibt eine eindeutig bestimmte funktorielle Zuordnung $\mathcal{V}\mapsto \mathrm{Td}(\mathcal{V})\in H^{\ast} ({X},\mathbb{Q}_p)$, wobei $\mathcal{V}$ ein Vektorbündel auf einem noetherschen analytischen adischen Raum ${X}$ über $\Spec \mathbb{Z}[1/p]$ ist, die folgende Eigenschaften besitzt:
\begin{enumerate}[label=(\roman*)]
\item Die Zuordnung $\mathcal{V}\mapsto \mathrm{Td}(\mathcal{V})$ vertauscht mit dem Rückzug.
\item Für jede exakte Sequenz von Vektorbündeln $0\xrightarrow{} \mathcal{V}' \xrightarrow{} \mathcal{V} \xrightarrow{} \mathcal{V}''\xrightarrow{} 0$ gilt $\mathrm{Td}(\mathcal{V})=\mathrm{Td}(\mathcal{V}') \cdot \mathrm{Td}(\mathcal{V}'')$.
\item Für ein Geradenbündel $\mathcal{L}$ gilt $\mathrm{Td}(\mathcal{L})=\frac{c_1(\mathcal{L})}{1-e^{-c_1(\mathcal{L})}}$.
\end{enumerate}
\end{satz}

Sei ${X}$ ein noetherscher analytischer adischer Raum, glatt über $S$. Es bezeichne $\mathcal{T}_{{X}}$ das relative Tangentialbündel von $X$ über $S$. Zur Vereinfachung der Notation schreiben wir für $\mathrm{Td}(\mathcal{T}_{{X}})$ einfach $\mathrm{Td}({X})$. 

Zur vollständigen Erklärung der Formulierung des Satzes von Grothendieck-Riemann-Roch fehlt uns nur noch der \textit{verfeinerte Chern-Charakter} $\mathrm{ch}:\mathbb{K}_{X,S}(-)\xrightarrow{} \mathcal{H}_{\Acute{e}t}(X,\mathbb{Q}_p)$, wobei $X$ ein noetherscher analytischer adischer Raum über $S$ ist, welcher die Bedingungen des Satzes~\ref{hauptabstiegssatz} erfüllt.\footnote{Man bemerke, dass sie automatisch erfüllt sind, sobald $X$ glatt über $S$ ist.} Wir definieren diesen durch den Isomorphismus $KU^\wedge_{p,X}\xrightarrow{\sim} L_{K(1)}K_X(-)$ aus dem vorhergehenden Abschnitt mithilfe des folgenden Satzes.

\begin{satz}\label{zerlegung}
Sei ${X}$ ein analytischer adischer Raum über $\Spec \mathbb{Z}[1/p]$, welcher die Bedingungen des Satzes~\ref{hauptabstiegssatz} erfüllt. Dann wird die étale Garbe $KU^\wedge_{p,X}$ nach dem Invertieren von $p$ isomorph zur étalen Garbe $\underset{n\in\mathbb{Z}}{\bigoplus}\underline{\mathbb{Q}_p}(n)[2n]$ als Garbe von $\mathbb{E}_\infty$-Ringen.\footnote{Zur Definition der Struktur eines $\mathbb{E}_{\infty}$-Ringes auf $\underset{n\in\mathbb{Z}}{\bigoplus}\underline{\mathbb{Q}_p}(n)[2n]$ benutze man die Tatsache, dass die $p$-Vervollständigung des Tensorprodukts $\underline{\mathbb{Z}_p}(n)\otimes \underline{\mathbb{Z}_p}(m)$ isomorph zu $\underline{\mathbb{Z}_p}(n+m)$ ist.}
\end{satz}

\begin{proof}
Zum Beweis gehen wir zum proétalen Situs von $X$ über.\footnote{Für eine Diskussion der proétalen Topologie und insbesondere des proétalen Situs einer proendlichen Gruppe verweisen wir auf \cite{proet}.} Konkreter betrachten wir das folgende Diagramm von geometrischen Morphismen von $\infty$-Topoi (siehe die Diskussion unter Satz~\ref{hauptabstiegssatz}): 
\begin{center}
\begin{tikzcd}
X_{pro\Acute{e}t} \arrow[d,"\alpha"] \arrow[rr, bend left, "\Tilde{\pi}_X"] \arrow[r] &(\Spec \mathbb{Z}[1/p])_{pro\Acute{e}t}\arrow[r,"\Tilde{\pi}"] \arrow[d,"\beta"] & (B\mathbb{Z}_p^\times)_{pro\Acute{e}t} \arrow[d,"\gamma"]\\
X_{\Acute{e}t} \arrow[rr, bend right, "\pi_X"]  \arrow[r] &(\Spec \mathbb{Z}[1/p])_{\Acute{e}t}\arrow[r,"\pi"] & B\mathbb{Z}_p^\times
\end{tikzcd}
\end{center}
Um unsere Argumente zu verdeutlichen, ändern wir unsere Notationen für den Verlauf dieses Beweises etwas ab. Für eine natürliche Zahl $k\geq 0$ und eine ganze Zahl $n\in \mathbb{Z}$ bezeichnen wir mit $\mathbb{Z}/p^k\mathbb{Z}(n)$ das diskrete Spektrum $\mathbb{Z}/p^k\mathbb{Z}$ zusammen mit der $\mathbb{Z}_p^\times$-Wirkung, die durch $(\alpha,x)\in (\mathbb{Z}_p^\times,\mathbb{Z}/p^k\mathbb{Z})\mapsto \alpha^n\cdot x$ gegeben ist. Im Folgenden identifizieren wir es mit der unter der Yoneda-Einbettung entsprechenden Garbe auf $B\mathbb{Z}_p^\times$. Es bezeichne $\mathbb{Z}_{p,B\mathbb{Z}_p^\times}(n)$ die Garbe von Spektren auf $B\mathbb{Z}_p^\times$, die als $\varprojlim \mathbb{Z}/p^k\mathbb{Z}(n)$ gegeben ist. Hierbei wird der Limes in der Kategorie der Garben von Spektren auf $B\mathbb{Z}_p^\times$ gebildet. Die in \cite[Lemma 3.8]{K1} konstruierte Garbe $KU^\wedge_{p}$ auf $B\mathbb{Z}_p^\times$ wird im Weiteren mit $KU^\wedge_{p,B\mathbb{Z}_p^\times}$ bezeichnet. Wir betrachten nun die Rückzüge dieser Garben nach $X_{\Acute{e}t}$. Es bezeichne $\mathbb{Z}_{p,X_{\Acute{e}t}}$ (bzw. $KU^\wedge_{p,X_{\Acute{e}t}}$) die $p$-Vervollständigung der Hypervervollständigung des Rückzugs $\pi_X^\ast \mathbb{Z}_{p,B\mathbb{Z}_p^\times}$ (bzw. $\pi_X^\ast KU^\wedge_{p,B\mathbb{Z}_p^\times}$). Man bemerke dabei, dass die so definierte Garbe $\mathbb{Z}_{p,X_{\Acute{e}t}}$ mit der unter Definition~\ref{glattheit} definierten Garbe $\underline{\mathbb{Z}_p}(n)$ übereinstimmt, denn die Garben $\pi_X^\ast\mathbb{Z}/p^k\mathbb{Z}(n)$ für $k\geq 0$ sind alle hypervollständig. Darüber hinaus bezeichnen wir mit $\mathbb{Q}_{p,X_{\Acute{e}t}}$ die Garbe $\mathbb{Z}_{p,X_{\Acute{e}t}}[1/p]$.

Wir definieren nun analoge Garben auf den proétalen Siten. Da der proétale $\infty$-Topos von $\mathbb{Z}_p^\times$ äquivalent zur $\infty$-Kategorie der verdichteten Animen mit $\mathbb{Z}_p^\times$-Wirkung ist (siehe \cite[Lemma 4.3.2]{proet}), wobei $\mathbb{Z}_p^\times$ mit der $p$-adischen Topologie versehen wird, ist er hypervollständig. Für $n\in\mathbb{Z}$ bezeichne $\mathbb{Z}_{p,(B\mathbb{Z}_p^\times)_{pro\Acute{e}t}}(n)$ (bzw. $\mathbb{Q}_{p,(B\mathbb{Z}_p^\times)_{pro\Acute{e}t}}(n)$) die topologische abelsche Gruppe $\mathbb{Z}_p$ (bzw. $\mathbb{Q}_p$) zusammen mit der $\mathbb{Z}_p^\times$-Wirkung, die durch $(\alpha,x)\in (\mathbb{Z}_p^\times,\mathbb{Z}_p)\mapsto \alpha^n\cdot x$ (bzw. $(\alpha,x)\in (\mathbb{Z}_p^\times,\mathbb{Q}_p)\mapsto \alpha^n\cdot x$) gegeben ist. Im Weiteren identifizieren wir sie mit den entsprechenden Garben abelscher Gruppen auf $(B\mathbb{Z}_p^\times)_{pro\Acute{e}t}$. Wie man direkt nachprüft, etwa unter Benutzung der Tatsache, dass der direkte Bildfunktor $\gamma_\ast$ mit Limites und filtrierten Kolimites in $\mathrm{Sh}((B\mathbb{Z}_p^\times)_{pro\Acute{e}t},\mathrm{Sp}_{\leq k})$ für jedes feste $k\in\mathbb{Z}$ vertauscht und darstellbare Objekte erhält, sind die kanonischen Abbildungen 
\[\mathbb{Z}_{p,B\mathbb{Z}_p^\times}(n)\rightarrow \gamma_\ast \mathbb{Z}_{p,(B\mathbb{Z}_p^\times)_{pro\Acute{e}t}}(n) \quad \text{und} \quad  \mathbb{Q}_{p,B\mathbb{Z}_p^\times}(n) \rightarrow \gamma_\ast \mathbb{Q}_{p,(B\mathbb{Z}_p^\times)_{pro\Acute{e}t}}(n)\]
Isomorphismen. Wir definieren nun die Garbe $KU^\wedge_{p,(B\mathbb{Z}_p^\times)_{pro\Acute{e}t}}$ auf $(B\mathbb{Z}_p^\times)_{pro\Acute{e}t}$ als \[(\mathbb{S}[B^2\mathbb{Z}_{p,(B\mathbb{Z}^\times_{p})_{{pro\Acute{e}t}}}(1)]^\wedge_p[\beta^{-1}])^\wedge_p,\]
wobei $\beta$ das Bott'sche Element bezeichnet. Man überzeugt sich leicht davon, dass die kanonische Abbildung $KU^\wedge_{p,B\mathbb{Z}_p^\times}\rightarrow \gamma_\ast KU^\wedge_{p,(B\mathbb{Z}_p^\times)_{pro\Acute{e}t}}$ ein Isomorphismus ist, denn der Funktor $\gamma_\ast$ vertauscht mit Postnikow-Limites. Es bezeichne \[\mathbb{Z}_{p,X_{pro\Acute{e}t}} \quad \text{bzw.}\quad \mathbb{Q}_{p,X_{pro\Acute{e}t}} \quad \text{bzw.}\quad KU^\wedge_{p,X_{pro\Acute{e}t}} \quad \text{bzw.}\quad KU^\wedge_{p,X_{pro\Acute{e}t}}[1/p]\] die Hypervervollständigung des Rückzugs \[\Tilde{\pi}_X^\ast \mathbb{Z}_{p,(B\mathbb{Z}_p^\times)_{pro\Acute{e}t}}(n)\quad \text{bzw.}\quad \Tilde{\pi}_X^\ast \mathbb{Q}_{p,(B\mathbb{Z}_p^\times)_{pro\Acute{e}t}}(n)\quad \text{bzw.}\quad \Tilde{\pi}_X^\ast KU^\wedge_{p,(B\mathbb{Z}_p^\times)_{pro\Acute{e}t}}\quad \text{bzw.}\quad \pi_\ast KU^\wedge_{p,X_{pro\Acute{e}t}}[1/p].\] Da der Rückzugfunktor $\Tilde{\pi}_X^\ast$ mit Limites vertauscht\footnote{Dies ergibt sich aus den Eigenschaften der proétalen Topologie.}, sind die so definierten Garben $\mathbb{Z}_{p,X_{pro\Acute{e}t}}$ und $KU^\wedge_{p,X_{pro\Acute{e}t}}$ zudem $p$-vollständig. Man prüft nun leicht nach, dass die kanonischen Abbildungen
\[\mathbb{Z}_{p,X_{\Acute{e}t}}(n) \rightarrow \alpha_\ast \mathbb{Z}_{p,X_{pro\Acute{e}t}}(n),\quad \mathbb{Q}_{p,X_{\Acute{e}t}}(n) \rightarrow \alpha_\ast \mathbb{Q}_{p,X_{pro\Acute{e}t}}(n),\]\[KU^\wedge_{p,X_{\Acute{e}t}}\rightarrow \alpha_\ast KU^\wedge_{p,X_{pro\Acute{e}t}}\quad \text{und}\quad 
KU^\wedge_{p,X_{\Acute{e}t}}[1/p]\rightarrow \alpha_\ast KU^\wedge_{p,X_{pro\Acute{e}t}}[1/p]\]
ebenfalls Isomorphismen sind.

Aus unseren Argumentationen oben folgt, dass es genügt, einen Isomorphismus 
\[KU^\wedge_{p,(B\mathbb{Z}_p^\times)_{pro\Acute{e}t}}\xrightarrow{\sim} \underset{n\in\mathbb{Z}}{\bigoplus}\mathbb{Q}_{p,(B\mathbb{Z}_p^\times)_{pro\Acute{e}t}}(n)[2n]\]
von verdichteten Ringspektren mit $\mathbb{Z}_p^\times$-Wirkung zu konstruieren. Wir betrachten zwei Fälle.

\textit{Fall $p\neq 2$.} Sei $g\in\mathbb{Z}\subset \mathbb{Z}_p$ ein topologischer Erzeuger der multiplikativen Gruppe $\mathbb{Z}_p^\times$. Wir betrachten den geometrischen Morphismus $\nu:(B\mathbb{Z})_{pro\Acute{e}t}\rightarrow (B\mathbb{Z}_p^\times)_{pro\Acute{e}t}$ von $\infty$-Topoi, welcher von der Abbildung $\mathbb{Z}\rightarrow \mathbb{Z}_p^\times,\ 1\mapsto g$ induziert wird. Es bezeichne $\mathbb{S}_p$ das verdichtete Spektrum $\varprojlim \mathbb{S}/p^k$, wobei $\mathbb{S}/p^k$ als diskretes verdichtetes Spektrum angesehen und der Limes in der $\infty$-Kategorie der verdichteten Spektren gebildet wird. Wir versehen es im Weiteren mit der trivialen $\mathbb{Z}_p^\times$-Wirkung. Wir behaupten, dass die Einschränkung des Rückzugsfunktors $\nu^\ast$ auf feste\footnote{Ein verdichtetes Spektrum mit $\mathbb{Z}_p^\times$-Wirkung heißt \textit{fest}, wenn dessen zugrunde liegendes verdichtetes Spektrum fest ist.} $\mathbb{S}_p$-Moduln mit $\mathbb{Z}_p^\times$-Wirkung volltreu ist. Wie man leicht einsehen kann, ist unsere Behauptung dazu äquivalent, dass das Tensorprodukt $\mathbb{S}_p[\mathbb{Z}_p^\times]^\blacksquare\underset{\mathbb{S}_p[g^\mathbb{Z}]}{\otimes^\blacksquare} \mathbb{S}_p[\mathbb{Z}_p^\times]^\blacksquare$ zum festen Spektrum $\mathbb{S}_p[\mathbb{Z}_p^\times]^\blacksquare$ isomorph ist. Hierbei bezeichnet $(-)^\blacksquare$ den Verfestigungsfunktor. Es bezeichne $\mu_{p-1}\subset \mathbb{Z}_p^\times$ die Untergruppe der $p$-ten Einheitswurzeln. Wir führen folgende Rechnung durch:
\[\mathbb{S}_p[\mathbb{Z}_p^\times]^\blacksquare\overset{\mathrm{def}}{=} \underset{k\geq 1}{\varprojlim}\, \mathbb{S}_p[\mu_{p-1}\times ((1+p\mathbb{Z}_p)/(1+p^k\mathbb{Z}_p))]\overset{\mathrm{log}}{\cong} \underset{k\geq 1}{\varprojlim}\, \mathbb{S}_p[\mu_{p-1}\times \mathbb{Z}/p^{k-1}\mathbb{Z}]\cong\mathbb{S}_p[\mu_{p-1}]\otimes \mathbb{S}_p[[T]],\]
wobei $T$ eine formale Variable ist und $g$ auf $\mathbb{S}_p[[T]]$ durch $\cdot (1+T)$ wirkt. Mit anderen Worten müssen wir zeigen, dass
\[\mathbb{S}_p[[1-T]]\underset{\mathbb{S}_p[T]}{\otimes^\blacksquare} \mathbb{S}_p[[1-T]] \cong \mathbb{S}_p[[1-T]]^\blacksquare\]
gilt. Da das analytische verdichtete Ringspektrum $(\mathbb{S}_{p}[T],\mathbb{S}_{p})_\blacksquare$ isomorph zum analytischen verdichteten Ringspektrum $\mathbb{S}_{p}[T]_\blacksquare$ ist, ist die linke Seite isomorph zu $\mathbb{S}_p[[1-T]]\underset{\mathbb{S}_p[T]_\blacksquare}{\otimes} \mathbb{S}_p[[1-T]]$. Wir schreiben $\mathbb{S}_p[[1-T]]$ als Faser \[\mathrm{fib}\, (\underset{k\in\mathbb{N}}{\prod}\mathbb{S}_p[T]/(1-T)^k\xrightarrow{} \underset{k\in\mathbb{N}}{\prod}\mathbb{S}_p[T]/(1-T)^k).\]
Man bemerke dabei, dass 
\[\mathbb{S}_p[T]/(1-T)^k\cong \mathrm{cofib}\, (\mathbb{S}_p[T]\xrightarrow[]{\cdot (1-T)^k} \mathbb{S}_p[T])\quad \text{und}\quad \underset{k\in\mathbb{N}}{\prod}\mathbb{S}_p[T]\cong \mathbb{S}_p[T]_\blacksquare [\mathbb{N}\cup \{\infty\}].\] Der gewünschte Isomorphismus wird nun durch direkte Rechnung verifiziert.

Es bezeichne $\mathbb{Q}_p(n)$ (bzw. $KU^\wedge_p$) das zugrunde liegende verdichtete Spektrum von $\mathbb{Q}_{p,(B\mathbb{Z}_p^\times)_{pro\Acute{e}t}}(n)$ (bzw. $KU^\wedge_{p,(B\mathbb{Z}_p^\times)_{pro\Acute{e}t}}$) versehen mit dessen $g^\mathbb{Z}$-Wirkung. Aus dem obigen Absatz folgt, dass es genügt, einen Isomorphismus 
\[KU^\wedge_{p}\rightarrow \underset{n\in\mathbb{Z}}{\bigoplus}\mathbb{Q}_{p}(n)[2n]\]
von verdichteten Ringspektren mit $g^\mathbb{Z}$-Wirkung zu konstruieren. Es bezeichne $\mathbb{Q}(n)$ die diskrete topologische Gruppe $\mathbb{Q}$ zusammen mit der $g^\mathbb{Z}$-Wirkung, die durch $(g^k,x)\in g^\mathbb{Z}\times \mathbb{Q}\mapsto g^{nk}\cdot x$ gegeben ist.  Wie man leicht nachprüft, gilt $\mathbb{Q}_{p}(n)\cong \mathbb{Q}(n)\otimes \mathbb{S}_p$. Auf dem Niveau der zugrunde liegenden verdichteten Ringspektren, d. h. ohne $g^\mathbb{Z}$-Wirkung, gilt außerdem $KU^\wedge_{p}\cong KU \otimes \mathbb{S}_p$, wobei $KU$ als diskretes verdichtetes Ringspektrum angesehen wird. In der Tat folgt dies leicht aus den Isomorphismen $ku^\wedge_p\cong ku \otimes \mathbb{S}_p$ und $KU^\wedge_p\cong ku^\wedge_p[\beta^{-1}]$, wobei $\beta$ das Bott'sche Element bezeichnet. Insbesondere gilt $\psi: (KU\otimes \mathbb{Q})\otimes \mathbb{S}_p\cong  KU^\wedge_p[1/p]$. Wir wollen nun die linke Seite mit einer $g^\mathbb{Z}$-Wirkung ausstatten, sodass $\psi$ zu einem $g^\mathbb{Z}$-äquivarianten Isomorphismus von verdichteten Ringspektren wird. Der Snaith'sche Satz besagt, dass $KU$ zum Spektrum $\mathbb{S}[B^2 \mathbb{Z}][\beta^{-1}]$ isomorph ist. Die Endomorphismen $B^2 \mathbb{Z}\xrightarrow{\cdot g^k} B^2 \mathbb{Z}$ für $k\geq 0$ induzieren in offensichtlicher Weise Endomorphismen $\phi_k:KU\rightarrow KU$. Es bezeichne $\Phi$ die kanonische Abbildung $KU\rightarrow KU^\wedge_p$. Wie man leicht einsehen kann, gilt $\Phi \circ\phi_k=g^k\cdot \Phi$ für jedes $k\geq 0$, wobei $g^k\cdot$ auf der rechten Seite der Gleichung die Wirkung von $g^k$ auf $KU^\wedge_p$ bezeichnet. Man überzeugt sich nun leicht davon, dass die Endomorphismen $\phi_k$ Automorphismen von $KU\otimes \mathbb{Q}$ induzieren und somit eine Wirkung von $g^\mathbb{Z}$ auf $KU\otimes \mathbb{Q}$, sodass die Abbildung $\psi$ zu einem $g^\mathbb{Z}$-äquivarianten Isomorphismus wird.

Mit anderen Worten haben wir das Problem auf folgendes reduziert: Wir müssen einen $g^\mathbb{Z}$-äquivarianten Isomorphismus $KU\otimes \mathbb{Q}\xrightarrow[]{\sim}\underset{n\in \mathbb{Z}}{\bigoplus} \mathbb{Q}(n)[2n]$ von Ringspektren konstruieren. Dies folgt aber direkt aus dem Isomorphismus $KU\cong \mathbb{S}[B^2 \mathbb{Z}][\beta^{-1}]$.

\textit{Fall $p=2$}. Dieser Fall ist tatsächlich völlig analog zu dem obigen. Der einzige Unterschied besteht darin, dass die Multiplikative Gruppe $\mathbb{Z}_2^\times$ keine dichte zyklische Untergruppe besitzt. Es gibt aber eine ganze Zahl $g$, sodass die Untergruppe $\{\pm 1\}\times g^\mathbb{Z}$ dicht in $\mathbb{Z}_2^\times$ ist. Wir ersetzen also die Gruppe $g^\mathbb{Z}$ im obigen Beweis mit dieser Untergruppe $\{\pm 1\}\times g^\mathbb{Z}$ und argumentieren dann wie oben.
\end{proof}

Sei $X$ ein noetherscher analytischer adischer Raum über $S$, welcher die Bedingungen des Satzes~\ref{hauptabstiegssatz} erfüllt. Wir definieren den verfeinerten Chern-Charakter $\mathrm{ch}:\mathbb{K}_{X,S}(-)\xrightarrow{}\mathcal{H}_{\mathrm{\Acute{e}t}}(X,\mathbb{Q}_p)$ als den Morphismus, der sich durch Anwenden des direkten Bildfunktors ${p}_{X\ast}$ auf die Verkettung
\[\mathbb{K}_X(-)\xrightarrow{} L_{K(1)}K_X(-)\cong KU_{X,p}^\wedge \xrightarrow{} \underset{n\in\mathbb{Z}}{\bigoplus}\underline{\mathbb{Q}_p}(n)[2n]\]
ergibt. Der so definierte Chern-Charakter induziert insbesondere durch Bildung von $\pi_0$ eine Abbildung $K_0(\mathrm{Vect}_X)\xrightarrow{}  H^{\ast}(X,\mathbb{Q}_p)$, wobei $\mathrm{Vect}_X$ das Gruppoid der Vektorbündel auf $X$ bezeichnet. Wir behaupten, dass diese Abbildung mit dem mittels der Kummer'schen Sequenz oben definierten Chern-Charakter $\mathrm{ch}:K_0(\mathrm{Vect}_X)\xrightarrow{}H^{\ast} (X,\mathbb{Q}_p)$ übereinstimmt. Wir betrachten das folgende Diagramm:
\begin{center}
\begin{tikzcd}
\mathbb{S}[ B\mathbb{G}_{m,X}] \arrow[rd] \arrow[rrrrrd, bend left=8] & & & & \\
\mathbb{S}[ B\mu_{p^\infty} ] \arrow[r] \arrow[u] & \mathbb{K}_X(-) \arrow[r] & L_{K(1)}K_X(-) & {KU^\wedge_{p,X}} \arrow[r,"\sim"]\arrow[l,swap,"\sim"] & {\underset{n\in\mathbb{Z}}{\bigoplus}\underline{\mathbb{Q}_p}(n)\beta^n[2n]} \arrow[r] & {\underline{\mathbb{Q}_p}(1)\beta[2]} \\
\end{tikzcd}
\end{center}
Der Morphismus $\mathbb{S}[ B\mu_{p^\infty}]\xrightarrow[]{}\mathbb{K}_X(-)$ (bzw. $\mathbb{S}[ B\mathbb{G}_{m,X}]\xrightarrow[]{}\mathbb{K}_X(-)$) in dem Diagramm entsteht durch Adjunktion aus den Abbildungen $\mu_{p^\infty}(R)\xrightarrow{}K_1(R)$ (bzw. $R^\times\xrightarrow{}K_1(R)$) für einen beliebigen Ring $R$. Der schräge Pfeil oben stellt den Morphismus dar, welcher durch Adjunktion dem durch die Kummer'sche Sequenz definierten Morphismus $\mathbb{G}_{m,X}[1]\xrightarrow{} \mathbb{Q}_p[2]$ entspricht. Die anderen Morphismen sind wie in der Diskussion oben. Die Frage, ob die beiden definierten Chern-Charaktere übereinstimmen, ist genau dazu äquivalent, ob das rechte Dreieck kommutativ ist. Dies folgt aber unmittelbar aus der Definition des Morphismus $KU^\wedge_{p,X}\xrightarrow{}L_{K(1)}K_X(-)$.

Wir kommen nun zum eigentlichen Beweis des Satzes von Grothendieck-Riemann-Roch. Nach all unserer Arbeit lässt sich der Satz genau in derselben Weise wie in der Situation von \cite[Vorlesung 15]{Complex} beweisen. Wir erklären kurz die wichtigsten Schritte. Wir betrachten zunächst den folgenden abstrakten Rahmen (siehe \cite[Definition 15.6]{Complex}). Sei $\mathrm{Man}_S$ die Kategorie der noetherschen analytischen adischen Räume, glatt und eigentlich über $S$, und sei $(K,\otimes)$ eine symmetrische monoidale $1$-Kategorie, abgeschlossen unter endlichen Produkten.

\begin{enumerate}[label=(\roman*)]
\item Unter einer Kohomologietheorie auf $\mathrm{Man}_S$ mit Werten in $K$ verstehen wir einen kontravarianten symmetrischen monoidalen Funktor $H^\ast: \mathrm{Man}_S^\mathrm{op}\xrightarrow{} K$, der disjunkte Vereinigungen auf Produkte abbildet und die folgende Eigenschaft besitzt: Für jedes ${X}\in \mathrm{Man}_S$ und jedes Vektorbündel $\mathcal{V}$ auf ${X}$ positiven Rangs ist die Abbildung $H^\ast (X)\xrightarrow{} H^\ast (\mathbb{P}(\mathcal{V}))$ ein Monomorphismus.
\item Sei $H^\ast$ eine Kohomologietheorie auf $\mathrm{Man}_S$ mit Werten in $K$. Es bezeichne $\mathrm{Man}_S^{\cong}$ das maximale Untergruppoid von $\mathrm{Man}_S$. Unter einer \textit{direkten Bildstruktur} auf $H^\ast$ verstehen wir einen kovarianten Funktor $H_\ast: \mathrm{Man}_S\xrightarrow{} K$ mit einer Identifizierung $H_\ast|_{\mathrm{Man}_S^{\cong}} \cong H^\ast|_{\mathrm{Man}_S^{\cong}}$, der die folgenden Eigenschaften besitzt.
\begin{enumerate}[label=(\alph*)]
\item Sei $h:{Z}\xrightarrow[]{}{Z}'$ ein Morphismus in $\mathrm{Man}_S$. Es bezeichne ${h}^\ast$ (bzw. ${h}_\ast$) die Abbildung $H^\ast({h})$ (bzw. $H_\ast({h})$). Für jedes transverse kartesische Diagramm
\begin{center}
\begin{tikzcd}
{X}' \arrow[d,"{f}'"] \arrow[r,"{g}'"] & {X} \arrow[d,"{f}"] \\
{Y}' \arrow[r,"{g}"] & {Y}
\end{tikzcd}
\end{center}
in $\mathrm{Man}_S$ gilt ${g}^\ast {f}_\ast \cong  {f}'_\ast{g}'{}^\ast:H^\ast({X})\xrightarrow{} H^\ast({Y}') $. 
\item Sei $f:{X}\xrightarrow[]{}{Y}$ ein Morphismus in $\mathrm{Man}_S$ und $Z$ ein Objekt in $\mathrm{Man}_S$. Es gilt 
\[f_\ast\otimes \mathrm{id}_{H^\ast(Z)}\cong (f_\ast\otimes \mathrm{id}_Z)_\ast:H^\ast (X\times_S Z)\cong H^\ast(X)\otimes H^\ast(Z)\xrightarrow{} H^\ast(Y)\otimes H^\ast(Z) \cong H^\ast (Y\times_S Z).\]
\item Sei $\mathcal{L}$ ein Geradenbündel auf einem Raum ${X}\in\mathrm{Man}_S$. Die Abbildung
\[H^\ast (\mathbb{P}(\mathcal{L}\oplus \mathcal{O}_{{X}}))\xrightarrow[]{({p}_\ast,\infty^\ast)}H^\ast({X})\times H^\ast ({X})\]
ist ein Monomorphismus. Hierbei bezeichnet $p:\mathbb{P}(\mathcal{L}\oplus \mathcal{O}_{{X}})\xrightarrow{} X$ die kanonische Projektion und $\infty:X\xrightarrow{} \mathbb{P}(\mathcal{L}\oplus \mathcal{O}_{{X}})$ den $\infty$-Schnitt.
\end{enumerate}
\end{enumerate}

In unserer Situation nehmen wir die derivierte Kategorie $D_{\Acute{e}t}(S,\mathbb{Z})$ der étalen Garben auf $S$ als die Kategorie $K$ und die Zuordnung ${X}\mapsto {p}_{{X}\ast}L_{K(1)}K_{{X}}[1/p]\cong \mathcal{H}_{\Acute{e}t}(X,\mathbb{Q}_p)$ als die Kohomologietheorie $H^\ast$, wobei ${p}_{{X}}$ die Struktur Abbildung ${X}\xrightarrow{} S$ bezeichnet. Der Satz von Grothendieck-Riemann-Roch kann nun wie folgt interpretiert werden: Er besagt, dass die zwei definierten direkten Bildstrukturen, nämlich diejenigen, die durch den Funktor $f_\ast:\mathbb{K}_{X,S}\xrightarrow{} \mathbb{K}_{Y,S}$ bzw. die Poincaré-Dualität definiert werden, bis auf Multiplikation mit der Todd-Klasse übereinstimmen.

Zum Beweis des Satzes führen wir wie in \cite[Vorlesung 15]{Complex} den Begriff der \textit{Euler'schen Klasse} eines Geradenbündels ein. Für einen Raum ${X}\in \mathrm{Man}_S$ und ein Geradenbündel $\mathcal{L}$ auf ${X}$ bezeichne $0:{X}\xrightarrow{}\mathbb{P}(\mathcal{L}^\ast\oplus \mathcal{O}_{{X}})$ den Nullschnitt. Für eine Kohomologietherie $(H^\ast,H_\ast)$ zusammen mit einer direkten Bildstruktur definieren wir $e(\mathcal{L})\in H^\ast ({X})$ als $e(\mathcal{L})=0^\ast 0_\ast (1)$. Wie man leicht nachprüft (siehe \cite[Beispiel 15.10]{Complex}), ist in unserer Situation die Euler'sche Klasse von $\mathcal{L}$ bezüglich der $K$-theoretischen Bildstruktur gleich $1-e^{c_1(\mathcal{L})}$ und bezüglich der anderen gleich $c_1(\mathcal{L})$.

Sei nun $H_\ast$ eine direkte Bildstruktur auf $\mathcal{H}_{\Acute{e}t}(X,\mathbb{Q}_p)$ und ${f}:{X}\xrightarrow{}{Y}$ ein Morphismus in $\mathrm{Man}_S$. Wie man in direkter Weise nachprüft, definiert die Formel $\mathrm{Td}({Y})^{-1}H_\ast({f})\mathrm{Td}({X})$ ebenfalls eine direkte Bildstruktur (für eine allgemeine Definition des Twists einer direkten Bildstruktur verweisen wir auf \cite[Bemerkung 15.11]{Complex}). Insbesondere genügt es zum Beweis des Satzes von Grothendieck-Riemann-Roch zu zeigen, dass zwei direkte Bildstrukturen auf einer Kohomologietheorie übereinstimmen, falls die entsprechenden Theorien der Euler'schen Klassen übereinstimmen. Der Beweis dieser Aussage ist wörtlich derselbe wie in der Situation von \cite[Theorem 15.12]{Complex}.
\newpage
\appendix
\section{Die Nisnevich-Topologie}

In diesem Abschnitt des Anhangs geben wir eine Definition der Nisnevich-Topologie von analytischen adischen Räumen und beweisen einige ihrer Eigenschaften. Im gesamten Abschnitt nehmen wir an, dass alle betrachteten analytischen adischen Räume quasi-kompakt und quasi-separiert und somit alle Morphismen quasi-kompakt sind. Wenn ein Huber-Paar ohne anderslautenden Kommentar auftritt, wird es als vollständig angenommen.

\begin{notation} 
\begin{enumerate}[label=(\roman*)]
\item[]
\item Unter einem \textit{Topos} verstehen wir einen \textit{$1$-Topos}.
\item Sei $X$ ein analytischer adischer Raum. Mit $(\kappa(x),\kappa^+(x))$ bezeichnen wir den vollständigen Restklassenkörper von $X$ in $x\in X$.
\end{enumerate}
\end{notation}

\begin{definition}[{\cite[Definition 7.1]{Perf}}, {\cite[Definition 7.5.1]{Berkeley}}]
\begin{enumerate}[label=(\roman*)]
\item[]
\item Ein Morphismus $f: X \rightarrow Y$ von analytischen adischen Räumen heißt \textit{endlich étale}, wenn für jede affinoide offene Teilmenge $U=\Spa (R,R^+)\subset Y$ das Urbild $f^{-1}(U)\cong \Spa (\Tilde{R},\Tilde{R}^+)$ ebenso affinoid ist, sodass die Abbildung $R\rightarrow \Tilde{R}$ endlich étale und $\Tilde{R}^+$ der ganze Abschluss von $R^+$ ist.
\item Ein Morphismus $f: X\rightarrow Y$ von analytischen adischen Räumen heißt \textit{étale}, wenn jeder Punkt $x\in X$ eine offene Umgebung $U$ hat, sodass es eine offene Teilmenge $V\subset Y$ mit $f(U)\subset V$ gibt, für die die Einchränkung $f:U\rightarrow V$ als eine offene Einbettung gefolgt von einem endlichen étalen Morphismus faktorisiert werden kann.
\end{enumerate}
\end{definition}

\begin{bemerkung}
In der Definition von endlichen étalen Morphismen reicht es, nur die Existenz einer Zariski-Überdeckung mit der obigen Eigenschaft zu fordern, denn endliche projektive Moduln erfüllen Zariski-Abstieg nach \cite[Theorem 1.4]{firstpaper}.
\end{bemerkung}

\begin{lemma}\label{offen}
Ein étaler Morphismus von analytischen adischen Räumen ist offen.
\end{lemma}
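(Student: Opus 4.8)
\emph{Der Plan.}
Die Eigenschaft, offen zu sein, ist sowohl auf der Quelle als auch auf dem Ziel lokal: Ist $X=\bigcup_i U_i$ eine offene Überdeckung und jede Einschränkung $f|_{U_i}$ offen, so ist $f$ offen, und ist $Y=\bigcup_j V_j$ eine offene Überdeckung und jede Einschränkung $f^{-1}(V_j)\to V_j$ offen, so ist $f$ offen. Da offene Einbettungen offenkundig offen sind und ein étaler Morphismus definitionsgemäß lokal als offene Einbettung gefolgt von einem endlichen étalen Morphismus faktorisiert, reduziert sich die Behauptung auf den Fall eines endlichen étalen Morphismus. Indem man das Ziel durch affinoide offene Teilmengen überdeckt, bleibt zu zeigen: Ist $A\to B$ ein endlicher étaler Morphismus vollständiger Huber-Ringe, so ist $\Spa(B,B^+)\to\Spa(A,A^+)$ offen.

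\emph{Der affinoide Fall.}
Hier steckt der eigentliche Inhalt. Ein étaler Ringmorphismus ist flach, also ist $A\to B$ flach und topologisch von endlichem Typ, und die gesuchte Offenheit ist das adische Gegenstück zu der Tatsache, dass flache Morphismen lokal endlicher Präsentation offen sind; man kann sie daher aus den Grundeigenschaften flacher Morphismen adischer Räume ableiten (siehe etwa \cite{Huber}). Alternativ lässt sich der Fall elementar behandeln: Nach Übergang zu einer offenen Überdeckung des Ziels, die einer Zariski-Überdeckung von $\Spec A$ entspricht --- was für die Offenheit von $f$ unschädlich ist ---, darf man $B=A[T]/(g)$ mit $g$ normiert und $\mathrm{disc}(g)\in A^\times$ annehmen, denn jede endliche étale Algebra über einem lokalen Ring ist monogen. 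Für $w\in\Spa(B,B^+)$ mit Bild $x=f(w)$ entspricht $w$ dann der Wertung $x$ zusammen mit einer \emph{einfachen} Nullstelle von $g$ im vervollständigten Restklassenkörper; nach dem Henselschen Lemma deformiert sich diese Nullstelle zu einer Nullstelle über jedem Punkt einer hinreichend kleinen rationalen Umgebung $W$ von $x$, und indem man $W$ klein genug wählt, liegt die zugehörige Fortsetzung der Wertung in einer vorgegebenen rationalen Umgebung von $w$. Damit ist das Bild jeder rationalen Teilmenge von $\Spa(B,B^+)$ offen, also $f$ offen.

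\emph{Haupthindernis.}
Die beiden Reduktionsschritte sind rein formal; die Mühe liegt im affinoiden endlichen étalen Fall. Das Haupthindernis dabei ist, dass die adische Topologie echt feiner ist als die Zariski-Topologie auf $\Spec A$ --- wo die Offenheit aus Flachheit und endlicher Präsentation unmittelbar folgt ---, sodass man entweder das oben genannte Resultat über flache Morphismen adischer Räume heranziehen oder das Henselsche Deformationsargument sorgfältig mit der expliziten Beschreibung der rationalen Teilmengen von $\Spa(A,A^+)$ und $\Spa(B,B^+)$ verzahnen muss. Ist dieser Schritt erledigt, folgt der Satz.
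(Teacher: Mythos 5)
Deine formale Reduktion (Lokalität der Offenheit auf Quelle und Ziel, Faktorisierung étale $=$ offene Einbettung gefolgt von endlich étale) ist korrekt und deckt sich mit der Arbeit, die ebenfalls sofort auf den endlichen étalen Fall reduziert. Dort wird dieser Fall dann aber nicht über Flachheit erledigt, sondern durch Anpassung des Beweises von \cite[Lemma 1.7.9]{Huber}: Man ignoriert dessen ersten Absatz und bemerkt, dass $X_B=\mathrm{Spv}(f)^{-1}(X_A)$ gilt, sodass die Offenheit aus der entsprechenden Aussage auf dem Bewertungsspektrum folgt. Genau dieser Kern -- die Offenheit eines endlichen étalen Morphismus affinoider analytischer adischer Räume -- wird in deinem Vorschlag nicht wirklich bewiesen, sondern nur umrissen.

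Zu Route (a): Ein allgemeiner Satz ``flach und topologisch von endlichem Typ $\Rightarrow$ offen'' ist für analytische adische Räume in der hier benötigten Allgemeinheit (beliebige analytische Huber-Paare, insbesondere nicht stark noethersch, etwa perfektoid) nicht einfach zitierbar; die einschlägigen Offenheitsaussagen bei Huber beruhen gerade nicht auf Flachheit, sondern auf einem $\mathrm{Spv}$-Argument, und ihre Übertragung auf diese Allgemeinheit ist genau der Inhalt des Beweises der Arbeit -- deine Berufung darauf ist also entweder zu vage oder läuft auf ebendiese Anpassung hinaus. Zu Route (b): Erstens ist ``jede endliche étale Algebra über einem lokalen Ring ist monogen'' falsch, sobald der Restklassenkörper endlich ist (etwa $\mathbb{F}_2^3$ über $\mathbb{F}_2$); die Reduktion lässt sich hier zwar retten, weil für ein vollständiges analytisches Huber-Paar jeder Restklassenkörper von $\Spec A$ unendlich ist (wäre $\kappa(\mathfrak{p})$ endlich, so gäbe es ein topologisch nilpotentes $a$ mit $a^N-1\in\mathfrak{p}$, obwohl $1-a^N$ wegen Vollständigkeit eine Einheit ist), aber das müsste man sagen. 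Zweitens entspricht ein Punkt $w$ über $x$ im Allgemeinen \emph{nicht} einer einfachen Nullstelle von $g$ in $\widehat{\kappa}(x)$, sondern einem irreduziblen Faktor von $g$ über $\widehat{\kappa}(x)$ zusammen mit einer Fortsetzung der Bewertung auf die zugehörige endliche separable Erweiterung; der Restklassenkörpergrad ist typischerweise $>1$. Drittens ist der Schritt, dass sich diese Daten ``nach dem Henselschen Lemma'' über jeden Punkt einer hinreichend kleinen rationalen Umgebung von $x$ fortsetzen und die fortgesetzte Bewertung zudem in einer \emph{vorgegebenen} rationalen Umgebung von $w$ landet, genau die zu beweisende Aussage: Man muss die Faktorisierung von $g$ über $\widehat{\kappa}(x')$ für variierendes $x'$ kontrollieren und (auch für Bewertungen höheren Ranges, wo ``Nähe'' über Spezialisierungen läuft) sicherstellen, dass die gewählte Fortsetzung die definierenden Ungleichungen der rationalen Teilmenge erfüllt. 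Diese Kontrolle leistet Hubers Bewertungsspektrum-Argument; in deinem Text wird sie nur behauptet, womit der entscheidende Schritt offen bleibt.
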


\begin{proof}
Es genügt zu zeigen, dass jeder endliche étale Morphismus offen ist. In diesem Falle ist der Beweis fast wörtlich derselbe wie in der Situation von \cite[Lemma 1.7.9]{Huber}: Man ignoriere dessen ersten Absatz und bemerke, dass die Abbildung $X_B\rightarrow X_A$ offen ist, denn $X_B=\mathrm{Spv}(f)^{-1}(X_A)$.
\end{proof}

Im Folgenden machen wir reichlich Gebrauch von der Theorie der Bewertungskörper. Obwohl wir hier keine Einleitung in dieses Thema geben können, erinnern wir dennoch an einige grundlegende Definitionen, vor allem um die Notationen festzulegen.

\begin{definition}
\begin{enumerate}[label=(\roman*)]
\item Ein (eventuell nicht vollständiges) Huber-Paar $(K,K^+)$ heißt \textit{analytischer affinoider Körper}, wenn $K$ ein nicht archimedischer Körper\footnote{also ein topologischer Körper, dessen Topologie von einer Bewertung vom Rang $1$ erzeugt wird} und $K^+$ dessen offener Bewertungsring ist.
\item Ein analytischer affinoider Körper $(K,K^+)$ heißt \textit{henselsch}, wenn der Ring $K^+$ henselsch ist.
\end{enumerate}
\end{definition}

\begin{lemma}\label{nulldimensionale Körper}
Sei $K$ ein (eventuell nicht vollständiger) nicht archimedischer Körper. Dann gilt:
\begin{enumerate}[label=(\roman*)]
\item Ist $K$ vollständig, so ist der affinoide Körper $(K,K^\circ)$ henselsch.
\item Ist $K$ separabel abgeschlossen, so ist jeder affinoide Körper der Form $(K,K^+)$ henselsch.
\end{enumerate}
\end{lemma}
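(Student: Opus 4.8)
Der Plan ist, beide Aussagen auf klassische Tatsachen der Bewertungstheorie zurückzuführen, wobei der springende Punkt die Identifikation des ringtheoretischen Begriffs des henselschen lokalen Ringes mit dem bewertungstheoretischen Begriff des henselschen bewerteten Körpers ist. Zunächst erinnere ich an die folgende Standardäquivalenz (klassisch, siehe etwa Engler--Prestel, \emph{Valued Fields}): Für einen Bewertungsring $R$ mit Quotientenkörper $L=\mathrm{Frac}(R)$ und zugehöriger Bewertung $v$ ist $R$ genau dann ein henselscher lokaler Ring, wenn sich $v$ eindeutig auf jede endliche separable Erweiterung von $L$ fortsetzen lässt, das heißt wenn $(L,v)$ ein henselscher bewerteter Körper ist. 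Beide Seiten sind zur Gültigkeit des Hensel'schen Lemmas über $R$ äquivalent, und die Bewertung $v$ darf dabei beliebigen Rang haben.

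Für Teil (i) bleibt dann nur zu zeigen, dass ein vollständiger nicht archimedischer Körper $K$ mit seiner Rang-$1$-Bewertung $|\cdot|$ ein henselscher bewerteter Körper ist; dies ist genau das Hensel'sche Lemma für vollständige bewertete Körper. Ist $f\in K^\circ[X]$ normiert und $\bar a\in k=K^\circ/K^{\circ\circ}$ eine einfache Nullstelle von $\bar f$, so konvergiert die Newton-Iteration $a_{n+1}=a_n-f(a_n)/f'(a_n)$ in $K^\circ$, denn $|f'(a_n)|=|f'(a_0)|$ bleibt konstant, während $|f(a_n)|$ quadratisch gegen $0$ fällt; der Grenzwert liefert die gesuchte Nullstelle, und für teilerfremde Faktorisierungen modulo $K^{\circ\circ}$ argumentiert man analog. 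Entscheidend ist hierbei, dass $K^\circ$ als abgeschlossene Teilmenge von $K$ bezüglich der Bewertungstopologie vollständig ist; man beachte, dass $K^\circ$ im Allgemeinen \emph{nicht} $\mathfrak{m}$-adisch vollständig ist (für einen nicht diskret bewerteten Körper gilt sogar $\mathfrak{m}^2=\mathfrak{m}$), sodass ein naives Argument über vollständige Paare nicht funktioniert. Mit der Äquivalenz aus dem ersten Absatz, angewandt auf $R=K^\circ$ und $L=K$, folgt, dass $(K,K^\circ)$ henselsch ist.

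Für Teil (ii) sei $(K,K^+)$ ein affinoider Körper mit $K$ separabel abgeschlossen; dann ist $K^+$ ein offener Bewertungsring von $K$, also $K^{\circ\circ}\subseteq K^+\subseteq K^\circ$, und man prüft direkt nach, dass $\mathrm{Frac}(K^+)=K$ gilt: Aus $\varpi\in K^+$ und $\varpi y\in K^{\circ\circ}\subseteq K^+$ für $\varpi\in K^{\circ\circ}\setminus\{0\}$ und $y\in K^\circ$ folgt $K^\circ\subseteq \mathrm{Frac}(K^+)$, und damit $\mathrm{Frac}(K^+)=K^\circ[1/\varpi]=K$. Da $K$ separabel abgeschlossen ist, besitzt $K$ keine nicht trivialen endlichen separablen Erweiterungen, sodass die Eindeutigkeitsbedingung aus dem ersten Absatz leer und folglich trivialerweise erfüllt ist; also ist $(K,v_{K^+})$ henselsch, und mit der Äquivalenz, angewandt auf $R=K^+$, ergibt sich die Behauptung.

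Eine echte Schwierigkeit tritt bei diesem Lemma nicht auf. Die einzigen Punkte, die Sorgfalt verlangen, sind das saubere Zitieren beziehungsweise Verknüpfen der beiden Henselizitätsbegriffe im ersten Absatz sowie -- in Teil (i) -- die Beobachtung, dass sich \enquote{vollständig} auf die Rang-$1$-Bewertungstopologie und nicht auf die $\mathfrak{m}$-adische Topologie bezieht.
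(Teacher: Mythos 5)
Dein Beweis ist korrekt und deckt sich inhaltlich mit dem der Arbeit: Dort wird für (i) schlicht auf \cite[Aufgabe VI.§8.6(b)]{Bourbaki} (vollständige Rang-$1$-Bewertungsringe sind henselsch) verwiesen und (ii) als unmittelbare Folge der Definition abgetan, während du dieselben klassischen Tatsachen -- Newton-Iteration für (i), eindeutige Fortsetzung der Bewertung auf endliche separable Erweiterungen für (ii) -- ausführlich ausschreibst. Auch deine Randbemerkung, dass sich die Vollständigkeit in (i) auf die Bewertungstopologie und nicht auf die $\mathfrak{m}$-adische Topologie bezieht, ist zutreffend.
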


\begin{proof}
\begin{enumerate}[label=(\roman*)]
\item Siehe \cite[Aufgabe VI.§8.6(b)]{Bourbaki}.
\item Dies folgt direkt aus der Definition.
\end{enumerate}
\end{proof}

\begin{definition}\label{henselisierung}
Sei $k$ ein Körper und $\mathcal{O}$ ein Bewertungsring von $k$. Die \textit{Henselisierung} von $(k,\mathcal{O})$ ist gegeben durch $(k_h,\mathcal{O}_h)$, wobei $\mathcal{O}_h$ die Henseliesirung von $\mathcal{O}$ und $k_h$ der Quotientskörper von $\mathcal{O}_h$ ist.
\end{definition}

\begin{lemma}
Sei $k$ ein Körper und $\mathcal{O}$ ein Bewertungsring von $k$. Man betrachte den separablen Abschluss $k^{\mathrm{sep}}$ von $k$ und einen Bewertungsring $\mathcal{O}^{\mathrm{sep}}$ mit $\mathcal{O}^{\mathrm{sep}}\cap k=\mathcal{O}$. Setzen wir 
\[G=\{\sigma\in \mathrm{Gal}(k^{\mathrm{sep}}/k)|\sigma(\mathcal{O}^{\mathrm{sep}})=\mathcal{O}^{\mathrm{sep}}\},\]
so existiert ein eindeutiger $K$-Isomorphismus 
\[(k_h,\mathcal{O}_h)\cong ((k^{\mathrm{sep}})^G,(\mathcal{O}^{\mathrm{sep}})^G).\]
\end{lemma}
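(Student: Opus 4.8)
Der Plan ist, die Aussage auf einige wohlbekannte Tatsachen der Bewertungstheorie zurückzuführen und daraus den Isomorphismus zu konstruieren. Zunächst würde ich $k' := (k^{\mathrm{sep}})^G$ und $\mathcal{O}' := (\mathcal{O}^{\mathrm{sep}})^G = \mathcal{O}^{\mathrm{sep}}\cap k'$ setzen; dies ist ein Bewertungsring von $k'$ über $\mathcal{O}$, und wegen $\mathcal{O}^{\mathrm{sep}}\cap k = \mathcal{O}$ ist $G$ gerade die zugehörige Zerlegungsgruppe. Als Erstes würde ich zeigen, dass $(k',\mathcal{O}')$ henselsch ist: Die Erweiterung $k^{\mathrm{sep}}/k'$ ist galoissch mit Gruppe $G$, die Fortsetzungen von $\mathcal{O}'$ auf $k^{\mathrm{sep}}$ bilden eine einzige $G$-Bahn, und der Stabilisator von $\mathcal{O}^{\mathrm{sep}}$ in $G$ ist nach Definition von $G$ ganz $G$; also ist $\mathcal{O}^{\mathrm{sep}}$ die eindeutige Fortsetzung von $\mathcal{O}'$ auf $k^{\mathrm{sep}}$, was nach dem Kriterium „henselsch $\Leftrightarrow$ eindeutige Fortsetzung auf den separablen Abschluss" die Henselizität von $(k',\mathcal{O}')$ liefert.

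Als Nächstes würde ich die entsprechende Minimalität nachweisen: Ist $k\subseteq M\subseteq k^{\mathrm{sep}}$ ein Zwischenkörper, für den $(M,\mathcal{O}^{\mathrm{sep}}\cap M)$ henselsch ist, so ist $k^{\mathrm{sep}}$ zugleich ein separabler Abschluss von $M$, und $\mathcal{O}^{\mathrm{sep}}$ ist die eindeutige Fortsetzung von $\mathcal{O}^{\mathrm{sep}}\cap M$; folglich stabilisiert $\mathrm{Gal}(k^{\mathrm{sep}}/M)$ den Ring $\mathcal{O}^{\mathrm{sep}}$, also $\mathrm{Gal}(k^{\mathrm{sep}}/M)\subseteq G$ und damit nach der Galois-Korrespondenz $k'\subseteq M$. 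Mit anderen Worten ist $(k',\mathcal{O}')$ das kleinste henselsche Zwischenfeld von $k^{\mathrm{sep}}/k$ bezüglich $\mathcal{O}^{\mathrm{sep}}$.

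Schließlich würde ich dies mit der lokalring-theoretischen Henselisierung $\mathcal{O}_h$ aus Definition~\ref{henselisierung} identifizieren. Dazu verwende ich, dass $\mathcal{O}^{\mathrm{sep}}$ henselsch ist — als Bewertungsring eines separabel abgeschlossenen Körpers ist jede endliche Erweiterung rein inseparabel oder trivial, die Bewertung setzt sich also eindeutig fort. Die universelle Eigenschaft von $\mathcal{O}_h$ liefert einen eindeutigen lokalen $\mathcal{O}$-Algebra-Homomorphismus $\iota\colon\mathcal{O}_h\to\mathcal{O}^{\mathrm{sep}}$ sowie, wegen der Henselizität von $\mathcal{O}'$, einen eindeutigen lokalen $\mathcal{O}$-Homomorphismus $j\colon\mathcal{O}_h\to\mathcal{O}'$; aus der Eindeutigkeit folgt, dass $\iota$ gleich der Komposition von $j$ mit der Inklusion $\mathcal{O}'\hookrightarrow\mathcal{O}^{\mathrm{sep}}$ ist, insbesondere $\iota(\mathcal{O}_h)\subseteq\mathcal{O}'$. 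Da $k_h/k$ separabel algebraisch ist (die Henselisierung ist ein filtrierter Kolimes von lokal-étalen $\mathcal{O}$-Algebren), ist $\iota$ injektiv, und da $\mathcal{O}_h$ ein Bewertungsring ist (etwa nach \cite{Bourbaki}), ist $\iota(\mathcal{O}_h)$ ein Bewertungsring von $L := \iota(k_h)$; weil $\iota$ lokal ist, stimmen die vergleichbaren Bewertungsringe $\iota(\mathcal{O}_h)\subseteq\mathcal{O}^{\mathrm{sep}}\cap L$ überein. Somit ist $(L,\mathcal{O}^{\mathrm{sep}}\cap L)\cong(k_h,\mathcal{O}_h)$ ein henselsches Zwischenfeld, nach dem vorigen Schritt also $k'\subseteq L$; zusammen mit $L = \iota(k_h)\subseteq k'$ aus dem vorhergehenden Schritt ergibt sich $L = k'$ und $\iota(\mathcal{O}_h) = \mathcal{O}^{\mathrm{sep}}\cap k' = \mathcal{O}'$. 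Damit ist $\iota$ der gesuchte $k$-Isomorphismus, und seine Eindeutigkeit folgt erneut aus der universellen Eigenschaft von $\mathcal{O}_h$, da jeder $k$-Isomorphismus $(k_h,\mathcal{O}_h)\to(k',\mathcal{O}')$ einen lokalen $\mathcal{O}$-Homomorphismus $\mathcal{O}_h\to\mathcal{O}'$ induziert.

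Der schwierigste Teil wird nicht ein einzelnes tiefes Argument sein, sondern die saubere Verzahnung der beiden Anwendungen der universellen Eigenschaft sowie das korrekte Zitieren der benötigten Standardtatsachen der Bewertungstheorie — vor allem, dass $\mathcal{O}_h$ ein Bewertungsring mit separabel algebraischem Quotientenkörper ist, und die Transitivität der Galois-Wirkung auf den Fortsetzungen einer Bewertung. Der dritte Schritt ist genau der Punkt, an dem die abstrakte Henselisierung mit dem bewertungstheoretischen Zerlegungskörper zusammengeführt wird.
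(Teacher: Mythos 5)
Dein Vorschlag ist korrekt; er nimmt allerdings einen Weg, den die Arbeit selbst gar nicht ausführt: Dort besteht der Beweis nur aus dem Verweis auf \cite[Theorem 5.2.2]{Engler}, und was du rekonstruierst, ist im Wesentlichen der Standardbeweis genau dieses Satzes -- der Zerlegungskörper ist henselsch (Konjugationssatz plus das Kriterium „henselsch $\Leftrightarrow$ eindeutige Fortsetzung auf $k^{\mathrm{sep}}$``), er ist minimal unter den henselschen Zwischenkörpern, und die universelle Eigenschaft von $\mathcal{O}_h$ identifiziert ihn mit der Henselisierung. Dein Text ist also ein vollwertiger Ersatz für das Zitat; der Mehrwert ist Selbstständigkeit, der Preis ist, dass du die zitierten Standardtatsachen (Transitivität der Galois-Wirkung auf den Fortsetzungen, „henselsch $\Leftrightarrow$ eindeutige Fortsetzung``, $\mathcal{O}_h$ ist Bewertungsring mit algebraischem Quotientenkörper) sauber belegen musst. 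Zwei Stellen solltest du dabei noch festzurren: Erstens benutzt du stillschweigend $\mathrm{Gal}(k^{\mathrm{sep}}/k')=G$; die unendliche Galois-Korrespondenz liefert das nur, wenn die Zerlegungsgruppe $G$ abgeschlossen in $\mathrm{Gal}(k^{\mathrm{sep}}/k)$ ist -- das ist Standard (das Komplement ist offen, da ein $\sigma$ mit $\sigma(x)\notin\mathcal{O}^{\mathrm{sep}}$ für ein $x\in\mathcal{O}^{\mathrm{sep}}$ eine ganze Umgebung solcher Automorphismen besitzt), gehört aber in den Beweis. Zweitens ist die Injektivität von $\iota$ keine Folge der Separabilität, sondern der Algebraizität von $k_h/k$ zusammen mit $\ker\iota\cap\mathcal{O}=0$: Der Kern ist ein Primideal von $\mathcal{O}_h$; wäre $0\neq x$ darin, so lieferte das Minimalpolynom von $x$ über $k$, nach Skalierung mit Koeffizienten in $\mathcal{O}$ und mit nichtverschwindendem konstanten Term, nach Anwenden von $\iota$ einen Widerspruch. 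Mit diesen beiden Ergänzungen ist dein Argument vollständig und deckt auch die Eindeutigkeitsaussage über die universelle Eigenschaft korrekt ab.
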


\begin{proof}
Siehe \cite[Theorem 5.2.2]{Engler}.
\end{proof}

Aus dem Blickwinkel der rigiden Geometrie gibt es keinen Unterschied zwischen Henselisierung eines affinoiden Körpers und deren Vervollständigung, wie das folgende triviale Lemma zeigt. 

\begin{lemma} Sei $(K,K^+)$ ein (eventuell nicht vollständiger) affinoider Körper.
\begin{enumerate}[label=(\roman*)]
\item Ist $(K,K^+)$ henselsch, so ist $(\Hat{K},\Hat{K}^+)$ ebenso henselsch.
\item Die Vervollständigung der Henselisierung von $(K,K^+)$ ist isomorph zu der Vervollständigung der Henselisierung von $(\Hat{K},\Hat{K}^+)$.
\end{enumerate}
\end{lemma}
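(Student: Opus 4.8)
The plan is to reduce both statements to the interplay between the rank-one valuation defining the topology of $K$ and the (possibly higher-rank) valuation attached to $K^+$. Since $(K,K^+)$ is an affinoid field we have $K^{\circ\circ}\subseteq K^+\subseteq K^\circ$, so $K^+$ is the preimage of a valuation ring $\mathcal O_k\subseteq k$ under the residue map $K^\circ\twoheadrightarrow k$, where $k=K^\circ/K^{\circ\circ}$; equivalently the valuation attached to $K^+$ is the composite of the rank-one valuation with the valuation of $\mathcal O_k$ on $k$. The basic input I would record first is the transitivity of henselianity for such composites: $K^+$ is henselian if and only if $K^\circ$ is henselian and $\mathcal O_k$ is henselian. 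This is standard valuation theory (see \cite{Engler}); one implication is just that $\mathcal O_k=K^+/K^{\circ\circ}$ is a quotient of the henselian local ring $K^+$ by a prime ideal, and the converse is the usual exercise with the characterisation of henselianity via unique extension of valuations to finite extensions.

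For (i) this makes the argument immediate. Passing from $K$ to $\widehat K$ does not change the residue field, $\widehat K^{\circ}/\widehat K^{\circ\circ}=k$, and $\widehat K^+$ is by construction the preimage of the same ring $\mathcal O_k$ under $\widehat K^{\circ}\twoheadrightarrow k$. By Lemma~\ref{nulldimensionale Körper}(i) the complete rank-one field $\widehat K$ has henselian $\widehat K^\circ$; and since $(K,K^+)$ is henselian, $\mathcal O_k$ is henselian by the transitivity statement -- but $\mathcal O_k$ is literally the same ring on both sides. Applying transitivity once more gives that $\widehat K^+$ is henselian, i.e.\ $(\widehat K,\widehat K^+)$ is henselian.

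For (ii) I would identify both completions with one and the same field equipped with its integral ring. Let $k^h=\operatorname{Frac}(\mathcal O_k^{h})$ be the henselization of $k$ relative to $\mathcal O_k$ in the sense of Definition~\ref{henselisierung}, a separable algebraic extension of $k$, and let $\widehat K^{\mathrm{ur}}/\widehat K$ be the unramified extension (with respect to the rank-one valuation) with residue field $k^h$, which exists and is essentially unique because $\widehat K$ is henselian. Unwinding Definition~\ref{henselisierung} for the composite valuation ring (again \cite{Engler}): on the $\widehat K$-side the rank-one part is already henselian, so the henselization of $(\widehat K,\widehat K^+)$ has underlying field $\widehat K^{\mathrm{ur}}$ and integral ring the preimage of $\mathcal O_k^{h}$ in $(\widehat K^{\mathrm{ur}})^\circ$, whence its completion is the completion of $\widehat K^{\mathrm{ur}}$ with the induced integral ring. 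On the $K$-side the henselization in addition henselizes the rank-one part, producing the rank-one henselization $K^{h}$ of $K$; but $K^{h}$ is dense in $\widehat K$, hence has the same completion $\widehat K$, and the passage to the unramified extension with residue field $k^h$ commutes with completion (a finite étale algebra over $K^{h}$ base-changes to the corresponding finite étale algebra over $\widehat K$ by henselianity of $\widehat K$ and invariance of the residue field under completion). So the completion of the henselization of $(K,K^+)$ is once again the completion of $\widehat K^{\mathrm{ur}}$, with the same integral ring; comparing the two descriptions yields (ii).

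The main obstacle is the bookkeeping hidden in the third paragraph: one must check carefully how the henselization of the affinoid field -- which by definition henselizes the composite valuation ring $K^+$ -- factors into the rank-one henselization of $K$ followed by the unramified extension realising the residue-field extension $k^h/k$, and that completion affects only the rank-one part and is idempotent there. Once this is in place everything else is a routine appeal to the standard theory of henselizations and completions of valued fields, which is why the lemma can reasonably be called trivial.
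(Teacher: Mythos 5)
Your argument is correct, but it is worth separating the two parts. For (i) you are essentially doing what the paper's parenthetical hint intends: the point $\Hat{K}^+/\Hat{\mathfrak{m}}^+\cong K^+/\mathfrak{m}^+$ is exactly your observation that completion changes neither the residue field $k$ of the rank-one valuation nor the induced valuation ring $\mathcal{O}_k\subset k$, so henselianity of $\Hat{K}^+$ follows from henselianity of $\Hat{K}^{\circ}$ (Lemma~\ref{nulldimensionale Körper}(i)) together with the transitivity of henselianity along the composite $K^{\circ\circ}\subset K^+\subset K^{\circ}$. For (ii), however, you take a genuinely different route from the paper: the paper settles it by the universal properties of henselization and completion — using (i), both completed henselizations are complete henselian affinoid fields under $(K,K^+)$ resp. $(\Hat{K},\Hat{K}^+)$, and the two universal properties produce mutually inverse comparison maps — whereas you identify both sides concretely with the completion of the unramified extension of $\Hat{K}$ with residue field $k^h$, via the decomposition of the henselization of a composite valuation as rank-one henselization followed by the unramified extension realising $k^h/k$ (a decomposition-group computation in the spirit of the Engler–Prestel result the paper cites). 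Your route costs the bookkeeping you flag, plus the standard but genuinely used facts that the rank-one henselization $K^{h_0}$ is dense in and separably algebraically closed in $\Hat{K}$ (so that finite unramified extensions base-change to, and are dense in, the corresponding extensions of $\Hat{K}$, and one may pass to the filtered colimit before completing), and that completion preserves the residue field together with the image of the plus-ring in it so the integral rings on both sides match; in return it yields an explicit model of the completed henselization, which the abstract universal-property argument does not provide. Both proofs are valid; yours is the more informative, the paper's the shorter.
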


\begin{proof}
\begin{enumerate}[label=(\roman*)]
\item Dies folgt direkt aus den Definitionen (man bemerke $\Hat{K}^+/\Hat{\mathfrak{m}}^+\cong K^+/\mathfrak{m}^+$).
\item Dies folgt direkt aus den universellen Eigenschaften der Henselisierung und der Vervollständigung.
\end{enumerate}
\end{proof}

\begin{definition}
Sei $(K,K^+)$ ein vollständiger affinoider Körper.
\begin{enumerate}[label=(\roman*)]
\item Unter der \textit{Henselisierung von $(K,K^+)$} verstehen wir die Vervollständigung der Henselisierung des Paares $(K,K^+)$ im Sinne von Definition~\ref{henselisierung}. Mit $(K_h,K_h^+)$ wird im Weiteren immer diese vervollständigte Henselisierung bezeichnet.
\item Es bezeichne $\Bar{K}$ die Vervollständigung eines separablen Abschlusses von $K$, welche ebenfalls separabel abgeschlossen ist, und $\bar{K}^+$ einen beliebigen offenen Bewertungsring von $\bar{K}$ mit $\bar{K}^+\cap K=K^+$.\footnote{Solche Ringe sind nach \cite[Theorem 3.2.14]{Engler} konjugiert.} Wir nennen den affinoiden Körper $(\Bar{K},\Bar{K}^+)$ einen \textit{separablen Abschluss von $(K,K^+)$}.
\end{enumerate}
\end{definition}

\begin{definition} Sei $X$ ein analytischer adischer Raum.
\begin{enumerate}[label=(\roman*)]
\item Die zugrunde liegende Kategorie des \textit{kleinen étalen Situs} $\mathrm{\acute{e}t}_X$ von $X$ ist definiert als die Kategorie von étalen Räumen über $X$. Eine \textit{étale Überdeckung} ist eine Familie $\{Y_i\rightarrow Y\}_{i\in I}$ von étalen Morphismen mit der Eigenschaft, dass die Abbildung $\coprod Y_i\rightarrow Y$ surjektiv ist.  
\item Die zugrunde liegende Kategorie des \textit{kleinen Nisnevich-Situs} $\mathrm{Nis}_X$ von $X$ ist ebenso definiert als die Kategorie von étalen Räumen über $X$. Eine Nisnevich-Überdeckung ist eine étale Überdeckung $\{Y_i\rightarrow Y\}_{i\in I}$ mit der Eigenschaft, dass für jeden henselschen affinoiden Körper $(K,K^+)$ die Abbildung \[\coprod Y_i(K,K^+)\rightarrow Y(K,K^+)\] surjektiv ist.
\end{enumerate}
\end{definition}

\begin{bemerkung}
Aus Lemma~\ref{offen} lässt sich leicht ableiten, dass jede étale Überdeckung eine endliche Teilüberdeckung enthält. Da spektrale Räume quasi-kompakt bezüglich der konstruierbaren Topologie sind, folgt aus Satz~\ref{spaltung} unten, dass dasselbe auch für die Nisnevich-Topologie gilt.
\end{bemerkung}

\begin{lemma}
Ein Basiswechsel einer Nisnevich-Überdeckung ist eine Nisnevich-Überdeckung.
\end{lemma}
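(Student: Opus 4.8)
Der Plan ist, die beiden definierenden Eigenschaften einer Nisnevich-Überdeckung — eine étale Überdeckung zu sein und die Hochhebungseigenschaft über henselschen affinoiden Körpern zu erfüllen — getrennt für den Basiswechsel nachzuweisen. Sei also $\{Y_i\rightarrow Y\}_{i\in I}$ eine Nisnevich-Überdeckung und $g:Y'\rightarrow Y$ ein Morphismus; wir setzen $Y_i'=Y_i\times_Y Y'$. Zuerst würde ich zeigen, dass die Faserprodukte $Y_i'$ als analytische adische Räume existieren und die Projektionen $Y_i'\rightarrow Y'$ étale sind. Dazu benutzt man, dass ein étaler Morphismus lokal auf der Quelle eine Verkettung einer offenen Einbettung mit einem endlichen étalen Morphismus ist: Der Basiswechsel einer offenen Einbettung ist das Urbild der entsprechenden offenen Teilmenge, und der Basiswechsel eines endlichen étalen Morphismus existiert nach der affinoiden Beschreibung, nämlich $\Spa(\Tilde R,\Tilde R^+)\times_{\Spa(R,R^+)}\Spa(S,S^+)=\Spa(\Tilde R\otimes_R S,\,-)$ mit dem ganzen Abschluss von $S^+$, und ist wieder endlich étale. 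Verklebt man diese lokalen Basiswechsel längs einer geeigneten Überdeckung von $Y_i$, so erhält man $Y_i'$ samt einem étalen Strukturmorphismus nach $Y'$.

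Die Hochhebungseigenschaft ist dann rein formal: Ist $(K,K^+)$ ein henselscher affinoider Körper und $\xi:\Spa(K,K^+)\rightarrow Y'$ ein $(K,K^+)$-Punkt, so ist $g\circ\xi$ ein $(K,K^+)$-Punkt von $Y$, der sich nach Voraussetzung zu einem Morphismus $\eta:\Spa(K,K^+)\rightarrow Y_i$ über $Y$ hochhebt; die universelle Eigenschaft des Faserprodukts $Y_i'$ liefert einen Morphismus $\Spa(K,K^+)\rightarrow Y_i'$ über $\xi$, also ist $\coprod_i Y_i'(K,K^+)\rightarrow Y'(K,K^+)$ surjektiv. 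Die noch ausstehende gemeinsame Surjektivität von $\coprod_i Y_i'\rightarrow Y'$ würde ich ebenfalls aus dieser Hochhebungseigenschaft gewinnen: Zu $y'\in Y'$ betrachte man den vollständigen affinoiden Restklassenkörper $(\kappa(y'),\kappa^+(y'))$, seine Henselisierung $(\kappa(y')_h,\kappa^+(y')_h)$ — ein henselscher affinoider Körper — und den kanonischen Morphismus $\xi:\Spa(\kappa(y')_h,\kappa^+(y')_h)\rightarrow\Spa(\kappa(y'),\kappa^+(y'))\rightarrow Y'$, der den abgeschlossenen Punkt auf $y'$ abbildet. Nach dem eben Gezeigten hebt sich $\xi$ zu einem Morphismus $\Spa(\kappa(y')_h,\kappa^+(y')_h)\rightarrow Y_i'$; das Bild des abgeschlossenen Punktes ist ein Punkt von $Y_i'$ über $y'$. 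Somit ist $\{Y_i'\rightarrow Y'\}$ eine étale Überdeckung mit der Nisnevich'schen Hochhebungseigenschaft, also eine Nisnevich-Überdeckung.

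Das Haupthindernis liegt meiner Einschätzung nach nicht in der Logik des Arguments — diese ist im Wesentlichen formal —, sondern in der sorgfältigen Verifikation des ersten Schrittes: dass die Faserprodukte $Y_i\times_Y Y'$ tatsächlich in der Kategorie der analytischen adischen Räume existieren und dass die lokale Struktur étaler Morphismen mit dem Verkleben verträglich ist. Beides ist Standard, benötigt aber die affinoide Beschreibung endlicher étaler Morphismen sowie Lemma~\ref{offen}. Ein zweiter, kleinerer Punkt, auf den zu achten ist, ist die Kompatibilität der kanonischen Abbildung vom Restklassenkörper nach $Y'$ mit dem Übergang zur Henselisierung im letzten Schritt, was man ebenfalls direkt aus den Definitionen erhält.
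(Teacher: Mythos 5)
Dein Beweis ist korrekt und verfolgt im Kern denselben Weg wie die Arbeit: Dort wird das Lemma mit einem Einzeiler abgetan („folgt direkt aus der Definition, da ein Basiswechsel eines étalen Morphismus wieder étale ist“), und du führst genau diese formale Beobachtung sorgfältig aus (Existenz der Faserprodukte, Rückzug der Hochhebungseigenschaft über die universelle Eigenschaft). Dein Zusatzargument, die Surjektivität von $\coprod Y_i'\rightarrow Y'$ über die Henselisierung des Restklassenkörpers aus der Hochhebungseigenschaft zu gewinnen, füllt lediglich ein Detail, das die Arbeit implizit lässt, und ist ebenfalls in Ordnung.
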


\begin{proof}
Dies folgt direkt aus der Definition, da ein Basiswechsel von einem étalen Morphismus ebenfalls étale ist.
\end{proof}

\begin{definition}
Sei $X$ ein analytischer adischer Raum. 
\begin{enumerate}[label=(\roman*)]
\item Unter einem \textit{geometrischen Punkt von $X$} verstehen wir einen affinoiden Körper der Form $(\Bar{\kappa}(x),\Bar{\kappa}^+(x))$ für $x\in X$ zusammen mit der kanonischen Abbildung
\[\Bar{\iota}_x:\Spa (\Bar{\kappa}(x),\Bar{\kappa}^+(x))\rightarrow X.\]
Ein solcher Punkt definiert einen Punkt des étalen Topos $X_{\mathrm{\acute{e}t}}$ über den Funktor
\[X_{\mathrm{\mathrm{\acute{e}t}}}\rightarrow \mathrm{Sets},\ \mathcal{F}\mapsto (\Bar{\iota}_x)_{\mathrm{\acute{e}t}}^*(\mathcal{F})(\Spa (\Bar{\kappa}(x),\Bar{\kappa}^+(x))).\]
\item Unter einem \textit{henselschen Punkt von $X$} verstehen wir einen affinoiden Körper $(K,K^+)$ mit einer Abbildung
\[\iota_x:\Spa (K,K^+)\rightarrow \Spa (\kappa_h(x),\kappa_h^+(x))\rightarrow X,\]
wobei $x\in X$, $K$ eine separable Erweiterung von $\kappa_h(x)$ und $K^+$ der ganze Abschluss von $\kappa_h^+(x)$ ist. Ein solcher Punkt definiert einen Punkt des Nisnevich-Topos $X_{\mathrm{Nis}}$ über den Funktor
\[X_{\mathrm{Nis}}\rightarrow \mathrm{Sets},\ \mathcal{F}\mapsto (\iota_x)_{Nis}^*(\mathcal{F})(\Spa (K,K^+)).\]
\end{enumerate}
\end{definition}

\begin{satz}\label{punkte}
Sei $X$ ein analytischer adischer Raum. Dann gilt:

\begin{enumerate}[label=(\roman*)]
\item Die Punkte des étalen Topos von $X$ bilden eine konservative Familie und sind genau die geometrischen Punkte von $X$.
\item Die Punkte des Nisnevich-Topos von $X$ bilden eine konservative Familie und sind genau die henselschen Punkte von $X$.
\end{enumerate}
\end{satz}

\begin{proof}
Dies wird analog zu \cite[Theorem VIII.7.9]{SGA4} bewiesen.
\end{proof}

Wie bei Schemata ist der étale Abstieg für analytische adische Räume die Kombination des Nisnevich- und des Galois-Abstiegs.

\begin{definition}
Sei $X$ ein analytischer adischer Raum und $\mathcal{F}$ eine étale Prägarbe von Animen auf $X$. Man sagt, dass $\mathcal{F}$ \textit{Galois-Abstieg erfüllt}, wenn für jede affinoide offene Teilmenge $\Spa (A,A^+)\subset X$ und jede galoissche Erweiterung $(A,A^+)\rightarrow (B,B^+)$ mit Galois-Gruppe $G$\footnote{D. h., die Abbildung $A\rightarrow B$ ist eine galoissche Erweiterung mit Galois-Gruppe $G$ und $B^+$ ist der ganze Abschluss von $A^+$.} die natürliche Abbildung $\mathcal{F}(\Spa (A,A^+))\rightarrow \mathcal{F}(\Spa (B,B^+))^G$\footnote{Hierbei bezeichnet $(-)^G$ die Homotopiefixpunkte.} ein Isomorphismus ist.
\end{definition}

\begin{satz}\label{étale=Nisnevich+Galois}
Sei $X$ ein analytischer adischer Raum. Dann ist eine étale Prägarbe $\mathcal{F}$ von Animen auf $X$ eine étale Garbe genau dann, wenn sie eine Nisnevich-Garbe ist und Galois-Abstieg erfüllt.
\end{satz}

\begin{proof}
Dies wird analog zu \cite[Theorem B.7.6.1]{SAG} bewiesen.
\end{proof}

Wir wollen im Folgenden die Nisnevich-Topologie eines analytischen adischen Raumes etwas genauer untersuchen.

\begin{definition}\label{grad_def}
Sei $f:Y\rightarrow X$ ein Morphismus von analytischen adischen Räumen, und seien $y\in Y$, $x=f(y)$. Man betrachte das kartesische Diagramm
\begin{center}
\begin{tikzcd}
Y \arrow[d,"f"] & U \arrow[l,swap,"\phi"]\arrow[d,"\psi"] \\
X & \arrow[l] \Spa (C,C^+)
\end{tikzcd}
\end{center}
wobei $(C,C^+)$ ein (vollständiger) separabel abgeschlossener affinoider Körper ist, mit abgeschlossenem Punkt $\bar{x}$, der auf $x$ abgebildet wird.
Der \textit{Grad} $d(x/y)$ ist gegeben durch die Mächtigkeit der Menge $\{\tilde{y}\in U|\phi(\tilde{y})=y,\psi(\tilde{y})=\bar{x}\}$.
\end{definition}

\begin{lemma}\label{grad_wohl}
Ist der Morphismus $f$ in der Situation von Definition~\ref{grad_def} étale, so ist der Grad wohldefiniert. Das heißt, dieser ist von $(C,C^+)$ unabhängig und endlich.
\end{lemma}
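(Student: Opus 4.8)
The plan is to reduce to the case where $f$ is finite étale and there to describe the splitting of $U$ completely explicitly. For the reduction, recall that by definition an étale $f$ restricts, on a suitable open neighbourhood $\Omega\ni y$ mapping into an open $V\subseteq X$, to a composite $\Omega\hookrightarrow W\xrightarrow{g}V$ with $\Omega\hookrightarrow W$ an open immersion and $g$ finite étale. Since $x=f(y)\in V$ and $\bar x$ is the unique closed point of $\Spa (C,C^+)$, every point of $\Spa (C,C^+)$ specialises to $\bar x$; hence the only open subset of $\Spa (C,C^+)$ containing $\bar x$ is the whole space, so the preimage of $V$ is everything and the structure morphism $\Spa (C,C^+)\to X$ factors through $V$. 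Base changing, $\Spa (C,C^+)\times_X\Omega\subseteq\Spa (C,C^+)\times_X W$ is an open subspace, and every point lying over $y\in\Omega$ (in the sense of $\phi$) automatically lies in it; so the set whose cardinality is $d(x/y)$ is the same whether computed with $f$, with $\Omega\to V$, or with $g\colon W\to V$. Thus we may assume $f$ is finite étale.

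Now assume $f\colon Y\to X$ is finite étale and choose an affinoid open $\Spa (R,R^+)\subseteq X$ containing $x$, so that $f^{-1}(\Spa (R,R^+))=\Spa (\tilde R,\tilde R^+)$ with $\tilde R$ finite étale over $R$ and $\tilde R^+$ the integral closure of $R^+$. The given morphism $\Spa (C,C^+)\to X$ factors through $\Spa (R,R^+)$, and the base change $U=\Spa (\tilde R\otimes_R C,(\tilde R\otimes_R C)^{+})$ is finite étale over $\Spa (C,C^+)$. As $C$ is separably closed, $\tilde R\otimes_R C\cong C^{n}$ with $n=\dim_{C}(\tilde R\otimes_R C)$, and since $C^+$ is integrally closed in $C$ its integral closure in $C^{n}$ is $(C^+)^{n}$; hence $U\cong\coprod_{i=1}^{n}\Spa (C,C^+)$, with $\psi$ restricting to the identity on each component. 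In particular $\psi^{-1}(\bar x)=\{\bar x_1,\dots,\bar x_n\}$ consists of the $n$ closed points of the components, so $d(x/y)\le n<\infty$, giving finiteness, and $d(x/y)=\#\{\,i:\phi(\bar x_i)=y\,\}$.

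It remains to prove independence of $(C,C^+)$. The $i$-th component of $U$ is classified by a ring homomorphism $\rho_i\colon\tilde R\to C$ over $R$, and $\phi(\bar x_i)$ is the point of $\Spa (\tilde R,\tilde R^+)$ obtained by pulling back the valuation of $\bar x$ (valuation ring $C^+$) along $\rho_i$; it lies in the fibre $f^{-1}(x)$. Because $\bar x\mapsto x$, the support of the valuation pulls back correctly, so $\Spa (C,C^+)\to X$ induces a canonical embedding $\hat\kappa(x)\hookrightarrow C$ compatible with the valuations, and each $\rho_i$ factors through $\tilde R\otimes_R\hat\kappa(x)$. By étaleness the latter is a finite product $\prod_k L_k$ of finite separable extensions of $\hat\kappa(x)$, and the points of $f^{-1}(x)$ are intrinsically the continuous valuations on the $L_k$ extending the valuation $v_x$ of $x$ (passing, if $v_x$ has rank $>1$, through the residue field of its rank-one part). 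Each $\rho_i$ lands in the separable closure $\overline{\hat\kappa(x)}$ of $\hat\kappa(x)$ inside $C$, so the only datum of $(C,C^+)$ that enters is the restricted valuation ring $C^+\cap\overline{\hat\kappa(x)}$, one extension of $v_x$ to $\overline{\hat\kappa(x)}$. Any two such extensions are conjugate under $\mathrm{Gal}(\overline{\hat\kappa(x)}/\hat\kappa(x))$ by \cite[Theorem 3.2.14]{Engler}, and conjugating by $\sigma$ merely permutes the embeddings $L_k\hookrightarrow\overline{\hat\kappa(x)}$ in a way compatible with the assignment of each embedding to the point of $f^{-1}(x)$ it produces; hence $\#\{\,i:\phi(\bar x_i)=y\,\}$ is unchanged, and $d(x/y)$ is well defined.

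The routine parts are the two reductions and the splitting $U\cong\coprod^{n}\Spa (C,C^+)$; the real work is the valuation-theoretic bookkeeping of the last paragraph — checking that $\Spa (C,C^+)\to X$ over $x$ genuinely supplies the embedding $\hat\kappa(x)\hookrightarrow C$ with $C^+\cap\hat\kappa(x)=\hat\kappa^+(x)$, that $f^{-1}(x)$ is correctly parametrised by the continuous extensions of $v_x$ to the factors $L_k$, and that the Galois action on embeddings is compatible with these parametrisations. This is exactly the point at which étaleness (rather than mere finite flatness) is used.
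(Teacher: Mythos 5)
Your proposal is correct, and its skeleton is the same as the paper's: reduce to $f$ finite étale over an affinoid (using that the closed point of $\Spa (C,C^+)$ has no proper open neighbourhood, so the test morphism factors through any open containing $x$), then split the base change using that $C$ is separably closed, so each component contributes at most one point above $\bar x$ and finiteness follows. Your computation that the plus-ring of the fibre product is exactly $(C^+)^n$, i.e. $U\cong\coprod_{i=1}^{n}\Spa (C,C^+)$, is in fact slightly sharper than what the paper records (components $\Spa (C,\tilde C^+)$ with $C^+\subseteq\tilde C^+$); either version suffices. Where you genuinely diverge is the independence step: the paper settles it in one line by observing that every test morphism $\Spa (C,C^+)\rightarrow X$ factors through the canonical geometric point $\Spa (\bar{\kappa}(x),\bar{\kappa}^+(x))\rightarrow X$, so that every computation reduces to the canonical one, whereas you keep $(C,C^+)$ arbitrary, identify the count with the number of $\hat{\kappa}(x)$-embeddings of the factors $L_k$ into a separable closure $\overline{\hat{\kappa}(x)}\subset C$ that pull the valuation ring $C^+\cap\overline{\hat{\kappa}(x)}$ back to $y$, and then invoke conjugacy of the extensions of $\kappa^+(x)$ (\cite[Theorem 3.2.14]{Engler}). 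The two routes rest on the same ingredient — the paper's factorization claim itself needs that conjugacy theorem to arrange $C^+\cap\bar{\kappa}(x)=\bar{\kappa}^+(x)$, and it tacitly also uses that further base change along $\Spa (C,C^+)\rightarrow\Spa (\bar{\kappa}(x),\bar{\kappa}^+(x))$ does not alter the count, which is essentially what your bookkeeping makes explicit — so your version is a legitimate, somewhat more hands-on alternative; its cost is exactly the list of routine verifications you flag (that the test point induces $\hat{\kappa}(x)\hookrightarrow C$ with $C^+\cap\hat{\kappa}(x)=\hat{\kappa}^+(x)$, and the valuation-theoretic description of $f^{-1}(x)$, where the correct condition is continuity together with integrality over $\kappa^+(x)$ rather than anything involving the rank-one part).
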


\begin{proof}
Wir dürfen ohne Einschränkung annehmen, dass $f: Y\rightarrow X$ endlich étale ist und $X$ und $Y$ affinoid sind. Wir betrachten einen separablen Abschluss $(C,C^+)=(\bar{\kappa}(x),\bar{\kappa}^+(x))$. Wie man leicht nachprüft, ist der Basiswechsel $U$ eine endliche disjunkte Vereinigung von affinoiden Räumen der Form $\Spa (C,\Tilde{C}^+)$ mit $C^+\subset \Tilde{C}^+$: Der Grad $d(y/x)$ ist also endlich. Die Unabhängigkeit von $(C,C^+)$ folgt aus der Tatsache, dass jeder Morphismus $\Spa (C,C^+)\rightarrow X$ über $(\bar{\kappa}(x),\bar{\kappa}^+(x))\rightarrow X$ faktorisiert.
\end{proof}

\begin{lemma}\label{grad_formel}
Sei 
\begin{center}
\begin{tikzcd}
Y \arrow[d,"f"] & Y' \arrow[l,swap,"g'"]\arrow[d,"f'"] \\
X & \arrow[l,"g"] X'
\end{tikzcd}
\end{center}
ein kartesisches Diagramm mit $f$ und $g$ étale. Sind $x\in X$, $y\in Y$, $x'\in X'$ mit $f(y)=x$, $g(x')=x$, so gilt 
\[d(y/x)=\underset{\substack{y'\in (f')^{-1}(x') \\ g(y')=y}}{\sum}d(y'/x').\] 
\end{lemma}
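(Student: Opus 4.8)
Der Beweis soll durch direkte Auswertung der Definition des Grades nach Basiswechsel zu einem geometrischen Punkt geführt werden. Zunächst zieht man sich, genau wie im Beweis von Lemma~\ref{grad_wohl}, ohne Einschränkung darauf zurück, dass $f$ und $g$ endlich étale und alle beteiligten Räume affinoid sind, etwa $X=\Spa(A,A^+)$, $Y=\Spa(B,B^+)$, $X'=\Spa(A',A'^+)$; da alle auftretenden Grade nur von beliebig kleinen Umgebungen der Punkte $x$, $y$ und $x'$ abhängen, bleiben die linke Seite und jeder Summand der rechten Seite der behaupteten Formel bei dieser Ersetzung erhalten. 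Es ist dann $Y'=\Spa(B',B'^+)$ mit $B'=B\otimes_A A'$, und $f'$ ist ebenfalls endlich étale.

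Wir wählen einen separablen Abschluss $(C,C^+)=(\bar\kappa(x),\bar\kappa^+(x))$ des vollständigen Restklassenkörpers in $x$ mit der zugehörigen Abbildung $\bar\iota_x\colon\Spa(C,C^+)\to X$, deren abgeschlossener Punkt $\bar x$ auf $x$ abgebildet wird, und ziehen das ganze kartesische Quadrat entlang $\bar\iota_x$ zurück. Da $B\otimes_A C$ und $A'\otimes_A C$ endliche étale $C$-Algebren sind und $C$ separabel abgeschlossen ist, gilt $B\otimes_A C\cong C^{m}$, $A'\otimes_A C\cong C^{k}$ und folglich $B'\otimes_A C\cong C^{mk}$. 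Daher zerfallen die Basiswechsel in endliche disjunkte Vereinigungen
\[Y\times_X\Spa(C,C^+)\cong\coprod_{i=1}^{m}\Spa(C,C^+),\qquad X'\times_X\Spa(C,C^+)\cong\coprod_{j=1}^{k}\Spa(C,C^+),\]
\[Y'\times_X\Spa(C,C^+)\cong\coprod_{i=1}^{m}\coprod_{j=1}^{k}\Spa(C,C^+),\]
und es bezeichne $y_i\in Y$ (bzw. $x'_j\in X'$, bzw. $y'_{ij}\in Y'$) das Bild des abgeschlossenen Punktes der $i$-ten (bzw. $j$-ten, bzw. $(i,j)$-ten) Komponente. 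Aus der Kompatibilität der kanonischen Projektionen folgt $g'(y'_{ij})=y_i$ und $f'(y'_{ij})=x'_j$, und nach Definition gilt $d(y/x)=\#\{\,i : y_i=y\,\}$.

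Für die rechte Seite wählen wir einen Index $j_0$ mit $x'_{j_0}=x'$; ein solcher existiert wegen $d(x'/x)\ge 1$. Die $j_0$-te Komponente von $X'\times_X\Spa(C,C^+)$ definiert einen geometrischen Punkt von $X'$ über $x'$ mit Restklassenkörper $C$; man beachte hierbei, dass $\kappa(x')/\kappa(x)$ endlich separabel ist, sodass $\bar\kappa(x')=\bar\kappa(x)=C$ gilt. Bezüglich dieses geometrischen Punktes ist $Y'\times_{X'}\Spa(C,C^+)\cong\coprod_{i=1}^{m}\Spa(C,C^+)$, wobei der abgeschlossene Punkt der $i$-ten Komponente auf $y'_{ij_0}$ abgebildet wird; also $d(y'/x')=\#\{\,i : y'_{ij_0}=y'\,\}$ für jedes $y'\in Y'$ über $x'$ und $y$, und nach Lemma~\ref{grad_wohl} ist dieser Wert unabhängig von der Wahl von $j_0$. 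Da $f'(y'_{ij_0})=x'_{j_0}=x'$ stets gilt und $g'(y'_{ij_0})=y_i$ genau dann gleich $y$ ist, wenn $y_i=y$, erhalten wir durch Abzählen von Paaren $(i,y')$
\[\sum_{\substack{y'\in Y'\\ f'(y')=x',\ g'(y')=y}} d(y'/x')=\#\{\,(i,y') : y_i=y,\ y'_{ij_0}=y'\,\}=\#\{\,i : y_i=y\,\}=d(y/x).\]

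Die eigentliche Rechnung ist kurz; die Hauptschwierigkeit liegt in der Buchhaltung. Zum einen muss die Reduktion auf den endlich étalen affinoiden Fall sorgfältig durchgeführt werden, ohne die drei Punkte und die zugehörigen Grade zu verändern (analog zum Beweis von Lemma~\ref{grad_wohl}). Zum anderen sind beim Basiswechsel die $+$-Ringe der Komponenten und die Identifikation ihrer abgeschlossenen Punkte mit Punkten von $Y$, $X'$ bzw. $Y'$ korrekt zu verfolgen; schließlich muss die Unabhängigkeit von $d(y'/x')$ von der Wahl des geometrischen Punktes über $x'$ gemäß Lemma~\ref{grad_wohl} an genau der Stelle eingesetzt werden, an der zu einem festen Index $j_0$ übergegangen wird.
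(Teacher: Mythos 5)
Dein Beweis ist korrekt und folgt im Wesentlichen demselben Weg wie die Arbeit, die die Aussage lediglich als leichte Rechnung direkt aus der Definition deklariert: Deine Ausarbeitung (Reduktion auf den endlich étalen affinoiden Fall, Zerlegung der Basiswechsel entlang $(\bar{\kappa}(x),\bar{\kappa}^+(x))$ in Kopien von $\Spa (C,C^+)$ unter Beachtung der ganzen Abschlüsse, anschließendes Abzählen der Paare $(i,y')$ mithilfe der Unabhängigkeit aus Lemma~\ref{grad_wohl}) ist genau diese Rechnung in ausgeschriebener Form.
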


\begin{proof}
Die Formel wird durch eine leichte Rechnung direkt aus der Definition hergeleitet.
\end{proof}

\begin{lemma}\label{etale lokal}
Sei $f:Y\rightarrow X$ ein separierter étaler Morphismus von analytischen adischen Räumen. Dann gibt es eine surjektive étale Abbildung $X'\rightarrow X$, sodass der Morphismus $Y\underset{X}{\times}X'\xrightarrow{f'}X'$ zu einem Morphismus der Form
\[\underset{k=1}{\overset{n}{\coprod}}U_i\xrightarrow[]{\coprod j_k} X'\] 
isomorph ist, wobei $j_k: U_k\hookrightarrow X'$ quasi-kompakt offen sind.
\end{lemma}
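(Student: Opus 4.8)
The plan is to imitate the classical structure theory of étale morphisms (for schemes, cf. \cite[Exposé VIII]{SGA4}), reducing in two steps to the finite étale case. First I would reduce to $X=\Spa(A,A^+)$ affinoid: a general qcqs $X$ has a finite affinoid cover $X=\bigcup_i X_i$, and if $X_i'\to X_i$ is a surjective étale map trivialising $Y\times_X X_i$, then $X':=\coprod_i X_i'\to X$ is surjective étale and trivialises $Y\times_X X'$, since a quasi-compact open of the clopen piece $X_i'\subseteq X'$ is again a quasi-compact open of $X'$ and all the spaces occurring are qcqs. With $X$ affinoid, $Y$ is qcqs (as $f$ is quasi-compact), hence covered by finitely many affinoid opens $V_a$, each of which — after shrinking — fits into $V_a\hookrightarrow W_a\xrightarrow{g_a} X$ with $g_a$ finite étale and $V_a\hookrightarrow W_a$ a quasi-compact open immersion ($V_a$ affinoid, $W_a$ affinoid).

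Next I would treat the finite étale case, which is where Lemmas~\ref{grad_wohl} and \ref{grad_formel} enter. Let $g\colon W\to X$ be finite étale with $X$ affinoid. Since a finite morphism is closed and, by Lemma~\ref{offen}, open, the image $g(W)$ is clopen; replacing $W$ by $W\sqcup(X\setminus g(W))$ we may assume $g$ surjective, so the degree $d(w/g(w))$ — well-defined, finite and locally constant by Lemma~\ref{grad_wohl} — is everywhere $\geq 1$, with maximum $n$. I induct on $n$. If $n=1$ then $g$ is an isomorphism and there is nothing to prove. If $n\geq 2$, base change $g$ along itself: as $g$ is étale the diagonal $\Delta\colon W\to W\times_X W$ is an open immersion, and as $g$ is separated it is also closed, so $W\times_X W=\Delta(W)\sqcup W^{(2)}$ with $W^{(2)}$ clopen; the second projection $W\times_X W\to W$ is finite étale, $\Delta(W)$ maps isomorphically onto the base, and Lemma~\ref{grad_formel} shows $W^{(2)}\to W$ is finite étale of degree $\leq n-1$. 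By induction there is a surjective étale $Z\to W$ with $W^{(2)}\times_W Z$ a finite disjoint union of quasi-compact opens of $Z$, whence
\[
W\times_X Z\;=\;(\Delta(W)\times_W Z)\sqcup(W^{(2)}\times_W Z)\;\cong\;Z\sqcup\coprod_k U_k,
\]
a finite disjoint union of quasi-compact opens of $X':=Z$, and $Z\to W\to X$ is surjective étale.

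Finally I would glue: applying the finite étale case to $\coprod_a W_a\to X$ (made surjective as above) yields a surjective étale $X'\to X$ such that each $W_a\times_X X'$, and hence each $V_a\times_X X'$ (being open in it), is a finite disjoint union of quasi-compact opens of $X'$ — intersecting the quasi-compact open immersion $V_a\times_X X'\hookrightarrow W_a\times_X X'$ with these opens keeps the pieces quasi-compact by quasi-separatedness. Then $f'\colon Y\times_X X'\to X'$ is separated étale, its source is covered by finitely many opens each mapping by a quasi-compact open immersion to $X'$, and in particular every geometric fibre of $f'$ is a finite set of rational points. I expect the main obstacle to be exactly this last point: showing that such a "completely decomposed" separated étale morphism is itself a finite disjoint union of quasi-compact open immersions. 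This goes as in the scheme case — the fibre cardinality is locally constant, so $X'$ decomposes into clopen strata on which it is constant, and on each stratum one peels off open immersions one sheet at a time using that the equalizer of two sections of a separated étale map is clopen — the only adic-geometric input being that these manipulations preserve quasi-compactness of the opens involved, which follows from quasi-separatedness and Lemma~\ref{offen}.
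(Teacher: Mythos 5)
Your overall plan (reduce to the finite étale case and split it by iterated self--base change) is a legitimate classical route, but the induction as you set it up has a genuine gap: you induct on $n=\max_w d(w/g(w))$, the maximal \emph{local} degree. First, Lemma~\ref{grad_wohl} only gives well-definedness and finiteness of $d(y/x)$, not local constancy, and $d$ is in fact not locally constant: for the Kummer cover $T\mapsto T^2$ of an annulus over $\mathbb{C}_p$ one has $d=1$ at all classical points but $d=2$ above the Gauss point. Second, the base case is false: a split cover $X\sqcup X\rightarrow X$ has $d\equiv 1$ without being an isomorphism. Third, the inductive step fails: take $X=\Spa (K,K^\circ)$ and $W=\Spa (L,L^\circ)\sqcup \Spa (K,K^\circ)$ with $L/K$ separable of degree $m\geq 2$; then $n=m$, but over the second component of $W$ the complement $W^{(2)}$ of the diagonal in $W\underset{X}{\times}W$ still contains a copy of $\Spa (L,L^\circ)$ with local degree $m$, so the maximum does not drop. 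The invariant that makes this argument work is the \emph{total} degree (the rank of the finite étale algebra, equivalently $\sum_{y\mapsto x} d(y/x)$), which is locally constant, and which drops by exactly $1$ when the clopen diagonal is removed; with that replacement your treatment of the finite étale case is correct.

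Two further points. (i) Your reduction assumes each affinoid piece $V_a\subset Y$ embeds as a quasi-compact open into some $W_a$ finite étale over \emph{all} of $X$; the definition of étale only provides this over an open subset of $X$, and such covers need not extend (a Kummer cover of an annulus inside the closed disc does not), so you must also shrink $X$ — harmless, since the conclusion is local on $X$, but it has to be organized. (ii) In your last step the justification ``the fibre cardinality is locally constant, so $X'$ decomposes into clopen strata'' is false for non-finite étale maps: already an open immersion violates it; this step needs a different argument. The paper avoids all of this bookkeeping by a different route: after reducing to $X$ affinoid tatesch, $Y$ affinoid and $f$ finite étale, it pulls $f$ back to each geometric point $\Spa (\bar{\kappa}(x),\bar{\kappa}^+(x))$, where the cover splits because $\bar{\kappa}(x)$ is separably closed, writes this geometric point as a cofiltered limit of étale neighborhoods, and spreads the splitting out to an étale neighborhood $U(x)\rightarrow X$ using \cite[Proposition 5.4.53]{Almost} and the étale analogue of \cite[Proposition III.6.3.7]{Morel}, concluding by quasi-compactness; this limit argument handles the passage from ``open-in-finite-étale over an open of $X$'' and the final decomposition in one stroke, whereas your approach, once repaired as above, buys a more explicit, degree-theoretic proof of the finite étale case but still needs an extra argument (or a further localization) for the general separated étale case.
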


\begin{proof}
Ohne Einschränkung dürfen wir annehmen, dass $X$ affinoid und tatesch ist. Wegen der Separiertheit gibt es für jedes Paar von Punkten in $Y$ mit demselben Bild eine affinoide Teilmenge von $Y$, die die beiden Punkte enthält. Demnach dürfen wir auch annehmen, dass $Y$ affinoid ist. Schließlich kann man annehmen, dass $f$ endlich étale ist, denn wir können nötigenfalls zu einer kleineren offenen Teilmenge von $X$ übergehen. 

Sei $x\in X$ und betrachte $\Spa (\bar{\kappa}(x),\bar{\kappa}^+(x)) \rightarrow X$. Man sieht leicht, dass der Basiswechsel von $f$ nach $\Spa (\bar{\kappa}(x),\bar{\kappa}^+(x))$ die gewünschte Eigenschaft besitzt. Man schreibe nun $\Spa (\bar{\kappa}(x),\bar{\kappa}^+(x))$ als $\underset{x\in U}{\varprojlim} U$ für $U\rightarrow X$ étale. Dann liefern \cite[Proposition 5.4.53]{Almost} und das étale Analogon von \cite[Proposition III.6.3.7]{Morel} eine étale Umgebung $U(x)\rightarrow X$ von $x$, für die der Basiswechsel $Y\underset{X}{\times}U(x)\rightarrow U(x)$ von der gewünschten Form ist. Da $X$ quasi-kompakt ist, wird es durch endlich viele solcher Umgebungen überdeckt.
\end{proof}

\begin{satz}
Sei $X$ ein analytischer adischer Raum. Eine étale Überdeckung $\coprod Y_i\rightarrow X$ ist eine Nisnevich-Überdeckung genau dann, wenn es für jedes $x\in X$ einen Punkt $y\in \coprod Y_i$ mit $d(y/x)=1$ gibt.
\end{satz}

\begin{proof}
Gegeben seien $x\in X$ und $y\in \coprod Y_i$ mit $f(y)=x$. Wie man leicht nachprüft, ist die Bedingung, dass es eine Hochhebung von $\Spa (\kappa_h(x),\kappa_h^+(x))\rightarrow X$ gibt, die den abgeschlossenen Punkt von $\Spa (\kappa_h(x),\kappa_h^+(x))$ nach $y$ abbildet, dazu äquivalent, dass es ein kommutatives Diagramm 
\begin{center}
\begin{tikzcd}
(\kappa(x),\kappa^+(x))\arrow [r]\arrow[d] & (\kappa_h(x),\kappa_h^+(x))\\
(\kappa(y),\kappa^+(y))\arrow[ur,dashed,"\psi"]  & 
\end{tikzcd}
\end{center}
gibt. Wir setzen $(C,C^+)=(\Bar{\kappa}(x),\Bar{\kappa}^+(x))$ und betrachten das Tensorprodukt 
\[(\kappa(y),\kappa^+(y))\underset{(\kappa(x),\kappa^+(x))}{\otimes}(C,C^+)\]
von analytischen Huber-Paaren. Durch direkte Rechnung verifiziert man, dass es zu einem endlichen direkten Produkt von Huber-Paaren der Form $(C,\Tilde{C}^+)$ mit $C^+\subset \tilde{C}^+$ isomorph ist. Wir behaupten nun, dass die Bedingung oben ihrerseits dazu äquivalent ist, dass dieses Tensorprodukt genau eine Kopie von $(C,C^+)$ enthält.

Angenommen, es gibt ein kommutatives Diagramm wie oben. Man sieht unmittelbar, dass
\[(\kappa(y),\kappa^+(y))\underset{(\kappa(x),\kappa^+(x))}{\otimes}(\kappa_h(x),\kappa_h^+(x))\cong \underset{\sigma\in G}{\prod} (\kappa_h(x),\kappa_h^+(x)+\sigma (\kappa^+(y))), \]
 wobei $G$ die Menge von $\kappa(x)$-Einbettungen von $\kappa(y)$ in $\kappa_h(x)$ bezeichnet. Wir behaupten, dass das Bild von $\kappa^+(y)$ unter jedem $\sigma\in G\setminus \{\psi \}$ nicht ganz in $\kappa_h^+(x)$ enthalten ist. In der Tat, wenn es ein $\sigma\in G\setminus \{\psi \}$ mit $\sigma (\kappa^+(y))\subset \kappa_h^+(x)$ gibt, dann kann man einen nicht trivialen stetigen Automorphismus von $(\kappa_h(x),\kappa_h^+(x))$ konstruieren, was der universellen Eigenschaft der Henselisierung widerspricht.

Sei nun $\kappa(y)$ eine separable Erweiterung von $\kappa(x)$, die nicht ganz in $\kappa_h(x)$ liegt. Hierbei haben wir implizit eine Einbettung von $(\kappa(y),\kappa^+(y))$ in $(C,C^+)$ gewählt. Wir setzen \[(\kappa_{h,x}(y),\kappa_{h,x}^+(y))=(\kappa(y),\kappa^+(y))\cap (\kappa_h(x),\kappa_h^+(x))\]
und schreiben $\kappa(y)$ als $\kappa_{h,x}(y)(\alpha)$ für $\alpha\in C\setminus \kappa_h(x)$. Durch eine ähnliche Rechnung wie oben verifiziert man, dass
\[(\kappa(y),\kappa^+(y)) \underset{(\kappa(x),\kappa^+(x))}{\otimes} (\kappa_h(x),\kappa_h^+(x)) \cong (\kappa_h(x)(\alpha),\overline{\kappa_h^+(x)}) \times  \prod (\kappa_h(x)(\alpha),\overline{\tilde{\kappa}_h^+(x)}),\]
wobei $\kappa_h^+(x)\subsetneq \Tilde{\kappa_h}^+(x)$ und $\overline{(-)}$ den ganzen Abschluss bezeichnet. Da die dem Ring $\kappa_h^+(x)$ entsprechende Bewertung $\nu:\kappa_h(x)\rightarrow \Gamma \sqcup \{0\}$ eine eindeutige Fortsetzung auf $C$ besitzt, prüft man leicht nach, dass
\[(\kappa_h(x)(\alpha),\overline{\kappa_h^+(x)})\underset{(\kappa_h(x),\kappa_h^+(x))}{\otimes}(C,C^+)\cong \underset{i=1}{\overset{n}{\prod}} (C,C^+),\ \mathrm{wobei}\ n=[\kappa(y):\kappa_{h,x}(y)].\]
\end{proof}

\begin{definition}
Sei $f:Y\rightarrow X$ ein étaler Morphismus von analytischen adischen Räumen und $W$ eine beliebige Teilmenge von $Y$. Es heißt $f|_W$ \textit{vom Grad $1$}, wenn $d(y/f(y))=1$ für alle $y\in W$ ist.
\end{definition}

\begin{satz}\label{spaltung}
Sei $f :Y\xrightarrow{} X$ ein étaler Morphismus von analytischen adischen Räumen. Sind $x\in X$, $y\in Y$ mit $f(y)=x$ und $d(y/x)=1$, so gibt es konstruierbare Teilmengen $x\in K\subset X$ und $y\in L\subset Y$ mit $\phi(L)\subset K$, sodass $f|_L:L\xrightarrow{} K$ ein Homöomorphismus vom Grad $1$ ist.
\end{satz}

\begin{proof}
Nach Verkleinerung von $Y$ dürfen wir annehmen, dass $f^{-1}(x)=y$ gilt. Es existiert nach dem Beweis von Lemma~\ref{etale lokal} eine quasi-kompakte étale Umgebung $g:X'\rightarrow X$ von $x$ mit $g^{-1}(x)=\{x'\}$, sodass der Basiswechsel
\begin{center}
\begin{tikzcd}
Y \arrow[d,"f"] & Y' \arrow[l,swap,"g'"]\arrow[d,"f'"] \\
X & \arrow[l,"g"] X'
\end{tikzcd}
\end{center}
eine endliche Vereinigung von quasi-kompakten offenen Einbettungen ist. Wir schreiben dann $Y'$ als $\underset{i=1}{\overset{n}{\coprod}}U_i$ für $U_i\subset X'$ quasi-kompakt offen. Aus Lemma~\ref{grad_formel} folgt, dass $x'\in X'$ genau ein Urbild $y'\in Y$ hat. Ohne Einschränkung dürfen wir $y\in U_1$ annehmen. Die Teilmenge $g'(U_i)$ ist nach Lemma~\ref{offen} offen für jedes $i=1,\dots,n$. Sie ist zudem quasi-kompakt, denn das Bild einer quasi-kompakten Teilmenge ist ebenfalls quasi-kompakt. Wir setzen nun $K=g(U_1)\setminus g(\underset{i=2}{\overset{n}{\bigcup}}U_i)$ und $L=f^{-1}(K)$. Man sieht unmittelbar, dass $L\subset g'(U_1)\setminus g'(\underset{i=2}{\overset{n}{\bigcup}}U_i)$. Wir behaupten, dass $f|_L:L\xrightarrow{} K$ ein Homöomorphismus vom Grad $1$ ist. Die Abbildung ist offensichtlich surjektiv. Sei $\Tilde{x}\in K$ und seien $\Tilde{y}_1,\Tilde{y}_2\in Y$ dessen Urbilder unter $f$. Wie man leicht verifiziert, etwa unter Verwendung der universellen Eigenschaft des Faserprodukts, existieren für jedes $\Tilde{x}'\in X'$ mit $g(\Tilde{x}')=\Tilde{x}$ Punkte $\Tilde{y}_1',\Tilde{y}_2'\in Y'$ mit $f'(\Tilde{y}_1')=f(\Tilde{y}_2')=\Tilde{x}'$ und $g'(\Tilde{y}_1')=\Tilde{y}_1,g'(\Tilde{y}_2')=\Tilde{y}_2$. Da $\Tilde{y}_1',\Tilde{y}_2'$ in $U_1\subset Y'$ liegen und $U_1\rightarrow X$ injektiv ist, sind $\Tilde{y}_1'$ und $\Tilde{y}_2'$ gleich, also auch $\Tilde{y}_1$ und $\Tilde{y}_2$. Aus Lemma~\ref{grad_formel} folgt weiter, dass die Abbildung $f|_L:L\xrightarrow{} K$ vom Grad $1$ ist, denn jeder Punkt in $U_1\setminus \underset{i=2}{\overset{n}{\bigcup}}U_i$ besitzt genau ein Urbild in $Y'$. Es bleibt also zu prüfen, dass die Umkehrfunktion stetig ist. Dies ist klar, denn $f^{-1}(K)=L$ und $f$ ist offen.
\end{proof}

\begin{satz}
Sei $f:Y\xrightarrow{} X$ eine Nisnevich-Überdeckung. Dann gibt es eine endliche Folge
\[ \emptyset =Z_n\subset Z_{n-1}\subset\dots\subset Z_1\subset Z_0=X\]
von abgeschlossenen Teilmengen von $X$ mit quasi-kompakten Komplementen mit der Eigenschaft, dass für jedes $i\in \{0,\dots,n-1\}$ die Einschränkung 
\[f: \Tilde{Z}_i\setminus \Tilde{Z}_{i+1} \xrightarrow{} Z_i\setminus Z_{i+1}\]  
spaltend ist, wobei $\Tilde{Z}_i$ das Urbild $f^{-1}(Z_i)$ bezeichnet. Das heißt, es gibt eine offene quasi-kompakte Teilmenge $Z_i'\subset \Tilde{Z}_i\setminus \Tilde{Z}_{i+1}$ (in der Topologie von $\Tilde{Z}_i\setminus \Tilde{Z}_{i+1}$), sodass $f:Z_i'\rightarrow Z_i\setminus Z_{i+1}$ ein Homöomorphismus vom Grad $1$ ist.
\end{satz}

\begin{proof}
Nach Satz~\ref{spaltung} hat jeder Punkt $x\in X$ eine konstuierbare Umgebung $K=U\cap W$ mit $U$ quasi-kompakt offen und $W$ abgeschlossen mit quasi-kompaktem Komplement, sodass es eine konstruierbare Teilmenge $L\subset Y$ mit $f(L)\subset K$ gibt, für die die Einschränkung $f|_L:L\rightarrow K$ ein Homöomorphismus vom Grad $1$ ist. Im Beweis des Satzes~\ref{spaltung} haben wir $L$ innerhalb einer quasi-kompakten offenen Teilmenge $Y'$ von $Y$ konstruiert, für die $f^{-1}(K)\cap Y' =L$ gilt. Deswegen dürfen wir annehmen, dass $L$ quasi-kompakt offen in der Topologie von $f^{-1}(K)$ ist. 

Da spektrale Räume quasi-kompakt bezüglich der konstruierbaren Topologie sind, wird $X$ durch endlich viele Teilmengen $K_1=U_1\cap W_1,\dots,K_n=U_n\cap W_n$ von dieser Form überdeckt. Wir beweisen nun den Satz mit Induktion nach $n$. Der Fall $n=1$ ist trivial. Nehmen
wir die Behauptung für $n-1$ als bewiesen an. Man betrachte 
\[Y\setminus f^{-1}(W_1)\xrightarrow{f} X\setminus W_1 \ \mathrm{und}\ f^{-1}(X\setminus U_1)\xrightarrow{f} X\setminus U_1.\] 
Nach Induktionsannahme gibt es Folgen 
\[ \emptyset =S_n\subset S_{n-1}\subset\dots\subset S_1\subset S_0=X\setminus W_1,\]
\[ \emptyset =R_{m}\subset R_{m-1}\subset\dots\subset R_1\subset R_0=X\setminus U_1\]
von abgeschlossenen Teilmengen von $X\setminus W_1$ bzw. $X\setminus U_1$ mit quasi-kompakten Komplementen mit der gewünschten Eigenschaft. Da $K_1=W_1\setminus (X\setminus U_1)$ eine gewünschte Spaltung besitzt, liefert dies die Folgen
\[ W_1\subset W_1\cup S_{n-1}\subset\dots\subset W_1\cup S_1\subset W_1\cup S_0=X,\]
\[ \emptyset =R_{m}\subset R_{m-1}\subset\dots\subset R_1\subset R_0=X\setminus U_1 \subset W_1,\]
welche die gewünschte Eigenschaft besitzen.
\end{proof}

\begin{definition}
Sei $X$ ein analytischer adischer Raum. Eine Nisnevich-Überdeckung \[\{i:U\rightarrow X,\ f:V\rightarrow X\}\] heißt \textit{elementar Nisnevich'sch}, wenn $i$ eine offene Einbettung und $f^{-1}(X\setminus U)\rightarrow X\setminus U$ ein Homöomorphismus vom Grad $1$ ist.
\end{definition}

\begin{satz}
Sei $X$ ein analytischer adischer Raum. Dann ist die Nisnevich-Topologie von elementaren Nisnevich-Überdeckungen erzeugt.
\end{satz}

\begin{proof}
Wörtlich derselbe Beweis wie in der Situation von \cite[Proposition 1.4]{MV}.
\end{proof}

\begin{definition}
Sei $X$ ein analytischer adischer Raum. Ein kartesisches Diagramm von analytischen adischen Räumen
\begin{center}
\begin{tikzcd}
Y \arrow[r] \arrow[d] & V \arrow[d] \\
U \arrow[r] & X
\end{tikzcd}\
\end{center}
heißt \textit{elementares Nisnevich-Quadrat}, wenn $\{U\rightarrow X,\ V\rightarrow X\}$ eine elementare Nisnevich-Überdeckung bildet.
\end{definition}

\begin{korollar}\label{excisiv}
Sei $X$ ein analytischer adischer Raum. Dann ist eine Prägarbe $\mathcal{F}$ von Animen bzw. Spektren auf dem Nisnevich-Situs $\mathrm{Nis}_X$ genau dann eine Garbe, wenn sie elementare Nisnevich-Quadrate auf kartesische Diagramme abbildet und $\mathcal{F}(\varnothing)=\ast$ gilt.
\end{korollar}

\begin{proof}
Dies folgt aus einem Satz von Voevodsky, siehe \cite[Theorem 3.2.5]{AHW}.
\end{proof}

\newpage
\section{Garben und Hypergarben}

In diesem Abschnitt des Anhangs untersuchen wir die Beziehung zwischen Garben und Hypergarben von Animen bzw. Spektren auf den \mbox{Zariski-,} Nisnevich- und étalen Siten eines analytischen adischen Raums. Es ist eines unserer Hauptziele, notwendige und hinreichende Bedingungen zu finden, die eine Hypergarbe auf dem Nisnevich-Situs erfüllen muss, um eine étale Hypergarbe zu sein. Dabei folgen wir dem Ansatz von Clausen und Mathew (siehe \cite{CM21}) im Fall von Schemata, welcher sich eins zu eins in die Situation von analytischen adischen Räumen übertragen lässt. Zunächst fassen wir allgemein gültige Definitionen und Techniken aus Abschnitten 2 und 3 von \cite{CM21} zusammen, welche wir danach in unserer Situation anwenden werden.

\begin{notation}
\begin{enumerate}[label=(\roman*)]
\item[]
\item Unter einem \textit{Situs} verstehen wir eine $1$-Kategorie zusammen mit einer Grothendieck-Topologie. Wir nehmen an, dass alle Siten ein finales Objekt enthalten.
\item Sei $\mathcal{T}$ ein Situs. Mit $\mathrm{Sh}(\mathcal{T})$ (bzw. $\mathrm{Sh}(\mathcal{T},\mathrm{Sp})$) bezeichnen wir die $\infty$-Kategorie von Garben von Animen (bzw. Spektren) auf $\mathcal{T}$. 
\item Sei $X$ ein Schema oder ein analytischer adischer Raum. Mit $X_{\mathrm{Zar}}$ (bzw. $X_{\mathrm{Nis}}$ bzw. $X_{\mathrm{\Acute{e}t}}$) bezeichnen wir den Zariski- (bzw. Nisnevich- bzw. étalen) $\infty$-Topos von $X$.
\item Sei $\mathcal{P}$ eine feste Teilmenge von Primzahlen. Ein Spektrum $X$ heißt \textit{$\mathcal{P}$-lokal}, falls die Multiplikation mit jeder Primzahl $q\not\in \mathcal{P}$ invertierbar auf $X$ ist. Die volle $\infty$-Unterkategorie von $\mathcal{P}$-lokalen Spektren wird mit $\mathrm{Sp}_{\mathcal{P}}$ bezeichnet. Im Falle $\mathcal{P}=\{$alle Primzahlen$\}$ ist die Bedingung leer.
\item Sei $k$ ein Körper. Für eine Primzahl $p$ bezeichnen wir mit $\mathrm{cd}_p(k)$ (bzw. $\mathrm{vcd}_p(k)$) die galoissche $p$-lokale kohomologische (bzw. virtuelle kohomologische) Dimension von $k$. Die \textit{galoissche $\mathcal{P}$-lokale kohomologische} (bzw. \textit{virtuelle kohomologische}) \textit{Dimension von $k$} ist definiert als $\underset{p\in\mathcal{P}}{\mathrm{sup}}\, \mathrm{cd}_p(k)$ (bzw. $\underset{p\in\mathcal{P}}{\mathrm{sup}}\, \mathrm{vcd}_p(k)$).
\end{enumerate}
\end{notation}

Wir erinnern zunächst an die Definitionen der Hyper- und Postnikow-Vollständigkeit, die dem gesamten Abschnitt zugrunde liegen.

\begin{definition}[{\cite[Definition 2.4]{CM21}}]\label{hyper}
Sei $\mathcal{T}$ ein Situs und $\mathcal{F}$ eine Garbe von Animen (bzw. Spektren) auf $\mathcal{T}$. Wir nennen $\mathcal{F}$
\begin{enumerate}[label=(\roman*)]
\item \textit{azyklisch}, falls $\pi_n(\mathcal{F})=0$ für alle $n\in\mathbb{N}$ (bzw. $n\in\mathbb{Z}$) ist;
\item \textit{hypervollständig} oder eine \textit{Hypergarbe}, falls $\mathrm{Hom}_{\mathrm{Sh}(\mathcal{T})}(\mathcal{G},\mathcal{F})=0$ bzw. $\mathrm{Hom}_{\mathrm{Sh}(\mathcal{T},\mathrm{Sp})}(\mathcal{G},\mathcal{F})=0$ für alle azyklischen Garben $\mathcal{G}$ auf $\mathcal{T}$ ist;
\item \textit{Postnikow-vollständig}, falls der Morphismus $\mathcal{F}\xrightarrow[]{} \underset{n}{\varprojlim}\, \tau_{\leq n} \mathcal{F}$ ein Isomorphismus ist.
\end{enumerate}
Die Kategorie $\mathrm{Sh}(\mathcal{T})$ heißt \textit{hypervollständig}, falls jede Garbe von Animen auf $\mathcal{T}$ hypervollständig ist.
\end{definition}

\begin{bemerkung}
Definition~\ref{hyper}(ii) ist äquivalent zur üblichen Definition mittels Hyperüberdeckungen. Außerdem ist eine Garbe von Spektren hypervollständig genau dann, wenn deren zugrunde liegende Garbe von Animen hypervollständig ist. Für Beweise der beiden Aussagen verweisen wir auf \cite[Beispiel 2.5]{CM21} und die dort zitierten Referenzen.
\end{bemerkung}

\begin{lemma}[{\cite[Lemma 2.6]{CM21}}]\label{grundeigenschaften_der_hypervollständigkeit}
\begin{enumerate}[label=(\roman*)]
\item[]
\item Die volle Unterkategorie von hypervollständigen Garben von Animen bzw. Spektren ist abgeschlossen unter Limites.
\item Eine nach oben beschränkte\footnote{D. h., die Homotopiegruppen $\pi_i$ verschwenden für alle $i$ oberhalb einer bestimmten Zahl.} Garbe von Animen bzw. Spektren ist hypervollständig.
\item Eine Postnikow-vollständige Garbe ist hypervollständig.
\end{enumerate}
\end{lemma}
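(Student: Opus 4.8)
Der Plan ist, alle drei Aussagen rein formal aus der Beschreibung der hypervollständigen Garben abzuleiten, die Definition~\ref{hyper}(ii) zusammen mit der darauffolgenden Bemerkung liefert: Auf einem Situs $\mathcal{T}$ sind die hypervollständigen Garben von Animen (bzw.\ von Spektren) genau die Objekte $\mathcal{F}$, die bezüglich der von den $\infty$-zusammenhängenden Morphismen (also den Morphismen mit verschwindenden Homotopiegarben der Faser) erzeugten stark gesättigten Klasse lokal sind, was äquivalent dazu ist, dass $\mathrm{Hom}(\mathcal{G},\mathcal{F})$ für jede azyklische Garbe $\mathcal{G}$ trivial ist. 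Ich würde zuerst $(i)$ zeigen, daraus zusammen mit $(ii)$ sofort $(iii)$ erhalten und zuletzt $(ii)$ behandeln. Da die Aussage genau \cite[Lemma 2.6]{CM21} ist, geht es hierbei um Standardargumente; der einzige Punkt, der überhaupt eines Beweises bedarf, ist $(ii)$, und selbst dieser ist nicht tiefliegend.

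Für $(i)$: Die bezüglich einer beliebigen Klasse von Morphismen lokalen Objekte einer präsentierbaren $\infty$-Kategorie sind unter allen Limites abgeschlossen, weil die Abbildungsräume bzw.\ Abbildungsspektra in der Zielvariablen Limites erhalten. In der Sprache von Definition~\ref{hyper}(ii): Ist $\mathcal{F}\cong\varprojlim_i\mathcal{F}_i$ mit allen $\mathcal{F}_i$ hypervollständig und ist $\mathcal{G}$ azyklisch, so ist $\mathrm{Hom}(\mathcal{G},\mathcal{F})\cong\varprojlim_i\mathrm{Hom}(\mathcal{G},\mathcal{F}_i)$ ein Limes trivialer Objekte und somit selbst trivial; das Argument ist für Animen und für Spektren identisch. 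Für $(iii)$ genügt es dann, $(ii)$ zur Verfügung zu haben: Ist $\mathcal{F}$ Postnikow-vollständig, so ist $\mathcal{F}\cong\varprojlim_n\tau_{\leq n}\mathcal{F}$ ein Limes $n$-abgeschnittener Garben, die nach $(ii)$ hypervollständig sind, also nach $(i)$ selbst hypervollständig.

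Für $(ii)$: Eine nach oben beschränkte Garbe $\mathcal{F}$ ist definitionsgemäß für ein geeignetes $n$ bereits $n$-abgeschnitten, d.\,h.\ $\mathcal{F}\cong\tau_{\leq n}\mathcal{F}$; es genügt daher zu zeigen, dass jede $n$-abgeschnittene Garbe hypervollständig ist. Hierfür beobachtet man, dass der Funktor $\mathrm{Hom}(-,\mathcal{F})$ für $n$-abgeschnittenes $\mathcal{F}$ über die Abschneidung $\tau_{\leq n}$ faktorisiert und dass $\tau_{\leq n}$ jeden $\infty$-zusammenhängenden Morphismus auf einen Isomorphismus abbildet (die Faser eines solchen Morphismus hat verschwindende Homotopiegarben, insbesondere ist ihr $\tau_{\leq n}$ trivial). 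Folglich bildet $\mathrm{Hom}(-,\mathcal{F})$ die erzeugende Klasse auf Isomorphismen ab, sodass $\mathcal{F}$ nach dem eingangs Gesagten hypervollständig ist. Für Garben von Spektren ergibt sich dasselbe noch unmittelbarer aus $\mathrm{Hom}(\mathcal{G},\mathcal{F})\cong\mathrm{Hom}(\tau_{\leq n}\mathcal{G},\mathcal{F})\cong\mathrm{Hom}(0,\mathcal{F})=0$ für azyklisches $\mathcal{G}$, unter Benutzung der Adjunktion von $\tau_{\leq n}$ mit der Inklusion und der Tatsache, dass $\tau_{\leq n}$ einer azyklischen Garbe verschwindet. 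Die einzige erwartete Schwierigkeit ist demnach rein buchhalterischer Natur: Man muss die Konnektivitätskonventionen konsistent halten.
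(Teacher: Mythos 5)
Die Arbeit führt für dieses Lemma keinen eigenen Beweis, sondern übernimmt es samt Beweis aus {\cite[Lemma 2.6]{CM21}}; dein Vorschlag reproduziert im Wesentlichen genau das dortige Standardargument -- (i) über die Limeserhaltung von $\mathrm{Hom}(\mathcal{G},-)$, (iii) als Kombination von (i) und (ii) mittels des Postnikow-Turms, (ii) über die Orthogonalität bezüglich der t-Struktur -- und ist in diesem Sinne korrekt.

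Ein Schritt in (ii) ist allerdings genau die Stelle, an der Hypervollständigkeit überhaupt ein nichttriviales Phänomen ist, und deine Formulierung verwischt sie. Du erklärst es für „definitionsgemäß“, dass eine nach oben beschränkte Garbe $\mathcal{F}$ schon $n$-abgeschnitten ist, also $\mathcal{F}\cong\tau_{\leq n}\mathcal{F}$ gilt, und begründest später $\tau_{\leq n}\mathcal{G}\cong 0$ für azyklisches $\mathcal{G}$ allein mit dem Verschwinden der Homotopiegarben. Liest man „nach oben beschränkt“ wörtlich wie in der Fußnote des Lemmas (die Homotopiegarben $\pi_i$ verschwinden oberhalb einer festen Zahl), so ist die Implikation $\mathcal{F}\cong\tau_{\leq n}\mathcal{F}$ in einem nicht hypervollständigen Topos falsch: Eine azyklische Garbe $\mathcal{G}\neq 0$ ist in diesem naiven Sinne nach oben beschränkt, erfüllt aber $\tau_{\leq n}\mathcal{G}\cong 0\neq\mathcal{G}$ und ist wegen $\mathrm{id}_{\mathcal{G}}\neq 0$ in $\pi_0\mathrm{Hom}(\mathcal{G},\mathcal{G})$ nicht hypervollständig -- mit dieser Lesart wäre also bereits die Aussage (ii) selbst falsch. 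Gemeint ist (wie bei Clausen--Mathew) die t-Struktur-Lesart $\mathcal{F}\in\mathrm{Sh}(\mathcal{T},\mathrm{Sp})_{\leq n}$: Koabgeschnittenheit wird in einem nicht hypervollständigen Topos gerade nicht durch die Homotopiegarben charakterisiert, wohl aber liegt jede azyklische Garbe in $\mathrm{Sh}(\mathcal{T},\mathrm{Sp})_{\geq m}$ für alle $m$. Mit dieser Lesart ist dein erster Schritt eine Tautologie, und der Rest des Arguments ist richtig und vollständig: Für azyklisches $\mathcal{G}$ gilt $[\Sigma^{k}\mathcal{G},\mathcal{F}]=0$ für alle $k\in\mathbb{Z}$, also $\mathrm{Hom}(\mathcal{G},\mathcal{F})=0$; im Fall von Animen ist dies die Standardtatsache, dass $n$-abgeschnittene Objekte eines $\infty$-Topos hypervollständig sind. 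Entscheidend ist nur, die Richtung nicht umzukehren: Aus dem Verschwinden der Homotopiegarben von $\mathcal{F}$ oberhalb von $n$ darf man nicht auf $\mathcal{F}\cong\tau_{\leq n}\mathcal{F}$ schließen -- genau diese Implikation wäre erst eine Konsequenz der zu beweisenden Hypervollständigkeit.
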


Obwohl die Begriffe von Hyper- und Postnikow-Vollständigkeit im Allgemeinen nicht äquivalent sind (siehe bspw. \cite[Beispiel 1.30]{MV}), stimmen sie nach \cite{MoRei} für eine große Klasse von $\infty$-Topoi überein. In dieser Arbeit kommen wir jedoch ohne die Ergebnisse von \cite{MoRei} aus und verwenden ein klassisches, auf kohomologischer Dimension basierendes Kriterium.

\begin{definition}[{\cite[Definition 2.8]{CM21}}]
Sei $\mathcal{T}$ ein Situs und $\mathcal{F}$ eine Garbe von konnektiven $\mathcal{P}$-lokalen Spektren auf $\mathcal{T}$. 
\begin{enumerate}[label=(\roman*)]
\item Sei $\mathcal{A}$ eine Garbe von abelschen Gruppen auf $\mathcal{T}$. Die \textit{$i$-te Kohomologiegruppe von $\mathcal{F}$ mit Koeffizienten in $\mathcal{A}$} ist gegeben durch 
\[H^i(\mathcal{F};\mathcal{A})=\pi_0\mathrm{Hom}_{\mathrm{Sh(\mathcal{T},\mathrm{Sp})}}(\mathcal{F},\Sigma^i\mathcal{A}).\]
\item Die Garbe $\mathcal{F}$ heißt \textit{von kohomologischer Dimension $\leq d$}, falls $H^i(\mathcal{F};\mathcal{A})=0$ für jede Garbe von $\mathcal{P}$-lokalen abelschen Gruppen $\mathcal{A}$ und jedes $i\geq d$ ist.
\item Es heißt \textit{$\mathrm{Sh(\mathcal{T},\mathrm{Sp}_{\mathcal{P},\geq 0})}$ besitzt genügend Objekte $\mathcal{P}$-lokaler kohomologischer Dimension $\leq d$}, wenn es für jedes $\mathcal{G}\in \mathrm{Sh(\mathcal{T},\mathrm{Sp}_{\mathcal{P},\geq 0})}$ einen Morphismus $f:\mathcal{H}\rightarrow \mathcal{G}$ in $\mathrm{Sh(\mathcal{T},\mathrm{Sp}_{\mathcal{P},\geq 0})}$ gibt, sodass $\pi_0(f)$ ein Epimorphismus und $\mathcal{H}$ von $\mathcal{P}$-lokaler kohomologischer Dimension $\leq d$ ist.
\end{enumerate}
\end{definition}

\begin{satz}[{\cite[Proposition 2.10]{CM21}}]\label{äquivalenz_vollständig}
Sei $\mathcal{T}$ ein Situs und $\mathcal{F}\in \mathrm{Sh(\mathcal{T},\mathrm{Sp}_{\mathcal{P}})}$. Angenommen, $\mathrm{Sh(\mathcal{T},\mathrm{Sp}_{\mathcal{P},\geq 0})}$ besitzt genügend Objekte von $\mathcal{P}$-lokaler kohomologischer Dimension $\leq d$. Dann ist $\mathcal{F}$ hypervollständig genau dann, wenn es Postnikow-vollständig ist. 
\end{satz}

Diese Vollständigkeitsbegriffe spielen im Verlauf dieser Arbeit eine besondere Rolle. Die für unsere Zwecke wichtigsten Eigenschaften dieser Begriffe sind in den beiden folgenden Sätzen zusammengefasst.

\begin{satz}
Sei $\mathcal{T}$ ein Situs.
\begin{enumerate}[label=(\roman*)]
\item Ein Morphismus $\mathcal{F}\rightarrow \mathcal{G}$ von hypervollständigen Garben von Animen bzw. Spektren ist genau dann ein Isomorphismus, wenn der induzierte Morphismus $\pi_i(\mathcal{F})\rightarrow \pi_i(\mathcal{G})$ zwischen den Homotopiegruppen ein Isomorphismus für alle $i\in \mathbb{N}$ bzw. $i\in \mathbb{Z}$ ist.
\item Angenommen, der $1$-Topos der Garben von Mengen auf $\mathcal{T}$ besitzt genügend Punkte. Sei $x$ ein Punkt von $\mathrm{Sh}(\mathcal{T},\mathrm{Set})$. Dann gilt für eine Garbe $\mathcal{F}$ von Animen bzw. Spektren
\[(\pi_i(\mathcal{F}))_x\cong \pi_i(\mathcal{F}_x).\]
Insbesondere ist ein Morphismus von hypervollständigen Garben ein Isomorphismus genau dann, wenn er halmweise ein Isomorphismus ist.
\end{enumerate}
\end{satz}

\begin{proof}
Der erste Teil folgt direkt aus der Definition. Für den zweiten Teil bemerke, dass die Abschneidungsfunktoren $\tau_{\leq n}$ und $\tau_{\geq n}$ für jedes $n$ mit dem Rückzug vertauschen. 
\end{proof}

\begin{satz}[{\cite[Proposition 2.13]{CM21}}]
Sei $\mathcal{T}$ ein Situs. Gegeben sei eine Garbe von Spektren bzw. konnektiven Spektren $\mathcal{F}$ bzw. $\mathcal{G}$. Dann existiert eine bedingt konvergente Spektralsequenz 
\[E^{p,q}_2=H^p(\mathcal{G};\pi_q(\mathcal{F}))\implies \pi_{q-p}\mathrm{Hom}_{\mathrm{Sh}(\mathcal{T},\mathrm{Sp})}(\mathcal{G},\underset{n}{\varprojlim}\, \tau_{\leq n}\mathcal{F}).\]
\end{satz}

Die Hyper- und Postnikow-Vollständigkeit sind lokale Eigenschaften im folgenden Sinne:

\begin{lemma}[Lokal-Global-Prinzip für Hyper- und Postnikow-Vollständigkeit, {\cite[Proposition 2.25]{CM21}}]\label{lokal-global}
Sei $\mathcal{T}$ ein Situs und $\{X_i \}_{i\in I}$ eine Überdeckung des finalen Objekts von $\mathcal{T}$. Dann ist eine Garbe $\mathcal{F}$ von Animen oder Spektren hyper- bzw. Postnikow-vollständig genau dann, wenn ihre Einschränkung auf den Situs $\mathcal{T}_{/X_i}$ der Objekte über $X_i$ für jedes $i\in I$ hyper- bzw. Postnikow-vollständig ist.
\end{lemma}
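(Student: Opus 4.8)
Der Plan ist, alles auf formale Eigenschaften der étalen geometrischen Morphismen $j_i\colon\mathcal{T}_{/X_i}\to\mathcal{T}$ zurückzuführen. Zuerst würde ich die folgenden Hilfsaussagen bereitstellen: (1) Der Einschränkungsfunktor $j_i^\ast$ — sowohl auf Garben von Animen als auch auf Garben von Spektren — ist exakt, erhält alle Kolimites und erhält zudem alle Limites (er besitzt einen linksadjungierten Funktor $j_{i!}$ und einen rechtsadjungierten Funktor $j_{i\ast}$) und vertauscht mit den Abschneidungsfunktoren $\tau_{\leq n}$; insbesondere erhält $j_i^\ast$ die $\infty$-zusammenhängenden Morphismen, also diejenigen mit azyklischer Faser. (2) Da die Familie $\{X_i\}_{i\in I}$ das finale Objekt von $\mathcal{T}$ überdeckt, ist $\coprod_i h_{X_i}$ — wobei $h_{X_i}$ die Garbifizierung der durch $X_i$ dargestellten Prägarbe bezeichnet — ein effektiver Epimorphismus auf das finale Objekt von $\mathrm{Sh}(\mathcal{T})$, weswegen die Familie $\{j_i^\ast\}_{i\in I}$ gemeinsam konservativ ist. (3) Unter der Äquivalenz $\mathrm{Sh}(\mathcal{T}_{/X_i})\simeq\mathrm{Sh}(\mathcal{T})_{/h_{X_i}}$ entspricht $j_i^\ast$ dem Basiswechsel $(-)\times h_{X_i}$, und für ein hypervollständiges Objekt $a$ eines Topos $\mathcal{X}$ ist ein Objekt $(Z\to a)\in\mathcal{X}_{/a}$ bereits dann hypervollständig, wenn $Z$ es in $\mathcal{X}$ ist (dies folgt durch eine einfache Rechnung mit Abbildungsräumen, da die $\infty$-zusammenhängenden Morphismen in $\mathcal{X}_{/a}$ gerade den $\infty$-zusammenhängenden Morphismen über $a$ in $\mathcal{X}$ entsprechen).

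Für die Hin-Richtung würde ich zunächst die Hypervollständigkeit im Fall von Garben von Animen behandeln: Ist $\mathcal{F}$ hypervollständig, so ist auch $\mathcal{F}\times h_{X_i}$ hypervollständig, denn die volle Unterkategorie der hypervollständigen Garben ist nach Lemma~\ref{grundeigenschaften_der_hypervollständigkeit}(i) unter Limites abgeschlossen, und $h_{X_i}$ ist als $0$-abgeschnittene Garbe nach Lemma~\ref{grundeigenschaften_der_hypervollständigkeit}(ii) hypervollständig; mit Hilfsaussage~(3) folgt, dass $j_i^\ast\mathcal{F}=(\mathcal{F}\times h_{X_i}\to h_{X_i})$ hypervollständig ist. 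Der Fall von Garben von Spektren ergibt sich hieraus, indem man die Bemerkung zu Definition~\ref{hyper} verwendet (eine Garbe von Spektren ist genau dann hypervollständig, wenn ihre zugrunde liegende Garbe von Animen es ist) zusammen mit der Tatsache, dass $j_i^\ast$ mit der Bildung der zugrunde liegenden Garbe von Animen vertauscht; alternativ argumentiert man direkt, dass $j_{i!}$ auf Garben von Spektren exakt ist, konnektive Objekte erhält und daher azyklische Garben (als Durchschnitt der $n$-konnektiven Garben über alle $n$) auf azyklische Garben abbildet. Für die Postnikow-Vollständigkeit würde ich — gleichermaßen für Animen und Spektren — lediglich $j_i^\ast$ auf den Isomorphismus $\mathcal{F}\xrightarrow{\sim}\varprojlim_n\tau_{\leq n}\mathcal{F}$ anwenden: Da $j_i^\ast$ Limites erhält und mit $\tau_{\leq n}$ vertauscht, ist das Ergebnis gerade der Isomorphismus $j_i^\ast\mathcal{F}\xrightarrow{\sim}\varprojlim_n\tau_{\leq n}(j_i^\ast\mathcal{F})$.

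Für die Rück-Richtung nehme ich an, dass $j_i^\ast\mathcal{F}$ für alle $i$ hypervollständig ist. Es bezeichne $\mathcal{F}\to\mathcal{F}^\wedge$ die Hypervervollständigung; die zugrunde liegende Abbildung ist $\infty$-zusammenhängend, also ist nach Hilfsaussage~(1) auch $j_i^\ast\mathcal{F}\to j_i^\ast\mathcal{F}^\wedge$ $\infty$-zusammenhängend. Sein Ziel ist nach der bereits bewiesenen Hin-Richtung (angewandt auf die hypervollständige Garbe $\mathcal{F}^\wedge$) hypervollständig, und seine Quelle ist es nach Voraussetzung; ein $\infty$-zusammenhängender Morphismus zwischen hypervollständigen Garben ist jedoch ein Isomorphismus. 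Somit ist $j_i^\ast\mathcal{F}\to j_i^\ast\mathcal{F}^\wedge$ für jedes $i$ ein Isomorphismus, und nach Hilfsaussage~(2) ist bereits $\mathcal{F}\to\mathcal{F}^\wedge$ ein Isomorphismus, d.\,h.\ $\mathcal{F}$ ist hypervollständig. Im Falle der Postnikow-Vollständigkeit genügt es nach Hilfsaussage~(2) zu zeigen, dass $j_i^\ast\bigl(\mathcal{F}\to\varprojlim_n\tau_{\leq n}\mathcal{F}\bigr)$ — also $j_i^\ast\mathcal{F}\to\varprojlim_n\tau_{\leq n}(j_i^\ast\mathcal{F})$ — ein Isomorphismus ist, und das ist gerade die vorausgesetzte Postnikow-Vollständigkeit von $j_i^\ast\mathcal{F}$.

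Das Hauptproblem besteht ausschließlich im Nachweis der Hilfsaussagen — insbesondere, dass $j_i^\ast$ als inverses Bild des étalen geometrischen Morphismus \emph{alle} Limites erhält (im Gegensatz zu inversen Bildern allgemeiner geometrischer Morphismen) und mit den Abschneidungsfunktoren vertauscht, sowie dass ein Objekt eines Topos-Slices über einem hypervollständigen Objekt mit hypervollständigem Totalraum selbst hypervollständig ist. Sind diese bereitgestellt, so ist der Rest reines Adjungiertenrechnen, und der Übergang zwischen Animen und Spektren verläuft uniform.
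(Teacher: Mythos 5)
Die Arbeit beweist dieses Lemma nicht selbst, sondern zitiert nur \cite[Proposition 2.25]{CM21}; dein Argument ist also an sich zu beurteilen, und es ist korrekt und vollständig in der Struktur. Deine Hilfsaussagen sind genau die richtigen und alle Standard: unter $\mathrm{Sh}(\mathcal{T}_{/X_i})\simeq \mathrm{Sh}(\mathcal{T})_{/h_{X_i}}$ ist $j_i^\ast=(-)\times h_{X_i}$ zugleich links- und rechtsadjungiert, vertauscht als inverses Bild eines geometrischen Morphismus mit $\tau_{\leq n}$ und erhält $\infty$-zusammenhängende Morphismen; die gemeinsame Konservativität folgt aus dem effektiven Epimorphismus $\coprod_i h_{X_i}\rightarrow 1$; und deine Abbildungsraum-Rechnung für Hilfsaussage (3) benutzt korrekt, dass $h_{X_i}$ als $0$-abgeschnittene, also nach oben beschränkte Garbe hypervollständig ist. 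Auch die Rück-Richtung (Hypervervollständigungseinheit ist $\infty$-zusammenhängend, Whitehead für hypervollständige Objekte, dann Konservativität) ist sauber; für die Postnikow-Vollständigkeit ist das Adjungiertenargument in beiden Richtungen unmittelbar. Der einzige Punkt, der eine Zeile mehr verdient, ist die Behauptung, dass die Einheit $\mathcal{F}\rightarrow\mathcal{F}^{\wedge}$ auch für Garben von Spektren azyklische Faser hat: für Animen ist das die Standardaussage über die kotopologische Lokalisierung an den $\infty$-zusammenhängenden Morphismen (siehe \cite[Abschnitt 6.5.2]{HTT}); für Spektren folgt es daraus, dass die Hypervervollständigung niveauweise auf den zugrunde liegenden Garben von Animen berechnet wird -- oder man umgeht den Punkt ganz, indem man den spektralen Fall mittels der Bemerkung nach Definition~\ref{hyper} (Hypervollständigkeit wird auf der zugrunde liegenden Garbe von Animen getestet) auf den Animen-Fall reduziert, wie du es für die Hin-Richtung ohnehin tust. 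Als gleichwertige Alternative für die Rück-Richtung sei erwähnt, dass man statt über die Vervollständigungseinheit auch über Abstieg entlang des \v{C}ech-Nervs der Überdeckung argumentieren kann, d.\,h.\ $\mathcal{F}\simeq\varprojlim_{[n]\in\Delta}(j_n)_\ast j_n^\ast\mathcal{F}$, zusammen mit der Beobachtung, dass direkte Bilder entlang der Slice-Projektionen Hypervollständigkeit erhalten (ihre Linksadjungierten erhalten azyklische Objekte) und hypervollständige Garben unter Limites abgeschlossen sind; dein Weg ist ebenso gültig und eher direkter.
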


Sei $\mathcal{T}$ ein Situs. Es bezeichne $\mathcal{C}$ (bzw. $\mathcal{C}^h$) die $\infty$-Kategorie der Garben von $\mathcal{P}$-lokalen Spektren auf $\mathcal{T}$ (bzw. die volle $\infty$-Unterkategorie der hypervollständigen Garben von $\mathcal{P}$-lokalen Spektren auf $\mathcal{T}$). Dann ist $\mathcal{C}^h$ eine linke Bousfield-Lokalisierung von $\mathcal{C}$, siehe \cite[Proposition 2.14]{CM21}. D. h., der natürliche Einbettungsfunktor $\mathcal{C}^h\rightarrow \mathcal{C}$ besitzt einen zugänglichen linksadjungierten Funktor $\mathcal{C}\rightarrow\mathcal{C}^h$, den wir den \textit{Hypervervollständigungsfunktor} nennen. Von besonderer Bedeutung ist der folgende Fall, in dem die volle $\infty$-Unterkategorie $\mathcal{C}^h$ außerdem unter Kolimites abgeschlossen ist.

\begin{lemma}[{\cite[Lemma 2.23]{CM21}}]\label{tensor-lokalisierung}
Die Situation sei wie oben. Es bezeichne $1^h$ die Hypervervollständigung der monoidalen Einheit von $\mathcal{C}$. Dann sind die folgenden Bedingungen äquivalent: 
\begin{enumerate}[label=(\roman*)]
\item Die volle $\infty$-Unterkategorie $\mathcal{C}^h\subset \mathcal{C}$ ist abgeschlossen unter Kolimites und dem Tensorproduktfunktor $-\otimes X$ für jedes $X\in \mathcal{C}$.
\item Für jedes $X\in \mathcal{C}$ ist das Objekt $1^h\otimes X$ hypervollständig.
\item Für jedes $X\in \mathcal{C}$ ist das Objekt $1^h\otimes X$ isomorph zur Hypervervollständigung von $X$.
\item Der Vergissfunktor $\mathrm{Mod}_{1^h}(\mathcal{C})\rightarrow \mathcal{C}$ ist volltreu mit wesentlichem Bild $\mathcal{C}^h$.
\item Jedes Objekt $X\in\mathcal{C}$, welches eine Struktur eines Moduls über einer Algebra $A\in\mathrm{Alg}(\mathcal{C}^h)$ besitzt, ist hypervollständig.
\end{enumerate}
\end{lemma}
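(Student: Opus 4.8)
Der Plan ist, jede der fünf Bedingungen als eine Umformulierung der Aussage zu erkennen, dass die Hypervervollständigung eine \emph{smashing}-Lokalisierung ist, d.\,h.\ dass $1^{h}$ ein idempotentes $E_{\infty}$-Algebrenobjekt in $\mathcal{C}$ mit $\mathrm{Mod}_{1^{h}}(\mathcal{C})\simeq\mathcal{C}^{h}$ ist; dabei folge ich \cite{CM21}. Es bezeichne $L\colon\mathcal{C}\to\mathcal{C}^{h}$ den Hypervervollständigungsfunktor mit Einheit $\ell_{X}\colon X\to LX$, ferner $1$ die monoidale Einheit von $\mathcal{C}$, $1^{h}=L(1)$, $\eta=\ell_{1}\colon 1\to 1^{h}$ und $\mathcal{N}=\{M\in\mathcal{C}\mid LM=0\}$. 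Zunächst würde ich drei vorbereitende Tatsachen bereitstellen. Erstens ist $L$ exakt, da die Hypervervollständigung eines $\infty$-Topos linksexakt ist; insbesondere ist $\mathcal{N}$ unter Fasern abgeschlossen und $\fib(\eta)\in\mathcal{N}$. Zweitens besteht $\mathcal{N}$ genau aus den azyklischen Garben (Definition~\ref{hyper}): Da diskrete Garben hypervollständig sind, bildet eine Garbe mit einer nichtverschwindenden Homotopiegruppe nichttrivial in eine hypervollständige Garbe ab, und umgekehrt ist jede azyklische Garbe im Sinne von Definition~\ref{hyper} in $\mathcal{N}$. Drittens ist $\mathcal{N}$ ein lokalisierendes $\otimes$-Ideal, also unter Kolimites, Verschiebungen und $-\otimes X$ für jedes $X\in\mathcal{C}$ abgeschlossen; da $\mathcal{C}$ unter Kolimites und Verschiebungen von den Objekten der Form $\Sigma^{\infty}_{+}U$ ($U$ im Situs) erzeugt wird, genügt für die Abgeschlossenheit unter $-\otimes X$ der Nachweis, dass der Funktor $j_{U!}$ („Ausdehnung durch Null“, der Linksadjungierte zu $j_{U}^{*}$) azyklische Garben auf azyklische Garben abbildet — das erledigt man durch Rückführung auf Prägarben, wo $j_{U!}$ exakt und mit der Garbifizierung verträglich ist. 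Aus der dritten Tatsache folgt, dass $L$-Äquivalenzen unter $-\otimes X$ abgeschlossen sind, die Lokalisierung also mit dem Tensorprodukt verträglich ist; insbesondere trägt $\mathcal{C}^{h}$ eine symmetrische monoidale Struktur, $L$ ist symmetrisch monoidal, $1^{h}$ ist ein $E_{\infty}$-Algebrenobjekt in $\mathcal{C}$, und man erhält eine kanonische natürliche Transformation $\alpha_{X}\colon 1^{h}\otimes X\to LX$ mit $\alpha_{X}\circ(\eta\otimes\mathrm{id}_{X})\simeq\ell_{X}$.

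Mit diesen Hilfsmitteln verläuft der Rest als formaler Implikationszyklus. Die Implikation $(i)\Rightarrow(ii)$ ist unmittelbar, da $1^{h}\in\mathcal{C}^{h}$ und $\mathcal{C}^{h}$ unter $-\otimes X$ abgeschlossen ist. Für $(ii)\Rightarrow(iii)$ beobachtet man, dass die Faser von $\eta\otimes\mathrm{id}_{X}$ gerade $\fib(\eta)\otimes X$ ist und wegen $\fib(\eta)\in\mathcal{N}$ (und weil $\mathcal{N}$ ein $\otimes$-Ideal ist) ebenfalls in $\mathcal{N}$ liegt; also ist $\eta\otimes\mathrm{id}_{X}$ eine $L$-Äquivalenz, und aus der Faktorisierung $\ell_{X}\simeq\alpha_{X}\circ(\eta\otimes\mathrm{id}_{X})$ und der $2$-von-$3$-Eigenschaft folgt, dass $\alpha_{X}$ eine $L$-Äquivalenz ist. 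Da $LX$ stets hypervollständig ist und — unter $(ii)$ — auch $1^{h}\otimes X$, und da eine $L$-Äquivalenz zwischen hypervollständigen Garben bereits ein Isomorphismus ist (denn $L$ wirkt auf $\mathcal{C}^{h}$ als Identität), liefert $\alpha_{X}$ in diesem Fall den gewünschten Isomorphismus $1^{h}\otimes X\cong LX$; die Rückrichtung $(iii)\Rightarrow(ii)$ ist trivial, da $LX$ hypervollständig ist.

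Den Zyklus würde ich dann durch $(iii)\Rightarrow(iv)\Rightarrow(i)$ sowie $(iv)\Rightarrow(v)\Rightarrow(ii)$ schließen. Für $(iii)\Rightarrow(iv)$ liefert $(iii)$ mit $X=1^{h}$, dass $\eta\otimes\mathrm{id}_{1^{h}}\colon 1^{h}\to 1^{h}\otimes 1^{h}\cong L(1^{h})\cong 1^{h}$ ein Isomorphismus, also $1^{h}$ idempotent ist; nach der Theorie der idempotenten Algebren (Lurie, \emph{Higher Algebra}, §4.8.2) ist dann $\mathrm{Mod}_{1^{h}}(\mathcal{C})\to\mathcal{C}$ volltreu mit wesentlichem Bild $\{X\mid\eta\otimes\mathrm{id}_{X}\text{ ist ein Isomorphismus}\}$, und dieses stimmt nach dem vorigen Absatz mit $\mathcal{C}^{h}$ überein (denn $\eta\otimes\mathrm{id}_{X}$ ist genau dann ein Isomorphismus, wenn $\alpha_{X}$ einer ist, also genau dann, wenn $\ell_{X}$ einer ist). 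Aus $(iv)$ folgt $(i)$, weil der Vergissfunktor $\mathrm{Mod}_{1^{h}}(\mathcal{C})\to\mathcal{C}$ Kolimites erzeugt und $\mathrm{Mod}_{1^{h}}(\mathcal{C})$ unter $-\otimes X$ abgeschlossen ist; diese Eigenschaften transportiert man über die Äquivalenz $\mathrm{Mod}_{1^{h}}(\mathcal{C})\simeq\mathcal{C}^{h}$. Aus $(iv)$ folgt $(v)$, da ein Modul $X$ über $A\in\mathrm{Alg}(\mathcal{C}^{h})=\mathrm{Alg}_{1^{h}}(\mathcal{C})$ durch Einschränkung entlang $1^{h}\to A$ ein $1^{h}$-Modul und somit nach $(iv)$ hypervollständig ist. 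Schließlich folgt $(v)\Rightarrow(ii)$, indem man $(v)$ auf $A=1^{h}$ anwendet: Der freie $1^{h}$-Modul $1^{h}\otimes X$ ist dann hypervollständig.

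Die zu erwartende Hauptschwierigkeit liegt im vorbereitenden Nachweis, dass die azyklischen Garben ein $\otimes$-Ideal bilden — äquivalent, dass die Hypervervollständigung mit der monoidalen Struktur verträglich ist. Ohne diese Tatsache kann man weder $1^{h}$ als Algebrenobjekt auffassen noch die Transformation $\alpha$ konstruieren, und die Schlüsselimplikation $(ii)\Rightarrow(iii)$ bricht zusammen; der rein formale Teil — der Zyklus über die idempotente Algebra $1^{h}$ — ist dagegen Routine.
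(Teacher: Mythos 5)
The paper does not prove this lemma at all — it is quoted verbatim from \cite[Lemma 2.23]{CM21} — so there is no internal proof to compare against; your argument is essentially the one given in that reference (the formal idempotent-algebra/smashing cycle). The reduction is correct, and the one point that genuinely needs care — that the acyclic ($=\infty$-konnektiven) Garben form a $\otimes$-Ideal, so that $L$ is a monoidal localization, $1^{h}$ is an $\mathbb{E}_\infty$-Algebra and the transformation $\alpha_X\colon 1^h\otimes X\to LX$ exists — is handled adequately by your generation argument via $\Sigma^\infty_+h_U$, the projection formula and the right-t-exactness of $j_{U!}$.
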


\begin{definition}[{\cite[Definition 2.24]{CM21}}]
Sei $\mathcal{T}$ ein Situs. Wenn eine der äquivalenten Bedingungen aus Lemma~\ref{tensor-lokalisierung} erfüllt ist, nennen wir den Hypervervollständigungsfunktor eine \textit{Tensor-Lokalisierung}.
\end{definition}

Eines der für uns wichtigsten Ergebnisse von \cite{CM21} ist eine präzise Analyse von Hypervollständigkeit in Termen von Diagrammen. Wir erinnern zunächst an die folgenden Hilfsdefinitionen.  

\begin{definition}[{\cite[Definition 2.29]{CM21}}]
Sei $\mathcal{T}$ ein Situs, abgeschlossen unter endlichen Limites. Es heißt $\mathcal{T}$ \textit{finitistisch}, wenn jede Überdeckung in $\mathcal{T}$ eine endliche Teilüberdeckung besitzt.
\end{definition}

\begin{definition}[{\cite[Definition 2.30]{CM21}}]
Sei $\mathcal{T}$ ein finitistischer Situs. Es heißt \textit{$\mathcal{T}$ von $\mathcal{P}$-lokaler kohomologischer Dimension $\leq d$}, wenn die $\mathcal{P}$-Lokalisierung der darstellbaren Garbe $\Sigma^{\infty}_{+}h_x$\footnote{also die $\mathcal{P}$-Lokalisierung der Garbifizierung der naiven Prägarbe $y\mapsto \Sigma^{\infty}_{+}\mathrm{Hom}_{\mathcal{T}}(y,x)$} von $\mathcal{P}$-lokaler kohomologischer Dimension $\leq d$ für jedes $x\in\mathcal{T}$ ist.
\end{definition}

\begin{definition}[{\cite[Definition 2.33]{CM21}}]
Sei $m\geq 0$ eine natürliche Zahl.
\begin{enumerate}[label=(\roman*)]
\item Ein filtriertes Spektrum $\dots\rightarrow X_{-1}\rightarrow X_0\rightarrow X_1\rightarrow \dots$ heißt \textit{$m$-nilpotent} (bzw. \textit{schwach $m$-nilpotent}), wenn für jedes $i\in\mathbb{Z}$ die Abbildung $X_i\rightarrow X_{i+m+1}$ nullhomotop ist (bzw. die induzierte Abbildung auf den Homotopiegruppen trivial ist).
\item Wir nennen ein augmentiertes kosimpliziales Spektrum $X^{\bullet}\in\mathrm{Fun}(\Delta^+,\mathrm{Sp})$ \textit{$m$-rasch konvergent} (bzw. \textit{schwach $m$-rasch konvergent}), wenn der Turm \[\{\cofib (X^{-1}\rightarrow \mathrm{Tot_n(X^{\bullet})})\}_n\]
$m$-nilpotent (bzw. schwach $m$-nilpotent) ist.
\end{enumerate}
\end{definition}

Der folgende Satz erklärt die Beziehung zwischen Nilpotenz und Hypervollständigkeit und wird uns später bei der Untersuchung von étalen Hypergarben helfen.

\begin{satz}[{\cite[Proposition 2.35]{CM21}}]\label{hypervollständigkeit_überdeckungen}
Sei $\mathcal{T}$ ein finitistischer Situs von $\mathcal{P}$-lokaler kohomologischer Dimension $\leq d$. Für eine Garbe von $\mathcal{P}$-lokalen Spektren $\mathcal{F}$ auf $\mathcal{T}$ sind äquivalent:
\begin{enumerate}[label=(\roman*)]
\item $\mathcal{F}$ ist hypervollständig (oder äquivalent dazu Postnikow-vollständig).
\item Für jede abgeschnittene Hyperüberdeckung $y_{\bullet}$ in $\mathcal{T}$ von einem Objekt $x\in\mathcal{T}$ ist das augmentierte kosimpliziale Spektrum
\[\mathcal{F}(x)\rightarrow \mathcal{F}(y_\bullet)\]
$d$-rasch konvergent.
\end{enumerate}
\end{satz}

Als weitere Anwendung des Nilpotenzbegriffs geben Clausen und Mathew eine explizite Bedingung für die Vertauschung von filtrierten Kolimites und Totalisierungen:

\begin{lemma}[{\cite[Lemma 2.34]{CM21}}]\label{totalisierungen}
Sei $(X_i^{\bullet})_{i\in I}$ ein filtriertes System von augmentierten kosimplizialen Spektren, wobei $I$ eine partiell geordnete Menge ist. Angenommen, es gibt ein $m\geq 0$, sodass $X_i^{\bullet}$ schwach $m$-rasch konvergent für jedes $i\in I$ ist. Dann ist das augmentierte kosimpliziale Spektrum $\underset{i\in I}{\colim}X_i^{\bullet}$ schwach $m$-rasch konvergent und der Morphismus
\[\underset{i\in I}{\colim}\mathrm{Tot}(X_i^{\bullet})\rightarrow \mathrm{Tot}(\underset{i\in I}{\colim}X_i^{\bullet})\]
ein Isomorphismus.
\end{lemma}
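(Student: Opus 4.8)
Der Plan ist, alles auf zwei elementare Tatsachen über Spektren zurückzuführen: dass endliche Limites — insbesondere jede partielle Totalisierung $\mathrm{Tot}_n$ — mit filtrierten Kolimites vertauschen, und dass Homotopiegruppen mit filtrierten Kolimites vertauschen. Die Gleichmäßigkeit von $m$ über die Indexmenge $I$ hinweg ist dabei das entscheidende Merkmal, das dem filtrierten Kolimes die rasche Konvergenz vererbt; ohne sie wäre die Aussage falsch.

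Zuerst würde ich festhalten, dass $\mathrm{Tot}_n(-)$ mit filtrierten Kolimites von Spektren vertauscht. Dies folgt durch Induktion nach $n$ aus dem Turm eines kosimplizialen Spektrums: Es gibt eine Fasersequenz $\Omega^n N^n(X^\bullet)\to \mathrm{Tot}_n(X^\bullet)\to \mathrm{Tot}_{n-1}(X^\bullet)$ (Bousfield-Kan bzw. Dold-Kan), wobei $N^n$ ein endlicher Limes der kosimplizialen Terme ist, sodass jeder Induktionsschritt nur Fasern und endliche Produkte verwendet, welche mit filtrierten Kolimites vertauschen. Da Kofasern endliche Kolimites sind, erhalten wir für jedes $n$ den Isomorphismus
\[\underset{i\in I}{\colim}\, \cofib\bigl(X_i^{-1}\to \mathrm{Tot}_n(X_i^\bullet)\bigr)\;\cong\; \cofib\bigl((\underset{i\in I}{\colim}\,X_i)^{-1}\to \mathrm{Tot}_n(\underset{i\in I}{\colim}\,X_i^\bullet)\bigr),\]
sodass der „Fehlerturm“ von $\colim_i X_i^\bullet$ der filtrierte Kolimes (über $i$) der Fehlertürme der $X_i^\bullet$ ist. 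Da $\pi_\ast$ mit filtrierten Kolimites vertauscht und ein filtrierter Kolimes von auf Homotopiegruppen trivialen Abbildungen wieder auf Homotopiegruppen trivial ist, ist ein filtrierter Kolimes von schwach $m$-nilpotenten Türmen wieder schwach $m$-nilpotent; somit ist $\colim_i X_i^\bullet$ schwach $m$-rasch konvergent, was die erste Behauptung ist.

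Für die zweite Behauptung würde ich zunächst bemerken, dass die schwache $m$-rasche Konvergenz eines augmentierten kosimplizialen Spektrums $X^\bullet$ bereits erzwingt, dass die Augmentation $X^{-1}\to \mathrm{Tot}(X^\bullet)$ ein Isomorphismus ist: Die Faser von $X^{-1}\to \mathrm{Tot}(X^\bullet)=\varprojlim_n \mathrm{Tot}_n(X^\bullet)$ ist (bis auf Verschiebung) $\varprojlim_n \cofib(X^{-1}\to\mathrm{Tot}_n(X^\bullet))$, und ein schwach $m$-nilpotenter Turm besitzt ein pro-triviales System von Homotopiegruppen — die Übergangsabbildungen verschwinden nach $m+1$ Schritten —, sodass sowohl sein $\varprojlim$ als auch sein $\varprojlim^1$ verschwinden und der Turm trivialen Limes hat. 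Wendet man dies auf jedes $X_i^\bullet$ und auf $\colim_i X_i^\bullet$ an (was nach der ersten Behauptung zulässig ist), so faktorisiert die Vergleichsabbildung als die Kette von Isomorphismen
\[\underset{i\in I}{\colim}\,\mathrm{Tot}(X_i^\bullet)\;\cong\;\underset{i\in I}{\colim}\,X_i^{-1}\;\cong\;(\underset{i\in I}{\colim}\,X_i)^{-1}\;\cong\;\mathrm{Tot}(\underset{i\in I}{\colim}\,X_i^\bullet),\]
welche genau die fragliche Abbildung ist, weshalb sie ein Isomorphismus ist.

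Das Haupthindernis ist die Buchführung im ersten Schritt: Man muss sorgfältig prüfen, dass $\mathrm{Tot}_n$ tatsächlich mit filtrierten Kolimites vertauscht (es handelt sich um die $n$-te abgeschnittene Totalisierung, einen endlichen Limes im homotopischen Sinne, nicht um einen naiven endlichen Limes über $\Delta_{\leq n}$) und dass alle obigen Identifizierungen mit den Strukturabbildungen der Türme verträglich sind, sodass die angegebene Kette von Isomorphismen wirklich die kanonische Vergleichsabbildung ergibt. Sobald dies feststeht, ist alles Weitere formal; insbesondere sollte man explizit festhalten, dass die gemeinsame Schranke $m$ zweimal verwendet wird — einmal, um die rasche Konvergenz des Kolimes zu erhalten, und einmal, um die $\varprojlim^1$-Terme gleichmäßig zu kontrollieren.
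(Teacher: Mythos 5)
Dein Beweis ist korrekt und entspricht im Wesentlichen dem Argument der zitierten Quelle: Die Arbeit selbst gibt keinen eigenen Beweis, sondern verweist auf \cite[Lemma 2.34]{CM21}, und dort verläuft der Beweis genau wie bei dir — partielle Totalisierungen $\mathrm{Tot}_n$ sind endliche Limites und vertauschen daher mit filtrierten Kolimites, die schwache $m$-Nilpotenz des Kofaserturms vererbt sich via $\pi_\ast$ auf den Kolimes, und die Pro-Trivialität der Homotopiegruppen des Kofaserturms (Verschwinden von $\varprojlim$ und $\varprojlim^1$) identifiziert $\mathrm{Tot}$ jeweils mit der Augmentierung. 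Deine abschließende Bemerkung zur Verträglichkeit der Identifizierungen mit der kanonischen Vergleichsabbildung ist genau der Punkt, den man festhalten muss, und sie folgt aus der Natürlichkeit der Augmentierungen.
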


Unser erstes Ziel ist es zu zeigen, dass der Nisnevich Situs eines endlichdimensionalen analytischen adischen Raums hypervollständig ist. Tatsächlich zeigen wir mehr: Der Nisnevich-Situs eines solchen Raums besitzt endliche \textit{Homotopiedimension}.

\begin{definition}[{\cite[Proposition 6.5.1.12, Definition 7.2.1.1]{HTT}}]
Sei $\mathcal{T}$ ein Situs und $n\geq -1$.
\begin{enumerate}[label=(\roman*)]
\item Eine Garbe von Animen $\mathcal{F}\in\mathrm{Sh}(\mathcal{T})$ heißt \textit{$n$-konnektiv}, wenn die Abschneidung $\tau_{n-1}\mathcal{F}$ ein finales Objekt von $\mathrm{Sh}(\mathcal{T})$ ist.
\item Es heißt $\mathrm{Sh}(\mathcal{T})$ \textit{von Homotopiedimension $\leq n$}, wenn jede $n$-konnektive Garbe $\mathcal{F}\in \mathrm{Sh}(\mathcal{T})$ einen Morphismus $\ast\rightarrow \mathcal{F}$ aus dem finalen Objekt besitzt.
\end{enumerate}
\end{definition}

Wir erinnern an das folgende wichtige Kriterium für Hypervollständigkeit:

\begin{satz}[{\cite[Korollare 7.2.1.12 und 7.2.2.30]{HTT}}]
Sei $\mathcal{T}$ ein Situs. Ist die $\infty$-Kategorie $\mathrm{Sh}(\mathcal{T})$ von Homotopiedimension $\leq n$, so ist sie hypervollständig und von kohomologischer Dimension $\leq n$. Insbesondere ist jede Garbe auf $\mathcal{T}$ Postnikow-vollständig.
\end{satz}

In unseren Beweisen nutzen wir die Hypervollständigkeit der gegebenen Garben und reduzieren die gewünschten Sätze auf bestimmte Aussagen auf dem Niveau von Halmen. Im Allgemeinen können wir eine Prägarbe auf der Kategorie von étalen Huber-Paaren über einem analytischen Huber-Paar $(A,A^+)$ durch Anwendung der Kan-Erweiterung auf die Kategorie von \textit{ind-étalen} Huber-Paaren über $(A,A^+)$ fortsetzen. 

\begin{definition}[{\cite[Definition 8.2.1]{Berkeley}}]
Ein Morphismus $f:(A,A^+)\rightarrow (B,B^+)$ von analytischen Huber-Paaren heißt \textit{ind-étale}, wenn $(B,B^+)$ zur Vervollständigung des Kolimes $(\colim\, (A_i,A_i^+))^{\wedge}$ eines filtrierten Systems von étalen Huber-Paaren über $(A,A^+)$ isomorph ist. 
\end{definition}

\begin{konstruktion}[vgl. {\cite[Konstruktion 4.30]{CM21}}]\label{kan_erweiterung}
Sei $X=\Spa (A,A^+)$ ein affinoider analytischer adischer Raum und $\mathcal{F}$ eine Prägarbe auf der Kategorie von étalen Räumen über $X$. Für ein ind-étales Huber-Paar $(B,B^+)\cong (\colim\, (A_i,A_i^+))^{\wedge}$ setzen wir
\[\mathcal{F}(B,B^+)=\colim\, \mathcal{F}(A_i,A_i^+).\]
\end{konstruktion}

\begin{bemerkung}
Streng genommen sind die Werte der oben gegebenen Fortsetzung nur bis auf Isomorphie definiert. In der „korrekten“ Definition wird das System $(A_i,A_i^+)_{i\in I}$ durch das System aller étalen Huber-Paare über $(A,A^+)$ mit einer Abbildung nach $(B,B^+)$ ersetzt.
\end{bemerkung}

\begin{lemma}\label{Nisnevich-Lokalisierung}
Sei $(B,B^+)=(\colim\, (A_i,A_i^+))^{\wedge}$ ein ind-étales Huber-Paar über einem tateschen Huber-Paar $(A,A^+)$. Dann definiert Konstruktion~\ref{kan_erweiterung} für jede Garbe von Animen (bzw. Spektren) auf dem Nisnevich-Situs von $\Spa (A,A^+)$ eine Nisnevich-Garbe von Animen (bzw. Spektren) auf $\Spa (B,B^+)$. Mit anderen Worten wird es keine Garbifizierung in der Konstruktion des Rückzugs entlang von $\Spa (B,B^+)\rightarrow \Spa (A,A^+)$ benötigt.
\end{lemma}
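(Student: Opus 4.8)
Der Plan ist, das Garbenkriterium aus Korollar~\ref{excisiv} zu verifizieren: Es genügt zu zeigen, dass die durch Konstruktion~\ref{kan_erweiterung} gegebene Prägarbe $\mathcal{F}$ auf $\mathrm{Nis}_{\Spa(B,B^+)}$ das leere Objekt auf $\ast$ und jedes elementare Nisnevich-Quadrat auf ein kartesisches Diagramm abbildet (mittels der Zariski-Quadrate, welche ebenfalls elementare Nisnevich-Quadrate sind, darf man sich dabei auf den Fall affinoider Ecken beschränken). Die erste Aussage ist unmittelbar: Ein leeres étales Objekt über $\Spa(B,B^+)$ steigt auf das leere Objekt über einem $\Spa(A_i,A_i^+)$ herab, und die Einschränkung von $\mathcal{F}$ entlang des étalen Morphismus $\Spa(A_i,A_i^+)\rightarrow\Spa(A,A^+)$ ist eine echte Nisnevich-Garbe --- nämlich die Einschränkung einer Garbe auf den Situs der étalen Objekte über $\Spa(A_i,A_i^+)$ ---, weshalb $\mathcal{F}(\varnothing)=\ast$ gilt; ein filtrierter Kolimes von Kopien von $\ast$ ist $\ast$.

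Zuerst würde ich ein beliebiges elementares Nisnevich-Quadrat über $\Spa(B,B^+)$, etwa mit rechter unterer Ecke $\Spa(C,C^+)$, rechter oberer Ecke $V$ (mit Strukturmorphismus $f:V\rightarrow\Spa(C,C^+)$), linker unterer Ecke $U$ und linker oberer Ecke $W$, auf eine endliche Stufe herabsteigen lassen. Da $\Spa(C,C^+)$ affinoid und étale über $\Spa(B,B^+)\cong(\colim(A_i,A_i^+))^{\wedge}$ ist, liefern der Basiswechsel entlang $\Spa(B,B^+)\rightarrow\Spa(A_i,A_i^+)$ zusammen mit den Approximationssätzen von Huber für étale Morphismen (vgl.\ \cite[Proposition 5.4.53]{Almost} und das étale Analogon von \cite[Proposition III.6.3.7]{Morel}, welche bereits im Beweis von Lemma~\ref{etale lokal} benutzt wurden) eine endliche Stufe $i_0$ und einen étalen Morphismus $\Spa(C_{i_0},C_{i_0}^+)\rightarrow\Spa(A_{i_0},A_{i_0}^+)$ mit $\Spa(C,C^+)\cong\Spa(C_{i_0},C_{i_0}^+)\underset{\Spa(A_{i_0},A_{i_0}^+)}{\times}\Spa(B,B^+)$, und ebenso für $U$, $V$ und $W$. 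Nach Vergrößerung von $i_0$ würde ich zusätzlich erreichen, dass $U_{i_0}\hookrightarrow\Spa(C_{i_0},C_{i_0}^+)$ eine quasi-kompakte offene Einbettung ist --- eine quasi-kompakte offene Teilmenge eines spektralen Raumes steigt entlang eines kofiltrierten Limes herab --- und dass $f_{i_0}^{-1}(\Spa(C_{i_0},C_{i_0}^+)\setminus U_{i_0})\rightarrow\Spa(C_{i_0},C_{i_0}^+)\setminus U_{i_0}$ ein Homöomorphismus vom Grad $1$ ist. Letzteres erwarte ich als das Hauptproblem: Man muss wissen, dass ein kofiltrierter Limes stetiger Abbildungen spektraler Räume, der ein Homöomorphismus ist, bereits auf einer endlichen Stufe ein Homöomorphismus ist, und dass der Grad --- welcher nach Definition~\ref{grad_def} durch Basiswechsel zu separabel abgeschlossenen affinoiden Körpern und Abzählen der Fasern berechnet wird --- schließlich konstant gleich $1$ ist; beides sollte aus der Quasi-Kompaktheit spektraler Räume in der konstruierbaren Topologie sowie aus Lemma~\ref{grad_formel} folgen. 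Auf diese Weise steigt das ganze Quadrat auf ein elementares Nisnevich-Quadrat über $\Spa(A_{i_0},A_{i_0}^+)$ herab, und dessen Basiswechsel nach $\Spa(A_j,A_j^+)$ für $j\geq i_0$ ist wiederum ein elementares Nisnevich-Quadrat, da offene Einbettungen, étale Morphismen und Homöomorphismen vom Grad $1$ unter Basiswechsel stabil sind (das Letzte nach Lemma~\ref{grad_formel}).

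Als Nächstes würde ich Korollar~\ref{excisiv} über jedem $\Spa(A_j,A_j^+)$ mit $j\geq i_0$ anwenden: Die Einschränkung von $\mathcal{F}$ auf $\mathrm{Nis}_{\Spa(A_j,A_j^+)}$ ist eine Garbe, also ist das Quadrat mit Ecken $\mathcal{F}(\Spa(C_j,C_j^+))$, $\mathcal{F}(V_j)$, $\mathcal{F}(U_j)$ und $\mathcal{F}(W_j)$ kartesisch. Der Übergang zum filtrierten Kolimes über die kofinale gerichtete Teilmenge $\{j\in I : j\geq i_0\}$ liefert dann --- unter Benutzung der Tatsache, dass die Kolimites von $\mathcal{F}(\Spa(C_j,C_j^+))$, $\mathcal{F}(U_j)$, $\mathcal{F}(V_j)$ und $\mathcal{F}(W_j)$ nach Konstruktion~\ref{kan_erweiterung} (und der in der darauffolgenden Bemerkung erwähnten Unabhängigkeit von der Wahl der ind-étalen Darstellung) gerade $\mathcal{F}(\Spa(C,C^+))$, $\mathcal{F}(U)$, $\mathcal{F}(V)$ und $\mathcal{F}(W)$ sind --- das ursprüngliche Quadrat als filtrierten Kolimes kartesischer Diagramme. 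Da filtrierte Kolimites in der $\infty$-Kategorie der Animen (bzw.\ der Spektren) mit endlichen Limites, insbesondere mit Pullbacks, vertauschen, ist das Kolimes-Quadrat kartesisch --- genau die gewünschte Abstiegsbedingung. Nach Korollar~\ref{excisiv} ist $\mathcal{F}$ somit eine Nisnevich-Garbe auf $\Spa(B,B^+)$, und in der Konstruktion des Rückzugs wurde keine Garbifizierung benötigt.
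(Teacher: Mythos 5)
Dein Beweis folgt im Wesentlichen demselben Weg wie die Arbeit: Reduktion auf elementare Nisnevich-Quadrate via Korollar~\ref{excisiv}, Abstieg des Quadrats auf eine endliche Stufe $\Spa(A_{i_0},A_{i_0}^+)$ mittels \cite[Proposition 5.4.53]{Almost} und des étalen Analogons von \cite[Proposition III.6.3.7]{Morel}, und Schluss über die Vertauschung filtrierter Kolimites mit Pullbacks. Du führst lediglich die im Originalbeweis mit \glqq wie man leicht nachprüft\grqq{} abgekürzten Schritte (Abstieg der quasi-kompakten offenen Einbettung und der Grad-$1$-Bedingung, Verhalten unter Basiswechsel nach Lemma~\ref{grad_formel}) explizit aus, was korrekt ist.
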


\begin{proof}
Nach Korollar~\ref{excisiv} genügt es nachzuprüfen, dass die Fortsetzung elementare Nisnevich'sche Quadrate auf kartesische Diagramme schickt. Aus \cite[Proposition 5.4.53]{Almost} und dem Beweis von \cite[Proposition III.6.3.7]{Morel} folgt, dass die Kategorie von étalen Huber-Paaren über $(B,B^+)$ und die Kategorie von étalen Huber-Paaren über $\colim\, (A_i,A_i^+)$ isomorph sind. Wie man leicht nachprüft, wird jedes elementare Nisnevich'sche Quadrat für $\colim\, (A_i,A_i^+)$ von einem solchen Quadrat für ein $(A_i,A^+_i)$ induziert.
\end{proof}

\begin{bemerkung}
Wie im schematischen Falle, gilt das étale Analogon des Lemmas~\ref{Nisnevich-Lokalisierung} nicht.
\end{bemerkung}

Als letzte Vorbereitung, bevor wir uns dem ersten Beweis zuwenden, erinnern wir an folgendes wichtige Resultat von \cite{CM21}:

\begin{satz}[{\cite[Theoreme 3.14 und 3.30]{CM21}}]\label{spektrale_räume_dimension}
Sei $X$ ein quasi-kompaktes quasi-separiertes Schema endlicher Krull-Dimension. Für einen Punkt $x\in X$ bezeichne man mit $i_x$ bzw. $\iota_x$ die kanonische Abbildung $\Spec \mathcal{O}_{X,x}\rightarrow X$ bzw. $\Spec \mathcal{O}^h_{X,x}\rightarrow X$. Hierbei bezeichnet $\mathcal{O}^h_{X,x}$ die Henselisierung von $\mathcal{O}_{X,x}$. Sei $n\geq 0$ eine natürliche Zahl.

\begin{enumerate}[label=(\roman*)]
\item Sei $\mathcal{F}$ eine Garbe von Animen auf dem Zariski-Situs von $X$. Ist der Zariski-Halm 
\[\Gamma (\Spec \mathcal{O}_{X,x},(i_x)_{\mathrm{Zar}}^\ast \mathcal{F})\]
$(\mathrm{dim}\overline{\{x\}}+n)$-konnektiv für jeden Punkt $x\in X$, so ist $\mathcal{F}(X)$ $n$-konnektiv. 
\item Sei $\mathcal{F}$ eine Garbe von Animen auf dem Nisnevich-Situs von $X$. Ist der Nisnevich-Halm 
\[\Gamma (\Spec \mathcal{O}^h_{X,x},(\iota_x)_{Nis}^\ast \mathcal{F})\]
$(\mathrm{dim}\overline{\{x\}}+n)$-konnektiv für jeden Punkt $x\in X$, so ist $\mathcal{F}(X)$ $n$-konnektiv. 
\end{enumerate}
Insbesondere ist die Homotopiedimension des Zariski- bzw. Nisnevich-Situs kleiner oder gleich $\mathrm{dim}\,X$.
\end{satz}

Und hier ist nun der erste versprochene Beweis. Da jeder spektrale topologische Raum zum Spektrum eines kommutativen Ringes isomorph ist, gilt der erste Teil des Satzes auch für quasi-kompakte quasi-separierte analytische adische Räume. Tatsächlich gilt in dieser Situation auch der zweite Teil, wie der folgende Satz zeigt:

\begin{satz}
Sei $X$ ein quasi-kompakter quasi-separierter analytischer adischer Raum endlicher Krull-Dimension und $\mathcal{F}$ eine Garbe von Animen auf dem Nisnevich-Situs von $X$. Mit $\iota_x$ bezeichnen wir die kanonische Abbildung $(\kappa_h(x),\kappa_h^+(x))\rightarrow X$ für die Henselisierung des Restklassenkörpers von $X$ in $x$. Sei $n\geq 0$ eine natürliche Zahl. Ist der Nisnevich-Halm \[\Gamma (\Spa (\kappa_h(x),\kappa_h^+(x)),(\iota_x)_{Nis}^\ast \mathcal{F})\] $(\mathrm{dim}\overline{\{x\}}+n)$-konnektiv für jeden Punkt $x\in X$, so ist $\mathcal{F}(X)$ $n$-konnektiv.\footnote{Die Bedingung wird nur an die Henselisierungen der Punkte von $X$ gestellt: In diesem Fall ist sie für die anderen henselschen Punkte automatisch erfüllt.} Insbesondere ist die Homotopiedimension kleiner oder gleich $\mathrm{dim}\,X$. 
\end{satz}

\begin{proof}
Wir beweisen den Satz mit Induktion nach $\mathrm{dim}\,X$. Der Fall $\mathrm{dim}\,X=-1$ ist trivial. Gelte nun die Behauptung für $\mathrm{dim}\,X-1$. Nach Satz~\ref{spektrale_räume_dimension} reicht es aus zu zeigen, dass der Zariski-Halm von $\mathcal{F}$ in $x$ $(\mathrm{dim}\overline{\{x\}}+n)$-konnektiv für jeden Punkt $x\in X$ ist.
Sei $j:U\hookrightarrow X$ eine quasi-kompakte offene Umgebung von $x$. Man betrachte folgendes kommutative Diagramm:
\begin{center}
\begin{tikzcd}
\Spa (\kappa_h(x),\kappa_h^+(x))_{\mathrm{Nis}}\arrow[r]\arrow[d] &\Spa (\kappa(x),\kappa^+(x))_{\mathrm{Nis}}\arrow[d]\arrow[r,"i_{\mathrm{Nis}}"] & U_{\mathrm{Nis}} \arrow[d,"\beta"] \arrow[r,"j_{\mathrm{Nis}}"] & X_{\mathrm{Nis}} \arrow[d,"\alpha"] \\
\Spa (\kappa_h(x),\kappa_h^+(x))_{\mathrm{Zar}}\arrow[r] &\Spa (\kappa(x),\kappa^+(x))_{\mathrm{Zar}}\arrow[r,"i_{\mathrm{Zar}}"] & U_{\mathrm{Zar}} \arrow[r,"j_{\mathrm{Zar}}"] & X_{\mathrm{Zar}}
\end{tikzcd}
\end{center}
Hierbei stellen die vertikalen Pfeile die kanonischen Morphismen von entsprechenden $\infty$-Topoi dar, die eine Nisnevich-Garbe nach der zugrunde liegenden Zariski-Garbe abbilden. Mit der Notation dieses Diagramms ist die Konnektivität, die wir nachweisen wollen, offenbar dazu äquivalent, dass das Anima \[\Gamma (\Spa (\kappa(x),\kappa^+(x)),(j \circ i)_{\mathrm{Zar}}^\ast \alpha_\ast (\mathcal{F}))\] $(\mathrm{dim}\overline{\{x\}}+k)$-konnektiv ist. Man sieht unmittelbar, dass die natürliche Transformation $j_{\mathrm{Zar}}^\ast\alpha_\ast\rightarrow \beta_\ast j_{\mathrm{Nis}}^\ast$ ein Isomorphismus ist. Da $\Spa (\kappa(x),\kappa^+(x))\cong \underset{x\in U}{\varprojlim}\, U$ gilt, wobei $U$ die offenen Umgebungen von $x$ durchläuft, genügt es wegen der vorigen Behauptung und Lemma~\ref{Nisnevich-Lokalisierung} folgendes zu zeigen: Sei $(K,K^+)$ ein affinoider Körper endlicher Krull-Dimension und $\mathcal{F}$ eine Garbe auf dessen Nisnevich-Situs. Sind die Bedingungen des Satzes erfüllt, so ist das Anima $\mathcal{F}(\Spa (K,K^+))$ $n$-konnektiv. Mit anderen Worten reicht es aus, den Satz im Falle $X=\Spa (K,K^+)$ nachzuweisen.

Wir betrachten nun den vorstehenden Fall. Wir bezeichnen mit $x$ den abgeschlossenen Punkt von $\Spa (K,K^+)$. Nach Annahme gibt es eine Nisnevich-Umgebung von $x$\footnote{D. h., der Grad der Abbildung im abgeschlossen Punkt ist gleich $1$.} der Form $\Spa (\Tilde{K},\Tilde{K}^+)$ für einen affinoiden Körper $(\Tilde{K},\Tilde{K}^+)$, sodass das Anima $\mathcal{F} (\Spa (\Tilde{K},\Tilde{K}^+))$ nicht leer ist. Zudem sind die Animen $\mathcal{F} (\Spa (K,K^+)\setminus \{x\})$ und $\mathcal{F}(\Spa (\Tilde{K},\Tilde{K}^+)\underset{\Spa (K,K^+)}{\times}(\Spa (K,K^+)\setminus \{x\}))$ nach Induktionsannahme $n+1$-konnektiv. Da das Paar
\[\{\Spa (K,K^+)\setminus \{x\}, \Spa (\Tilde{K},\Tilde{K}^+)\}\]
eine elementare Nisnevich-Überdeckung bildet, ist das Diagramm
\begin{center}
\begin{tikzcd}
\mathcal{F}(\Spa (K,K^+))\arrow[r]\arrow[d] &\mathcal{F}(\Spa (K,K^+)\setminus \{x\})\arrow[d]\\
\mathcal{F}(\Spa (\Tilde{K},\Tilde{K}^+))\arrow[r] &\mathcal{F}(\Spa (\Tilde{K},\Tilde{K}^+)\underset{\Spa (K,K^+)}{\times}(\Spa (K,K^+)\setminus \{x\}))
\end{tikzcd}
\end{center}
kartesisch, weshalb $\mathcal{F}(\Spa (K,K^+))$ nicht leer ist. Die höhere Konnektivität wird völlig analog nachgeprüft.
\end{proof}

Wir beginnen nun mit der Analyse der Hypervollständigkeit auf dem étalen Situs eines analytischen adischen Raums $X$. Unser erstes Ziel ist ein Hypervollständigkeitskriterium in Termen von Punkten von $X$. Wir erinnern zunächst an die folgende Definition.

\begin{definition}[{\cite[Definition 4.1]{CM21}}]\label{stetige_g_mengen}
Sei $G$ eine proendliche Gruppe. Die zugrunde liegende Kategorie des \textit{Situs $\mathcal{T}_G$ von stetigen $G$-Mengen} ist definiert als die Kategorie von endlichen Mengen mit stetiger $G$-Wirkung. Eine Familie $\{M_i\rightarrow M\}_{i\in I}$ von Morphismen bildet eine Überdeckung, wenn die Abbildung $\coprod M_i\rightarrow M$ surjektiv ist. 
\end{definition}

Für einen quasi-kompakten quasi-separierten analytischen adischen Raum $X$ und einen Punkt $x\in X$ nennen wir den Situs von stetigen $\mathrm{Gal}(\kappa_h(x))$-Mengen den \textit{galoisschen Situs von $X$ in $x$}. Wir bezeichnen ihn mit $\mathcal{T}_x$. In der Terminologie von pseudo-adischen Räumen (siehe \cite[Abschnitt 1.10]{Huber}) ist der entsprechende ($\infty$-)Topos nach \cite[Proposition 2.3.10]{Huber} zum étalen ($\infty$-)Topos des pseudo-adischen Raums $(\Spa(\kappa(x),\kappa^+(x)),\Tilde{x})$ äquivalent, wobei $\Tilde{x}$ der abgeschlossene Punkt bezeichnet.

Das Kriterium, das wir im Folgenden beweisen werden, gilt für analytische adische Räume, für die die étalen kohomologischen Dimensionen der Räume in den entsprechenden étalen Siten nach oben beschränkt sind. Daher untersuchen wir zunächst die Beziehung zwischen diesen „globalen“ Dimensionen und den „lokalen“ Dimensionen der galoisschen Siten.

\begin{satz}[vgl. {\cite[Korollar 3.29]{CM21}}]
Sei $X$ ein quasi-kompakter quasi-separierter analytischer adischer Raum der Dimension $d$. Es bezeichne $\mathrm{cd_x}$ die $\mathcal{P}$-lokale kohomologische Dimension des galoisschen Situs von $X$ in $x$. Dann gilt:

\[\underset{x\in X}{\mathrm{sup}}\, \mathrm{cd}_x\leq \underset{U\in \Acute{E}t_X}{\mathrm{sup}}\, \mathrm{CohDim}_\mathcal{P}(U_{\Acute{e}t})\leq d+\underset{x\in X}{\mathrm{sup}}\, \mathrm{cd}_x.\]
\end{satz}

\begin{proof}
Wir beweisen zunächst die erste Ungleichung. Sei $x$ ein Punkt von $X$. Wie man leicht nachprüft, ist der direkte Bildfunktor $r_\ast:\mathcal{T}_x\rightarrow \Spa (\kappa_h(x),\kappa_h^+(x))$ exakt, also gilt \[\mathrm{cd}_x\leq \mathrm{CohDim}_\mathcal{P}(\Spa (\kappa_h(x),\kappa_h^+(x))_{\Acute{e}t}).\] 
Da der adische Raum $\Spa (\kappa_h(x),\kappa_h^+(x))$ isomorph zum inversen Limes der Nisnevich-Umgebungen von $x$ ist, folgt die Aussage also aus dem adischen Analogon von \cite[\href{https://stacks.math.columbia.edu/tag/03Q4}{Tag 03Q4}]{stacks-project}.

Die zweite Ungleichung lässt sich leicht aus \cite[Proposition 2.8.1]{Huber} ableiten.
\end{proof}

Sei $X=\Spa (A,A^+)$ ein affinoider analytischer adischer Raum. Unter dem \textit{affinoiden étalen Situs von $X$} verstehen wir die Kategorie affinoider étaler Räume über $X$. Eine Überdeckung in diesem Situs ist per Definition eine Familie von Abbildungen zwischen affinoiden étalen Räumen über $X$, die eine Überdeckung im üblichen étalen Situs von $X$ bildet. Das folgende Lemma, das in der gleichen Weise wie in \cite{CM21} nachgewiesen wird, wird uns im Weiteren erlauben, den Beweis des gewünschten Kriteriums auf das Analogon für den affinoiden Situs zu reduzieren.

\begin{lemma}[vgl. {\cite[Proposition 4.34]{CM21}}]\label{garben_affinoid}
Sei $X$ ein quasi-kompakter quasi-separierter analytischer adischer Raum und $\mathcal{F}$ eine Nisnevich-Garbe von Animen bzw. Spektren auf $X$. Gilt eine der folgenden Aussagen für jede étale Abbildung $f:U\rightarrow X$, mit $U$ affinoid tatesch, so gilt deren Analogon für $\mathcal{F}$ auf dem üblichen étalen Situs:

\begin{enumerate}[label=(\roman*)]
\item der Rückzug $f^\ast \mathcal{F}$ ist eine Garbe auf dem affinoiden étalen Situs von $U$;
\item der Rückzug $f^\ast \mathcal{F}$ ist eine hypervollständige Garbe auf dem affinoiden étalen Situs von $U$;
\item der Rückzug $f^\ast \mathcal{F}$ ist eine Postnikow-vollständige Garbe auf dem affinoiden étalen Situs von $U$;
\item der Rückzug $f^\ast \mathcal{F}$ ist isomorph zu der trivialen Garbe auf dem affinoiden étalen Situs von $U$;
\end{enumerate}
\end{lemma}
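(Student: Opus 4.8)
The plan is to prove this exactly as \cite[Proposition~4.34]{CM21}: reduce to the affinoid case and then transfer each of the four properties from the affinoid étale site to the full small étale site $X_{\mathrm{\acute{e}t}}$ by a dense-subsite comparison. I begin by reducing to the case where $X=\Spa(A,A^+)$ is Tate affinoid. Each of the four properties is local on $X$ for the étale topology: for hyper- and Postnikov-completeness this is Lemma~\ref{lokal-global}, and for ``$\mathcal{F}$ is an étale sheaf'' resp.\ ``$\mathcal{F}$ is the trivial sheaf'' it is an elementary descent statement, using that a Zariski open cover is in particular a Nisnevich (hence étale) cover --- indeed a morphism out of $\Spa(K,K^+)$ for an affinoid field $(K,K^+)$ factors through any open of the target containing the image of the closed point, since opens of a spectral space are stable under generization. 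Choosing a finite cover of $X$ by Tate affinoid open subsets $X_1,\dots,X_m$ and noting that any Tate affinoid étale over some $X_k$ is also Tate affinoid étale over $X$, with the evident compatibility of pullbacks of $\mathcal{F}$, the hypothesis for $X$ restricts to the hypothesis for each $X_k$, so it suffices to treat $X$ Tate affinoid. In that case every affinoid adic space étale over $X$ is itself Tate --- a finite étale algebra over a Tate Huber ring inherits the topologically nilpotent unit, and rational localizations of Tate rings are Tate --- so the affinoid étale site of $X$ occurring in the statement coincides with the site $\mathcal{S}_X$ of Tate affinoid adic spaces étale over $X$, equipped with étale covers.

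The geometric heart of the matter is that $\mathcal{S}_X$ is a dense sub-site of $X_{\mathrm{\acute{e}t}}$ carrying the induced topology. By the definition of an étale morphism, together with the remark that finite étaleness may be checked Zariski-locally, every étale $V\to X$ is, étale-locally on $V$, the composite of an open immersion with a finite étale morphism; shrinking the relevant open subset of $X$ to a Tate affinoid rational subset --- possible since every analytic affinoid adic space is covered by the Tate rational subsets attached to a finite set of topologically nilpotent generators of its unit ideal --- and again invoking stability of ``Tate'' under finite étale extensions and rational localizations, one concludes that every object of $X_{\mathrm{\acute{e}t}}$ is covered by objects of $\mathcal{S}_X$; this is Lemma~\ref{etale lokal} sharpened so that the pieces are Tate affinoid. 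The standard comparison of sheaves on a site with sheaves on a dense sub-site, in its $\infty$-categorical form and applied as in \cite[Proposition~4.34]{CM21}, then furnishes equivalences $\mathrm{Sh}(X_{\mathrm{\acute{e}t}})\xrightarrow{\ \sim\ }\mathrm{Sh}(\mathcal{S}_X)$ of sheaves of anima, and likewise of sheaves of spectra, given by restriction.

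Now I run all four cases at once by invoking the hypothesis with $U=X$ and $f=\mathrm{id}$. It says that the restriction $\mathcal{F}|_{\mathcal{S}_X}$ --- a genuine restriction of presheaves, no sheafification being involved along an étale map (cf.\ Lemma~\ref{Nisnevich-Lokalisierung}) --- is, respectively, an étale sheaf, a hypercomplete étale sheaf, a Postnikov-complete étale sheaf, or the trivial sheaf. In every case $\mathcal{F}|_{\mathcal{S}_X}$ is a sheaf, hence under the above equivalence extends uniquely to an étale sheaf $\widetilde{\mathcal{F}}$ on $X_{\mathrm{\acute{e}t}}$; one has $\widetilde{\mathcal{F}}\cong\mathcal{F}$ because both are Nisnevich sheaves (an étale sheaf being one a fortiori), they agree on $\mathcal{S}_X$, and every object of $X_{\mathrm{\acute{e}t}}$ admits a Zariski --- hence Nisnevich --- cover by objects of $\mathcal{S}_X$. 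This settles (i). For (ii) and (iii) one uses that hyper- and Postnikov-completeness are intrinsic properties of an object of an $\infty$-topos (Postnikov-completeness being the condition $\mathcal{F}\xrightarrow{\ \sim\ }\varprojlim_n\tau_{\leq n}\mathcal{F}$, with truncations and limits preserved by the equivalence), so they pass from $\mathcal{F}|_{\mathcal{S}_X}$ to $\mathcal{F}$; for (iv), the unique étale extension of the trivial sheaf is again the trivial sheaf.

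The main obstacle is the first two steps: identifying the affinoid, equivalently Tate affinoid, étale site with a dense sub-site of $X_{\mathrm{\acute{e}t}}$ inducing the étale topology. This is exactly where the local structure theory of étale morphisms of analytic adic spaces (Lemma~\ref{etale lokal}) and the stability of Tate Huber rings under finite étale extensions and rational localizations are needed; everything afterwards is formal bookkeeping identical to \cite{CM21}. A minor point to watch is that $\mathcal{F}$ is only assumed to be a Nisnevich sheaf at the outset, so one must genuinely identify its unique étale extension from $\mathcal{S}_X$ with $\mathcal{F}$ itself, which is where ``Zariski covers are Nisnevich covers'' enters.
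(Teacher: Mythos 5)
Your proposal is correct and follows essentially the route the paper intends: the paper gives no argument of its own but defers to \cite[Proposition 4.34]{CM21}, whose proof is exactly your scheme of reducing to the (Tate) affinoid case by étale locality (Lemma~\ref{lokal-global} plus Zariski descent) and then comparing the small étale site with the cover-dense affinoid étale subsite, so that sheafiness, hyper- and Postnikow-completeness, and triviality transfer along the restriction/right-Kan-extension equivalence. The only point to make explicit is that the identification $\widetilde{\mathcal{F}}\cong\mathcal{F}$ should be exhibited by the unit map $\mathcal{F}\rightarrow \widetilde{\mathcal{F}}$ of the restriction--Kan-extension adjunction, which is then checked to be an equivalence by your induction over finite affinoid covers using Nisnevich (Zariski) descent.
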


Als letzter Schritt vor der Formulierung des Kriteriums erinnern wir an die Analyse von \cite{CM21} der Hypervollständigkeit auf dem Situs der stetigen Mengen einer pro-endlichen Gruppe.

\begin{definition}[{\cite[Definition 4.8]{CM21}}]
Sei $K$ eine endliche Gruppe und $X$ ein Spektrum mit $K$-Wirkung. Die $K$-Wirkung heißt \textit{schwach $m$-nilpotent}, falls das augmentierte kosimpliziale Spektrum
\[X^{hK}\rightarrow X\rightrightarrows \underset{K}{\prod} X\mathrel{\substack{\textstyle\rightarrow\\[-0.6ex]
                      \textstyle\rightarrow \\[-0.6ex]
                      \textstyle\rightarrow}} \dots \]
welches die Homotopiefixpunkte berechnet, schwach $m$-rasch konvergent ist.
\end{definition}

\begin{satz}[{\cite[Proposition 4.16]{CM21}}]\label{hypervollständigkeit_galois}
Sei $G$ eine pro-endliche Gruppe $\mathcal{P}$-lokaler kohomologischer Dimension $d$. Für eine Garbe von $\mathcal{P}$-lokalen Spektren $\mathcal{F}$ auf $\mathcal{T}_G$ sind äquivalent:
\begin{enumerate}[label=(\roman*)]
\item Die Garbe $\mathcal{F}$ ist hypervollständig.\footnote{Bzw. Postnikow-vollständig, was in diesem Falle nach Satz~\ref{äquivalenz_vollständig} äquivalent zur Hypervollständigkeit von $\mathcal{F}$ ist.}
\item Es gibt ein $m\geq 0$, sodass für jeden offenen Normalteiler $N\subset H$ einer offenen Untergruppe von $G$ die $H/N$-Wirkung auf dem Spektrum $\mathcal{F}(G/N)$ schwach $m$-nilpotent ist.
\item Für jeden offenen Normalteiler $N\subset H$ einer offenen Untergruppe von $G$ ist die $H/N$-Wirkung auf dem Spektrum $\mathcal{F}(G/N)$ schwach $d$-nilpotent ist.
\end{enumerate}
\end{satz}

\begin{satz}[{\cite[Proposition 4.17]{CM21}}]\label{tensor-lokalisierung-proendliche-gruppen}
Sei $G$ eine pro-endliche Gruppe $\mathcal{P}$-lokaler virtueller kohomologischer Dimension $d$. Dann ist der Hypervervollständigungsfunktor für Garben von $\mathcal{P}$-lokalen Spektren auf $\mathcal{T}_G$ eine Tensor-Lokalisierung.
\end{satz}

Schließlich sind wir bereit, das versprochene Kriterium zu formulieren und nachzuweisen:

\begin{satz}[vgl. {\cite[Theorem 4.36]{CM21}}]\label{kriterium}
Sei $X$ ein quasi-kompakter quasi-separierter analytischer adischer Raum, sodass die $\mathcal{P}$-lokalen étalen kohomologischen Dimensionen von $\{U\rightarrow X\}\in \mathrm{\acute{E}t}_X$ nach oben beschränkt sind. Ist $\mathcal{F}$ eine hypervollständige Nisnevich-Garbe von $\mathcal{P}$-lokalen Spektren, so ist sie eine hypervollständige étale Garbe genau dann, wenn für alle Punkte $x\in X$ der Nisnevich-Rückzug entlang von $\Spa (\kappa_h(x),\kappa_h^+(x))\rightarrow X$ eine hypervollständige Garbe auf $\mathcal{T}_x$ definiert.
\end{satz}

\begin{proof}
Unser Beweis ist identisch zu dem Beweis von \cite[Theorem 4.36]{CM21}. Sei $d$ eine obere Schranke für die $\mathcal{P}$-lokalen étalen kohomologischen Dimensionen von $\{U\rightarrow X\}\in \mathrm{\acute{E}t}_X$. Angenommen, $\mathcal{F}$ ist eine hypervollständige étale Garbe. Seien $\kappa_h(x)\rightarrow k\rightarrow k'$ endliche separable Körpererweiterungen und $k^+$ (bzw. $k'^+$) der ganze Abschluss von $\kappa_h(x)^+$ in $k$ (bzw. in $k'$). Wir schreiben $(\kappa_h(x),\kappa_h^+(x))$ als $(\underset{U}{\colim} (\mathcal{O}_X(U),\mathcal{O}_X^+(U)))^\wedge$, wobei $U$ die Nisnevich-Umgebungen von $x$ durchläuft. Wie man mithilfe von \cite[Proposition 5.4.53]{Almost} leicht nachprüft, kann die Abbildung $(k,k^+)\rightarrow (k',k'^+)$ als filtrierten Kolimes 
\[\biggl(\underset{U}{\colim}\, (B_U,B_U^+)\rightarrow (B_U',B_U'^+)\biggr)^\wedge\]
von volltreuen\footnote{D. h., die induzierte Abbildung zwischen adischen Spektren ist surjektiv.} étalen Abbildungen von étalen Huber-Paaren über $(\mathcal{O}_X(U),\mathcal{O}_X^+(U))$ geschrieben werden. Nach Satz~\ref{hypervollständigkeit_überdeckungen} ist der Čech'sche Nerv 
\[\mathcal{F}(B_U,B_U^+)\rightarrow \mathcal{F}(B_U',B_U'^+)\rightrightarrows\cdots\]
$d$-rasch konvergent, weshalb das kosimpliziale Diagramm 
\[\mathcal{F}(k,k^+)\rightarrow \mathcal{F}(k',k'^+)\rightrightarrows\cdots\]
wegen Lemma~\ref{totalisierungen} schwach $d$-rasch konvergent ist. Der Nisnevich-Rückzug von $\mathcal{F}$ definiert also eine étale Garbe auf $\mathcal{T}_x$, die zudem nach Satz~\ref{hypervollständigkeit_galois} hypervollständig ist.

Sei nun umgekehrt $\mathcal{F}$ eine Nisnevich-Garbe, sodass ihr Nisnevich-Rückzug entlang der Abbildung $\Spa (\kappa_h(x),\kappa_h^+(x))\rightarrow X$ für jeden Punkt $x\in X$ eine hypervollständige Garbe auf $\mathcal{T}_x$ definiert. Sei $\mathcal{F}\rightarrow \Tilde{\mathcal{F}}$ die étale Hypervervollständigung von $\mathcal{F}$. Wir betrachten das Diagramm
\begin{center}
\begin{tikzcd}
\mathrm{Sh}(\mathcal{T}_x,\mathrm{Sp}) \arrow[r]\arrow[d] &\Spa (\kappa_h(x),\kappa_h^+(x))_{\mathrm{\Acute{e}t}}\arrow[d,"\beta"]\arrow[r,"\iota_{\mathrm{\Acute{e}t}}"] & X_{\mathrm{\mathrm{\Acute{e}t}}} \arrow[d,"\alpha"] \\
\mathrm{PSh}_{\prod}(\mathcal{T}_x,\mathrm{Sp}) \arrow[r] &\Spa (\kappa_h(x),\kappa_h^+(x))_{\mathrm{Nis}}\arrow[r,"\iota_{\mathrm{Nis}}"] & X_{\mathrm{Nis}}
\end{tikzcd}
\end{center}
wobei $\mathrm{PSh}_{\prod}(\mathcal{T}_x,\mathrm{Sp})$ die volle $\infty$-Unterkategorie derjenigen Prägarben auf $\mathcal{T}_x$ bezeichnet, welche disjunkte Summen von $\mathrm{Gal}(\kappa_h(x))$-Mengen auf Produkte abbilden. Nach Satz~\ref{punkte} und Lemma~\ref{Nisnevich-Lokalisierung} genügt es zu zeigen, dass die Abbildung \[\iota^\ast_{\mathrm{Nis}} \mathcal{F}\rightarrow \iota^\ast_{\mathrm{Nis}}\alpha_\ast \Tilde{\mathcal{F}}\] ein Isomorphismus ist. Die Nisnevich-Garbe $\iota^\ast_{\mathrm{Nis}}\mathcal{F}$ (bzw. $\iota^\ast_{\mathrm{Nis}}\alpha_\ast \Tilde{\mathcal{F}}$) definiert nach Annahme (bzw. wegen der ersten Hälfte des Beweises) eine hypervollständige Garbe auf $\mathcal{T}_x$. Da die Hypervervollständigung Halme nicht ändert, definieren sie dieselbe Garbe auf $\mathcal{T}_x$. Mit anderen Worten ist die Abbildung  \[\iota^\ast_{\mathrm{Nis}} \mathcal{F} (\Spa (k,k^+))\rightarrow \iota^\ast_{\mathrm{Nis}}\alpha_\ast \Tilde{\mathcal{F}} (\Spa (k,k^+))\] ein Isomorphismus für jeden affinoiden Körper $\Spa (k,k^+)$, wobei $k$ eine endliche separable Erweiterung von $\kappa_h(x)$ und $k^+$ der ganze Abschluss von $\kappa_h(x)^+$ in $k$ ist. Durch Betrachtung von offenen Teilmengen von $\Spa (\kappa_h(x),\kappa_h^+(x))$\footnote{Sie entsprechen den Generalisierungen von $x$ in $X$.} erhalten wir die gewünschte Aussage für offene Teilmengen solcher affinoiden Körper.
\end{proof}

Daraus lässt sich genau wie im schematischen Falle das folgende Korollar ableiten.

\begin{korollar}[vgl. {\cite[Korollar 4.40]{CM21}}]\label{adische_räume_tensor_lokalisierung}
Sei $X$ ein quasi-kompakter quasi-separierter analytischer adischer Raum endlicher Krull-Dimension, sodass die virtuellen $\mathcal{P}$-lokalen kohomologischen Dimensionen der galoisschen Siten von $X$ nach oben beschränkt sind. Dann ist die Hypervervollständigung von $\mathcal{P}$-lokalen Spektren eine Tensor-Lokalisierung.
\end{korollar}
\newpage
\printbibliography

\end{document}